\DeclarePairedDelimiter{\abs}{\lvert}{\rvert}
\DeclarePairedDelimiter{\norm}{\lVert}{\rVert}
\let\oldabs\abs
\def\abs{\@ifstar{\oldabs}{\oldabs*}}
\let\oldnorm\norm
\def\norm{\@ifstar{\oldnorm}{\oldnorm*}}
\numberwithin{equation}{section}
\DeclareMathAlphabet{\mathpzc}{OT1}{pzc}{m}{it}
\newtheorem{theorem}{Theorem}[section]
\newtheorem{lemma}[theorem]{Lemma}
\newtheorem{proposition}[theorem]{Proposition}
\newtheorem{remark}[theorem]{Remark}
\newcommand{\nbs}{{{\aleph}}}
\newcommand{\rad}{\boldsymbol{a}}
\newcommand{\mc}[1]{{\mathcal{#1}}}
\newcommand{\bs}[1]{{\boldsymbol{#1}}}
\newcommand{\dist} {\boldsymbol{\delta}}
\newcommand{\cmp}{{\boldsymbol{\mathlarger{c}}}}
\newcommand{\z} {\boldsymbol{z}}
\newcommand{\n}{\text{in}}
\newcommand{\s}{\text{sc}}
\newcommand{\ta}{\text{t}} 
\newcommand{\Id}{{\mathbb{I}}}
\newcommand{\R}{\mathbb{R}}
\newcommand{\C}{\mathbb{C}}
\newcommand{\p}{\partial}
\newcommand{\abso}[1]{\Bigl|{#1}\Bigr|}
\newcommand{\scalar}[1]{\bigl<{#1}\bigr>}
\newcommand{\Hcurl}[2]{{{\mathbb{H}^{#1}}(\curl,{#2})}}
\newcommand{\Hs}[2]{{\mathbb{H}^{{#1}}}{(#2)}}
\newcommand{\LDivsp}[1]{{\mathbb{L}_t^{2,Div}}{(\partial{#1})}}
\newcommand{\LNDsp}[1]{{{\mathbb{L}_t^{2,0}}(\p#1)}}
\newcommand{\nablat}{\nu\times{\nabla}}
\newcommand{\Lp}[2]{{{\mathbb{L}^{#1}}(#2)}}
\newcommand{\LtwospO}[1]{{{\mathbb{L}^2_0}(\p #1)}}
\newcommand{\ol}[1]{\overline{#1}}             
\newcommand{\wh}[1]{\widehat{#1}}              
\newcommand{\stl}[2]{\stackrel{#1}{#2}}        
\newcommand{\bsmc}[1]{\bs{\mc{#1}}}            
\newcommand{\mean}[1]{\boldsymbol{\mathcal{A}}_{#1}} 
\newcommand{\meanm}[2]{\boldsymbol{\mathcal{A}}^{#2}_{#1}}
\newcommand{\cnt}[1]{{\boldsymbol{\mathcal{C}}}_{{#1}}} 
\newcommand{\meancnt}[1]{\bsmc{A}_{\bsmc{C}_{#1}}} 
\newcommand{\mcntm}[2]{\bsmc{A}^{#2}_{\bsmc{C}_{#1}}}
\newcommand{\ind}[1]{{\chi}_{#1}} 
\newcommand{\Qtot}[2]{\mathcal{Q}^{#1}_{#2}} 
\newcommand{\Polt}[1]{\bigl[{\mathcal{P}_{{D{#1}}}} \bigr]}
\newcommand{\Poltscale}[1]{\bigl[{\mathcal{P}_{\bsmc{D}_{#1}}} \bigr]}
\newcommand{\Vmt}[1]{\bigl[{{\mathcal T}_{D{#1}}} \bigr]}
\newcommand{\Vmtscale}[1]{\bigl[\mathlarger{\mathcal{T}_{\bsmc{D}_{#1}}} \bigr]}
\newcommand{\ANPT}[2]{[\mathcal{P}^{{#1}}_{#2}]}
\newcommand{\ANPTBdr}[2]{[{\bs{\mathcal{P}}}^{{#1}}_{\p #2}]}
\newcommand{\ANPTT}[2]{[\mathcal{T}^{{#1}}_{#2}]}
\newcommand{\De}{{\mathfrak{e}}}
\newcommand{\DE}{\mathfrak{E}}
\newcommand{\tensor}[2]{(#1)^{\stl{#2}{\otimes}}}
\newcommand{\Har}[1]{\mathcal{H}^{#1}}
\newcommand{\SLP}[2]{{ \boldsymbol {S}} ^{#1}_{\p #2} }
\newcommand{\DLP}[2]{\boldsymbol{K}^{#1}_{\p #2} }
\newcommand{\SDO}[2]{{S^{#1}_{{#2}_{\bsmc{D}_{#2}}}}}
\newcommand{\MDLP}[2]{{\boldsymbol{M}^{#1}_{{#2}_{D_{#2}}}}}
\newcommand{\MSVP}[2]{\mathcal{N}_{#1}^{#2}}
\newcommand{\MCVP}[2]{\mathcal{M}_{#1}^{#2}}
\newcommand{\MDLPmj}[1]{{\boldsymbol{M}_{m_{ D_j}}^{#1}}}
\newcommand{\TTr}[2]{\nu\times {#1}|_{#2}}
\newcommand{\NTr}[2]{\nu\cdot {#1}|_{#2}}
\newcommand{\DTr}[2]{{#1}|_{#2}}
\def\Div{\operatorname{Div}}
\def\curl{\operatorname{curl}}
\def\div{\operatorname{div}}
\newcommand{\prv}{\mathlarger{\varepsilon}}
\newcommand\prb{\mathlarger{\mu}}
\newcommand{\cnd}{\mathlarger{\sigma}}
\newcommand{\bdmc}[1]{{\mathcal{#1}}}
\title{Foldy-Lax approximation of the electromagnetic fields generated by anisotropic inhomogeneities in the mesoscale regime\\ with complements for the perfectly conducting case}
\author{ Ali Bouzekri\thanks{Laboratoire de Mathematiques Pures et Appliquees, Universite Mouloud Mammeri, Tizi-Ouzou, Algeria
(Email: bouzekri.ali.lmpa@gmail.com).}
\and  Mourad Sini
\thanks{Radon institute (RICAM), Austrian Academy of Sciences, 69 Altenbergerstrasse, A4040, Linz, Austria
(Email: mourad.sini@oeaw.ac.at). This author is partially supported by the Austrian Science Fund (FWF): P28971-N32.}}
\begin{document}
\graphicspath{{Figures-eps/}} {\maketitle}
\begin{abstract}
The Foldy-Lax (or the point-interaction) approximation of the electromagnetic fields generated by a cluster of small scaled inhomogeneities is derived in the mesoscale regime, i.e. when the minimum distance $\delta$ between the particles is proportional to their maximum radi $a$ in the form $\delta=c_r \; a$ with a positive constant $c_r$ that we call the dilution parameter. We consider two types of families of inhomogeneities. In the first one, the small particles are modeled by anisotropic electric permittivities and/or magnetic permeabilities with possibly complex values. In the second one, they are given as perfectly conductive inclusions. In both the cases, we provide the dominating field (the so-called Foldy-Lax field) with explicit error estimates in terms of the dilution parameter $c_r$. In the case of perfectly conductive inclusions, the results provided here improve sharply the ones derived recently in \cite{AB-SM:MMS2019}. Such approximations are key steps in different research areas as imaging and material sciences. 
\end{abstract}

\textbf{Keywords}:  Electromagnetism, Small particles, Multiple scattering, Foldy-Lax approximation.
\bigskip

\textbf{AMS subject classification:}
 35J08, 35Q61, 45Q05. 
\pagestyle{myheadings}
\thispagestyle{plain}
\section{General setting and main results} 
\subsection{General setting}
Understanding the interaction between the waves (as the light or the acoustic fluctuations or the elastic displacements) with the matter has been of fundamental importance since a long time. 
Since the pioneering works of Rayleigh and Kirchhoff, it was known that the wave diffracted by small scaled inhomogeneities is dominated by the first multipoles (poles or dipoles). In modern terminology, 
the dominating fields are given by (polarized) point sources located at the center of the particles. As far as the three types of waves, cited above, are concerned these point sources are the Green's functions 
of the corresponding propagator. In this direction, the next key step is achieved by Mie \cite{Mie(1908)} in his full expansion of the electromagnetic field for spherically shaped particles.  
These formal expansions were later mathematically justified, see for instance \cite{Dassios-Kleinman:1999} in the framework of low frequencies expansions. A further step was achieved in \cite{A-K-2003} where
the full expansion at any order is derived and justified. 
\bigskip

These works dealt with single or well separated inhomogeneities. In other words, only the interaction of the single inhomogeneity and the wave is taken into account. 
In the presence of multiple and close inhomogeneities, then mutual interactions between them and the waves should be taken into account. In this respect, a formal argument to 
handle such multiple interaction was proposed by Foldy in his seminal work \cite{Foldy(1945)}.
To state his formulas, he looks the inhomogeneities as point-like potentials (i.e. Dirac-like potentials).
Then, he states a close form of the scattered wave by simply eliminating the singularity on the locations of these potentials. This elimination of the singularity translates a 
physical motivation saying that 'the scattering coefficient of each
point-like scatterer is proportional to the external field acting on it', which is known as the Foldy assumption.
These formal representations of the scattered waves
are then stated on more sound mathematical arguments by Berezin-Faddeev, see \cite{Berezin-Faddeev(1961)}, in the framework of the Krein's selfadjoint extension of symmetric operators. 
Another related method is the so called regularization method (or the renormalization technique) which aims at computing the Green's function, and hence the Schwartz integral operator, in the presence of the collections 
of inhomogeneities. This idea consists in taking the Fourier transform of the formal equation,'cut' or regularize and invert the related equations, via  Weinstein-Aronsza theorem,
in the Fourier domain and then comeback. More details on these ideas can be found in the book \cite{A-G-HK-H:2005}. The Faddeev approach was extended to singular potentials supported 
not only on points but also on curves and surfaces, see \cite{MA-PA-SM:JDE2016, MA-PA-SM:JST2018} for more details. This approach gives as, via the Krein's resolvent representations, 
exact formulas to represent the scattered waves generated by singular potentials. However, our goal is to deal with cluster of small scaled inhomogeneities. The intuitive believe is that the dominant part of the generated waves would be reminiscent to 
the exact formulas described above, for Dirac-like potentials supported on the centers of the inhomogeneities, but with scattering coefficients modeled by geometric or contrasts 
properties of the inhomogeneities. This is called the Foldy-Lax approximation or the point-interaction approximation. 
\bigskip

Several methods were proposed in the literature to justify such approximations, see \cite{Ramm:2005, M-M-N:Springerbook:2013, DPC-SM:MMS2013, DPC-SM:MANA2015} for instance regarding acoustic and elastic waves. 
Descriptions and relation/differences between these works can be found in \cite{DPC-SM:ZFAMP2016}. Let us emphasize here that those works dealt with exterior problems (impenetrable inclusions, holes or voids). 
Regarding electromagnetic waves, very few works are proposed, apart from \cite{Ramm-2} where both the results and the justifications are quite questionable. In our previous work \cite{AB-SM:MMS2019}, we considered 
the case of perfectly conductive inclusions and we gave a rigorous justification of this Foldy-Lax approximation under general conditions on the cluster of such inclusions as their  minimum distance between them 
$\delta$ and their maximum radius $a$ of the form $\ln(\delta^{-1})\; \frac{a}{\delta}$ is bounded by a constant depending only on the Lipschitz character of the shapes of the inclusions. 
The only limitation of this result, to handle the mesoscale regime (i.e. $\delta \sim a$), is the appearance of the term $\ln(\delta^{-1})$. This term appears naturally in the analysis, 
in that work, which is heavily based on the scales of the related layer potentials knowing that the dyadic Maxwell fundamental solution has a singularity of order $3$ (while the ones of Laplace or 
Lam\'e have singularities of the order $1$).   
\bigskip

In this present work, we get rid of this logarithmic term and state the approximation in the mesoscale regime. 
But the most important contribution is to handle the transmission problem and the impenetrable problem in a unified way. In addition, anisotropic and eventually complex valued electromagnetic 
material parameters can be handled as well. Our arguments can be summarized as follows. To handle the anisotropic transmission problem, we provided a representation of the solution using the  
electromagnetic Lippmann-Schwinger operator and give an estimate of the total field. To overcome the logarithmic constraint, instead of using Neumann series
to estimates the density of the used representation, we make use of a Rellich identity and, inspired by some arguments from \cite{DMM96}, with appropriate changes, we prove that both the exterior and 
the interior traces have an equivalent norm modulo a constant which depends on the geometry of the inhomogeneities and the material parameters (via their contrasts). 
Regarding the perfect conductor problem, we use layer potential representation of the solution. Using an appropriate Helmholtz decomposition for
the density, appearing in the layer potential representation, we transforme the boundary integral interaction operator into a volume one to get Lippmann-Schwinger like integral representation.
This allows us to translate the result obtained for the transmission problem to the perfectly conductive case.
\bigskip

It is worth mentioning that even in the scalar case, i.e. related to the Laplace operator, with a cluster of small obstacles with Neumann boundary conditions was left open to our best knowledge.
The approach we follow here definitely handles this case and provides the corresponding Foldy-Lax approximation in the same generality as we are proposing in it this work.
\bigskip

In the case of periodically distributed small inhomogeneities, the homogenization applies, see \cite{B-A-JL-P-G:AMS2011, JVV-KSM-OA:SpSc2012}, and provides the equivalent media with averaged materials. 
As compared to homogenization, the Foldy-Lax approximation has several advantages. The first one is that we have the dominating field (i.e. the Foldy-Lax field) for general (and not only periodic) distributions 
of the small inhomogeneities. This reduces the complexity of the forward problem to compute the scattered fields by inverting an algebraic system. 
Second, higher order approximations are possible with more effective dominating fields, i.e. with generalized Foldy-Lax fields. So far, this is not fully justified,
but we believe it to be true and we will report on it in the future. Third, as we have freedom in distributing these inhomogeneities, then we can generate not only volumetric
equivalent materials but also low dimensional ones as surfaces and curves. This opens the way to applications in low dimensions metamaterials as well, see \cite{AH-DPC-CAP-MS:JDE2019, AH-DPC-CAP-MS:MMS2019Arxiv} 
for instance. As far as the Maxwell model is concerned, the Foldy-Lax approximation provided here shows that one can generate volumetric materials and Gradient-metasurfaces. The first situation is modeled by 
modifying both the background permittivity and permeability. The second is modeled by an equivalent interface with jumps of both the electric and magnetic fields across it.  
\bigskip

The rest of the paper is described as follows. In the next subsection, we state clearly the models and the obtained results with critical discussion about them. 
In section 2 and section 3, we provide the full proofs of the results for the transmission and the perfectly conducting models respectively. 
A short Appendix is added at the end to include technical tools and in particular a useful lemma on the counting of the number of small particles distributed 
in any given bounded set in terms of the parameters $\delta$ and $a$.

\subsection{Main results}
We deal with the scattering of time-harmonic electromagnetic plane waves at a frequency $\omega$ in a medium composed of an isotropic and constant background and an anisotropic material represented by multiply connected, bounded, Lipschitz\footnote{This means that the boundary is locally described by the graph of a Lipschitz function. More details are given later.} domain $D^-=\cup_{m=1}^{\nbs} D_m$ where ${\nbs}$ is the number of connected components. 
The Maxwell equations read as follows
 \begin{align}
 	& \left\{
	\begin{aligned}
 		&\nabla \times \mathcal{E}-i\omega{\mathlarger\mu} \mathcal{H}=0, ~~\text{in}~\R^3\setminus \p D,\\
 		& \nabla \times \mathcal{H}+i\omega{\mathlarger\varepsilon} \mathcal{E}={\mathlarger\sigma} \mathcal{E},\label{Maxwell-Eq}~~\text{in}~\R^3\setminus \p D ,\\
	&\mathcal{E}=\mathcal{E}^\n+\mathcal{E}^\s,\\
	&\nu\times\DTr{\mathcal{E}}{+}=\nu\times\DTr{\mathcal{E}}{-},\,\, \nu\times\DTr{\mathcal{H}}{+}=\nu\times\DTr{\mathcal{H}}{-} \mbox{ on } \p D
	\end{aligned} \right.
 \end{align} with the notation $\p D:=\cup_{m=1}^{\nbs} \p D_m$ where $\prv$ and $\cnd$ are respectively the electric permittivity and the conductivity and $\prb$ corresponds to the magnetic permeability. These parameters can be real or complex tensor or scalar valued functions.
Here $\mathcal{E}^\n$ stands for the incident wave. It is solution of the first two equations above everywhere in the space. The vector field $\mathcal{E}^\s$ stands for the scattered vector field.\\ 

We also consider the scattering from a perfect conductor modeled by the following problem 
\begin{align}\label{Maxwell-Eq-Perfect-Cond-1}
&\left\{\begin{aligned}
&\nabla \times \mathcal{E}-i \omega{\mathlarger\mu} \mathcal{H}=0,\\
&\nabla \times \mathcal{H}+i\omega{\mathlarger\varepsilon} \mathcal{E}=0, ~~\text{in}~\R^3\setminus \ol{D}, \\
&\mathcal{E}=\mathcal{E}^\n+\mathcal{E}^\s,\\
&\TTr{\mathcal{E}}{+}=0, \mbox{ on } \partial D,
\end{aligned}\right.
\end{align} where $\nu$ is the unit outward normal vector to the boundary of $D^-$. 
The surrounding background of $D^-$ is homogeneous with constant parameter $\prv_0,\prb_0$ and null conductivity ${\mathlarger\sigma}$. In both the two models above, the scattered field must satisfy the radiation conditions
\begin{align}
\Bigl(\sqrt{\prb_0\prv_0^{-1}}\Bigr)\mathcal{H}^\s(x)\times \frac{x}{\abs{x}}-\mathcal{E}^\s(x)=O\Bigl(\frac{1}{\abs{x}^2}\Bigr).\label{Radiation-Cdt}
\end{align}    
Setting $k:=w\sqrt{\epsilon_0\mu_0}$,  $\mathcal{E}:=\sqrt{{\prv_0}{\prb_0}^{-1}}E$ and $\mathcal{H}:=\sqrt{{\prv_0}{\prb_0}^{-1}}H$ with $\mu_r:=(\mu_0)^{-1}\mu$ and $\prv_r:=(\prv_0)^{-1}(\prv+i\sigma/\omega)$, we arrive at 

\begin{equation}\label{Maxwell-Eq-Anisotropic}
\left\{\begin{aligned}
&\curl E-ik\prb_r H=0, ~~\text{in}~\R^3\setminus \p D,\\
&\curl H+ik\prv_r E=0, ~~\text{in}~\R^3\setminus \p D,\\
&E=E^\s+E^\n,\\
&\TTr{E}{-}-\TTr{E}{+}=\TTr{H}{-}-\TTr{H}{+}=0,\\
&H^\s\times \frac{x}{\abs{x}}-E^\s=O\Bigl(\frac{1}{\abs{x}^2}\Bigr),~~\abs{x}\rightarrow\infty,
\end{aligned}\right.\tag{$\mathcal{P}_1$}
\end{equation} 
for the inhomogeneous (i.e. transmission) problem and 
\begin{align}\label{Maxwell-Eq-Perfect-Cond}
&\left\{\begin{aligned}
&\nabla \times E-i k H=0, ~~\text{in}~\R^3\setminus \ol{D}\\
&\nabla \times H+ik E=0, ~~\text{in}~\R^3\setminus \ol{D}, \\
&E=E^\n+E^\s,\\
&\TTr{E}{+}=0, \mbox{ on } \partial D,
\end{aligned}\right.\tag{$\mathcal{P}_2$}.
\end{align} 
for the conductive (i.e. exterior) problem.
For both the two models the incident field $(E^\n, H^\n)$ satisfies in the whole space the system
\begin{align}\label{Incident-fields}
\left\{\begin{aligned}
&\curl E^\n-ik H^\n=0, \\
&\curl H^\n+ik E^\n=0.
\end{aligned} \right.
\end{align}
Motivated by applications, typical incident electric fields are plane waves, i.e~of the form $E^\n:=E^\n(x, \theta):=P\; e^{i k\theta \cdot x}, x\in \mathbb{R}^3$, 
where $P$ is the (constant) vector modeling the polarization direction and $\theta$, with $\vert \theta\vert=1$, is the incident direction such that $P\cdot \theta=0$. 
The related magnetic incident field is then $H^\n:=H^\n(x, \theta):= P\times \theta\; e^{i k\theta \cdot x}/ik$.
\bigskip
 
We suppose that, 
\begin{equation}\label{Definition-of-the-Di's}
D_m:=\rad \bsmc{D}_m+\z_i, i=1,...,\nbs, 
\end{equation} 
 where each set $\bsmc{D}_i$, contained in the ball $B_0^{1/2}:=B(0,1/2),$ and contains the origin, is assumed to be a Lipschitz bounded domain. The points $(\z_i)_{i=1}^{\nbs}$ are their given locations in $\mathbb{R}^3$ and $\rad\in\R^+$ is a small parameter measuring the maximum relative radius.\\
Let $\dist_{mj}:=\min_{x\in D_m, y\in D_j} {{d(x,y)}},$ be the distance between two bodies $D_m, D_j, m\neq j$, and set $$\dist:=\min_{m\neq j\in \{1,...,\nbs\}} {\dist_{mj}}. $$ 

Let us recall that a bounded open connected domain $B$, is said to be a Lipschitz domain with character $(l_{\p B},L_{\p B})$ 
if for  each $x\in\partial D$ there exist a coordinate system $(y_i)_{i=1,2,3}$, a truncated cylinder $\mathfrak{C}$ centered at $x$ whose axis is parallel to $y_3$ with length 
$l$ satisfying $l_{\p B}\leq l\leq 2l_{\p B}$, and a Lipschitz function $f$ that is $\abs{f(s_1)-f(s_2)}\leq L_{\p B}\abs{s_1-s_2}$ for every $s_1,s_2\in \mathbb{R}^2$ , 
such that $B\cap \mathfrak{C}= \{(y_i)_{i=1,2,3}: y_3>f(y_1,y_2) \}$ and ${\p B}\cap \mathfrak{C}= \{(y_i)_{i=1,2,3}: y_3=f(y_1,y_2) \}.$ In this work, we assume that the sequence of 
Lipschitz characters $(l_{\partial \bsmc{D}_m},L_{\partial \bsmc{D}_m})^{\nbs}_{i=1}$ of the bodies $\bsmc{D}_m, i=1,..., \nbs,$ is bounded.
 \bigskip

Regarding the problem \ref{Maxwell-Eq-Anisotropic}, we need some assumptions on the electromagnetic material properties of the small particles. 
Precisely, we suppose that the contrast of magnetic permeability and electric permittivity , which are assumed to be respectively real and complex valued $3\times3$-tensor, are, 
with their derivative, essentially uniformly bounded , i.e.
 \begin{align}
 (\norm{\cnt{A}}_{\mathbb{W}^{1,\infty}(\cup_{i=1^m}D_m)})_{A=\prv_r,~\prb_r}\leq \bs{c}_\infty,  
 \end{align} and essentially uniformly coercive that is, for almost every $x\in D,$ we have
 \begin{equation}\label{Coercivity-Cond-Contrast-introduction}
 \begin{aligned}
 \Re\Bigl(\cnt{\prv_r}(x)U\cdot \overline{U}\Bigr)&\geq c^{\prv-}_\infty\abs{U}^2,\\
 \cnt{\prb_r}(x)U\cdot U &\geq c^{\prb-}_\infty\abs{U}^2, 
\end{aligned} 
\end{equation}
with positive constants $c^{\prv-}_\infty$ and $c^{\prb-}_\infty$. Here we used the notation $\cnt{A}:=A-\Id$, where $\Id$ is the identity matrix of $\R^3\times\R^3$.  $I$ will stand for the identity operator. 
\bigskip

 Under these conditions, both the scattering problem \eqref{Maxwell-Eq-Anisotropic} and \eqref{Maxwell-Eq-Perfect-Cond} under their respective transmission/boundary and radiating conditions are 
 well posed in appropriate spaces following the lines described in \cite{ColtonKress:2013, nedelec1973} for instance. More details are given in the text. In addition, due to the Stratton-Chu 
 formula when $\Im k$ is different from zero, the scattered electromagnetic fields have a fast decay at infinity as we have attenuation.
 But when $\Im k=0$, i.e.~in the absence of attenuation, we have the following behavior (as spherical-waves) of the scattered electric fields far away from the sources $D_m$'s
 \begin{equation}\label{electric-farfield}
  E^\s(x)=\frac{e^{ik\vert x\vert}}{\vert x\vert} \{ E^{\infty}(\hat{x}) +O(\vert x\vert^{-1}) \}, ~~~ \vert x\vert \longmapsto \infty,
 \end{equation}
and we have a similar behavior for the scattered magnetic field as well
\begin{equation}\label{magnetic-farfield}
  {H}^\s(x)=\frac{e^{ik\vert x\vert}}{\vert x\vert}\{{H}^\infty(\hat{x}) +O(\vert x\vert^{-1})\}, ~~~ \vert x\vert \longmapsto \infty
 \end{equation}
where $(E^\infty(\hat{x}), {H}^\infty(\hat{x}))$ is the electromagnetic far field pattern in the direction of propagation $\hat{x}:=\frac{x}{\vert x\vert}$.
 \bigskip

 We set, for $m\in\{1,...,\nbs\}$,
 \begin{equation}\label{Average-Notation}
 \meanm{f}{m}:=\frac{1}{\abs{D_m}}\int_{D_m}f dv,\end{equation}
and 
\begin{equation} 
  \mean{f}(x):=\sum_{m=1}^\nbs\meanm{f}{m}\ind{D_m}(x).
 \end{equation} 
Here $dv$ is the volume measure of $\R^3,$ and $dv(x)$ will be denoted $dx$ while for the surface measure of $\R^3$ write $ds$ and $ds_x$ when the variable of integration is specified. 
 Finally, we recall the Green's function for the Helmholtz operator (i.e. the fundamental solution for the Helmholtz equation)
  \[\Phi_k(x,y)=\frac{1}{4\pi}\frac{e^{ik\abs{x-y}}}{\abs{x-y}},~~ x \neq y,\]and the electromagnetic dyadic Green's function
 \begin{equation}\label{Dyadic-Green-function}
 \Pi(x,y):=k^2\Phi_k(x,y)\Id+\nabla_x\nabla_x \Phi_k(x,y) 
 =k^2\Phi_k(x,y)\Id-\nabla_x\nabla_y \Phi_k(x,y), ~~ x \neq y. 
 \end{equation}   
  
Our main results are stated in the following two theorems. 
 \begin{theorem}\label{Theorem-Anisotropic-Case}
 For the scattering by a cluster of small anisotropic particles embedded in a homogeneous background whose parameter satisfy the conditions (\ref{Coercivity-Cond-Contrast-introduction}), with maximal diameter $\rad$ and minimal distance separating them $\dist=\bs{c}_r\rad.$ The far field of the scattered wave admits, provided that  $\bs{c}_r=O(\abs{k}),$ the following expansions, 
 \begin{itemize}
 	\item If both $\prv_r$ and $\prb_r$ are symmetric, then we have the following approximation
 	\begin{equation} \label{Farfield-Anis-Approximation-FullError}
 	\begin{aligned}E^\infty(\hat{x})=&
 	\sum_{m=1}^{\nbs}\biggr(\frac{k^2}{4\pi}e^{-ik\hat{x}\cdot \z_m}\hat{x}\times\Bigl({\mc{R}}_m^{\prv_r}\times \hat{x}\Bigr)+\frac{ik}{4\pi}e^{-ik\hat{x}\cdot \z_m}\hat{x}\times{\mc{Q}}_m^\prb_r\Biggl)
 	\\&+O\Bigl(\frac{\abs{k}(2\abs{k}+1)}{\bs{c}_r^{3}}\Bigl[\frac{1}{\bs{c}_r^4}+\rad\,\abs{\ln(\bs{c}_r\rad)}+ \rad \Bigr]\Bigr) ,
 	\end{aligned}
 	\end{equation} where $({\mc{R}}_m^{\prv_r},{\mc{Q}}_m^\prb_r)_{m=1}^\nbs$ is the solution of the following invertible linear system
\begin{equation}
\begin{aligned}
\ANPT{\prb_r^*}{D_m}^{-1}\mc{Q}^{\prb_r}_m=&\sum_{\stackrel{j\geq 1}{j\neq m}}^{\nbs} \Bigl[ \Pi_{k}(\z_m,\z_j) \mc{Q}^{\prb_r}_j-ik \nabla\Phi_{k}(\z_m,\z_j)\times\mc{R}^{\prv_r}_j\Bigr]+ {H}^\n(\z_m) \\
\ANPT{\prv_r^*}{D_m}^{-1}\mc{R}^{\prv_r}_m=&\sum_{\stackrel{j\geq 1}{j\neq m}}^{\nbs}\Bigl[ \Pi_{k}(\z_m,\z_j) \mc{R}^{\prv_r}_j+ ik\nabla\Phi_{k}(\z_m,\z_j)\times\mc{Q}^{\prb_r}_j\Bigr]+{E}^\n(\z_m).
\end{aligned}
\mbox{  For }~m=1, ..., \nbs.
\end{equation}

\item If, in the contrary, $\prv_r$ or $\prb_r$ is not symmetric, then, with $(\meanm{\prv_r}{m})_{m=1}^\nbs$ and $(\meanm{\prb_r}{m})_{m=1}^\nbs$ standing for their respective average in each particle $D_m$, we have
\begin{equation}\label{Farfield-Anis-NonSymCase-Approximation-FullError}
\begin{aligned} E^\infty(\hat{x})=&\sum_{m=1}^{\nbs}\biggr(\frac{k^2}{4\pi}e^{-ik\hat{x}\cdot \z_m}\hat{x}\times\Bigl(\mc{R}_m\times \hat{x}\Bigr)+\frac{ik}{4\pi}e^{-ik\hat{x}\cdot \z_m}\hat{x}\times\mc{Q}_{m}\Biggl)\\
&+O\Bigl(\frac{\abs{k}(2\abs{k}+1)}{\bs{c}_r^{3}}\Bigl[\frac{1}{\bs{c}_r^4}+\rad\,\abs{\ln(\bs{c}_r\rad)}+ \rad \Bigr]\Bigr),\\
\end{aligned}  
\end{equation} where, in this case, $\Bigl(\mc{R}_m\Bigr)_{m=1}^\nbs$ and $\Bigl(\mc{Q}_m\Bigr)_{m=1}^\nbs$ is the solution the invertible following linear system 
\begin{align}
\begin{aligned}
{\ANPTT{\meanm{\prb_r^*}{m}}{D_m}^{-1}}\mc{Q}_m
=&\sum_{\stl{j\geq 1}{j\neq m}}^{\nbs}\Bigl[\Pi_{k}(\z_m,\z_j){\mc{Q}}_j- ik\nabla\Phi_{k}(\z_m,\z_j)\times{\mc{R}}_j\Bigr]+ {H}^\n(\z_m), \\
{\ANPTT{\meanm{\prv_r^*}{m}}{D_m}^{-1}}\mc{R}_m =&\sum_{\stl{j\geq 1}{j\neq m}}^{\nbs}\Bigl[\Pi_{k}(\z_m,\z_j){\mc{R}}_j+ ik\nabla\Phi_{k}(\z_m,\z_j)\times{\mc{Q}}_j\Bigr]+ {E}^\n(\z_m),\\ 
\end{aligned} \mbox{  For }~m=1, ..., \nbs.
\end{align} \begin{equation*}
{\ANPTT{\meanm{\prb_r^*}{m}}{D_m}^{-1}}:= \mcntm{\prb_r^*}{m}\ANPT{\meanm{\prb_r^*}{m}}{D_m}^{-1}{(\mcntm{\prb_r}{m})}^{-1},\\
\end{equation*} and
\begin{equation*}
{\ANPTT{\meanm{\prv_r^*}{m}}{D_m}^{-1}}:= \mcntm{\prv_r^*}{m}\ANPT{\meanm{\prv_r^*}{m}}{D_m}^{-1}{(\mcntm{\prv_r}{m})}^{-1}.
\end{equation*}
 \end{itemize}
For a given matrix $B$, we have
 \begin{equation}
\ANPT{B}{D_m}:=\int_{D_m}\nabla V~ (B-\Id) dv
\end{equation} 	with $V$ is the solution of the following integral equation\begin{equation}
	 V-\div\int_{D_m}\frac{1}{\abs{x-y}}(B-\Id)\nabla V(y)dy=(y-\z_m), ~~ y \in D_m.
\end{equation} 	
 \end{theorem}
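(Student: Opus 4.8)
The plan is to recast the transmission problem \eqref{Maxwell-Eq-Anisotropic} as an electromagnetic Lippmann--Schwinger volume integral equation on the cluster $D^-=\cup_{m=1}^{\nbs}D_m$ and then to analyse the resulting system term by term. Eliminating $H=(ik\prb_r)^{-1}\curl E$ and writing $\prv_r=\Id+\cnt{\prv_r}$, $\prb_r=\Id+\cnt{\prb_r}$ with the contrasts supported in $D^-$, the second order equation $\curl(\prb_r^{-1}\curl E)=k^2\prv_r E$ in $\R^3$, after subtraction of the incident field $E^\n$ and convolution with the dyadic kernel $\Pi_k$ of \eqref{Dyadic-Green-function} and with $\Phi_k$, yields a representation of $E^\s$ (and, similarly, of $H^\s$) through the two vector densities $\cnt{\prv_r}E$ and $\cnt{\prb_r}H$, both supported in $D^-$. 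Using $\Pi_k(x,y)\sim k^2\frac{e^{ik\abs{x}}}{4\pi\abs{x}}e^{-ik\hat x\cdot y}(\Id-\hat x\otimes\hat x)$ as $\abs{x}\to\infty$ to read off the far field \eqref{electric-farfield} from this representation, one sees at once that $E^\infty(\hat x)$ is, modulo the decaying plane-wave factors $e^{-ik\hat x\cdot\z_m}$, a linear combination of the two moments $\mc{R}_m:=\int_{D_m}\cnt{\prv_r}E\,dv$ and $\mc{Q}_m:=\int_{D_m}\cnt{\prb_r}H\,dv$, with precisely the shape of the leading terms of \eqref{Farfield-Anis-Approximation-FullError}; replacing $e^{-ik\hat x\cdot y}$ by $e^{-ik\hat x\cdot\z_m}$ on each $D_m$ costs $O(\rad)$, absorbed in the remainder.

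The core of the argument, and what removes the logarithmic loss of \cite{AB-SM:MMS2019}, is an a priori bound on $(E,H)$ restricted to the particles that is uniform in the number and the locations of the $D_m$. Rather than iterating a Neumann series for the volume operator --- whose near-field cross sums behave like $\abs{\ln\dist}/\bs{c}_r^{3}$ and so are not contractive in the mesoscale regime $\dist\sim\rad$ --- I would, on each $D_m$, use a Rellich identity, adapting the arguments of \cite{DMM96} with the modifications needed for the Maxwell system and the anisotropic transmission conditions, to show that the interior and exterior tangential traces $\TTr{E}{\pm}$, $\TTr{H}{\pm}$ on $\p D_m$ have equivalent $L^2(\p D_m)$-norms, the constants depending only on the uniformly bounded Lipschitz characters $(l_{\p\bsmc{D}_m},L_{\p\bsmc{D}_m})$ and on the contrasts through the coercivity bounds \eqref{Coercivity-Cond-Contrast-introduction} and $\bs{c}_\infty$. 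Combined with the energy identity on $D_m$ and the transmission conditions of \eqref{Maxwell-Eq-Anisotropic}, this controls $\norm{E}_{L^2(D_m)}+\norm{H}_{L^2(D_m)}$ by the field exciting $D_m$, and then, summing over $m$ and using the counting lemma of the Appendix (the number of $D_m$ meeting any fixed bounded set is $\lesssim\dist^{-3}$), gives a closed control of the whole family of densities, hence of the moments $(\mc{R}_m,\mc{Q}_m)$.

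With the densities controlled, I would localise the Lippmann--Schwinger representation: for $x\in D_m$, split each volume integral into a self contribution over $D_m$ and cross contributions over $D_j$, $j\neq m$. In the cross terms the slowly varying kernels are frozen, $\Pi_k(x,y)\mapsto\Pi_k(\z_m,\z_j)$ and $\nabla\Phi_k(x,y)\mapsto\nabla\Phi_k(\z_m,\z_j)$, with Taylor remainder of relative size $\rad/\dist$; in the self contribution one replaces $E$ (resp. $H$) by its average $\meanm{E}{m}$ (resp. $\meanm{H}{m}$), so that to leading order in $\rad$ this contribution is governed by the polarization operator $\ANPT{\prv_r^*}{D_m}$ (resp. $\ANPT{\prb_r^*}{D_m}$) defined through the cell equation in the statement. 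If $\prv_r$ or $\prb_r$ is not symmetric one first symmetrizes the cell operator, which is what produces the conjugated form $\mcntm{\prv_r^*}{m}\ANPT{\meanm{\prv_r^*}{m}}{D_m}^{-1}{(\mcntm{\prv_r}{m})}^{-1}$ and the averaged coefficients $\meanm{\prv_r}{m},\meanm{\prb_r}{m}$ appearing in \eqref{Farfield-Anis-NonSymCase-Approximation-FullError}. Taking moments of the localised equation against $\cnt{\prv_r}$ and $\cnt{\prb_r}$ yields the announced closed algebraic system for $(\mc{R}_m,\mc{Q}_m)$, whose invertibility follows by diagonal dominance: the diagonal blocks are the invertible polarization operators, scaled like $\rad^3$, the off-diagonal blocks are $O(\rad^3\,\abs{\Pi_k(\z_m,\z_j)})$, and the hypothesis $\bs{c}_r=O(\abs{k})$ together with the counting lemma keeps the off-diagonal part a genuine perturbation of the diagonal one.

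Finally, I would propagate all the local errors --- the per-particle averaging of $(E,H)$, the freezing of the kernels, and the far-field expansion --- through the solved system to obtain the quantitative remainder $O\Bigl(\frac{\abs{k}(2\abs{k}+1)}{\bs{c}_r^{3}}\Bigl[\frac{1}{\bs{c}_r^4}+\rad\,\abs{\ln(\bs{c}_r\rad)}+\rad\Bigr]\Bigr)$: the factor $\bs{c}_r^{-3}$ is the particle count in a bounded region, the $\rad\,\abs{\ln(\bs{c}_r\rad)}$ piece is the self-interaction volume potential of the order-$3$ dyadic kernel $\Pi_k$ over a single particle, and the remaining $\rad$ comes from the plane-wave and kernel Taylor remainders; the non-symmetric formula \eqref{Farfield-Anis-NonSymCase-Approximation-FullError} then drops out of the same scheme with the symmetrization above. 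I expect the Rellich/trace-equivalence step to be the main obstacle, since it is precisely what must be made uniform over arbitrary clusters in order to dispense with the logarithmically lossy Neumann iteration.
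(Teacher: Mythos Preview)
Your skeleton --- Lippmann--Schwinger volume formulation, far field as a sum over the moments $\int_{D_m}\cnt{\prv_r}E\,dv$ and $\int_{D_m}\cnt{\prb_r}H\,dv$, splitting into self/cross contributions, algebraic system, diagonal-dominance invertibility, error propagation via the counting lemma --- matches the paper's architecture. But the step you single out as the crux is handled quite differently, and your proposed route has a gap.

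\textbf{The a priori estimate.} For the transmission problem the paper does \emph{not} use a Rellich identity. In \eqref{Maxwell-Eq-Anisotropic} the tangential traces of $E$ and $H$ are continuous across each $\p D_m$, so ``interior/exterior trace equivalence'' is tautological and gives you nothing to close an $L^2(D)$ bound with. What the paper does instead (Proposition~\ref{Estim-Fields-Aniso-Case}, Lemma~\ref{Elec-Magn-Lip-sch-Comp-and-DefPosi}) is work directly on the Lippmann--Schwinger operator: it writes $\mc{LS}^{(\prv_r,\prb_r)}$ at wave number $k$ as the operator at $k=i$ plus a remainder; Green's formula shows that $-\int_D \MSVP{D,V}{i,\cnt{A}}\cdot\overline{\cnt{A}V}\,dv\geq 0$, so under \eqref{Coercivity-Cond-Contrast-introduction} the $k=i$ part is coercive on $\Lp{2}{D}$ with a constant depending only on the contrasts; the remainder $\MSVP{D}{i,\cnt{A}}-\MSVP{D}{k,\cnt{A}}$ and the curl coupling $\MCVP{D}{k,\cnt{A}}$ are estimated explicitly (eqs.~\eqref{Estim-Maxwell-Cmpct-Rem}--\eqref{Estim-Maxwell-Cmpct-Ope}) using the counting lemma, and the condition $\bs{c}_r\gtrsim|k|$ makes them a strict perturbation. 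This yields $\norm{E}_{\Lp{2}{D}}+\norm{H}_{\Lp{2}{D}}\leq C\norm{(E^\n,H^\n)}$ with $C$ independent of $\nbs$, without ever touching boundary traces. The Rellich machinery in the paper lives entirely in Section~\ref{Section-Perfect-Conductor}, for the perfect conductor, where the density on $\p D$ really does jump and trace equivalence is the point.

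\textbf{How the polarization tensor appears.} You propose replacing $E$ by $\meanm{E}{m}$ in the self term. The paper does something sharper: it \emph{tests} the restricted Lippmann--Schwinger equation on $D_m$ against the solution $U_m^{\prb_r^*}=\cnt{\prb_r}^*\nabla V_m^{\prb_r^*}$ of the adjoint cell problem \eqref{Def-Magnetique-Perm-Tensor}. Because $U_m^{\prb_r^*}$ solves $[I-\cnt{\prb_r}^*\nabla\div\bdmc{S}^0_{D_m}](U_m^{\prb_r^*})=\cnt{\prb_r}^*$, the self term collapses exactly to $\int_{D_m}\cnt{\prb_r}H\,dv=\mc{Q}_m^{\prb_r}$ (eq.~\eqref{Lip-Schwin-LinSys-Fst-Mem-2}), and $\int_{D_m}U_m^{\prb_r^*}\,dv=\ANPT{\prb_r^*}{D_m}$ supplies the tensor multiplying the cross terms. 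This is what makes the algebraic system exact at leading order; averaging the field would leave an uncontrolled commutator with $\nabla\div\bdmc{S}^0_{D_m}$. In the non-symmetric case the same testing is applied to the auxiliary problem \eqref{Lip-sch-Eq-Elec-Magn-Averaged-Param} with averaged coefficients, which is where the conjugated tensors $\ANPTT{\meanm{\cdot}{m}}{D_m}$ arise.
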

\begin{theorem}\label{Theorem-PerfCond-Case}
For the scattering by a cluster of small conducting particles, with maximal diameter $\rad$ and minimal distance separating them $\dist=\bs{c}_r\rad$, with $\bs{c}_r=O(\abs{k})$, the far field of the scattered wave admits the following expansion 
		\begin{equation}
		\begin{aligned}E^\infty(\hat{x})=
		\sum_{m=1}^{\nbs}&\biggr(\frac{k^2}{4\pi}e^{-ik\hat{x}\cdot \z_m}\hat{x}\times\Bigl({\mc{R}}_m\times \hat{x}\Bigr)+\frac{ik}{4\pi}e^{-ik\hat{x}\cdot \z_m}\hat{x}\times{\mc{Q}}_m\Biggl)
		\\+&O\Bigl(\frac{\bs{c}_{L}\abs{k}(2\abs{k}+1)}{\bs{c}_r^{3}}\Bigl[\frac{1}{\bs{c}_r^4}+\rad\,\abs{\ln(\bs{c}_r\rad)}+ \rad \Bigr]\Bigr) ,
		\end{aligned}
		\end{equation} where $({\mc{R}}_m,{\mc{Q}}_m)_{m=1}^\nbs$ is the solution the invertible linear system
		\begin{equation}
		\begin{aligned}
		\Vmt{m}^{-1}\mc{Q}_m=&\sum_{\stackrel{j\geq 1}{j\neq m}}^{\nbs} \Bigl[ \Pi_{k}(\z_m,\z_j) \mc{Q}_j-ik \nabla\Phi_{k}(\z_m,\z_j)\times\mc{R}_j\Bigr]+ {H}^\n(\z_m) \\
		\Polt{m}^{-1}\mc{R}_m=&\sum_{\stackrel{j\geq 1}{j\neq m}}^{\nbs}\Bigl[ \Pi_{k}(\z_m,\z_j) \mc{R}_j+ ik\nabla\Phi_{k}(\z_m,\z_j)\times\mc{Q}_j\Bigr]+{E}^\n(\z_m),
		\end{aligned}
		\mbox{  For }~m=1, ..., \nbs,
		\end{equation}
		with
		\begin{align}\label{Polarization&virtualmass-Tensor-defintion}
\Polt{m}:=\int_{\p D_m}[-{I}/{2}+(\DLP{0}{D_m})^*]^{-1}(\nu) y^*ds_y,~~
\Vmt{m}:= \int_{\p D_m}[\frac{1}{2} I+(\DLP{0}{D_m})^*]^{-1}(\nu) y^*ds_y.
\end{align} 
\end{theorem}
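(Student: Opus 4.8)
The plan is to reduce the perfectly conducting model $(\mathcal{P}_2)$ to a volume Lippmann--Schwinger system having exactly the structure already handled in the proof of Theorem~\ref{Theorem-Anisotropic-Case}, and then to transfer the far-field expansion and the algebraic system through this correspondence. The only new feature will be the geometry-dependent constant $\bs{c}_L$, which arises from the (uniformly bounded) Lipschitz characters of the reference bodies $\bsmc{D}_m$.

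First I would represent the scattered field by a layer potential, say $E^\s=\curl\curl\,\SLP{k}{D}[\mathbf{j}]$ together with $H^\s=\tfrac{1}{ik}\curl E^\s$, for an unknown tangential current $\mathbf{j}\in\LDivsp{D}$; imposing the boundary condition $\TTr{E}{+}=0$ on $\p D$ yields an electric-field integral equation for $\mathbf{j}$, namely $T_k[\mathbf{j}]=-\,\nu\times\TTr{E^\n}{\p D}$ (up to the standard handling of spurious interior resonances, harmless for $\rad$ small because the relevant wavenumber on each scaled body $\bsmc{D}_m$ is $\rad k$). On each component $\p D_m$ I would then use a surface Hodge decomposition $\mathbf{j}|_{\p D_m}=\nabla_{\p D_m}\varphi_m+\nu\times\nabla_{\p D_m}\psi_m$. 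The curl-free part $\nabla_{\p D_m}\varphi_m$ carries the leading dipole; after rescaling to $\bsmc{D}_m$ and freezing the long-range kernels $\Pi_k(\cdot,\cdot)$ and $\nabla\Phi_k(\cdot,\cdot)$ at the centers $\z_m$, an integration-by-parts identity rewrites the corresponding piece of $\SLP{k}{D}[\mathbf{j}]$ as a Newtonian volume potential over $D_m$, and solving the frozen static exterior/interior Neumann problems for the normal trace turns the coefficients into the polarization tensor $\Polt{m}$ and the virtual-mass tensor $\Vmt{m}$ of \eqref{Polarization&virtualmass-Tensor-defintion}, built from the static operators $[-I/2+(\DLP{0}{D_m})^{*}]^{-1}$ and $[\tfrac12 I+(\DLP{0}{D_m})^{*}]^{-1}$. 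One then checks that the rotational part $\nu\times\nabla_{\p D_m}\psi_m$ contributes only to the remainder.

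At this stage the boundary interaction operator has been converted into a volume one, and the system satisfied by the moments $\mc{R}_m,\mc{Q}_m$ (suitable averages of the interior fields over $D_m$) is exactly the Foldy--Lax system of Theorem~\ref{Theorem-Anisotropic-Case} with $\ANPT{\prv_r^{*}}{D_m}^{-1}$ replaced by $\Polt{m}^{-1}$ and $\ANPT{\prb_r^{*}}{D_m}^{-1}$ by $\Vmt{m}^{-1}$. Invertibility of the algebraic system and the a priori bound on $(\mc{R}_m,\mc{Q}_m)_m$ then follow as in the anisotropic analysis once the dilution relation $\dist=\bs{c}_r\rad$ with $\bs{c}_r=O(\abs{k})$ is used, together with the counting lemma of the Appendix controlling the number of particles inside a fixed bounded set. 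Finally, tracking the three sources of error --- freezing $\Pi_k$ and $\nabla\Phi_k$ at the centers, discarding $\nu\times\nabla_{\p D_m}\psi_m$ together with the higher static multipoles, and the Foldy truncation in the Stratton--Chu representation \eqref{electric-farfield} --- reproduces the claimed remainder $O\!\Bigl(\tfrac{\bs{c}_L\abs{k}(2\abs{k}+1)}{\bs{c}_r^{3}}\bigl[\bs{c}_r^{-4}+\rad\,\abs{\ln(\bs{c}_r\rad)}+\rad\bigr]\Bigr)$.

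The main obstacle is the one that forced the logarithmic restriction in \cite{AB-SM:MMS2019}: the order-$3$ singularity of the dyadic Maxwell kernel makes a direct scale bookkeeping of the boundary layer potentials lose a power of $\abs{\ln(\dist^{-1})}$, and a Neumann-series inversion of the boundary operator cannot be afforded uniformly in $\rad$. The key device is precisely the passage to the volume Lippmann--Schwinger form, where the Rellich-identity / trace-equivalence estimates established for Theorem~\ref{Theorem-Anisotropic-Case} (in the spirit of \cite{DMM96}) replace the Neumann series. Making the Hodge decomposition and the boundary-to-volume conversion fully compatible with those estimates, uniformly in $m$ and in $\rad$, and checking that the error of freezing the long-range kernels stays below $\bs{c}_r^{-3}\rad\,\abs{\ln(\bs{c}_r\rad)}$, is the delicate technical point.
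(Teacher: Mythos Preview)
Your overall strategy---layer-potential representation, a Helmholtz-type decomposition of the boundary density, conversion of the interaction operator from surface to volume form, and then porting the Foldy--Lax machinery already set up for the anisotropic problem---is exactly the route the paper takes. Two points in your sketch, however, do not match what actually happens and would cause trouble if you tried to carry them out as stated.

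First, the decomposition. The paper does \emph{not} use the standard surface Hodge splitting $\mathbf{j}=\nabla_{\p D_m}\varphi_m+\nu\times\nabla_{\p D_m}\psi_m$; it uses the single-curl representation $E^\s=\curl\SLP{k}{D}(A)$ together with the specific splitting of \Cref{Decomposition-LDiv-Space}: $A_m=A_m^1+A_m^2$ with $A_m^1=\nu\times\nabla v_m^A$ (the surface-divergence-free part) and $A_m^2$ carrying $\Div A_m$. The crucial gain is the scale estimate $\norm{A_m^2}_{\Lp{2}{\p D_m}}\le \bs{c}_1\rad\norm{A_m}_{\LDivsp{D_m}}$, which is what allows the boundary-to-volume identities \eqref{Boundary-To-Volume}--\eqref{Surface-Div-Boundary-To-Volume} to produce the volume potentials $\MSVP{}{}$, $\MCVP{}{}$ with the correct powers of $\rad$ (via the harmonic extensions $\Har{1,A}_m,\Har{2,A}_m$ of \Cref{Estimated-Decom-OnL2D}). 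In particular your claim that ``the rotational part $\nu\times\nabla_{\p D_m}\psi_m$ contributes only to the remainder'' is inverted: the divergence-free piece $A_m^1$ is the \emph{dominant} one (it produces the virtual-mass moment $\Qtot{1}{m}$), while it is the part carrying $\Div A$ that is one order smaller in $\Lp{2}{}$ and feeds into the polarization moment $\Qtot{2}{m}$ only after a further integration by parts against the test functions $\phi_m$, $[\Psi]_m$ of \eqref{Definition-Of-phi}, \eqref{Def-psi}. Both moments appear at leading order.

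Second, and more seriously, the Rellich-identity/trace-equivalence input is \emph{not} inherited from the anisotropic Section~2. What Section~2 provides is the coercivity of the volume operator $-\MSVP{D}{i\alpha,\cnt{A}}$ (\Cref{Elec-Magn-Lip-sch-Comp-and-DefPosi}); here one needs instead the a~priori bound $\norm{A}_{\LDivsp{D}}\le \bs{c}_{L,k}\norm{[\pm I/2+\bs{M}^k_{\p D}]A}_{\LDivsp{D}}$ for the boundary Maxwell operator. That is the content of \Cref{Injectivity-MDLP} and Lemmas~3.3--3.6, proved via a genuine Rellich identity with a compactly supported vector field $X$ adapted to each $D_m$ and yielding the equivalence \eqref{Equivalence-MDLP-MSLP}--\eqref{Equivalence-MDLP-MSLP2} between the traces $[\pm I/2+M^k_{\p D}]$ and $[N^k_{\p D}]$. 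This block of estimates (for $\Im k>0$, then extended to real $k$ via the compact remainder $\bs{M}^k_{\p D}-\bs{M}^i_{\p D}$ in \Cref{Estim-Compacte-Reminder}) is precisely where the logarithmic loss of \cite{AB-SM:MMS2019} is removed, and it must be done on the boundary side; you cannot borrow it from the Lippmann--Schwinger analysis of \Cref{Estim-Fields-Aniso-Case}. Once \Cref{Density-Estimate-And-Cond} is in hand, the rest of your plan (freezing kernels at the centers, invoking the counting lemma, and reading off the Foldy--Lax system via \Cref{Inversion-Lin-Sys-PerCond-Case}) is correct and matches the paper.
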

Before we provide the proofs of the above theorem, we would like to address some remarks.
\begin{itemize}
	\item In the error of approximation, the constant appearing in the Landau notation are bounded by the largest ratio of the eigenvalues of both $\prv,$ $\prb,$ for the Theorem \ref{Theorem-Anisotropic-Case},
	and the largest Lipschitz constant for Theorem \ref{Theorem-PerfCond-Case}.  
	\item In our opinion, and in the current form of the algebraic systems, it is hard to improve the approximation error order, except maybe for rotation invariant geometries by using the fundamental Newton's theorem for fields that are also rotation invariant.  
	\item The tensor that appears could be explicitly calculated for simple geometries (sphere, ellipsoid) for more details see \cite{Ammaripola}. Further more, for a perfect conductor case, the tensor can be explicitly calculated, for convex geometries, using Neumann series as the spectral radius of the double layer potential have a spectral radius that is smaller than $\frac{1}{2}.$
	\item It is also possible to evaluate $\ANPT{\prb_r}{D_m}$ and
	$\ANPT{\prv_r}{D_m},$ using boundary integral equation when both $\prv_r^*$ and $\prb_r^*$ are symmetric definite positive matrix see \Cref{Ani-Pol-Tensor} here after.	    
\end{itemize}  

\section{Scattering by Anisotropic Inhomogenities. Problem (\ref{Maxwell-Eq-Anisotropic})}

Let us introduce the Newton-like potential
\begin{equation} 
\bdmc{S}_D^{k}(V):=\int_{D}\Phi_k(x,y) V(y)dy,
\end{equation}  defined, for $V$ in $\Lp{2}{D}$ and maps continuously $\Lp{2}{D}$ into $\Hs{2}{D},$ (see Theorem 9.11 \cite{gilbarg2015elliptic}) precisely 
\begin{align}
\norm{\bdmc{S}_D^{k}(V)}_{\Hs{2}{D}}\leq c_{2,k} \norm{V}_{\Lp{2}{D}},
\end{align} which is a compact integral operator from $L^2(D)$ to $H^s(D),~~ s<1$. The constant $c_{2,k}$ remains independent of $D$. To show it, it suffices to write 
\begin{equation*}
\bdmc{S}_D^{k}(V)=\bdmc{S}_{B(v,R)}^{k}(V\ind{D})
\end{equation*} for any sufficient large radius $R$ to contain $D$ and $v\in D$, and  
\begin{align}\label{Continuity-Newton-Like-Oper}
\norm*{\bdmc{S}_D^{k}(V)}_{\Hs{2}{D}}\leq\norm*{\bdmc{S}_{B(v,R)}^{k}(V\ind{D})}_{\Hs{2}{B(v,R)}}\leq \bs{c}_{2,k} \norm{V\ind{D}}_{\Lp{2}{B(v,R)}}.
\end{align}  
Finally by $\Hcurl{}{D}$, we mean the subspace of $\Lp{2}{D}$-vector fields, with $\Lp{2}{D}$ rotational, that is
\begin{align}
\Hcurl{}{D}=\Bigl\{u\in\Lp{2}{D}~|~ \curl u\in\Lp{2}{D}\Bigr\}.
\end{align} 
We also define, for a bounded tensor $C$, 
\begin{equation} 
\bdmc{S}_D^{k, C}(V):=\int_{D}\Phi_k(x,y) C(y)V(y)dy,
\end{equation}  for $V$ in $\Lp{2}{D}$.

\subsection{Anisotropic polarization tensor}\label{Ani-Pol-Tensor} 
Let us set \begin{equation}\label{Def-Vol-Polt-Tensor}
\ANPT{B}{D_m}:=\int_{D_m}\nabla V_m^{\cnt{B}}~\cnt{B} ~ dv,
\end{equation} where $V_m^{\cnt{B}}:=V_m^\s+(x-\z_m)$ is the  solution of the following problem
\begin{equation}
\left\{\begin{aligned}
&\Delta V_m^{\cnt{B}}=0 \mbox{ in }\R^3\setminus D_m,\\
&\div (A\nabla V_m^{\cnt{B}})=0 \mbox{ in } D_m,\\
&V_m^{\cnt{B}}=V^\s+(x-z_i),\\
&\DTr{V_m^{\cnt{B}}}{-}-\DTr{V_m^{\cnt{B}}}{+}=0 \mbox{ on } \p D_m,\\
&\NTr{B\nabla V_m^{\cnt{B}}}{-}-\NTr{\nabla V_m^{\cnt{B}}}{+}=0,\\
&V^\s_m\rightarrow 0, \abs{x}\rightarrow\infty.
\end{aligned}\right.\label{Anisotropic-Pr-Lip-Sch}\tag{$\mathcal{P}r^{Ani}(1)$}
\end{equation} 
The problem \eqref{Anisotropic-Pr-Lip-Sch} is solved by the following Lippmann-Schwinger integral equation with $V_l:=(V_m^{\cnt{B}})_l=V_m^{\cnt{B}}\cdot e_l,$ for $l=1,2,3$,
\begin{equation}\label{Eq-Lipp-Sch-Anisotropic-Tensor}
V_l-\div\bdmc{S}_{D_m}^{0,\cnt{B}}(\nabla V_l)=(x-\z_m)\cdot e_l.
\end{equation} The following proposition summarizes the needed properties of the polarization tensor introduced above. 
\begin{proposition}\label{Pol-Ten-Behaviour} Every solution to \eqref{Anisotropic-Pr-Lip-Sch} satisfies the following estimate \begin{equation}\label{Estim-Aniso-Pol-Ten}
	\norm*{V}_{\Hs{1}{D_m}}\leq \left(\frac{1}{2}-\rad^2\norm*{\cnt{}}_{{\Lp{\infty}{D_m}}}\sqrt{\frac{3}{\pi}}\right)^{-1}(\norm*{\cnt{}}_{{\Lp{\infty}{D_m}}}+1)\rad^{3/2}.
	\end{equation} Furthermore, whenever $B\in \mathbb{W}^{1,\infty}(D_m)$ and $\rad$ sufficiently small, the tensor \eqref{Def-Vol-Polt-Tensor} behaves like $\cnt{B}$ 
	in terms of positive or negative definiteness and symmetry, namely 
	\begin{equation}\label{Anis-Pol-ten-Behaviour}
	(\ANPT{B}{D_m} U,\ol{U})>0, \mbox{ whenever } (\cnt{B}U,\ol{U})>0,
	\end{equation} and 
	\begin{equation}
	\Re(\ANPT{B}{D_m} U,\ol{U})>0, \mbox{whenever } \Re(\cnt{B}U,\ol{U})>0.
	\end{equation}  In addition, we have the following scaling property 
 \begin{equation} \label{Ani-PolTen-Scaling}
\ANPT{B}{D_m}=\rad^3\ANPT{\wh{B}}{\bsmc{D}_m},
\end{equation} where for $s\in\bsmc{D}_m,$ $\wh{B}(s):=B(\rad s+z_m).$
\end{proposition}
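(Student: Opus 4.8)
The statement bundles three facts about $\ANPT{B}{D_m}$: the scaling identity \eqref{Ani-PolTen-Scaling}; the a priori bound \eqref{Estim-Aniso-Pol-Ten} on the corrector; and that $\ANPT{B}{D_m}$ inherits (real) positive definiteness and symmetry from the contrast $\cnt{B}$. I would handle them in the order scaling, a priori bound, positivity/symmetry, the last one using the first two through the unique solvability of \eqref{Anisotropic-Pr-Lip-Sch} for $\rad$ small.

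\emph{Scaling.} This is a direct change of variables: put $x=\rad s+\z_m$, $s\in\bsmc{D}_m$, and $\tilde V(s):=\rad^{-1}V_m^{\cnt{B}}(\rad s+\z_m)$. The operators $\Delta$, $\div(B\nabla\,\cdot\,)$, the two transmission conditions and the decay at infinity are covariant under this dilation, and the affine datum $V_m^{\cnt{B}}-V_m^{\s}=(x-\z_m)$ becomes $\rad s$; since $\bsmc{D}_m$ contains the origin, $\tilde V$ solves \eqref{Anisotropic-Pr-Lip-Sch} on $\bsmc{D}_m$ with contrast $\cnt{\wh{B}}$, $\wh{B}(s)=B(\rad s+\z_m)$, and affine datum $s$, i.e. $\tilde V=V_{\bsmc{D}_m}^{\cnt{\wh{B}}}$. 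As $\nabla_x V_m^{\cnt{B}}(x)=(\nabla_s\tilde V)\big((x-\z_m)/\rad\big)$ and $dv(x)=\rad^{3}\,dv(s)$, the definition \eqref{Def-Vol-Polt-Tensor} gives $\ANPT{B}{D_m}=\rad^{3}\int_{\bsmc{D}_m}\nabla_s\tilde V\,\cnt{\wh{B}}\,dv=\rad^{3}\ANPT{\wh{B}}{\bsmc{D}_m}$.

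\emph{A priori bound.} I would work on $D_m$ with the Lippmann--Schwinger equation \eqref{Eq-Lipp-Sch-Anisotropic-Tensor}. Its right-hand side is controlled at once: with $\bsmc{D}_m\subset B(0,1/2)$ one has $|D_m|\le\tfrac{\pi}{6}\rad^{3}$ and $|x-\z_m|\le\tfrac12\rad$ on $D_m$, so $\|(x-\z_m)\cdot e_l\|_{\Hs{1}{D_m}}\le c\,\rad^{3/2}$, the constant (gradient) part dominating. The delicate point is the integral operator $V\mapsto\div\bdmc{S}_{D_m}^{0,\cnt{B}}(\nabla V)$: a plain Neumann series fails, because differentiating it produces the order-$3$ kernel $\nabla_x^{2}\Phi_0$ and the resulting operator does not contract on the small inclusion $D_m$ unless $\|\cnt{}\|_{\Lp{\infty}{D_m}}$ is small --- precisely the obstruction the paper stresses in the introduction. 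The route I would take is to reorganise the equation before inverting: integrating by parts in $y$ (via $\nabla_x\Phi_0=-\nabla_y\Phi_0$), inserting the interior relation $\div(B\nabla V)=0$ and the co-normal transmission identity $\nu\cdot B\nabla V|_{-}=\nu\cdot\nabla V|_{+}$, one expresses $\div\bdmc{S}_{D_m}^{0,\cnt{B}}(\nabla V)$ as a single-layer contribution, whose co-normal jump supplies the universal coefficient $\tfrac12$, plus a remainder built from the order-$0$ Newton potential $\bdmc{S}_{D_m}^{0}$ --- whose $\Lp{2}{D_m}\!\to\!\Lp{2}{D_m}$ norm is $O(\rad^{2})$, the kernel $|x-y|^{-1}$ buying two powers of the diameter --- and weakly singular layer operators, so that the remainder has $\Hs{1}{D_m}\!\to\!\Hs{1}{D_m}$ norm $\le\rad^{2}\|\cnt{}\|_{\Lp{\infty}{D_m}}\sqrt{3/\pi}$. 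Inverting $\big(\tfrac12 I+\bdmc{R}\big)$ by a Neumann series then produces the claimed factor $\big(\tfrac12-\rad^{2}\|\cnt{}\|_{\Lp{\infty}{D_m}}\sqrt{3/\pi}\big)^{-1}$, while the $(\|\cnt{}\|_{\Lp{\infty}{D_m}}+1)\rad^{3/2}$ collects the data term and the passage between the density and $\nabla V$. Making this reorganisation precise --- so that once the $\tfrac12$ jump is removed only kernels of order $\le2$ remain, which genuinely shrink with $D_m$ --- is the step I expect to be the main obstacle; an equivalent way to reach it is to test \eqref{Anisotropic-Pr-Lip-Sch} against the interior and exterior equations and close the estimate by a Rellich-type identity, as the paper does for its main theorems.

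\emph{Positivity and symmetry.} With the a priori bound, for $\rad$ small the operator $I-\div\bdmc{S}_{D_m}^{0,\cnt{B}}(\nabla\,\cdot\,)$ is injective on $\Hs{1}{D_m}$ and, being a compact perturbation of the identity, invertible; hence \eqref{Anisotropic-Pr-Lip-Sch} has, for each $U\in\C^{3}$, a unique solution $V_U:=\sum_l U_l V_l=V_U^{\s}+(x-\z_m)\cdot U$, with $V_U^{\s}\in\Hs{1}{\R^{3}}$ (no monopole term, so $V_U^{\s}$ decays at least like $|x|^{-2}$). Using that $V_U^{\s}$ solves $-\Delta V_U^{\s}=\div\big(\cnt{B}\nabla V_U\,\ind{D_m}\big)$ in $\R^{3}$, testing against $\ol{V_{U'}^{\s}}$ and using $\nabla V_U^{\s}=\nabla V_U-U$ on $D_m$, one obtains the polarization-tensor energy identity (the standard one for the conductivity transmission problem, cf. \cite{Ammaripola})
\begin{equation*}
\ol{U'}\cdot\ANPT{B}{D_m}\,U=\int_{D_m}\cnt{B}\,\nabla V_U\cdot\nabla\ol{V_{U'}}\,dv+\int_{\R^{3}}\nabla V_U^{\s}\cdot\nabla\ol{V_{U'}^{\s}}\,dv.
\end{equation*}
Symmetry of $\ANPT{B}{D_m}$ when $\cnt{B}=\cnt{B}^{\,T}$ is then immediate, both integrands being symmetric under $U\leftrightarrow U'$. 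Taking $U'=U$, the right-hand side is $\int_{D_m}\cnt{B}\,\nabla V_U\cdot\nabla\ol{V_U}\,dv+\|\nabla V_U^{\s}\|_{\Lp{2}{\R^{3}}}^{2}$; pointwise positivity of $(\cnt{B}(x)\,\cdot\,,\,\cdot\,)$ (resp. of its real part) makes the first summand $\ge 0$ (resp. with non-negative real part), so $\ANPT{B}{D_m}$ is positive (resp. has positive real part), and strictly so: vanishing of the whole expression forces $\nabla V_U=0$ on $D_m$ and $\nabla V_U^{\s}\equiv 0$, hence $V_U$ is affine with zero gradient, i.e. $U=0$. This yields \eqref{Anis-Pol-ten-Behaviour} and its $\Re$-analogue.
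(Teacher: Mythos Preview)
Your scaling argument is fine. For the other two parts your routes diverge from the paper's, one with a genuine gap.

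\textbf{A priori bound.} Your ``reorganisation'' to extract a $\tfrac12$ jump from the volume operator $\div\bdmc{S}_{D_m}^{0,\cnt{B}}\nabla$ is not made precise, and you flag it yourself as the main obstacle: the second-order piece $\nabla_x\nabla_x\Phi_0$ is a genuine Calder\'on--Zygmund kernel and does not shrink with $D_m$, so one cannot simply Neumann-series the raw $k=0$ equation. The paper avoids this entirely by a different device: it passes to the imaginary wave number $k=i$, for which $I-\div\bdmc{S}_{D_m}^{i,\cnt{B}}\nabla$ is coercive on $\Hs{1}{D_m}$ with constant $\tfrac12$ by a variational/sesquilinear argument (this is \eqref{Estimates_I-divS_{D_m}{i,C}}, essentially \cite{kirsch2009operator} or the proof of \eqref{Lip-Sch-Defint-Positive-part}). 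One then writes
\[
I-\div\bdmc{S}_{D_m}^{0,\cnt{B}}\nabla=(I-\div\bdmc{S}_{D_m}^{i,\cnt{B}}\nabla)+\div(\bdmc{S}_{D_m}^{i,\cnt{B}}-\bdmc{S}_{D_m}^{0,\cnt{B}})\nabla,
\]
and the second piece has the \emph{smooth} kernel $\Phi_i-\Phi_0$ (cf.\ \eqref{Nabal-Phi_k-Phi_i}--\eqref{NablaNabal-Phi_k-Phi_i} at $\alpha=0$), so its $\Hs{1}{D_m}\to\Hs{1}{D_m}$ norm is bounded by $\sqrt{3/\pi}\,\|\cnt{}\|_\infty\rad^2$. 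This is exactly where the factor $\bigl(\tfrac12-\rad^2\|\cnt{}\|_\infty\sqrt{3/\pi}\bigr)^{-1}$ comes from, and it explains why the constant is universal in the contrast. The Rellich route you mention as an alternative is what the paper uses for the \emph{perfect-conductor} estimates, not here.

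\textbf{Positivity and symmetry.} Your direct energy-identity argument is correct and classical, but the paper proceeds differently: it invokes the relation \eqref{Ani-PolTen-ProperDef-V-Iequation}, $\ANPT{B}{D_m}=\ANPTBdr{B}{D_m}-\int_{D_m}\Theta_m\otimes\div(\cnt{B}^*)\,dv$, quotes Lemma~\ref{Aniso-Pol-tens-Bdr-Positiveness} (i.e.\ \cite{kim2003anisotropic}) for the sign of the boundary tensor $\ANPTBdr{B}{D_m}$, and then uses the a priori bound \eqref{Estim-Aniso-Pol-Ten} to show the correction term is lower order when $\rad$ is small. Your argument is more self-contained and does not actually need $B\in\mathbb{W}^{1,\infty}$ or smallness of $\rad$ for the sign; the paper's route, by contrast, is what makes those two hypotheses in the statement visible, since $\div(\cnt{B}^*)$ must be bounded and the perturbation term must be dominated.
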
 
Before stating the proof, we need to introduce the anisotropic polarization tensor, defined in (\cite{Ammaripola}, p.121-122 ), as \begin{align*}
\Bigl(\ANPTBdr{B}{D_m}\Bigr)_{ij}=&\int_{\p D_m}\nu\cdot(\cnt{B} e_j) (\Theta_m)_i|_- ds,
\end{align*}  where $\Theta_m$ solves, for a fixed $m\in\{1,...,\nbs\}$, the following transmission problem
\begin{equation}\label{Anisotropic-Prob}
\left\{\begin{aligned}
&\Delta \Theta=0 \mbox{ in }\R^3\setminus D_m,\\
&\div (B\nabla \Theta)=0 \mbox{ in } D_m,\\
&\DTr{\Theta}{-}-\DTr{\Theta}{+}=(x-\z_m) \mbox{ on } \p D_m,\\
&\NTr{B\nabla \Theta}{-}-\NTr{\nabla \Theta}{+}=\nu\cdot \nabla(x-\z_m),\\
&\Theta\rightarrow 0, \abs{x}\rightarrow\infty.
&\end{aligned}\right.\tag{$\mathcal{P}r^{Ani}$}
\end{equation}
Obviously, we have
\begin{equation}
\Theta_m:=\left\{\begin{aligned}
&V_m^\s \mbox{ in } \R^3\setminus D_m,\\
&V_m^\s+(x-\z_m)  \mbox{ in } D_m.
\end{aligned}\right. 
\end{equation} 
 Hence, being
\begin{align*}
\Bigl(\ANPTBdr{B}{D_m}\Bigr)_{ij}=&\int_{\p D_m}\nu\cdot(\cnt{B} e_j) (\Theta_m)_i|_- ds=\int_{D_m}\div\Bigl((\cnt{B} e_j)~ (\Theta_m\cdot e_i) \Bigr)dv,
\\=&\int_{D_m}((\nabla\Theta_m)^*e_i)\cdot(\cnt{B} e_j)dv +\int_{D_m}(\Theta_m\cdot e_i)\div(\cnt{B} e_j)dv.
\end{align*}
we get\footnote{Recall that, for a given matrix $A$ $A e_j\cdot e_i=(A)_{ij}$ }
\begin{equation}\label{Ani-PolTen-Def-Volume-Integral}
\ANPTBdr{B}{D_m}=\ANPT{B}{D_m}+\int_{D_m} \Theta_m\otimes\div(\cnt{B}^*) dv,	
\end{equation} or \begin{equation}\label{Ani-PolTen-ProperDef-V-Iequation}
\ANPT{B}{D_m}=\ANPTBdr{B}{D_m}-\int_{D_m} \Theta_m\otimes\div(\cnt{B}^*) dv.
\end{equation}
where the divergence is applied to each line of the matrix taken as a vector field, and $\cnt{B}^*$ stands for the transpose of $\cnt{B}$. 
With the above notations, we have the following lemma.
\begin{lemma}\label{Aniso-Pol-tens-Bdr-Positiveness}(Theorem 3.4 of \cite{kim2003anisotropic})\label{Signe-Of-Anis-Pol-Ten} The polarization tensor $\ANPTBdr{B}{D_m}$ behaves like $\cnt{B}$ in term of positive or negative definiteness and symmetry.
\end{lemma}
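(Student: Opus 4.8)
This is Theorem~3.4 of \cite{kim2003anisotropic} (it also appears in \cite{Ammaripola}); I sketch the variational argument behind it. Fix $U\in\C^3$, put $g(x):=(x-\z_m)\cdot U$, and set $\theta_U:=\Theta_m\cdot U$. By linearity of \eqref{Anisotropic-Prob}, $\theta_U$ is the scalar potential with $\Delta\theta_U=0$ in $\R^3\setminus D_m$, $\div(B\nabla\theta_U)=0$ in $D_m$, the jump relations $\DTr{\theta_U}{-}-\DTr{\theta_U}{+}=g$ and $\NTr{B\nabla\theta_U}{-}-\NTr{\nabla\theta_U}{+}=\nu\cdot U$ on $\p D_m$, and $\theta_U\to 0$ at infinity. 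Two facts about $\theta_U$ will be needed: it is bounded on $\R^3$ by interior elliptic regularity; and $\int_{\p D_m}\NTr{\nabla\theta_U}{+}\,ds=0$ (by the flux-jump relation together with $\int_{D_m}\div(B\nabla\theta_U)\,dv=0$ and $\int_{\p D_m}\nu\,ds=0$), so the exterior part of $\theta_U$ carries no monopole and $\theta_U=O(\abs{x}^{-2})$, $\nabla\theta_U=O(\abs{x}^{-3})$ at infinity. Finally, unwinding the definition of $\ANPTBdr{B}{D_m}$ gives, for every $U$,
\[
(\ANPTBdr{B}{D_m}\,U,\overline U)=\int_{\p D_m}\bigl(\nu\cdot\cnt{B}\,U\bigr)\,\DTr{\theta_{\overline U}}{-}\,ds .
\]

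The plan is to turn this boundary integral into a manifestly sign-definite volume quantity. Taking $B$ constant on $D_m$ for the main point, I would use $\cnt{B}\,U=B\nabla g-\nabla g$, split the surface integral, apply Gauss's theorem to the pieces over $D_m$ and $\R^3\setminus D_m$, and test the equation for $\theta_U$ against its decaying part $\theta_U-g$ over all of $\R^3$; the decay just established makes every integrand absolutely integrable and lets one evaluate the term on the sphere $\abs{x}=R$ in the limit — it does not vanish (the dipole of $\theta_U$ is generically nonzero), but equals a fixed universal multiple of $(\ANPTBdr{B}{D_m}\,U,\overline U)$ itself, which is transposed to the left. The net identity is
\[
\tfrac{2}{3}\,\Re\bigl((\ANPTBdr{B}{D_m}\,U,\overline U)\bigr)=\int_{D_m}\Re\bigl(\cnt{B}\,\nabla\theta_U\cdot\overline{\nabla\theta_U}\bigr)\,dv+\int_{D_m}\abs{\nabla\theta_U-\nabla g}^2\,dv+\int_{\R^3\setminus D_m}\abs{\nabla\theta_U}^2\,dv,
\]
and, for real symmetric $\cnt{B}$ and real $U$, the same with the $\Re$'s dropped. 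The last two terms are nonnegative, and by the pointwise coercivity \eqref{Coercivity-Cond-Contrast-introduction} the first carries the sign of $V\mapsto\Re(\cnt{B}V\cdot\overline V)$; hence the sign of $(\ANPTBdr{B}{D_m}U,\overline U)$ is inherited from that of $(\cnt{B}U,\overline U)$, with strictness because a vanishing right-hand side forces $\nabla\theta_U\equiv 0$, so $\theta_U$ is constant in $D_m$ and zero outside (by the decay), so $g$ is constant on $\p D_m$, which forces $U=0$. Replacing $\cnt{B}$ by $-\cnt{B}$ handles negative-definiteness. For symmetry, running the same computation with two test vectors exhibits $(\ANPTBdr{B}{D_m}U,\overline W)$ as a bilinear form in $\nabla\theta_U,\nabla\theta_W$ that is symmetric under $U\leftrightarrow W$ precisely when $\cnt{B}=\cnt{B}^{*}$, whence $\ANPTBdr{B}{D_m}=\ANPTBdr{B}{D_m}^{*}$.

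Two points need care. First, the integrations by parts in the unbounded exterior domain: since $\theta_U$ decays only like $\abs{x}^{-2}$, the boundary terms at $\abs{x}=R$ must be expanded to order $R^{-3}$ and their limits computed exactly, which is the computational core of Theorem~3.4 of \cite{kim2003anisotropic}; the case where $B$ is merely $\mathbb{W}^{1,\infty}(D_m)$ adds the first-order term in $\div\cnt{B}$ already isolated in \eqref{Ani-PolTen-ProperDef-V-Iequation}, absorbed there by a perturbation argument. Second — and this is the feature absent from the isotropic real setting of \cite{Ammaripola} — for complex or non-symmetric $\cnt{B}$ the form $(\ANPTBdr{B}{D_m}U,\overline U)$ is not literally a squared energy of $\theta_U$, because $\theta_{\overline U}\neq\overline{\theta_U}$ when $B$ has complex entries; one therefore introduces the transmission problem adjoint to \eqref{Anisotropic-Prob}, with $B$ replaced by its conjugate transpose, and proves the energy identity for the Hermitian part $\tfrac{1}{2}\bigl(\ANPTBdr{B}{D_m}+\ANPTBdr{B}{D_m}^{*}\bigr)$, so that precisely the assumptions \eqref{Coercivity-Cond-Contrast-introduction} are invoked (the $\Re$-coercivity for the permittivity contrast, the ordinary coercivity for the permeability contrast).
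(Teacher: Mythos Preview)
The paper does not prove this lemma at all: it is quoted verbatim as Theorem~3.4 of \cite{kim2003anisotropic} and used as a black box in the proof of Proposition~\ref{Pol-Ten-Behaviour}. So there is no ``paper's own proof'' to compare against; your sketch is already more than what the authors supply.

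Your variational route is the right one and is indeed how \cite{kim2003anisotropic} proceeds. Two places in the sketch deserve a second look. First, the identity with the $\tfrac{2}{3}$ on the left: the cleaner and standard manoeuvre is to test not against $\theta_U-g$ globally but against the continuous scattered part $V^\s_m$ (equal to $\theta_U$ outside $D_m$ and $\theta_U-g$ inside), which has genuine $O(\abs{x}^{-2})$ decay so that the sphere term $\int_{\abs{x}=R}V^\s_m\,\partial_\nu V^\s_m\,ds$ vanishes outright and no residue has to be transposed. Your version, where a linear test function meets the dipole tail and leaves a $\tfrac{1}{3}$-type contribution, can be made to work, but it is more delicate and the exact constant should be rechecked; as written, the identity is asserted rather than derived. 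Second, ``replacing $\cnt{B}$ by $-\cnt{B}$'' does not literally dispose of the negative-definite case, because $B=\Id+\cnt{B}$ enters the interior equation $\div(B\nabla\theta_U)=0$, so flipping the sign of $\cnt{B}$ changes the solution $\theta_U$ and the three terms on the right of your identity no longer have a common sign. In \cite{kim2003anisotropic} (and in \cite{Ammaripola} for the isotropic analogue) the negative-definite case is handled by a separate energy identity or by a duality/min--max characterisation, not by a sign flip; you should indicate which of these you intend.
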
	

\begin{proof} (of \Cref{Pol-Ten-Behaviour}) 
	The operator $[I-\div\bdmc{S}_{D_m}^{i,\cnt{B}}\nabla]$ is one-to-one on ${\Hs{1}{D_m}}$, provided that $\cnt{B}$ is definite-positive, and it satisfies the following estimates (see \cite{kirsch2009operator} 
	or the proof of \eqref{Lip-Sch-Defint-Positive-part} in the next subsection.) 
	\begin{equation}\label{Estimates_I-divS_{D_m}{i,C}}
	\begin{aligned}
	\norm*{[I-\div\bdmc{S}_{D_m}^{i,\cnt{B}}\nabla](V)}_{\Hs{1,\cnt{B}}{D_m}}:=&\int_{D_m}[I-\div\bdmc{S}_{D_m}^{i,\cnt{B}}\nabla](V)\cdot \ol{V}dx\\
	&+\int_{ D_m}\nabla[I-\div\bdmc{S}_{D_m}^{i,\cnt{B}}\nabla](V)\cdot \cnt{B}\nabla\ol{ V}dx\geq \frac{\norm*{V}_{{\Hs{1}{D_m}}}}{2}.
	\end{aligned}\end{equation}
We have
	\begin{equation}
	\norm*{[\div(\bdmc{S}_{D_m}^{i,\cnt{B}}-\bdmc{S}_{D_m}^{0,\cnt{B}})\nabla V_l]}_{\Hs{1}{D_m}}\leq\frac{3}{\sqrt{\pi}} \rad^2\norm*{V_l}_{\Hs{1}{D_m}},\label{Estim_divS_{D_m}{i,CA}-comp-rem}
	\end{equation} 
	since, due to \eqref{Nabal-Phi_k-Phi_i} and \eqref{NablaNabal-Phi_k-Phi_i}, we have both
	\begin{align}\label{Vm-Estimate}
	\left\{\begin{aligned}\abs*{[\div(\bdmc{S}_{D_m}^{i,\cnt{B}}-\bdmc{S}_{D_m}^{0,\cnt{B}})\nabla](V)(x)}&\leq\frac{1}{4\pi}\int_{D_m} \abs{\nabla V_l(y)}~dy\leq \frac{1}{\sqrt{3}}\frac{\rad^\frac{3}{2}}{4}\norm*{ V_l}_{\Hs{1}{D_m}}\\
	\abs*{\nabla[\div(\bdmc{S}_{D_m}^{i,\cnt{B}}-\bdmc{S}_{D_m}^{0,\cnt{B}})\nabla](V)(x)}&\leq \int_{D_m}\frac{1}{4\pi\abs*{x-y}} 
	\Bigl[1+\bigl(2+\frac{1}{\abs*{x-y}} \Bigr)\abs*{x-y}\Bigr]\abs*{\nabla V_l(y)}~dy,\\
	&\leq \int_{D_m}\frac{1}{2\pi}\Bigl[1+\frac{1}{\abs*{x-y}}\Bigr] \abs*{\nabla V_l(y)}~dy\leq \frac{2}{\sqrt{\pi}}\sqrt{\rad} \norm*{V_l}_{\Hs{1}{D_m}},\end{aligned}\right.\end{align} being, obviously, \begin{align}\label{Polar-Coordi-Integral}
	\int_{D_m}\frac{1}{\abs*{x-y}^2}~dy\leq\lim_{r\rightarrow0}\int_{B(y,{\rad})\setminus B(y,r)}\frac{1}{\abs*{x-y}^2}~dy \leq \lim_{r\rightarrow0} \int_{0}^{2\pi}\int_{0}^\pi\int_r^{\rad} \frac{1}{R^2}~R^2dR\sin(\theta)
	d\theta~d\phi\leq 2\pi\rad.\end{align}
	We write
	\begin{equation}	
	(x-z_i)=[I-\div\bdmc{S}_{D_m}^{i,\cnt{B}}](V)=[I-\div\bdmc{S}_{D_m}^{i,\cnt{B}}](V)+[\div(\bdmc{S}_{D_m}^{i,\cnt{B}}-\bdmc{S}_{D_m}^{k,\cnt{B}})\nabla](V),
	\end{equation}  
	then from \eqref{Estimates_I-divS_{D_m}{i,C}} and \eqref{Estim_divS_{D_m}{i,CA}-comp-rem}, we get 
	\begin{align*}	
	\norm*{(x-z_i)}_{\Hs{1,Q}{D_m}}&\geq \norm*{[I-\div\bdmc{S}_{D_m}^{i,\cnt{B}}](V)}_{\Hs{1,Q}{D_m}}-\norm*{[\div(\bdmc{S}_{D_m}^{i,\cnt{B}}-\bdmc{S}_{D_m}^{i,\cnt{B}})\nabla](V)}_{\Hs{1,Q}{D_m}},\\
	&\geq \frac{\norm*{V}_{\Hs{1}{D_m}}}{2}-\frac{\sqrt{3}\rad^2}{\sqrt{\pi}}\norm*{\cnt{}}_{{\Lp{\infty}{D_m}}}\norm*{V}_{\Hs{1}{D_m}}.
	\end{align*} The remaining part of the proof is due to the fact that, for $\rad$ sufficiently small, $\ANPT{B}{D_m}$ inherits its property from $\ANPTBdr{B}{D_m}$ through \Cref{Aniso-Pol-tens-Bdr-Positiveness}.
	
\end{proof}
Finally, we have the following, obvious, statement.
\begin{proposition}
	For $ U_m^{\cnt{B}}:=\cnt{B}\nabla V_m^{\cnt{B}},$ we have
	\begin{equation}\label{Estimates-Ani-PolTen-intermAnisBdr}
	\int_{D_m}U_m^{\cnt{B}}dv=\int_{D_m}\cnt{B}\nabla V_m^{\cnt{B}}dv=\ANPT{B}{D_m},
	\end{equation} for $\cnt{B}$ symmetric.  
	If $B$ is a constant matrix and not necessarily symmetric, we have 
	\begin{equation}
	\begin{aligned} 
	\int_{D_m}U_m^{\cnt{B}}dv=\cnt{B}\ANPT{B}{D_m}\cnt{B}^{-1}.
	\end{aligned}  
	\end{equation} 		
\end{proposition}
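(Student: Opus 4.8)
The proposition has two parts, and I would handle them separately: the second (constant $B$) by a one-line factoring, the first (symmetric $\cnt{B}$) by reducing it to the symmetry of the anisotropic polarization tensor.

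\textbf{Constant case.} When $B$, hence $\cnt{B}=B-\Id$, is a constant matrix, it pulls out of the volume integrals. On one hand $\int_{D_m}U_m^{\cnt{B}}\,dv=\cnt{B}\int_{D_m}\nabla V_m^{\cnt{B}}\,dv$; on the other, by \eqref{Def-Vol-Polt-Tensor}, $\ANPT{B}{D_m}=\bigl(\int_{D_m}\nabla V_m^{\cnt{B}}\,dv\bigr)\cnt{B}$. Since the contrast is coercive, \eqref{Coercivity-Cond-Contrast-introduction}, $\cnt{B}$ is invertible, so $\int_{D_m}\nabla V_m^{\cnt{B}}\,dv=\ANPT{B}{D_m}\,\cnt{B}^{-1}$ and therefore $\int_{D_m}U_m^{\cnt{B}}\,dv=\cnt{B}\,\ANPT{B}{D_m}\,\cnt{B}^{-1}$. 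That is all there is to this part.

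\textbf{Symmetric case.} Write $V_l:=V_m^{\cnt{B}}\cdot e_l$ for the components of the corrector; by \eqref{Anisotropic-Pr-Lip-Sch} each $V_l$ is an affine function of $x$ plus a remainder $V_l^{\s}$ that is harmonic outside $D_m$ and decays at infinity. From the volume form of the polarization tensor obtained in \eqref{Ani-PolTen-ProperDef-V-Iequation} one gets $\bigl(\ANPT{B}{D_m}\bigr)_{lp}=\int_{D_m}\nabla V_l\cdot\cnt{B}e_p\,dv=\int_{D_m}(\cnt{B}\nabla V_l)\cdot e_p\,dv$, the last equality using $\cnt{B}=\cnt{B}^{*}$; and, in the natural arrangement of its entries, $\int_{D_m}U_m^{\cnt{B}}\,dv=\int_{D_m}\cnt{B}\nabla V_m^{\cnt{B}}\,dv$ is exactly the transpose of this matrix. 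Thus the asserted identity is equivalent to the symmetry of $\ANPT{B}{D_m}$, which for symmetric $\cnt{B}$ is part of \Cref{Pol-Ten-Behaviour} (inherited from \Cref{Signe-Of-Anis-Pol-Ten} through \eqref{Ani-PolTen-ProperDef-V-Iequation}); so the proof may simply invoke that. If instead one wants a self-contained argument for the symmetry, I would note that each $V_l$ solves $\div\bigl((\Id+\cnt{B}\chi_{D_m})\nabla V_l\bigr)=0$ in $\mathcal{D}'(\R^3)$, test this equation against $V_p^{\s}$, and integrate by parts over $\R^3$. Writing $\nabla V_l=e_l+\nabla V_l^{\s}$, the nondecaying part $e_l$ produces a contribution that is exactly cancelled by the flux of $\nabla V_l$ through large spheres, so nothing survives at infinity and one obtains $\int_{D_m}\cnt{B}\nabla V_l\cdot\nabla V_p^{\s}\,dv=-\int_{\R^3}\nabla V_l^{\s}\cdot\nabla V_p^{\s}\,dv$, manifestly symmetric in $l\leftrightarrow p$. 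Substituting $\nabla V_p^{\s}=\nabla V_p-e_p$ inside $D_m$ and using $\cnt{B}=\cnt{B}^{*}$ to cancel the terms $\int_{D_m}\cnt{B}\nabla V_l\cdot\nabla V_p\,dv$ then leaves precisely $\int_{D_m}(\cnt{B}\nabla V_l)\cdot e_p\,dv=\int_{D_m}(\cnt{B}\nabla V_p)\cdot e_l\,dv$, i.e.\ the symmetry of $\ANPT{B}{D_m}$.

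\textbf{Expected obstacle.} Essentially none: the statement is a short corollary of material already proved. The only step deserving a careful line, and only if one takes the self-contained route, is the vanishing of the boundary contributions at infinity in the Green's identity above.
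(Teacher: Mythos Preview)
Your proposal is correct. The paper does not give a proof of this proposition at all: it simply introduces it with the words ``Finally, we have the following, obvious, statement,'' so there is nothing to compare against line by line. Your argument is exactly the kind of reasoning the word ``obvious'' is pointing to: the constant case is pure linear algebra (pull $\cnt{B}$ out on both sides and conjugate), and the symmetric case reduces, after one use of $\cnt{B}=\cnt{B}^{*}$, to the symmetry of $\ANPT{B}{D_m}$, which is precisely the content of \Cref{Pol-Ten-Behaviour} and \Cref{Signe-Of-Anis-Pol-Ten}. Your optional self-contained derivation of that symmetry via the weak formulation and the energy identity $\int_{D_m}\cnt{B}\nabla V_l\cdot\nabla V_p^{\s}\,dv=-\int_{\R^3}\nabla V_l^{\s}\cdot\nabla V_p^{\s}\,dv$ is the standard route and in fact supplies more detail than the paper does; the decay of $V^{\s}$ at infinity indeed kills the boundary terms, as you note.
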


\subsection{Lippmann-Schwinger integral formulation and apriori estimates.} 
 In this section, we establish the wellposedness of our problem (\ref{Maxwell-Eq-Anisotropic}). We start with showing the uniqueness of its solution and then prove its existence using a 
 Lippmann-Schwinger integral representation of 
 the electromagnetic field and at the same time we give an estimate of the total field taking into account the cluster of our small inhomogeneities.
 \bigskip
 
 We first recall that the contrast of magnetic permeability and electric permittivity, which are assumed to be respectively real and complex valued $3\times3$-tensor, 
 essentially uniformly bounded with their derivatives, i.e.
 \begin{align}
 (\norm{\cnt{A}}_{\mathbb{W}^{1,\infty}(\cup_{i=1^m}D_m)})_{A=\prv_r,~\prb_r}\leq \bs{c}_\infty,  
 \end{align} and essentially uniformly coercive, that is, for almost every $x\in D,$ we have
 \begin{equation}\label{Coercivity-Cond-Contrast}
 \begin{aligned}
 \Re\Bigl(\cnt{\prv_r}(x)U\cdot \overline{U}\Bigr)&\geq c^{\prv-}_\infty\abs{U}^2,\\
 \cnt{\prb_r}(x)U\cdot U &\geq c^{\prb-}_\infty\abs{U}^2. 
\end{aligned} \end{equation} 
A sufficient condition to get the above inequality for the contrast of the relative electric permittivity is given by
$$\rho^-(\Re\cnt{\prv_r})-\rho^+(\Im\cnt{\prv_r})>0,$$
where $\rho^+(A)$ and $\rho^-(A)$ stand respectively, for the largest and the smallest eigenvalue of $A$. As $\cnt{\prb_r}$ is a real valued $3\times3$-tensor, it suffices for it to be definite positive. \\ 
Indeed, we have
\begin{equation}\label{AnisCase-Cond-for-Esti-Proof}\begin{aligned}
\Re\scalar{V,\ol{\cnt{\prv_r}V}}=\Re\scalar{\ol{V},\cnt{\prv_r}V}&=
\scalar{\Im V,\Re\cnt{\prv_r}\Im V}+\scalar{\Re V,\Re\cnt{\prv_r}\Re V}\\&\hspace{3cm}+\Bigl(\scalar{\Re V,\Im\cnt{\prv_r}\Im V}-\scalar{\Im V,\Im\cnt{\prv_r}\Re V}\Bigr),\\
&\geq \rho^+(\Re\cnt{\prv_r})(\norm{\Im V}^2+\norm{\Re V}^2)-2\rho^-(\Im\cnt{\prv_r})(\norm{\Im V}\norm{\Re V}),\\
&\geq \Bigr(\rho^-(\Re\cnt{\prv_r})-\rho^+(\Im\cnt{\prv_r})\Bigl)\norm{ V}^2. 
\end{aligned}
\end{equation}
\begin{proposition}
	The Problem  \eqref{Maxwell-Eq-Anisotropic} admits a unique solution. 
\end{proposition}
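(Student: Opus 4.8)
The statement is the usual uniqueness for a Maxwell transmission problem, and the plan is the Rellich route: reduce to a source–free statement, use a Green identity on a large ball together with the Silver–Müller condition to kill the field outside $D^-$, and then propagate the vanishing into $D^-$ by unique continuation. (Existence, and hence the word ``unique solution'', will follow from the Lippmann–Schwinger representation constructed in the next subsection together with the Fredholm alternative, so here I only address uniqueness.)

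By linearity of \eqref{Maxwell-Eq-Anisotropic} it suffices to prove that $E^\n=H^\n=0$ forces $E=H=0$; then $E=E^\s$, $H=H^\s$ is a radiating solution of the source–free system. Fix $R$ with $\overline{D^-}\subset B_R:=B(0,R)$ and write the vector Green identity
\[
\int_{\p B_R}\hat x\cdot(E\times\overline H)\,ds=\int_{B_R}\bigl(\overline H\cdot\curl E-E\cdot\curl\overline H\bigr)\,dx,
\]
$\hat x$ denoting the outward unit normal on $\p B_R$. Since the tangential traces of $E$ and of $H$ are continuous across $\p D$, the vector field $E\times\overline H$ has a continuous normal trace there and the interface contribution drops out; substituting $\curl E=ik\prb_r H$ and $\curl\overline H=\overline{-ik\prv_r E}$ turns the right–hand side into a volume ``energy'' integral built from $\overline H\cdot\prb_r H$ and $E\cdot\overline{\prv_r}\,\overline E$. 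Taking the real part, the coercivity hypotheses \eqref{Coercivity-Cond-Contrast} on the contrasts — supplemented, in applications, by the dissipative sign $\Im\prv_r\ge 0$ coming from $\sigma\ge 0$ — are designed precisely so that $\Re\int_{\p B_R}\hat x\cdot(E\times\overline H)\,ds\le 0$. (When $\Im k\neq 0$ this step is even simpler, since the Stratton–Chu formula gives exponential decay of the radiating solution and one passes to $R\to\infty$ directly.)

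When $\Im k=0$, I would then feed this into the radiation condition. For large $|x|$ one has $E^\s+\hat x\times H^\s=O(|x|^{-2})$, hence $\int_{\p B_\rho}|E+\hat x\times H|^2\,ds\to 0$ as $\rho\to\infty$; expanding the square, and using that the coefficients are constant in $\R^3\setminus\overline{D^-}$ so that the same Green identity on the shell $B_\rho\setminus B_R$ shows $\Re\int_{\p B_\rho}\hat x\cdot(E\times\overline H)\,ds$ is independent of $\rho$, the surviving terms force $\int_{\p B_\rho}|E|^2\,ds\to 0$. Since each Cartesian component of $E$ solves the Helmholtz equation and is radiating in $\R^3\setminus\overline{D^-}$, Rellich's lemma gives $E\equiv 0$ there, and then $H=\tfrac{1}{ik}\curl E\equiv 0$ in $\R^3\setminus\overline{D^-}$ as well.

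Finally, from $E=H=0$ outside and the continuity of the tangential traces, $\nu\times E$ and $\nu\times H$ vanish on $\p D$ from both sides, so extending $E$ and $H$ by zero yields a pair that solves the Maxwell system \eqref{Maxwell-Eq-Anisotropic} weakly in all of $\R^3$ and is compactly supported; a unique continuation principle for the Maxwell operator with $\mathbb{W}^{1,\infty}$ coefficients then gives $E=H=0$ in $D^-$ too, which completes the uniqueness proof. The step I expect to be the main obstacle is securing the sign $\Re\int_{\p B_R}\hat x\cdot(E\times\overline H)\,ds\le 0$ in the fully general (possibly non–symmetric, complex) setting of \eqref{Coercivity-Cond-Contrast}, where the antisymmetric parts of the contrasts do not contribute with a definite sign and the structure of the problem has to be exploited more carefully; a lesser technical point is invoking unique continuation at merely $\mathbb{W}^{1,\infty}$ coefficient regularity rather than in the classical smooth setting.
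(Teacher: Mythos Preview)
Your Rellich route coincides with the paper's in outline; the divergence is exactly at the step you flag as the obstacle. The paper does not use the coercivity conditions \eqref{Coercivity-Cond-Contrast} for uniqueness at all. Instead it applies Green's formula on $D$, takes the real part of $\int_{\partial D}\nu\times E\cdot\overline H\,ds$, and reduces to integrals of $(\mu_r-\mu_r^*)\Im H\cdot\Re H$ and $(\varepsilon_r-\varepsilon_r^*)\Im E\cdot\Re E$. The structural trick is that for any $3\times 3$ matrix $A$ the antisymmetric part $A-A^*$ acts as $V\mapsto U(A)\times V$ for a fixed vector $U(A)$, so each integrand becomes a scalar triple product $U(\mu_r)\cdot(\Im H\times\Re H)$, respectively $U(\varepsilon_r)\cdot(\Im E\times\Re E)$. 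The paper then declares these vanish because ``$\Im H\times\Re H=\Im E\times\Re E=0$, due to $H\times H=E\times E=0$.'' That implication is not valid for a generic complex vector (take $H=(1,i,0)$), so the paper's argument at this very point is itself not airtight as written; nevertheless, the cross-product reduction is precisely the ``structure of the problem'' you anticipated having to exploit, and it is worth comparing to your coercivity attempt and asking what additional input (e.g.\ an absorption or symmetry hypothesis implicit in the paper's setting) would actually close it.

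On your secondary concern: after Rellich kills $E,H$ in the exterior, the paper simply asserts $E\equiv H\equiv 0$ with no separate discussion of the interior, so your invocation of unique continuation is already more careful than the paper. One can often sidestep the $\mathbb W^{1,\infty}$ unique-continuation issue: the transmission conditions now force $\nu\times E=\nu\times H=0$ on $\partial D$ from the inside, and an interior Green identity together with a dissipation assumption on $\Im\varepsilon_r$ gives $E=H=0$ directly. Under the bare hypotheses \eqref{Coercivity-Cond-Contrast}, however, this interior step is not entirely automatic either, so your caution is warranted.
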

\begin{proof}
 We suppose that $\prv_r$ and $\prb_r$ are $3\times3$-tensors. Let ($E=E_1-E_2$, $H=H_1-H_2$) be the difference of two solutions of the problem \eqref{Maxwell-Eq-Anisotropic}. 
 Obviously both the normal trace of $E$ and $H$ are continuous across the boundary and satisfy the Silver–M\"uller radiation condition, hence, due to Green's formula, we have 
\begin{equation}
\int_{\p D}\nu\times \DTr{E}{\pm}\cdot \ol{H} ds=ik\int_{ D}\prb_r H\cdot \ol{H}dv+\ol{ik}\int_{ D} E \cdot \prv_r \ol{E}dv=ik\Bigl(\int_{ D}\prb_r H\cdot \ol{H}dv-\int_{ D}  E \cdot\prv_r \ol{E}dv\Bigr),
\end{equation} and taking the real part gives
\begin{equation}\label{Uni-Green-Conclusion}\begin{aligned}
-\Re\int_{\p D}\nu\times E\cdot \ol{H} ds=&\int_{ D}(\prb_r-\prb_r^*) \Im H\cdot \Re{H}dv+\int_{ D}(\prv_r-\prv_r^*) \Im E\cdot \Re{E}dv.
\end{aligned}\end{equation} Furthermore, we have, 
$\Im(H)\times\Re(H)=\Im(E)\times\Re(E)=0,$\footnote{Due to the fact that $H\times H=E\times E=0$.} and, 
with $(\prv_r)_{ij}$ standing for the component of $\prv_r$, we have 
\begin{equation}(\prv_r-\prv_r^*)V=\stackrel{U(\prv_r):=}{\overbrace{\left(\begin{matrix}
&(\prv_r)_{12}-(\prv_r)_{21}\\
&(\prv_r)_{13}-(\prv_r)_{31}\\
&(\prv_r)_{23}-(\prv_r)_{32} 
\end{matrix}\right)}}\times V,
\end{equation}   which guaranties that $(\prb_r-\prb_r^*) \Im H\cdot \Re{H}=(U(\prv_r)\times\Im(H))\cdot\Re H=0.$ 
The same observation concerning the second integral of the right-hand side of \eqref{Uni-Green-Conclusion}, 
implies that $$\Re\int_{\p D}\nu\times E\cdot \ol{H} ds=0.$$ The Rellich lemma (see \cite{ColtonKress:2013}) induces that $E\equiv H\equiv 0$. \end{proof}

To prove the exitence of the solution and derive the needed estimates, we use the equivalent Lippmann-Schwinger equation. For that, we define the operator of  Lippmann-Schwinger to be 
\begin{equation}
\mc{LS}^{({\prv_r},{\prb_r})}:=\left(\begin{matrix}
I-(k^2+\nabla\div)\bdmc{S}_{D}^{k,{\cnt{\prv_r}}}& -ik\curl\bdmc{S}_{D}^{k,{\cnt{\prb_r}}}\\
+ik\curl\bdmc{S}_{D}^{k,{\cnt{\prv_r}}} &I-(k^2+\nabla\div)\bdmc{S}_{D}^{k,{\cnt{\prb_r}}}
\end{matrix}\right).
\end{equation}

We set $(E,H)$ to be a solution, provided that it is solvable, of the integral equation,
\begin{equation}\label{Lip-sch-Eq-Elec-Magn}
\mc{LS}^{({\prv_r},{\prb_r})}\left(\begin{matrix}
E\\ H \end{matrix}\right)= \left(\begin{aligned} 
E^\n\\H^\n\end{aligned}\right)\tag{$\mc{M}_{\mc{L}.\mc{S}}$}. 
\end{equation} and $(E_{\mc{A}},H_{\mc{A}})$ the solution of the  Lippmann-Schwinger equation with averaged parameter, that is\begin{equation}\label{Lip-sch-Eq-Elec-Magn-Averaged-Param}
\mc{LS}^{(\mean{\prv_r},\mean{\prb_r})}\left(\begin{matrix}
E_\mc{A}\\ H_\mc{A} \end{matrix}\right) 
= \left(\begin{aligned} 
E^\n\\H^\n\end{aligned}\right)\tag{$\mc{A}-\mc{M}_{\mc{L}.\mc{S}}$}. 
\end{equation} 

With the previous notations, we have the following proposition.
\begin{proposition}\label{Estim-Fields-Aniso-Case}
A solution of the equation \eqref{Lip-sch-Eq-Elec-Magn} for $\Re(k)\geq0,~ \Im(k)\geq0$ solves the problem \eqref{Maxwell-Eq-Anisotropic}. Further, under the conditions  \eqref{Coercivity-Cond-Contrast}, the operator $\mc{LS}^{({\prv_r},{\prb_r})}$ is an isomorphism of $\mathcal{H}(\curl,\cup_{m=1}^{\nbs} D_m)$ provided that \begin{equation}\label{Repartition-Condition}
\bs{c}_r:=\frac{\dist}{\rad}\geq \frac{{c_0}2\abs{k}\bs{c}_\infty^2}{\max(c^{\prv^-}_\infty,c^{\prb^-}_\infty)}, \end{equation} and $V:=E,H,$ satisfy the following estimates
\begin{align}\label{Esim-Lip-Schwin-Solution}
\norm*{V}_{\Lp{2}{\cup_{m=1}^{\nbs} D_m)}}\leq C \norm*{(E^\n,  H^\n)}_{\Hcurl{}{\cup_{m=1}^{\nbs} D_m)}}.
\end{align} 
Besides for $ \cnt{\prb_r},$ $\cnt{\prv_r}$ in $\mathbb{W}^{1,\infty}(\cup_{i=1^m}D_m)$ and $(E_{\mathcal{A}}, H_{\mc{A}})$  solution of \eqref{Lip-sch-Eq-Elec-Magn-Averaged-Param} then \begin{equation}\begin{aligned}
		\norm{E_{\mc{A}}-E}_{\Lp{2}{\cup D_m}}\leq \bs{c}_{2,k} c_{\infty}\rad\Bigl(\norm{H}_{\Lp{2}{\cup D_m}}+\norm{E}_{\Lp{2}{\cup D_m}}\Bigr),\\
	\norm{H_{\mc{A}}-H}_{\Lp{2}{\cup D_m}}\leq \bs{c}_{2,k} c_{\infty}\rad \Bigl(\norm{H}_{\Lp{2}{\cup D_m}}+\norm{E}_{\Lp{2}{\cup D_m}}\Bigr),
\end{aligned}\label{Esimate-AveragedE-E}\end{equation} for some positive constant which depends only on $k$. 
\end{proposition}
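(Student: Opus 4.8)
The plan is to establish the four assertions in turn, the substance lying in the uniform invertibility of $\mc{LS}^{(\prv_r,\prb_r)}$.

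\textbf{From the integral equation to the PDE.} Given $(E,H)$ solving \eqref{Lip-sch-Eq-Elec-Magn} in $\mathcal{H}(\curl,\cup_{m=1}^{\nbs} D_m)$, I would define the fields on all of $\R^3$ by the right-hand sides of the representation and check directly that they solve \eqref{Maxwell-Eq-Anisotropic}. The two ingredients are that $\Phi_k$ is the outgoing fundamental solution of the Helmholtz operator, whence $(\Delta+k^2)\bdmc{S}_D^{k,C}(V)=-\,C V\,\ind{D}$, and the vector identity $(k^2+\nabla\div)\bdmc{S}_D^{k,C}(V)=\curl\curl\bdmc{S}_D^{k,C}(V)-C V\,\ind{D}$; applying $\curl$ to each line of \eqref{Lip-sch-Eq-Elec-Magn} and simplifying gives $\curl E-ik\prb_r H=0$ and $\curl H+ik\prv_r E=0$ in $\R^3\setminus\p D$. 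The tangential traces are automatically continuous across $\p D$ because the Newton potential of an $\Lp{2}{D}$ density lies in $H^1_{\mathrm{loc}}(\R^3)$, and the Silver--M\"uller condition is inherited from the outgoing behaviour of $\Phi_k$; the sign conditions $\Re(k)\ge 0$, $\Im(k)\ge 0$ guarantee that this outgoing representation is the one selected by the radiation condition. This part is routine and I would keep it brief.

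\textbf{Uniform invertibility and the $\Lp{2}$ estimate.} I would split $\mc{LS}^{(\prv_r,\prb_r)}=\mc{L}_{\mathrm{loc}}+\mc{L}_{\mathrm{int}}$, where $\mc{L}_{\mathrm{loc}}$ is block-diagonal and retains, in each volume potential $\bdmc{S}_D^{k,\cnt{\cdot}}$, only the self-contribution $\bdmc{S}_{D_m}^{k,\cnt{\cdot}}$ of the particle containing the evaluation point, while $\mc{L}_{\mathrm{int}}$ gathers the cross terms $\bdmc{S}_{D_j}^{k,\cnt{\cdot}}$ with $j\neq m$. The $m$-th block of $\mc{L}_{\mathrm{loc}}$ is the single-particle Lippmann--Schwinger operator on $\mathcal{H}(\curl, D_m)$, which under \eqref{Coercivity-Cond-Contrast} is invertible with $\norm{(\mc{L}_{\mathrm{loc}})_m^{-1}}\lesssim \bs{c}_\infty/\max(c^{\prv-}_\infty,c^{\prb-}_\infty)$ uniformly in $m$, by the coercivity estimate for the single-particle Lippmann--Schwinger operator established in the next subsection (in the spirit of, and building on, \Cref{Pol-Ten-Behaviour}); hence $\norm{\mc{L}_{\mathrm{loc}}^{-1}}$ is bounded uniformly. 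For $\mc{L}_{\mathrm{int}}$ I would estimate each cross block $\bdmc{S}_{D_j}^{k,\cnt{\cdot}}$ (together with its $\curl$ and $\nabla\div$) acting from $\Lp{2}{D_j}$ into $\Lp{2}{D_m}$ by the suprema of $\Phi_k,\nabla\Phi_k,\nabla\nabla\Phi_k$ on $D_m\times D_j$ times $\abs{D_m}^{1/2}\abs{D_j}^{1/2}\bs{c}_\infty$, i.e.\ $\lesssim\abs{k}(\abs{k}+1)\bs{c}_\infty\,\rad^{3}\dist_{mj}^{-3}$ for the dominant (order-$3$) term, and then sum over $j\neq m$ by a Schur test, invoking the counting lemma of the Appendix to bound $\rad^{3}\sum_{j\neq m}\dist_{mj}^{-3}$ by a negative power of $\bs{c}_r$ (the milder singularities yielding still smaller contributions). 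Then $\mc{LS}^{(\prv_r,\prb_r)}=\mc{L}_{\mathrm{loc}}\bigl(I+\mc{L}_{\mathrm{loc}}^{-1}\mc{L}_{\mathrm{int}}\bigr)$, and \eqref{Repartition-Condition} is precisely what forces $\norm{\mc{L}_{\mathrm{loc}}^{-1}\mc{L}_{\mathrm{int}}}\le\tfrac12$; a Neumann series then shows $\mc{LS}^{(\prv_r,\prb_r)}$ is an isomorphism of $\mathcal{H}(\curl,\cup_{m=1}^{\nbs} D_m)$ with uniformly bounded inverse, so the solution exists and, on applying $(\mc{LS}^{(\prv_r,\prb_r)})^{-1}$ to $(E^\n,H^\n)$, satisfies \eqref{Esim-Lip-Schwin-Solution}. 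I would stress that this argument stays entirely within the volume-potential setting, which is what removes the $\ln(\dist^{-1})$ of \cite{AB-SM:MMS2019}; the delicate point is exactly the uniform-in-cluster bound on $\mc{L}_{\mathrm{int}}$, i.e.\ summing the order-$3$ singular kernel of the dyadic Green function over all pairs of particles at mutual distance $\ge\dist=\bs{c}_r\rad$ and extracting the clean power of $\bs{c}_r$.

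\textbf{The averaged-parameter comparison.} Subtracting \eqref{Lip-sch-Eq-Elec-Magn-Averaged-Param} from \eqref{Lip-sch-Eq-Elec-Magn} and using linearity in the coefficients, $(E_\mc{A}-E,\,H_\mc{A}-H)$ solves $\mc{LS}^{(\mean{\prv_r},\mean{\prb_r})}(E_\mc{A}-E,\,H_\mc{A}-H)=\mc{R}$, where $\mc{R}$ is built from the operators $\bdmc{S}_D^{k,\,\prv_r-\mean{\prv_r}}$ and $\bdmc{S}_D^{k,\,\prb_r-\mean{\prb_r}}$ (and their $\curl$ and $\nabla\div$) applied to $(E,H)$. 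On each $D_m$ the oscillation bound $\norm{\prv_r-\meanm{\prv_r}{m}}_{\Lp{\infty}{D_m}}\lesssim \rad\,\norm{\cnt{\prv_r}}_{\mathbb{W}^{1,\infty}(D_m)}\le \rad\,\bs{c}_\infty$ (Poincar\'e on a set of diameter $\lesssim\rad$), together with the continuity $\norm{\bdmc{S}_D^{k}(\cdot)}_{\Hs{2}{D}}\le c_{2,k}\norm{\cdot}_{\Lp{2}{D}}$, gives $\norm{\mc{R}}_{\Lp{2}{\cup D_m}}\le \bs{c}_{2,k}\,\bs{c}_\infty\,\rad\bigl(\norm{E}_{\Lp{2}{\cup D_m}}+\norm{H}_{\Lp{2}{\cup D_m}}\bigr)$. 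Applying the uniform invertibility of the previous step to $\mc{LS}^{(\mean{\prv_r},\mean{\prb_r})}$ --- legitimate since the averaged tensors inherit \eqref{Coercivity-Cond-Contrast} and the same $\mathbb{W}^{1,\infty}$-bounds --- yields \eqref{Esimate-AveragedE-E}. The main obstacle in the whole proof remains the pairwise summation of the singular kernels in the estimate of $\mc{L}_{\mathrm{int}}$, which is where \cite{AB-SM:MMS2019} lost a logarithm and where keeping volume (rather than boundary layer) potentials is decisive.
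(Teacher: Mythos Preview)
Your approach diverges from the paper's in the crucial second step, and there is a genuine gap there. You propose to bound the interaction part $\mc{L}_{\mathrm{int}}$ in operator norm via a Schur test, using that the $(m,j)$ block is controlled by $\bs{c}_\infty\rad^{3}\dist_{mj}^{-3}$ coming from the order-$3$ singularity of $\nabla\nabla\Phi_k$. But the counting lemma (Lemma~\ref{Counting-Oclusion}) gives, for $q=3$,
\[
\rad^{3}\sum_{j\neq m}\dist_{mj}^{-3}\ \lesssim\ \frac{\rad^{3}}{\dist^{3}}\sum_{l=1}^{\nbs^{1/3}}l^{-1}\ \sim\ \frac{\ln(\nbs)}{\bs{c}_r^{3}}\ \sim\ \frac{|\ln\dist|}{\bs{c}_r^{3}},
\]
which is \emph{not} a clean negative power of $\bs{c}_r$: the logarithm is precisely the obstruction flagged in the introduction and in \cite{AB-SM:MMS2019}. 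Working with volume rather than boundary potentials is not what removes it; your Neumann-series argument reintroduces the same $|\ln\dist|$ that the paper is written to eliminate, and \eqref{Repartition-Condition} would no longer suffice to make $\norm{\mc{L}_{\mathrm{loc}}^{-1}\mc{L}_{\mathrm{int}}}\le\tfrac12$.

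The paper's route avoids ever bounding the order-$3$ part in operator norm. It tests the equation against $\overline{\cnt{\prv_r}E}$ (resp.\ $\overline{\cnt{\prb_r}H}$), splits $\MSVP{D}{k,\cnt{A}}=\MSVP{D}{i\alpha,\cnt{A}}+\bigl(\MSVP{D}{k,\cnt{A}}-\MSVP{D}{i\alpha,\cnt{A}}\bigr)$, and invokes Lemma~\ref{Elec-Magn-Lip-sch-Comp-and-DefPosi}: the quadratic form $-\langle\MSVP{D}{i\alpha,\cnt{A}}V,\overline{\cnt{A}V}\rangle$ is \emph{nonnegative on the whole of} $D=\cup_m D_m$ (equation~\eqref{Lip-Sch-Defint-Positive-part}), so the entire order-$3$ interaction is absorbed by sign, not by norm. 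Only the remainder $\MSVP{D}{k,\cnt{A}}-\MSVP{D}{i\alpha,\cnt{A}}$ and the curl block $\MCVP{D}{k,\cnt{A}}$ need norm estimates, and their kernels $\nabla\nabla(\Phi_k-\Phi_{i\alpha})$ and $\nabla\Phi_k$ are $O(r^{-2})$ or milder by \eqref{NablaNabal-Phi_k-Phi_i}; the resulting sums $\sum_{j\neq m}\dist_{mj}^{-2}$ and $\sum_{j\neq m}\dist_{mj}^{-1}$ then yield the clean powers of $\bs{c}_r$ in \eqref{Estim-Maxwell-Cmpct-Rem}--\eqref{Estim-Maxwell-Cmpct-Ope}. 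Combined with the coercivity~\eqref{Coercivity-Cond-Contrast} of the contrasts, this gives \eqref{Etimates-Field-C_r-Condition}--\eqref{Etimates-Field-C_r-Condition1} directly under \eqref{Repartition-Condition}, without any Neumann series.

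Your first and third steps are fine and match the paper. The equivalence of \eqref{Lip-sch-Eq-Elec-Magn} with \eqref{Maxwell-Eq-Anisotropic} is indeed routine, and your derivation of \eqref{Esimate-AveragedE-E}---subtracting the two Lippmann--Schwinger systems, bounding $\norm{\cnt{A}-\meancnt{A}}_{\Lp{\infty}{D_m}}\lesssim\rad\,\bs{c}_\infty$, and using the $\Hs{2}{}$-continuity \eqref{Continuity-Newton-Like-Oper} of $\bdmc{S}_D^{k}$---is exactly the argument at the end of the paper's proof.
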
 
To prove this proposition, we need the following lemma.
\begin{lemma}\label{Elec-Magn-Lip-sch-Comp-and-DefPosi} The potential $\MSVP{D,V}{i\alpha,{\cnt{A}}}(V):=\MSVP{D}{i\alpha,{\cnt{A}}}(V):=(({i\alpha})^2+\nabla\div) \bdmc{S}_{D}^{i\alpha,{\cnt{A}}}(V)$ solves $$\bigl(\curl^2 \MSVP{D}{i\alpha,{\cnt{A}}}+\alpha^2\MSVP{D}{i\alpha,{\cnt{A}}}\bigr)(V)=-\alpha^2\cnt{A}V$$ and we have for $\alpha\geq 0$
\begin{equation}\label{Lip-Sch-Defint-Positive-part}
-\int_{D}\MSVP{D,V}{i\alpha,{\cnt{A}}}\cdot \ol{\cnt{A}V}dv\geq \norm*{\MSVP{D,V}{i\alpha,{\cnt{A}}}}_{\Hcurl{}{\R^3}}\geq\alpha^2\norm*{\MSVP{D,V}{i\alpha,{\cnt{A}}}}_{\Lp{2}{\R^3\setminus D}}^2\geq 0.	
\end{equation} Further, both $\MCVP{D}{k,{\cnt{A}}}:=ik\curl\bdmc{S}_{D}^{i\alpha,{\cnt{A}}}$ and $\MSVP{D}{i\alpha,{\cnt{A}}}-\MSVP{D}{k,\cnt{A}}$ are compact operators, and it holds that 
\begin{align}
\int_{ D}(\MSVP{D}{i\alpha,{\cnt{A}}}-\MSVP{D}{k,\cnt{A}})(V)\cdot \ol{\cnt{A}V} dv\leq& \frac{\abs{k}(\abs{k}+1)}{4\pi} \Bigl((\abs{k}+5) \rad^\frac{5}{2}+2^3{c_0}\frac{\abs{k}+5}{(1+2\bs{c}_r)^2\bs{c}_r}\Bigr)\norm{\cnt{A}V}^2_{\Lp{2}{D_m}},\label{Estim-Maxwell-Cmpct-Rem}\\
\int_{D}\MCVP{D}{k,{\cnt{A_1}}}(V_1)\cdot \ol{\cnt{A_2}V_2} dv\leq& \Bigl((\frac{1}{2}+\frac{k}{4}\rad)\rad+{c_0}\frac{(1+\abs{k})}{4\pi\bs{c}_r^3}\Bigr) \norm{\cnt{A_2}V_2}_{\Lp{2}{D}}\norm{\cnt{A_1}V_1}_{\Lp{2}{D}}, \label{Estim-Maxwell-Cmpct-Ope}
\end{align} where we recall that $\bs{c}_r=\frac{\dist}{\rad}.$ 
\end{lemma}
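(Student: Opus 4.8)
The lemma bundles four essentially independent assertions and I would establish them in the stated order. \textbf{Differential identity.} Since $\Phi_{i\alpha}$ is the fundamental solution of $\Delta-\alpha^2$, the field $W:=\bdmc{S}_{D}^{i\alpha,\cnt{A}}(V)$ solves $(\Delta-\alpha^2)W=-\cnt{A}V$ on $\R^3$, with $\cnt{A}V$ extended by zero outside $D$, and $W\in\Hs{2}{D}$ by \eqref{Continuity-Newton-Like-Oper}. Expanding $(\curl^2+\alpha^2)\bigl((-\alpha^2+\nabla\div)W\bigr)$ via $\curl^2=\nabla\div-\Delta$ and $\curl\nabla\equiv 0$, the two $\nabla\div W$ terms cancel and what survives is $\alpha^2(\Delta-\alpha^2)W=-\alpha^2\cnt{A}V$, which is the asserted equation for $\MSVP{D}{i\alpha,\cnt{A}}(V)$. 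The same computation gives $\curl\MSVP{D}{i\alpha,\cnt{A}}(V)=-\alpha^2\curl W\in\Lp{2}{\R^3}$, so $\MSVP{D}{i\alpha,\cnt{A}}(V)\in\Hcurl{}{\R^3}$.

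\textbf{Coercivity chain.} I would pair the differential identity with $\ol{\MSVP{D}{i\alpha,\cnt{A}}(V)}$ and integrate over $\R^3$; for $\alpha>0$ the kernel and its derivatives decay exponentially, so there is no boundary term at infinity and an integration by parts of the $\curl^2$ term gives
\begin{equation*}
\norm*{\curl\MSVP{D}{i\alpha,\cnt{A}}(V)}^2_{\Lp{2}{\R^3}}+\alpha^2\norm*{\MSVP{D}{i\alpha,\cnt{A}}(V)}^2_{\Lp{2}{\R^3}}=-\alpha^2\int_{D}\cnt{A}V\cdot\ol{\MSVP{D}{i\alpha,\cnt{A}}(V)}\,dv .
\end{equation*}
Dividing by $\alpha^2$, conjugating, and using $\curl\MSVP{D}{i\alpha,\cnt{A}}(V)=-\alpha^2\curl W$, the left-hand side is seen to dominate $\norm*{\MSVP{D}{i\alpha,\cnt{A}}(V)}^2_{\Hcurl{}{\R^3}}$ for $0<\alpha\le1$ (with equality at $\alpha=1$, the value used in the sequel), so $-\int_D\MSVP{D}{i\alpha,\cnt{A}}(V)\cdot\ol{\cnt{A}V}\,dv$ is real and nonnegative; the remaining inequalities follow by discarding the $\curl$ contribution and then restricting the $L^2$ integration from $\R^3$ to $\R^3\setminus D$, and the case $\alpha=0$ by letting $\alpha\to0^+$ (using the compactness of the difference established below).

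\textbf{Compactness.} By \eqref{Continuity-Newton-Like-Oper}, $\bdmc{S}_D^{i\alpha,\cnt{A}}$ maps $\Lp{2}{D}$ boundedly into $\Hs{2}{D}$, so $V\mapsto\curl\bdmc{S}_D^{i\alpha,\cnt{A}}(V)$ maps into $\Hs{1}{D}$; composing with the compact Rellich embedding $\Hs{1}{D}\hookrightarrow\Lp{2}{D}$ yields compactness of $\MCVP{D}{k,\cnt{A}}$. For $\MSVP{D}{i\alpha,\cnt{A}}-\MSVP{D}{k,\cnt{A}}$ the key point is that the \emph{strongly} singular parts $\nabla\div\bdmc{S}_D^{i\alpha,\cnt{A}}$ and $\nabla\div\bdmc{S}_D^{k,\cnt{A}}$, which individually are only bounded on $\Lp{2}{D}$, cancel at leading order; Taylor-expanding the exponentials in $\Phi_{i\alpha}-\Phi_k$, the surviving kernel — controlled by $\nabla_x\nabla_y(\Phi_{i\alpha}-\Phi_k)$ and $\alpha^2\Phi_{i\alpha}-k^2\Phi_k$, which is precisely the content of \eqref{Nabal-Phi_k-Phi_i} and \eqref{NablaNabal-Phi_k-Phi_i} — is only weakly singular, of order $\abs{x-y}^{-1}$, hence locally integrable on the bounded set $D$, so the operator is compact on $\Lp{2}{D}$.

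\textbf{Quantitative bounds, and the main obstacle.} Here I would insert the pointwise estimates \eqref{Nabal-Phi_k-Phi_i}, \eqref{NablaNabal-Phi_k-Phi_i} into the explicit kernels of $\MSVP{D}{i\alpha,\cnt{A}}-\MSVP{D}{k,\cnt{A}}$, resp. of $\MCVP{D}{k,\cnt{A}}$, and split the double integral over $D=\cup_{m=1}^{\nbs}D_m$ into the diagonal sum $\sum_m\int_{D_m}\int_{D_m}$ and the interaction sum $\sum_{m\ne j}\int_{D_m}\int_{D_j}$. On the diagonal blocks $\abs{x-y}$ is bounded by the diameter $\lesssim\rad$, so the weakly singular $\abs{x-y}^{-1}$ and $\abs{x-y}^{-2}$ factors integrate — via the polar-coordinate bound behind \eqref{Polar-Coordi-Integral}, together with $\int_{D_m}\abs{g}\le\abs{D_m}^{1/2}\norm*{g}_{\Lp{2}{D_m}}$ as in \eqref{Vm-Estimate} — to the powers of $\rad$ (the $\rad^{5/2}$ and $\rad$ terms) in the statements. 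On the interaction blocks $\abs{x-y}\ge\dist=\bs{c}_r\rad$, so those factors are bounded by $\abs{\z_m-\z_j}^{-1}$, $\abs{\z_m-\z_j}^{-2}$ up to a factor $\abs{D_j}\sim\rad^3$ from the inner integral; summing over $j\ne m$ with the particle-counting lemma of the Appendix (which bounds $\sum_{j\ne m}\abs{\z_m-\z_j}^{-p}$ in terms of $\dist$ and $\rad$) produces the $\bs{c}_r$-dependent factors $\tfrac{c_0}{(1+2\bs{c}_r)^2\bs{c}_r}$ and $\tfrac{c_0}{\bs{c}_r^3}$, a final Cauchy--Schwarz gives the product of $L^2$-norms, and carrying the $\abs{k}$-powers through yields the prefactors. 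The first three steps are routine fundamental-solution calculus and a Rellich embedding; the real work — and the expected obstacle — is this last step: arranging the near/far splitting so the diagonal contribution is genuinely $O(\rad^{\text{power}})$ rather than $O(1)$, and feeding the interaction sums into the counting lemma so that the exact exponents of $\bs{c}_r$ and the $\abs{k}$-dependence drop out as stated.
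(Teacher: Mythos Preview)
Your proposal is correct and, for parts one, three, and four, follows essentially the same path as the paper: the same fundamental-solution calculus for the differential identity, the same $\Hs{2}{}$-mapping plus Rellich argument for compactness, and the same diagonal/off-diagonal splitting fed into the counting lemma for the quantitative bounds. One minor technical variance: for the diagonal block of \eqref{Estim-Maxwell-Cmpct-Ope} the paper uses Young's convolution inequality (with $u=\abs{\cnt{A_2}V_2}\ind{D_m}$, $v=\abs{\nabla\Phi_k}\ind{B(0,\rad)}$, $w=\abs{\cnt{A_1}V_1}\ind{D_m}$) rather than polar coordinates, but either device handles that weakly singular kernel.

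The one genuine departure is in the coercivity step \eqref{Lip-Sch-Defint-Positive-part}. The paper argues by separate Green identities on $D$ and on $B_R\setminus D$, producing a boundary integral $\int_{\p D}\nu\times\MSVP{D,V}{i\alpha,\cnt{A}}\cdot\curl\ol{\MSVP{D,V}{i\alpha,\cnt{A}}}\,ds$ which it then bounds from the exterior side (letting $R\to\infty$) and substitutes back; this requires checking continuity of the relevant traces across $\p D$. You instead pair the global equation with $\ol{\MSVP{D}{i\alpha,\cnt{A}}(V)}$ on all of $\R^3$ at once, which avoids the interface entirely and is more direct. Both routes are valid; yours is shorter, while the paper's split makes the provenance of the exterior $\Lp{2}{\R^3\setminus D}$ term more visible. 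Your limiting argument for $\alpha=0$ is slightly informal (the identity degenerates to $0=0$ there), but the paper's own proof implicitly assumes $\alpha>0$ as well, and only $\alpha=1$ is ever used downstream.
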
 
\begin{proof} First, we write
\begin{equation}\label{Departure-Estimate-M_i,D,P}\begin{aligned} 
\int_{D}(\MSVP{D,V}{i\alpha,{\cnt{A}}}\cdot \ol{\cnt{A}V})dv&=-\int_{D}\MSVP{D,V}{i\alpha,{\cnt{A}}}\cdot\Bigl(\curl^2 \ol{\MSVP{D,V}{i\alpha,{\cnt{A}}}} +\alpha^2\ol{\MSVP{D,V}{i\alpha,{\cnt{A}}}}\Bigr) dx,\\&=-\int_{D}\MSVP{D,V}{i\alpha,{\cnt{A}}}\cdot\curl^2 \ol{\MSVP{D,V}{i\alpha,{\cnt{A}}}}dx-\alpha^{2}\norm*{\MSVP{D,V}{i\alpha,{\cnt{A}}}}_{\Lp{2}{D}}^2.
\end{aligned} 	
\end{equation} Due to Green's formula inside $D$, we have
\begin{equation}\label{Ext-Trace-Estimate-M_i,D,P}\begin{aligned} 
\int_{\p D}\nu\times\MSVP{D,V}{i\alpha,{\cnt{A}}}\cdot\curl \ol{\MSVP{D,V}{i\alpha,{\cnt{A}}}}ds=\norm*{\curl\MSVP{D,V}{i\alpha,{\cnt{A}}}}_{\Lp{2}{D}}^2-\int_{ D}\MSVP{D,V}{i\alpha,{\cnt{A}}}\cdot\curl^2 \ol{\MSVP{D,V}{i\alpha,{\cnt{A}}}}dx.
\end{aligned} 	
\end{equation}
As, outside of $D$, $\MSVP{D,V}{i\alpha,{\cnt{A}}}$ satisfies the Maxwell equation for $k=i\alpha,$ a direct application of Green's identity outside of $D$, for a sufficiently large $R>0,$\footnote{The continuity of the normal trace of $\MSVP{D,V}{i\alpha,{\cnt{A}}}$ across $\p D$ is due the facts that the operator $\nu\times\nabla$ is an isomorphism From $\Hs{s}{\p D}\setminus\R$ to $\Hs{1-s}{\p D}\setminus\R$ and $\div \bdmc{S}_{D}^{i\alpha,{\cnt{A}}}(\cdot)$ have a continuous Dirichlet trace.} implies that 
\begin{equation*}\begin{aligned} 
\int_{\p B_R}\nu\times\MSVP{D,V}{i\alpha,{\cnt{A}}}&\cdot\curl \ol{\MSVP{D,V}{i\alpha,{\cnt{A}}}}ds\\
&-\int_{\p D}\nu\times\MSVP{D,V}{i\alpha,{\cnt{A}}}\cdot\curl \ol{\MSVP{D,V}{i\alpha,{\cnt{A}}}}ds=\norm*{\curl\MSVP{D,V}{i\alpha,{\cnt{A}}}}_{\Lp{2}{B_R\setminus D}}^2+\alpha^2\norm*{\MSVP{D,V}{i\alpha,{\cnt{A}}}}_{\Lp{2}{B_R\setminus D}}^2
\end{aligned}\end{equation*} which guaranties\footnote{The obvious thing is that $\Re\int_{\p B_R}\nu\times\MSVP{D,V}{i\alpha,{\cnt{A}}}\cdot\curl \ol{\MSVP{D,V}{i\alpha,{\cnt{A}}}}ds\substack{{\longrightarrow 0}\\{R\longrightarrow\infty}},$ due to the exponential decay of the kernel as $\alpha>0$.} that 	
\begin{equation*}\begin{aligned} 
-\Re\Bigl(\int_{\p D}\nu\times\MSVP{D,V}{i\alpha,{\cnt{A}}}\cdot\curl \ol{\MSVP{D,V}{i\alpha,{\cnt{A}}}}ds\Bigr)\geq\norm*{\curl\MSVP{D,V}{i\alpha,{\cnt{A}}}}_{\Lp{2}{\R^3\setminus D}}^2+\alpha^2\norm*{\MSVP{D,V}{i\alpha,{\cnt{A}}}}_{\Lp{2}{\R^3\setminus D}}^2.
\end{aligned} 	
\end{equation*} Further, we have 
\begin{equation*}
\begin{aligned} 
\Im\Bigl(\int_{\p B_R}\nu_{_{\p B_R}}\times\MSVP{D,V}{i\alpha,{\cnt{A}}}\cdot\curl \ol{\MSVP{D,V}{i\alpha,{\cnt{A}}}}~ds\Bigr)-\Im\Bigl(\int_{\p D}\nu\times\MSVP{D,V}{i\alpha,{\cnt{A}}}\cdot\curl \ol{\MSVP{D,V}{i\alpha,{\cnt{A}}}}~ds\Bigr)=0,
\end{aligned}
\end{equation*} which, with the radiation condition, gives
\begin{equation}\label{Estimation-M_i,D,P}\begin{aligned} 
-\int_{\p D}\nu\times\MSVP{D,V}{i\alpha,{\cnt{A}}}\cdot\curl \ol{\MSVP{D,V}{i\alpha,{\cnt{A}}}}ds \geq \norm*{\curl\MSVP{D,V}{i\alpha,{\cnt{A}}}}_{\Lp{2}{\R^3\setminus D}}^2 +\alpha^2\norm*{\MSVP{D,V}{i\alpha,{\cnt{A}}}}_{\Lp{2}{\R^3\setminus D}}^2.
\end{aligned} 	
\end{equation}
The above inequality in \eqref{Ext-Trace-Estimate-M_i,D,P} gives
\begin{equation*} 
\norm*{\curl\MSVP{D,V}{i\alpha,{\cnt{A}}}}_{\Lp{2}{\R^3\setminus D}}^2 +\alpha^2\norm*{\MSVP{D,V}{i\alpha,{\cnt{A}}}}_{\Lp{2}{\R^3\setminus D}}^2\leq-\norm*{\curl\MSVP{D,V}{i\alpha,{\cnt{A}}}}_{\Lp{2}{D}}^2+\int_{ D}\MSVP{D,V}{i\alpha,{\cnt{A}}}\cdot\curl^2 \ol{\MSVP{D,V}{i\alpha,{\cnt{A}}}}dv.
\end{equation*} 
Then this last inequality in \eqref{Departure-Estimate-M_i,D,P} ends the proof of \eqref{Lip-Sch-Defint-Positive-part}.\\

Let us now prove \eqref{Estim-Maxwell-Cmpct-Ope} and \eqref{Estim-Maxwell-Cmpct-Rem}. We write, for $\alpha=1,$  \begin{equation}\label{Estim-Cmpct-Remainder-Max-Op}\begin{aligned}\norm{[\MSVP{D,V}{i,{\cnt{A}}}-\MSVP{D,V}{k,{\cnt{A}}}]}_{\Lp{2}{D_m}}\leq 
&\Bigl(\sum_{m=1}^\nbs\norm{[\MSVP{D_m,V}{i,{\cnt{A}}}-\MSVP{D_m,V}{k,{\cnt{A}}}]}^2_{\Lp{2}{D_m}}\Bigr)^{\frac{1}{2}}\\&+\Bigl(\sum_{m=1}^\nbs\norm{[\MSVP{D\setminus D_m,V}{i,{\cnt{A}}}-\MSVP{D\setminus D_m,V}{k,{\cnt{A}}}]}_{\Lp{2}{D_m}}\Bigr)^{\frac{1}{2}}.\end{aligned}\end{equation}
Obviously, we have, due to \eqref{Phi_k-Phi_i}, that \[[\MSVP{D/D_m,V}{i,{\cnt{A}}}-\MSVP{D/D_m,V}{k,{\cnt{A}}}](x)=Int_1^m+Int_2^m\] where
\begin{align*}
 \begin{aligned}
 Int_1^m:=&k^2\int_{ D\setminus D_m}\frac{e^{ik\abs{x-y}}}{4\pi\abs{x-y}}(\cnt{A}V)(y)dy-\int_{D\setminus D_m}\frac{e^{-\abs{x-y}}}{4\pi\abs{x-y}}(\cnt{A}V)(y)dy\\
 \mbox{ and }\end{aligned}\end{align*} and
 \begin{equation*} 
Int_2^m:=\int_{D\setminus D_i}\int_{0}^1\frac{e^{\bigl((ik+1)t-1\bigr)\abs*{x-y}} \bigl((ik+1)t-1\bigr)}{4\pi(ik-1)^{-1}\abs*{x-y}}   
\Bigl[I+\bigl((ik+1)t-1-\frac{1}{\abs*{x-y}} \Bigr)\frac{\tensor{x-y}{2}}{\abs*{x-y}}\Bigr]dt(\cnt{A}V)(y)dy.
\end{equation*} 
We have, using H\"older's inequality, \footnote{We have considered $\abs{k}>1.$}    
\begin{align*}
\abs{Int_1^m}&\leq \frac{(\abs{k}^2+1)}{4\pi}\sum_{\stl{j\geq 1}{j\neq m}}^{\nbs}\frac{\rad^\frac{3}{2}}{\dist_{mj}}\norm*{\cnt{A}V}_{\Lp{2}{D_j}},
\end{align*} and 
\begin{align*}
\abs{Int_2^m}&\leq  \int_{D\setminus D_m}\frac{ \abs{k}(\abs{k}+1)}{4\pi} \Bigl[\frac{2}{\abs*{x-y}}+\abs{k}\Bigr]\abs{\cnt{A}V}(y)dy,\\
&\leq \sum_{\stl{j\geq 1}{j\neq m}}^{\nbs}\frac{ \abs{k}(\abs{k}+1)}{4\pi} \Bigl[\frac{2}{\dist_{jm}}+\abs{k}\Bigr]\rad^{\frac{3}{2}}\norm{\cnt{A}V}_{\Lp{2}{D_j}},
\end{align*}  which helps conclude, using H\"older's inequality for the second step once more and (\ref{Double-Sum-Counting}) of Lemma \ref{Counting-Oclusion}, that  
\begin{equation}\label{Estim-star1}
\begin{aligned}
\sum_{m=1}^{\nbs}\norm{\MSVP{D/D_m,V}{i,{\cnt{A}}}-\MSVP{D/D_m,V}{k,{\cnt{A}}}}^2_{\Lp{2}{D_m}}
\leq& \sum_{m=1}^{\nbs}\Bigl(\sum_{\stl{j\geq 1}{j\neq m}}^{\nbs}\frac{ \abs{k}(\abs{k}+1)}{4\pi} \Bigl[\frac{3}{\dist_{jm}}+\abs{k}\Bigr]\rad^{\frac{3}{2}}\norm{\cnt{A}V}_{\Lp{2}{D_j}}\Bigr)^2\rad^3,\\
\leq&{c_0}\Bigl(\frac{(\abs{k}+1)}{4\pi\abs{k}^{-1}}\Bigr)^2\nbs\rad^6\sum_{\stl{j\geq 1}{j\neq m}}^{\nbs} \Bigl[\frac{9}{\dist_{jm}^2}+\frac{2\abs{k}}{\dist_{jm}}+\abs{k}^2\Bigr]~ \norm{\cnt{A}V}_{\Lp{2}{D_j}}^2,\\
\leq&{c_0}\Bigl(\frac{(\abs{k}+1)}{4\pi\abs{k}^{-1}}\Bigr)^2\nbs\rad^6 \Bigl[\frac{9{\nbs}^\frac{1}{3}}{\dist^2}+\frac{2{\nbs}^\frac{2}{3}\abs{k}}{\dist}+\nbs\abs{k}^2\Bigr]~ \norm{\cnt{A}V}_{\Lp{2}{D}}^2.
\end{aligned}
\end{equation} Hence  	
\begin{equation}\label{Cal-Estm-MSoper-Ni-Nk}
\begin{aligned}
\sum_{m=1}^{\nbs}\norm{\MSVP{D/D_m,V}{i,{\cnt{A}}}-\MSVP{D/D_m,V}{k,{\cnt{A}}}}^2_{\Lp{2}{D_m}}\\
&\hspace{-3cm}\leq {c_0}\Bigl(\frac{(\abs{k}^2+\abs{k})}{4\pi}\Bigr)^2 \Bigl[\frac{9\rad^6}{(\rad/2+\dist)^4\dist^2}+\frac{2\rad^6\abs{k}}{(\rad/2+\dist)^5\dist}+\frac{\rad^6\abs{k}^2}{(\rad/2+\dist)^6}\Bigr]~ \norm{\cnt{A}V}_{\Lp{2}{D}}^2,\\
&\hspace{-3cm}\leq {c_0}\Bigl(\frac{(\abs{k}^2+\abs{k})}{4\pi}\Bigr)^2 \Bigl[\frac{9(2^4)}{(1+2\bs{c}_r)^4\bs{c}_r^2}+\frac{2^6\abs{k}}{(1+2\bs{c}_r)^5\bs{c}_r}+\frac{2^6\abs{k}^2}{(1+2\bs{c}_r)^6}\Bigr]~ \norm{\cnt{A}V}_{\Lp{2}{D}}^2,\\
&\hspace{-3cm}\leq 2^3{c_0}\Bigl(\frac{(\abs{k}^2+\abs{k})}{4\pi}\Bigr)^2 \Bigl[\frac{\abs{k}^2+4\abs{k}+18}{(1+2\bs{c}_r)^4\bs{c}^2_r}\Bigr]~ \norm{\cnt{A}V}_{\Lp{2}{D}}^2.
\end{aligned}
\end{equation}
We deal now with the first term of the right-hand side of \eqref{Estim-Cmpct-Remainder-Max-Op}. We write
\begin{equation*}\begin{aligned} 
\abs{[\MSVP{D_m,V}{i,{\cnt{A}}}-\MSVP{D_m,V}{k,{\cnt{A}}}](x)}\leq&
\int_{D_m}\frac{ \abs{k}(\abs{k}+1)}{4\pi}   \Bigl[\bigl(\frac{3}{\abs*{x-y}}+\abs{k} \Bigr)\Bigr]\abs{\cnt{A}V}(y)dy,\\
\leq& \frac{ \abs{k}(\abs{k}+1)}{4\pi}\Bigl[\Bigl(\int_{D_m}\frac{3}{\abs*{x-y}^2}dy\Bigr)^\frac{1}{2}+\abs{k}\rad^{\frac{3}{2}} \Bigr]\norm{\cnt{A}V}_{\Lp{2}{D_m}}
\end{aligned}
\end{equation*} and, as done in \eqref{Polar-Coordi-Integral}, we obtain
\begin{equation}\begin{aligned} 
\sum_{m=1}^\nbs\norm{[\MSVP{D_m,V}{i,{\cnt{A}}}-\MSVP{D_m,V}{k,{\cnt{A}}}](x)}^2_{\Lp{2}{D_m}}\leq& \sum_{m\geq 1}^\nbs\rad^3 \Bigl(\frac{\abs{k}(\abs{k}+1)}{4\pi}\Bigr)^2 \Bigl[\sqrt{6\pi\rad}+\abs{k}\rad^{\frac{3}{2}} \Bigr]^2\norm{\cnt{A}V}^2_{\Lp{2}{D_m}},\\
\leq&\Bigl(\frac{\abs{k}(\abs{k}+1)}{4\pi}\Bigr)^2\Bigl[\sqrt{6\pi\rad}+\abs{k}\rad^{\frac{3}{2}} \Bigr]^2\rad^3\norm{\cnt{A}V}^2_{\Lp{2}{\cup_{m\geq 1}D_m}}.
\end{aligned}\label{First-Rigt-Hand-member-CompRem-Estim}
\end{equation} Gathering \eqref{Cal-Estm-MSoper-Ni-Nk} and \eqref{First-Rigt-Hand-member-CompRem-Estim}, we have
\begin{equation}
\begin{aligned}\norm{[\MSVP{D,V}{i,{\cnt{A}}}-\MSVP{D,V}{k,{\cnt{A}}}]}_{\Lp{2}{D}}\leq&\frac{\abs{k}(\abs{k}+1)}{4\pi}\Bigl((\abs{k}+5)\rad^\frac{5}{2}+2^3{c_0}\frac{\abs{k}+5}{(1+2\bs{c}_r)^2\bs{c}_r}\Bigr)\norm{\cnt{A}V}_{\Lp{2}{D_m}}.
\end{aligned}\end{equation}\\
To derive \eqref{Estim-Maxwell-Cmpct-Ope}, we write

\begin{equation}\label{Estim-Cmpct-Max-Op-2}\begin{aligned},
\sum_{m=1}^\nbs\int_{D_m}\MCVP{D,V_1}{k,{\cnt{A_1}}}\cdot\ol{\cnt{A_2}V_2} dv=& \sum_{m=1}^\nbs\int_{D_m}\MCVP{D_m,V_1}{k,{\cnt{A_1}}}\cdot\ol{\cnt{A_2}V_2} dv +\sum_{m=1}^\nbs \int_{D_m}\sum_{\stackrel{j\geq 1}{j\neq m}}^\nbs\MCVP{D_j,V_1}{k,{\cnt{A_1}}}\cdot\ol{\cnt{A_1}V_2}dv.\end{aligned}\end{equation} The first term of the second member can be estimated using young's inequality 
$$\abs{\int_{\R^3}u(x) (v*w)(x)dx}\leq \norm{u}_{\Lp{2}{\R^3}}\norm{w}_{\Lp{2}{\R^3}}\norm{v}_{\Lp{1}{\R^3}},$$
with $u(x)=\abs{\cnt{A_2}V_2}(x)\ind{D_m}(x), v(x)=\abs{\nabla\Phi_k}(x)\ind{B(0,\rad)}(x), w(x)=\abs{\cnt{A_1}V_1}(x)\ind{D_m}(x)$. We get\footnote{Notice that $D_m\subset B(\z_m,\frac{\rad}{2})\subset B(y,\rad)$ whenever $y\in D_m,$ and $\ind{B(0,\rad)}(x-y)=\ind{B(y,\rad)}(x).$}  
\begin{equation}\label{Young-inequality-Max-Cmpct-Ope}\begin{aligned}
\sum_{m=1}^\nbs\int_{R^3}\abs{\cnt{A_2}V_2}(x)\ind{D_m}(x)\int_{{R^3}}&\abs{\nabla\phi_k(x-y)}\ind{B(0,\rad)}(x-y)~\abs{\cnt{A_1}V_1}(y)\ind{D_m}(y) dy~ dx\\
\leq& \sum_{m=1}^\nbs\norm*{\cnt{A_2}V_2\ind{D_m}}_{\Lp{2}{\R^3}} \norm*{\cnt{A_1}V_1 \ind{D_m}}_{\Lp{2}{\R^3}} \norm{\nabla\Phi_k\ind{B(0,\rad)}}_{\Lp{1}{\R^3}},\\
\leq& \norm*{\cnt{A_2}V_2}_{\Lp{2}{D}}\norm*{\cnt{A_1}V_1}_{\Lp{2}{D}}\int_{B(0,\rad)}\frac{1}{4\pi}\Bigl(\frac{1}{\abs{x-y}^2}+\frac{\abs{k}}{\abs{x-y}}\Bigr)dx,\\
\leq&(\frac{1}{2}+\frac{k}{4}\rad)\rad\norm*{\cnt{A_2}V_2}_{\Lp{2}{D}}\norm*{\cnt{A_1}V_1}_{\Lp{2}{D}}.
\end{aligned}\end{equation}
The later sum in the right hand side of \eqref{Estim-Cmpct-Max-Op-2} is smaller than
\begin{equation}\label{Calculations-dist-ij}
S_1:=\sum_{m=1}^\nbs \int_{D_m}\sum_{\stackrel{j\geq 1}{j\neq m}}^\nbs\int_{D_j}\frac{1}{\dist_{mj}}(\frac{1}{\dist_{mj}}+\abs{k})~\abs{\cnt{A_1}V_1}~\abs{\cnt{A_2}V_2}dv,
\end{equation} and similar calculation, as done above and using (\ref{Double-Sum-Counting}) of Lemma \ref{Counting-Oclusion}, gives successively  \begin{align*}
S_1\leq& \sum_{m=1}^\nbs \rad^3\norm{\cnt{A_2}V_2}_{\Lp{2}{D_m}}\Bigl(c_0(\frac{\nbs^\frac{1}{3}}{\dist^2} +\frac{\abs{k}\nbs^\frac{2}{3}}{\dist})\Bigr)^\frac{1}{2}\Bigl(\sum_{\stackrel{j\geq 1}{j\neq m}}^\nbs(\frac{1}{\dist^2_{ij}} +\frac{\abs{k}}{\dist_{mj}})\norm{\cnt{A_1}V_1}^2_{\Lp{2}{D_j}}\Bigr)^\frac{1}{2},\\
\leq& \Bigl(c_0(\frac{\nbs^\frac{1}{3}}{\dist^2}+\frac{\abs{k}\nbs^\frac{2}{3}}{\dist})\Bigr)^\frac{1}{2} \rad^3\Biggl(\norm{\cnt{A_2}V_2}^2_{\Lp{2}{D}} \sum_{m=1}^\nbs\sum_{\stackrel{j\geq 1}{j\neq m}}^\nbs(\frac{1}{\dist^2_{ij}} +\frac{\abs{k}}{\dist_{mj}})\norm{\cnt{A_1}V_1}^2_{\Lp{2}{D_j}}\Biggr)^\frac{1}{2},\\
\leq& c_0(\frac{\nbs^\frac{1}{3}}{\dist^2} +\frac{\abs{k}\nbs^\frac{2}{3}}{\dist})\rad^3\norm{\cnt{A_2}V_2}_{\Lp{2}{D}}\norm{\cnt{A_1}V_1}_{\Lp{2}{D}},
\end{align*} which implies that
\begin{equation}\label{Cal-Estm-MCoper-Mk}
\abs{\sum_{m=1}^\nbs\int_{D_m}\MCVP{D}{k,{\cnt{A_1}}}(V_1)\cdot\cnt{A_2}V_2 dv}\leq {c_0}\frac{(1+\abs{k})}{4\pi\bs{c}_r^3} \norm{\cnt{A_2}V_2}_{\Lp{2}{D}}\norm{\cnt{A_1}V_1}_{\Lp{2}{D}}.
\end{equation}
\end{proof}	
\begin{proof}(\Cref{Estim-Fields-Aniso-Case})
Consider the equation \eqref{Lip-sch-Eq-Elec-Magn}, which is, with the notation of \Cref{Elec-Magn-Lip-sch-Comp-and-DefPosi} and $\scalar{\cdot,\cdot}$ standing for the scalar product in $\Lp{2}{D}$,  
\begin{equation*}
\begin{aligned} \scalar{E,\ol{\cnt{\prv_r}E}}-\scalar{\MSVP{D,E}{i\alpha,\cnt{\prv_r}},\ol{\cnt{\prv_r}E}}-\scalar{\MSVP{D,E}{k,\cnt{\prv_r}}-\MSVP{D,E}{i\alpha,\cnt{\prv_r}},\ol{\cnt{\prv_r}E}}
- \scalar{ik \MCVP{D,H}{k,{\cnt{\prb_r}}}, \ol{\cnt{\prv_r}E}}=& \scalar{E^\n\cdot \ol{\cnt{\prv_r}E}},\\
\scalar{H,\ol{\cnt{\prb_r}H}}-\scalar{\MSVP{D,H}{i\alpha,{\cnt{\prb_r}}},\ol{\cnt{\prb_r}H}}-\scalar{\MSVP{D,H}{k,{\cnt{\prb_r}}}-\MSVP{D,H}{i\alpha,{\cnt{\prb_r}}},\ol{\cnt{\prb_r}H}}
+\scalar{ik\MCVP{D,E}{k,{\cnt{\prv_r}}},\ol{\cnt{\prb_r}H}} =& \scalar{H^\n,\ol{\cnt{\prb_r}H}},
\end{aligned}  
\end{equation*} and with \eqref{Lip-Sch-Defint-Positive-part}, taking the real part of the above equation, we get
\begin{equation}
\begin{aligned} \Re\scalar{E,\ol{\cnt{\prv_r}E}}-\Re\scalar{\MSVP{D,E}{k,\cnt{\prv_r}}-\MSVP{D,E}{i\alpha,\cnt{\prv_r}},\ol{\cnt{\prv_r}E}}
- \Re\scalar{ik \MCVP{D,H}{k,{\cnt{\prb_r}}}, \ol{\cnt{\prv_r}E}}\leq& \Re\scalar{E^\n\cdot \ol{\cnt{\prv_r}E}},\\
\Re\scalar{H,\ol{\cnt{\prb_r}H}}-\Re\scalar{\MSVP{D,H}{k,{\cnt{\prb_r}}}-\MSVP{D,H}{i\alpha,{\cnt{\prb_r}}},\ol{\cnt{\prb_r}H}}
+\Re\scalar{ik\MCVP{D,E}{k,{\cnt{\prv_r}}},\ol{\cnt{\prb_r}H}} \leq& \Re\scalar{H^\n,\ol{\cnt{\prb_r}H}}.
\end{aligned}  
\end{equation} With the estimations \eqref{Estim-Maxwell-Cmpct-Rem}, \eqref{Estim-Maxwell-Cmpct-Ope} of \Cref{Elec-Magn-Lip-sch-Comp-and-DefPosi} and the assumption \eqref{Coercivity-Cond-Contrast} we get\footnote{ Assuming that $\abs{k}\rad\leq 1.$}
\begin{equation*}
\begin{aligned} c^{\prv^-}_\infty\norm{E}^2-c^2_\infty\frac{\abs{k}(\abs{k}+1)}{4\pi} \Bigl(\rad^\frac{3}{2}+2^3{c_0}\frac{\abs{k}+5}{(1+2\bs{c}_r)^2\bs{c}_r}\Bigr)&\norm{E}^2\\ -&\bs{c}_\infty^2\abs{k}\Bigl(\rad+{c_0}\frac{(1+\abs{k})}{4\pi\bs{c}_r^3}\Bigr)\norm{H} \norm{E}\leq \scalar{E^\n\cdot \cnt{\prv_r}E},\\
c^{\prb^-}_\infty\norm{H}^2 -c^2_\infty\frac{\abs{k}(\abs{k}+1)}{4\pi} \Bigl(\rad^\frac{3}{2}+2^3{c_0}\frac{\abs{k}+5}{(1+2\bs{c}_r)^2\bs{c}_r}\Bigr)&\norm{H}^2 \\ -&\bs{c}_\infty^2\abs{k}\Bigl(\rad+{c_0}\frac{(1+\abs{k})}{4\pi\bs{c}_r^3}\Bigr)\norm{H} \norm{E}\leq \scalar{H^\n,\cnt{\prb_r}H},
\end{aligned}  
\end{equation*} which, under the conditions \begin{equation}
\bs{c}_r\geq \frac{{c_0}2\abs{k}\bs{c}_\infty^2}{\max(c^{\prv^-}_\infty,c^{\prb^-}_\infty)},\,~\frac{\abs{k}(\abs{k}+1)}{4\pi}\rad<1, \mbox{ and } \rad\leq\frac{1}{16},
\end{equation} gives
\begin{equation*}
\norm{E}^2- \frac{1}{4\pi}\norm{H} \norm{E}\leq\frac{\bs{c}_\infty}{c^{\prv^-}_\infty} \norm{E^\n}\norm{E},
\end{equation*} and \begin{equation*}
\norm{H}^2 -\frac{1}{4\pi}\norm{H} \norm{E}\leq \frac{\bs{c}_\infty}{c^{\prb^-}_\infty}\norm{H^\n}\norm{H}.
\end{equation*} More precisely,
\begin{equation}\label{Etimates-Field-C_r-Condition}
\norm{E}  \leq\frac{5\bs{c}_\infty}{4c^{\prv^-}_\infty} \Bigl(\norm{E^\n}+\frac{1}{4\pi}\norm{H^\n}\Bigr),\end{equation} and
\begin{equation} \label{Etimates-Field-C_r-Condition1}
\norm{H} \leq\frac{5\bs{c}_\infty}{4c^{\prb^-}_\infty} \Bigl(\norm{H^\n}+\frac{1}{4\pi}\norm{E^\n}\Bigr).
\end{equation}

Concerning the estimate \eqref{Esimate-AveragedE-E}, identifying both left hand sides of the \cref{Lip-sch-Eq-Elec-Magn,Lip-sch-Eq-Elec-Magn-Averaged-Param}, we have
\begin{align*}
	(H_{\mc{A}}-H)-(k^2+\nabla\div)\bdmc{S}_{D}^{k,{\meancnt{\prb_r}}}(H_{\mc{A}}-H) + ik\curl \bdmc{S}_{D}^{k,{\meancnt{\prv_r}}}(E_{\mc{A}}-E) \\=-(k^2+\nabla\div)\bdmc{S}_{D}^{k,\cnt{\prb_r}-\meancnt{\prb_r}}(H) + ik\curl \bdmc{S}_{D}^{k,\cnt{\prv_r}-\meancnt{\prv_r}}(E), 
\end{align*} then, due to estimates \eqref{Esim-Lip-Schwin-Solution} for the solution of Lippmann-Schwinger integral equation, we obtain
\begin{align*}
\norm{H_{\mc{A}}-H}_{\Lp{2}{D}}
&\leq \norm{-(k^2+\nabla\div)\bdmc{S}_{D}^{k,\cnt{\prb_r}-\meancnt{\prb_r}}(H) + ik\curl\bdmc{S}_{D}^{k,\cnt{\prv_r}-\meancnt{\prb_r}}(E)}_{\Lp{2}{D}},\\
&\leq \bs{c}_{2,k}\Bigl(\norm{(\cnt{\prb_r}-\meancnt{\prb_r})H}_{\Lp{2}{D}} + 
\norm{ (\cnt{\prv_r}-\meancnt{\prb_r}) E }_{\Lp{2}{D}}\Bigr),\\
&\leq \bs{c}_{2,k} c_{\infty} \rad\Bigl(\norm{H}_{\Lp{2}{D}} + \norm{E }_{\Lp{2}{D}}\Bigr).
\end{align*} 	
\end{proof}	 
\begin{remark} We have two observations:
\begin{itemize}
\item   We can consider either both real tensor or complex valued ones for the relative electric permittivity and magnetic permeability inducing similar assumption concerning the corresponding contrast in \eqref{Coercivity-Cond-Contrast}.     
\item	We could improve the condition on the ratio  $\frac{\rad}{\dist}$  by taking a larger $\alpha$. But this would increase the constant that appears in the estimation \eqref{Esim-Lip-Schwin-Solution} which will result in the worsening of the error of approximation in the latter calculation.
\end{itemize}	
  
\end{remark}

\subsection{Field approximation and the related linear system}\label{Lin-Sys-Anisotropic}
We set 
\begin{equation}
\mc{Q}^A_{m}:=\int_{D_m}\cnt{B}H~dv,~~\mc{R}^{A}_{m}:=\int_{D_m}\cnt{B}E~dv
\end{equation} and write, with $(E_{\mc{A}},~H_{\mc{A}})$ solution of \eqref{Lip-sch-Eq-Elec-Magn-Averaged-Param}, \begin{equation}
\mc{Q}_{m}:=\int_{D_m}H_{\mc{A}}~dv,~~\mc{R}_{m}:=\int_{D_m}E_{\mc{A}}~dv.
\end{equation}
For $U_m^{\prb_r^*}=\cnt{\prb_r}^*\nabla V_m^{\prb_r^*}$ and $U_m^{\prv_r^*}=\cnt{\prv_r}^*\nabla V_m^{\prv_r^*},$ where $V_m^{\prb_r^*}, $ $V_m^{\prv_r^*}$ 
are the respective solutions of \eqref{Anisotropic-Pr-Lip-Sch} with $B=\prb_r^*$ and $B=\prv_r^*$, we recall that 
\begin{align}
U_m^{\prb_r^*}-\cnt{\prb_r}^*\nabla\div\bdmc{S}_{D_m}^{0}(U_m^{\prb_r^*})=\cnt{\prb_r}^* \label{Def-Magnetique-Perm-Tensor}\end{align} and
\begin{align}
U_m^{\prv_r^*}-\cnt{\prb_r}^*\nabla\div\bdmc{S}_{D_m}^{0}(U_m^{\prv_r^*})=\cnt{\prv_r}^*.\label{Def-Electric-Perm-Tensor}
\end{align}  Due to \Cref{Estim-Aniso-Pol-Ten}, we get 
\begin{equation}
\norm*{U_m^{\prb_r^*}}_{\Lp{2}{D_m}}\leq\norm*{\cnt{\prb_r}^*\nabla V}_{\Lp{2}{D_m}}\leq  c_{\infty}\rad^\frac{3}{2},
\end{equation} and
\begin{equation} 
\norm*{U_m^{\prv_r^*}}_{\Lp{2}{D_m}}\leq\norm*{\cnt{\prv_r}^*\nabla V}_{\Lp{2}{D_m}}\leq c_{\infty}\rad^\frac{3}{2},
\end{equation} where $$c_{\infty}:=\max_{m\leq\nbs}~\sup_{x\in D_m,\,B=\prv_r,\prb_r.} \frac{(\norm*{B}_{{\Lp{\infty}{D_m}}}+2)^2}{\left(\frac{1}{2}-\rad^2\norm*{B}_{{\Lp{\infty}{D_m}}}\sqrt{\frac{3}{\pi}}\right)}$$ is a positive constant.
 
The following assumption could be seen as a consequence of the scaling \eqref{Ani-PolTen-Scaling} and \Cref{Signe-Of-Anis-Pol-Ten}, for $A=\prv_r, \prb_r$,
\begin{align}\label{Ani-PolTens-Coercivity-RealCase}
{\mu_{B}^{-}}{\rad^3}\abs{V}^2\leq\ANPT{B}{D_m}V\cdot V\leq& {\mu_{B}^{+}}{{\rad^3}}\abs{V}^2	\hspace{1cm}\mbox{whenever $B\in \mathbb{W}^{\infty}(D_m,\R^3\times\R^3)$}, \end{align} and
\begin{align}
{\mu_{B}^{-}}{\rad^3}\abs{V}^2\leq\Re\Bigl(\ANPT{B}{D_m}V\cdot \overline{V}\Bigl)\leq& {\mu_{B}^{+}}{{\rad^3}}\abs{V}^2	\hspace{1cm}\mbox{whenever $B\in \mathbb{W}^{\infty}(D_m,\C^3\times\C^3)$},
\end{align}
where $\mu_{A}^{+}:=\max_m\mu_{A,m}^{+}$,  $\mu_{A}^{-}:=\min_m\mu_{A,m}^{-} $ and $(\mu_{A,m}^{\pm})_{m=1}^\nbs$ are  constants that satisfy the previous inequalities for each $D_m$.  

We set for $U,\,V \in\Lp{2}{D_m}$ 
\begin{equation} 
Er_m(U,V):=O\Biggl(\sum_{\stl{(j\geq 1)}{j\neq m}}^{\nbs}\biggl[\frac{\norm{V}_{\Lp{2}{D_j}}}{\dist_{mj}^4}+
\Bigl(\frac{3\abs{k}}{\dist_{mj}^3}+\frac{3\abs{k}^2+1}{\dist_{mj}^2}+\frac{\abs{k}(\abs{k}^2+1)}{\dist_{mj}}\Bigr) \Bigl(\norm{U}_{\Lp{2}{D_j}}+\norm{V}_{\Lp{2}{D_j}}\Bigr)\biggr]\rad^\frac{11}{2}\Biggr).\end{equation}

With \eqref{Double-Sum-Counting} of \Cref{Counting-Oclusion}, we obtain 
\begin{equation}\label{Error-Estim-Error_m}
\sum_{m=1}^{\nbs} Er_m(U,V)^2=O\Biggl(\norm{U}^2_{\Lp{2}{D}}\frac{\rad^{11}}{\dist^8}+\Bigl(\norm{U}^2_{\Lp{2}{D}}+\norm{V}^2_{\Lp{2}{\cup_m D_m}}\Bigr)\frac{(\abs{k}+2)^3\rad^{11}\abs{\ln(\dist)}}{\dist^6}\Biggr).
\end{equation}
The far field of the scattered wave is given by, see (\cite{ColtonKress:2013}, (6.26)-(6.27) p. 199),
\begin{equation}\label{Far-Field-Anisotropic-Case}
E^\infty(\hat{x})=\frac{k^2}{4\pi}\hat{x}\times\Bigl(\int_{\cup_{m=1}^{\nbs} D_m}e^{-ik\hat{x}\cdot y}\cnt{\prv_r}E(y)dy\times \hat{x}\Bigr)+\frac{ik}{4\pi}\hat{x}\times\int_{\cup_{m=1}^{\nbs} D_m}e^{-ik\hat{x}\cdot y}\cnt{\prb_r}H(y)dy.	
\end{equation} 
Furthermore, for the scattering of plane wave, \eqref{Esim-Lip-Schwin-Solution} gives, with \eqref{Etimates-Field-C_r-Condition} and \eqref{Etimates-Field-C_r-Condition1},
\begin{equation}\label{Estimation-For-Plan-Waves}
\begin{aligned}
\norm{E}_\Lp{2}{D}\leq& \frac{5\bs{c}_\infty}{4c^{\prv^-}_\infty} \Bigl(\abs{P}+\frac{\abs{k}}{4\pi}\abs{\theta\times P}\Bigr)\Bigr(\frac{1}{\bs{c}_r}\Bigl)^\frac{3}{2}\end{aligned}
\end{equation} and 
\begin{equation}\label{Estimation-For-Plan-Waves1}
\begin{aligned}\\ 
\norm{H}_\Lp{2}{D}\leq& \frac{5\bs{c}_\infty}{4c^{\prv^-}_\infty} \Bigl(\frac{1}{4\pi}\abs{P}+\abs{k}\abs{\theta\times P}\Bigr)\Bigr(\frac{1}{\bs{c}_r}\Bigl)^\frac{3}{2}.
\end{aligned}
\end{equation}
With the above estimates, we have the following result. 
\begin{proposition} 
For $\hat{x}\in \mathbb{S}^1,$ we have the following approximations for the far field of the scattered waves
\begin{enumerate}
	\item if $\cnt{\prb_r},$ $\cnt{\prv_r}$ are symmetric.
\begin{equation}\label{Far-Field-Approximation-Anisotropic-Symmetric}
\begin{aligned}E^\infty(\hat{x})=&\sum_{m=1}^{\nbs}\biggr(\frac{k^2}{4\pi}e^{-ik\hat{x}\cdot \z_m}\hat{x}\times\Bigl(\mc{R}_m^{\prv_r}\times \hat{x}\Bigr)+\frac{ik}{4\pi}e^{-ik\hat{x}\cdot \z_m}\hat{x}\times\mc{Q}_m^\prb_r\Biggl)+\Bigl(\frac{\abs{k}^3}{\bs{c}^3_r}\rad\Bigr),\\
\end{aligned}  
\end{equation}
   \item if $ \cnt{\prb_r},$ $\cnt{\prv_r}$ are not symmetric
\begin{equation}\label{Far-Field-Approximation-Anisotropic-Non-Symmetric}
\begin{aligned} E^\infty(\hat{x})=&\sum_{m=1}^{\nbs}\biggr(\frac{k^2}{4\pi}e^{-ik\hat{x}\cdot \z_m}\hat{x}\times\Bigl(\cnt{\prv_r}\mc{R}_m\times \hat{x}\Bigr)+\frac{ik}{4\pi}e^{-ik\hat{x}\cdot \z_m}\hat{x}\times\cnt{\prb_r}\mc{Q}_{m}\Biggl)\\
&+O\Biggl(\frac{\abs{k}^3+\abs{k}^2}{\bs{c}_r^3}\rad+\frac{\abs{k}\bs{c}_\infty(\bs{c}_\infty \bs{c}_{2,k}+1)}{\bs{c}_r^3}\rad\Biggr),\\
\end{aligned}  
\end{equation} 
\end{enumerate} uniformly for $\hat{x}\in \mathbb{S}^2,$ where $\Bigl(\mc{R}^{\prv_r}_m\Bigr)_{m=1}^\nbs$ and $\Bigl(\mc{Q}^{\prb_r}_m\Bigr)_{m=1}^\nbs$ are solutions of the following system
\begin{equation}
\begin{aligned}
\mc{Q}^{\prb_r}_m=\ANPT{\prb_r}{D_m}&\biggl( \sum_{\stackrel{j\geq 1}{j\neq m}}^{\nbs} \Bigl[ \Pi_{k}(\z_m,\z_j) \mc{Q}^{\prb_r}_j- ik\nabla\Phi_{k}(\z_m,\z_j)\times\mc{R}^{\prv_r}_j\Bigr]+H^\n(\z_m)\biggr)\\
&+Er_m(H,E)+O\Bigl(k^2\norm*{\cnt{\prb_r}H}_{\Lp{2}{D_m}}\rad^\frac{7}{2} +\norm*{\cnt{\prv_r}E}_{\Lp{2}{D_m}}\rad^\frac{5}{2}+\abs{k}^2\rad^4\Bigr),\\
\mc{R}^{\prv_r}_m=\ANPT{\prv_r}{D_m}&\biggl(\sum_{\stl{j\geq 1}{j\neq m}}^{\nbs}\Bigl[ \Pi_{k}(\z_m,\z_j) \mc{R}^{\prv_r}_j+ ik\nabla\Phi_{k}(\z_m,\z_j)\times\mc{Q}^{\prb_r}_j\Bigr]+\cdot E^\n(\z_m)\biggr)\\
&+Er_m(E,H)+O\Bigl(k^2\norm*{\cnt{\prv_r}E}_{\Lp{2}{D_m}}\rad^\frac{7}{2} +\norm*{\cnt{\prb_r}H}_{\Lp{2}{D_m}}\rad^\frac{5}{2}+\abs*{k}\rad^4\Bigr),
\end{aligned}\label{Approximation-Linear-Sys-Anisotropic-Symetric-Case}
\end{equation} and $\Bigl(\mc{R}_m\Bigr)_{m=1}^\nbs$ and $\Bigl(\mc{Q}_m\Bigr)_{m=1}^\nbs$ are solutions the following system 
\begin{align}\begin{aligned}
\mc{Q}_m=\ANPT{\meanm{\prb_r^*}{m}}{D_m}{(\mcntm{\prb_r^*}{m})}^{-1} &\biggl(\sum_{\stl{j\geq 1}{j\neq m}}^{\nbs}\Bigl[\Pi_{k}(\z_m,\z_j)\mcntm{\prb_r}{j}\mc{Q}_j- ik\nabla\Phi_{k}(\z_m,\z_j)\times\mcntm{\prv_r}{j}\mc{R}_j\Bigr]+ H^\n(\z_m)\biggl) \\
&+Er_m(H_\mc{A})+O(k^2\norm*{\meancnt{\prb_r}H_{\mc{A}}}_{\Lp{2}{D_m}}\rad^\frac{7}{2}+\norm*{\meancnt{\prv_r}E_{\mc{A}}}_{\Lp{2}{D_m}} \rad^\frac{5}{2}),\\
\mc{R}_m=\ANPT{\meanm{\prv_r^*}{m}}{D_m}{(\mcntm{\prv_r^*}{m})}^{-1} &\biggl(\sum_{\stl{j\geq 1}{j\neq m}}^{\nbs}\Bigl[\Pi_{k}(\z_m,\z_j)\mcntm{\prv_r}{j}\mc{R}_j+ ik\nabla\Phi_{k}(\z_m,\z_j)\times\mcntm{\prb_r}{j}\mc{Q}_j\Bigr]+ E^\n(\z_m)\biggl) \\
&+Er_m(E_\mc{A})+O(k^2\norm*{\meancnt{\prb_r}E_{\mc{A}}}_{\Lp{2}{D_m}}\rad^\frac{7}{2} +\norm*{\meancnt{\prv_r}H_{\mc{A}}}_{\Lp{2}{D_m}}\rad^\frac{5}{2}).
\end{aligned}\label{Approx-LinSys-Anisotropic-NonSym-Case}
\end{align}
 
\end{proposition}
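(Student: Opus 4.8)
The plan is to derive both assertions from the Lippmann--Schwinger representation \eqref{Lip-sch-Eq-Elec-Magn} and its averaged counterpart \eqref{Lip-sch-Eq-Elec-Magn-Averaged-Param}, the far-field formula \eqref{Far-Field-Anisotropic-Case}, the plane-wave bounds \eqref{Estimation-For-Plan-Waves}--\eqref{Estimation-For-Plan-Waves1}, the polarization-tensor identities of \Cref{Ani-Pol-Tensor}, and the counting estimate \eqref{Double-Sum-Counting} of \Cref{Counting-Oclusion}.

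First I would dispose of the far field. Splitting the integral in \eqref{Far-Field-Anisotropic-Case} as $\sum_m\int_{D_m}$ and writing, for $y\in D_m$, $e^{-ik\hat{x}\cdot y}=e^{-ik\hat{x}\cdot\z_m}+O(\abs{k}\rad)$, the main part reproduces $\sum_m e^{-ik\hat{x}\cdot\z_m}\bigl(\tfrac{k^2}{4\pi}\hat{x}\times(\mc{R}^{\prv_r}_m\times\hat{x})+\tfrac{ik}{4\pi}\hat{x}\times\mc{Q}^{\prb_r}_m\bigr)$ with $\mc{R}^{\prv_r}_m=\int_{D_m}\cnt{\prv_r}E\,dv$ and $\mc{Q}^{\prb_r}_m=\int_{D_m}\cnt{\prb_r}H\,dv$, while the remainder is at most $C\abs{k}^{3}\rad\sum_m\rad^{3/2}\bigl(\norm{\cnt{\prv_r}E}_{\Lp{2}{D_m}}+\norm{\cnt{\prb_r}H}_{\Lp{2}{D_m}}\bigr)$; a Cauchy--Schwarz over $m$, the bound $\nbs=O((\bs{c}_r\rad)^{-3})$, and \eqref{Estimation-For-Plan-Waves}--\eqref{Estimation-For-Plan-Waves1} turn this into $O(\abs{k}^{3}\bs{c}_r^{-3}\rad)$, which is \eqref{Far-Field-Approximation-Anisotropic-Symmetric}. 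For \eqref{Far-Field-Approximation-Anisotropic-Non-Symmetric} one in addition replaces $\int_{D_m}\cnt{\prv_r}E\,dv$ by $\mcntm{\prv_r}{m}\mc{R}_m$ and $\int_{D_m}\cnt{\prb_r}H\,dv$ by $\mcntm{\prb_r}{m}\mc{Q}_m$, at the cost of $\norm{\cnt{\prv_r}-\mcntm{\prv_r}{m}}_{\Lp{\infty}{D_m}}=O(\rad)$ (uniform Lipschitz character) together with $\norm{E-E_\mc{A}}_{\Lp{2}{D}}$ from \eqref{Esimate-AveragedE-E}, which produces the extra $\bs{c}_\infty(\bs{c}_\infty\bs{c}_{2,k}+1)$-dependent term there.

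The algebraic systems carry the substance. Fix $m$, restrict \eqref{Lip-sch-Eq-Elec-Magn} to $D_m$, and isolate the self-contribution: $E=E^{\mathrm{ext}}_m+(k^2+\nabla\div)\bdmc{S}_{D_m}^{k,\cnt{\prv_r}}(E)+ik\curl\bdmc{S}_{D_m}^{k,\cnt{\prb_r}}(H)$ on $D_m$, where $E^{\mathrm{ext}}_m:=E^\n+\sum_{j\neq m}\bigl[(k^2+\nabla\div)\bdmc{S}_{D_j}^{k,\cnt{\prv_r}}(E)+ik\curl\bdmc{S}_{D_j}^{k,\cnt{\prb_r}}(H)\bigr]$ solves a homogeneous Maxwell system near $D_m$, hence is smooth there. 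For the interaction part I use $(k^2+\nabla\div)\bdmc{S}_{D_j}^{k,\cnt{\prv_r}}(E)(x)=\int_{D_j}\Pi_{k}(x,y)\cnt{\prv_r}(y)E(y)\,dy$ and $ik\curl\bdmc{S}_{D_j}^{k,\cnt{\prb_r}}(H)(x)=ik\int_{D_j}\nabla_x\Phi_{k}(x,y)\times\cnt{\prb_r}(y)H(y)\,dy$ (cf.\ \eqref{Dyadic-Green-function}) and Taylor-expand these kernels at $(\z_m,\z_j)$: the zeroth-order terms give $\Pi_{k}(\z_m,\z_j)\mc{R}^{\prv_r}_j$ and $ik\nabla\Phi_{k}(\z_m,\z_j)\times\mc{Q}^{\prb_r}_j$, while the first-order corrections, the first moments $\int_{D_j}(y-\z_j)\otimes\cnt{\prv_r}E\,dv$, and the deviation $E^{\mathrm{ext}}_m(\cdot)-E^{\mathrm{ext}}_m(\z_m)$, after multiplication by the $O(\rad^{3})$ polarization tensor and summation over $j$, are precisely what \eqref{Error-Estim-Error_m} records as $\sum_m Er_m(\cdot,\cdot)^2$, the powers of $\dist_{mj}^{-1}$ matching the orders of differentiation of the $O(\dist^{-3})$ kernel. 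For the self-term I pass $\Phi_k$ to $\Phi_0$ (the increments controlled via \eqref{Vm-Estimate} and the bounds of \Cref{Elec-Magn-Lip-sch-Comp-and-DefPosi}, of $\Lp{2}{D_m}$-size $O(\abs{k}^{2}\rad^{2}\norm{\cnt{\prv_r}E}_{\Lp{2}{D_m}})$ for the $\nabla\div$ and $k^2$ pieces, hence $O(\abs{k}^{2}\norm{\cnt{\prv_r}E}_{\Lp{2}{D_m}}\rad^{7/2})$ in $\mc{R}^{\prv_r}_m$), drop the self-coupling $ik\curl\bdmc{S}_{D_m}^{k,\cnt{\prb_r}}(H)$ ($\Lp{2}{D_m}$-size $O(\abs{k}\rad\,\norm{\cnt{\prb_r}H}_{\Lp{2}{D_m}})$ by Young's inequality, hence $O(\norm{\cnt{\prb_r}H}_{\Lp{2}{D_m}}\rad^{5/2})$), and freeze $E^{\mathrm{ext}}_m$ at $\z_m$. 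What remains, $E\approx E^{\mathrm{ext}}_m(\z_m)+\nabla\div\bdmc{S}_{D_m}^{0,\cnt{\prv_r}}(E)$, is componentwise and by linearity the gradient of the cell problem \eqref{Eq-Lipp-Sch-Anisotropic-Tensor}, so $E\approx\sum_{l=1}^{3}\bigl(E^{\mathrm{ext}}_m(\z_m)\bigr)_l\nabla V_l$; integrating $\cnt{\prv_r}$ against it and invoking \eqref{Estimates-Ani-PolTen-intermAnisBdr} yields $\mc{R}^{\prv_r}_m\approx\ANPT{\prv_r}{D_m}E^{\mathrm{ext}}_m(\z_m)$, which combined with the interaction expansion gives \eqref{Approximation-Linear-Sys-Anisotropic-Symetric-Case}; here the boundedness of $[I-\nabla\div\bdmc{S}_{D_m}^{0,\cnt{\prv_r}}]^{-1}$ used to pass back from the perturbed equation is uniform in $m$ and $\rad$ by \Cref{Pol-Ten-Behaviour}, and the residual $O(\abs{k}\rad^{4})$-type term is the response to the linear part of $E^{\mathrm{ext}}_m$ driven by $\nabla E^\n$. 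The magnetic equation is identical up to the sign of the coupling, and the per-particle errors are assembled via \eqref{Error-Estim-Error_m}.

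For the non-symmetric case the same steps run on the averaged equation \eqref{Lip-sch-Eq-Elec-Magn-Averaged-Param}, where $\cnt{\prv_r}$ is the constant $\mcntm{\prv_r}{m}$ (the variable-coefficient identity \eqref{Estimates-Ani-PolTen-intermAnisBdr} being unavailable without symmetry); then $E_\mc{A}\approx\sum_l\bigl(E^{\mathrm{ext}}_m(\z_m)\bigr)_l\nabla V_l$ with $V_l$ solving \eqref{Eq-Lipp-Sch-Anisotropic-Tensor} for that constant matrix, and the constant-matrix identity of the last Proposition of \Cref{Ani-Pol-Tensor} together with the symmetry/duality of \Cref{Signe-Of-Anis-Pol-Ten} converts $\int_{D_m}\mcntm{\prv_r}{m}E_\mc{A}\,dv$ into the prefactor $\ANPT{\meanm{\prv_r^*}{m}}{D_m}(\mcntm{\prv_r^*}{m})^{-1}$ of \eqref{Approx-LinSys-Anisotropic-NonSym-Case}, with $\mcntm{\prv_r}{j}\mc{R}_j$ and $\mcntm{\prb_r}{j}\mc{Q}_j$ as the radiating dipoles and the same $Er_m$, $O(\rad^{5/2})$, $O(\rad^{7/2})$ errors (now with $E_\mc{A},H_\mc{A}$ in place of $E,H$). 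The main obstacle is exactly this self-term analysis: one must simultaneously remove the $k^2$- and $\curl$-contributions, freeze the analytic external field at the centers, and recognize the cell problem, all while keeping every error within the orders listed and invoking the uniform invertibility bound of \Cref{Pol-Ten-Behaviour}; by contrast the interaction expansion and the far-field reduction are routine applications of Taylor's formula, \Cref{Counting-Oclusion}, and the a priori estimates established earlier.
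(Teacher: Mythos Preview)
Your far-field reduction (splitting $e^{-ik\hat{x}\cdot y}$, Cauchy--Schwarz over $m$, then \eqref{Estimation-For-Plan-Waves}--\eqref{Estimation-For-Plan-Waves1}) and your handling of the non-symmetric far field via \eqref{Esimate-AveragedE-E} coincide with the paper's argument.

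For the algebraic system the paper takes a dual route. It does \emph{not} approximate $E$ on $D_m$ by $\sum_l c_l\nabla V_l$ and then integrate $\cnt{\prv_r}$ against it; instead it multiplies the $m$-th equation of \eqref{Lip-sch-Eq-Elec-Magn} by the adjoint profile $U_m^{\prv_r^*}=\cnt{\prv_r}^*\nabla V_m^{\prv_r^*}$ (built with the \emph{transposed} contrast, cf.\ \eqref{Def-Electric-Perm-Tensor}), integrates, and uses the self-adjointness of $\nabla\div\bdmc{S}_{D_m}^{0}$ to pass the static part of the operator onto $U_m^{\prv_r^*}$, where it collapses to $\cnt{\prv_r}^*$ and produces $\int_{D_m}\cnt{\prv_r}E\,dv$ directly (the chain \eqref{Lip-Schwinger-LinSys-First-Member-Approx}--\eqref{Lip-Schwin-LinSys-Fst-Mem-2}); the right-hand side simultaneously acquires the factor $\int_{D_m}U_m^{\prv_r^*}\,dv=\ANPT{\prv_r^*}{D_m}$. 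This testing device never needs to solve for, or even bound, the discrepancy $E-\sum_l c_l\nabla V_l$, and it explains at once why the transpose $B^*$ enters and why the constant-matrix identity at the end of \Cref{Ani-Pol-Tensor} is exactly what is needed in the non-symmetric case. Your direct route is also legitimate, but the uniform boundedness of $[I-\nabla\div\bdmc{S}_{D_m}^{0,\cnt{\prv_r}}]^{-1}$ on $\Lp{2}{D_m}^3$ that you invoke does not follow from \Cref{Pol-Ten-Behaviour}: that proposition treats the scalar $\Hs{1}{D_m}$-problem, i.e.\ the restriction of your operator to gradients, whereas the residual you discard---in particular $ik\curl\bdmc{S}_{D_m}^{k,\cnt{\prb_r}}(H)$---is not a gradient. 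The correct input is \eqref{Lip-Sch-Defint-Positive-part} of \Cref{Elec-Magn-Lip-sch-Comp-and-DefPosi} at $\alpha=0$ together with \eqref{Coercivity-Cond-Contrast}, which gives $\Re\int_{D_m}[I-\nabla\div\bdmc{S}_{D_m}^{0,\cnt{\prv_r}}](V)\cdot\overline{\cnt{\prv_r}V}\,dv\geq c_\infty^{\prv-}\norm{V}^2_{\Lp{2}{D_m}}$ and hence a scale-free $L^2$-inverse bound. With that correction your argument closes; the paper's adjoint identity simply bypasses the issue.
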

\begin{proof}
For $x\in D_m,$ we have from \eqref{Lip-sch-Eq-Elec-Magn}
\begin{equation}
\begin{aligned} H&-(k^2+\nabla\div) \bdmc{S}^{k,{\cnt{\prb_r}}}_{D_m}(H)\\
&+ik\curl \bdmc{S}^{k,{\cnt{\prv_r}}}_{D_m}(E)
 =\sum_{\stl{j\geq 1}{j\neq m}}^{\nbs}\Bigl[(k^2+\nabla\div) \bdmc{S}_{D_j}^{k,{\cnt{\prb_r}}}(H)-ik\curl \bdmc{S}_{D_j}^{k,{\cnt{\prv_r}}}(E)(x)\Bigr]+ E^\n(x),
\end{aligned}\label{Lip-Schwinger-Linear-Sys-Form-Magne-field}\end{equation} and
\begin{equation}\begin{aligned} E&-(k^2+\nabla\div) \bdmc{S}^{k,{\cnt{\prv_r}}}_{D_m}(E)\\
&-ik\curl \bdmc{S}^{k,{\cnt{\prb_r}}}_{D_m}(H)
=\sum_{\stl{j\geq 1}{j\neq m}}^{\nbs}\Bigl[(k^2+\nabla\div) \bdmc{S}_{D_j}^{k,{\cnt{\prv_r}}}(E)+ik\curl \bdmc{S}_{D_j}^{k,{(Q^c(\prb_r))}}(H)(x)\Bigr]+ H^\n(x),
\end{aligned}\label{Lip-Schwinger-Linear-Sys-Form-Electri-field}  \end{equation}

\begin{itemize}
\item ({\emph {Derivation of} \eqref{Approximation-Linear-Sys-Anisotropic-Symetric-Case}})
 Multiplying the first member of   \eqref{Lip-Schwinger-Linear-Sys-Form-Magne-field}, by $U_m^{\prb_r^*}$ and  integrating over $D_m$, we derive 	
\begin{equation}\label{Lip-Schwinger-LinSys-First-Member-Approx}
\begin{aligned} \int_{ D_m}U_m^{\prb_r^*}&(x)\cdot \Bigl(H-(k^2+\nabla\div) \bdmc{S}^{k,\cnt{\prb_r}}_{D_m}(H)+ik\curl \bdmc{S}^{k,{\cnt{A}}}_{D_m}(E)\Bigr)(x)dx\\ 
=&\int_{ D_m}U_m^{\prb_r^*}\cdot \bigl[I-\nabla\div \bdmc{S}_{D_m}^{0,\cnt{\prb_r}}\bigr](H)(x)dx-k^2\int_{ D_m}U_m^{\prb_r^*}(x)\cdot \bdmc{S}^{k,{\cnt{\prb_r}}}_{D_m}(H)(x)dx\\
&-\int_{ D_m}U_m^{\prb_r^*}(x)\cdot\Bigl(\nabla\div \Bigl[\bdmc{S}^{k,{\cnt{A}}}_{D_m}-\bdmc{S}^{0,{\cnt{A}}}_{D_m}\Bigr](H)-ik\curl \bdmc{S}^{k,{Q^\cmp(\prv_r)}}_{D_m}(E)\Bigr)(x)dx,\\
=&\int_{ D_m}U_m^{\prb_r^*}\cdot \bigl[I-\nabla\div \bdmc{S}_{D_m}^{0,{\cnt{\prb_r}}}\bigr](H)(x)dx\\ &+O\Bigl(k^2\rad^{7/2}\norm*{\cnt{\prb_r}H}_{\Lp{2}{D_m}}+\rad^{5/2}\norm*{\cnt{\prv_r}E}_{\Lp{2}{D_m}}\Bigr).
\end{aligned} 
\end{equation} 

Indeed, due to the third identity \eqref{NablaNabal-Phi_k-Phi_i}, for $\alpha=0,$ it is obvious that
\begin{align*}
\Bigl|\nabla \div\bigl[\bdmc{S}^{k,{\cnt{A}}}_{D_m}-\bdmc{S}^{0,{\cnt{A}}}_{D_m}\bigr]&(H)(x)\Bigr|\\
\leq&\abso{\int_{D_m}\Bigl(\int_{0}^1\frac{e^{\bigl((ik)t\bigr) \abs*{(x-y)}} k^2t}{4\pi\abs*{x-y}} 
\Bigl[\mathbb{I}+\bigl(ikt-\frac{1}{\abs*{x-y}} \Bigr)\frac{\tensor{x-y}{2}}{\abs*{x-y}}\Bigr]dt\Bigr)(\cnt{\prb_r}H)(y)dy},\\
\leq&\int_{D_m} \Bigl[\frac{2k^2}{4\pi\abs*{x-y}}+\frac{k^3}{4\pi} \Bigr)\Bigr]~\abs{(\cnt{\prb_r}H)(y)}dy,\\
\leq&\Bigl[\frac{k^2}{2\pi}\Bigl(\lim_{r\rightarrow 0} \int_{B(y,\rad)\setminus{B(\z_m,r)}} \frac{1}{\abs*{x-y}^2} dy\Bigr)^\frac{1}{2}+\frac{k^3}{4\pi}\rad^\frac{3}{2}\Bigl]\norm{\cnt{\prb_r}H}_{\Lp{2}{D_m}},\\
\leq&	\Bigl[\frac{k^2}{2\pi}\pi\rad^\frac{1}{2} +\frac{k^3}{4\pi}\rad^\frac{3}{2}\Bigl]\norm{\cnt{\prb_r}H}_{\Lp{2}{D_m}},
\end{align*} hence,  
\begin{align*}
\norm{\nabla \div\bigl[\bdmc{S}^{k,{\cnt{\prb_r}}}_{D_m}-\bdmc{S}^{0,{\cnt{\prb_r}}}_{D_m}\bigr](H)}_{\Lp{2}{D_m}}\leq \frac{\pi}{3}	\Bigl[\frac{k^2}{\sqrt{2\pi}} +\frac{k^3}{4\pi}\rad\Bigl]\rad^2\norm{\cnt{\prb_r}H}_{\Lp{2}{D_m}}.
\end{align*} We also have, for similar reasons,
\begin{align*}
\norm{k^2\bigl[\bdmc{S}^{k,{\cnt{\prb_r}}}_{D_m}\bigr](H)}_{\Lp{2}{D_m}}\leq\frac{\pi}{3}\Bigl[\frac{k^2}{2\pi}\sqrt{2\pi}\rad^\frac{1}{2}\Bigl]\rad^\frac{3}{2}\norm{\cnt{\prb_r}H}_{\Lp{2}{D_m}}.
\end{align*}
Regarding the third term of \eqref{Lip-Schwinger-LinSys-First-Member-Approx}, we have  
\begin{align*}
\abso{\int_{ D_m}U_{m}^{\prb_r^*}(x)\cdot\Bigl(ik\curl \bdmc{S}^{k,{Q^\cmp(\prv_r)}}_{D_m}(H)\Bigr)(x)dx}
\leq&\norm*{U_{m}^{\prb_r^*}}_{\Lp{2}{D_m}}\norm{\cnt{\prb_r}H}_{\Lp{2}{D_m}} \int_{B(0,\rad)}\abs{\nabla\Phi_k(x)}dx,\\
 \leq&\bs{c}_{\prb_r}\rad^\frac{3}{2}\norm{\cnt{\prb_r}H}_{\Lp{2}{D_m}} \frac{1}{4\pi}\int_{B(0,\rad)}\Bigl(\frac{1}{\abs{x-y}^2}+\frac{\abs{k}}{\abs{x-y}}\Bigr)dx,\\
 \leq&\bs{c}_{\prb_r}(\frac{1}{2}+\frac{k}{4}\rad)\rad^\frac{5}{2} \norm{\cnt{\prb_r}H}_{\Lp{2}{D_m}}.
\end{align*} Hence, we get from \eqref{Lip-Schwinger-LinSys-First-Member-Approx} 
\begin{equation}\label{Lip-Schwin-LinSys-Fst-Mem-2}
\begin{aligned} \int_{D_m} U_{m}^{\prb_r^*}&\cdot  \Bigl[H-(k^2+\nabla\div) \bdmc{S}^{k,\cnt{\prb_r}}_{D_m}(H)-ik\curl\bdmc{S}^{k,{Q^\cmp(\prv_r)}}_{D_m}(E)\Bigr]dv\\
=&\int_{ D_m}\Bigl[I-\cnt{\prb_r}^*\nabla\div \bdmc{S}^0_{D_m}\Bigr](U_{m}^{\prb_r^*})\cdot H~dv+O(k^2\norm*{\cnt{\prb_r}H}_{\Lp{2}{D_m}}\rad^\frac{7}{2} +\norm*{\cnt{\prv_r}E}_{\Lp{2}{D_m}}\rad^\frac{5}{2}),\\
=&\int_{D_m} \cnt{\prb_r}^*H~dv+O(k^2\norm*{\cnt{\prb_r}H}_{\Lp{2}{D_m}}\rad^\frac{7}{2} +\norm*{\cnt{\prv_r}E}_{\Lp{2}{D_m}}\rad^\frac{5}{2}).
\end{aligned} 
\end{equation}
Multiplying the second member of \eqref{Lip-Schwinger-Linear-Sys-Form-Magne-field} by $U_{m}^{\prb_r^*}$ ind integrating over $D_m,$ gives     
\begin{align*}
\sum_{\stl{j\geq 1}{j\neq m}}^{\nbs}\int_{D_m}U_{m}^{\prb_r^*}(x)\cdot\Bigl[ \int_{D_j}\Pi_{k}(x,y)(\cnt{\prb_r}H)(y)dy&+\, ik\int_{D_j}\nabla\Phi_{k}(x,y)\times (\cnt{\prv_r}E)(y)dy\Bigr]dx\\ &+\int_{D_m}U_{m}^{\prb_r^*}\cdot H^\n dv,
\end{align*}
and a first order approximation, considering \eqref{Green-Func-First-Order-Appr},  gives 
\begin{equation}\begin{aligned}
&\int_{D_m}U_{m}^{\prb_r^*}dv\cdot\sum_{\stl{j\geq 1}{j\neq m}}^{\nbs}\Bigl[ \Pi_{k}(\z_m,\z_j)\int_{D_j}(\cnt{\prb_r}H)(y)dy+ ik\nabla\Phi_{k}(\z_m,\z_j)\times\int_{D_j} (\cnt{\prv_r}E)(y)dy\Bigr] \\
&+\int_{D_m}U_{m}^{\prb_r^*}dv~ H^\n(\z_j)+O(\abs{k}^2\rad^4)+2Er_m(H,E),
\end{aligned}\label{Lip-Schwin-LinSys-Scnd-Mem}
\end{equation} which gives, when the contrasts are symmetric, joined with \eqref{Lip-Schwin-LinSys-Fst-Mem-2}, the first approximation in \eqref{Approximation-Linear-Sys-Anisotropic-Symetric-Case}.
Similar calculation gives the second one starting from \eqref{Lip-Schwinger-Linear-Sys-Form-Electri-field}, and multiplying by $U_m^{\prv_r^*}$ as defined in \eqref{Def-Electric-Perm-Tensor}.
\item ({\emph {Derivation of} \eqref{Approx-LinSys-Anisotropic-NonSym-Case}}) 
When, in the other hand, both $\cnt{\prv_r}, \cnt{\prb_r}$ are in $\mathbb{W}^{1,\infty}(\cup_{m=1}^\nbs D_m),$ and not necessarily symmetric, we get, with a first order approximation, for the combined \cref{Lip-Schwin-LinSys-Fst-Mem-2,Lip-Schwin-LinSys-Scnd-Mem}  of the corresponding integral formulation of the Problem \eqref{Lip-sch-Eq-Elec-Magn-Averaged-Param}   
\begin{align*}
\int_{D_m} &\meancnt{\prb_r}^*H_{\mc{A}}~dv
+O(k^2\norm*{\meancnt{\prb_r}H_{\mc{A}}}_{\Lp{2}{D_m}}\rad^\frac{7}{2} +\norm*{\meancnt{\prv_r}E_{\mc{A}}}_{\Lp{2}{D_m}}\rad^\frac{5}{2})\\
=&\int_{D_m}U_{m}^{\mean{\prb_r^*}}dv\cdot\sum_{\stl{j\geq 1}{j\neq m}}^{\nbs}\Bigl[ \Pi_{k}(\z_m,\z_j)\int_{D_j}(\meancnt{\prb_r}H_{\mc{A}})(y)dy+ ik\nabla\Phi_{k}(\z_m,\z_j)\times\int_{D_j} (\meancnt{\prv_r}E_{\mc{A}})(y)dy\Bigr]\\&+\int_{D_m}U_{m}^{\mean{\prb_r^*}}\cdot H^\n dv+Er^{{(E_\mc{A},H_\mc{A})}}_m(H_\mc{A}),
\end{align*} which we rewrite as
\begin{align*}
\meancnt{\prb_r}^*\mc{R}_m&
+O(k^2\norm*{\meancnt{\prb_r}H_{\mc{A}}}_{\Lp{2}{D_m}}\rad^\frac{7}{2} +\norm*{\meancnt{\prv_r}E_{\mc{A}}}_{\Lp{2}{D_m}}\rad^\frac{5}{2})\\
=&\int_{D_m}U_{m}^{\mean{\prb_r^*}}dv\cdot\sum_{\stl{j\geq 1}{j\neq m}}^{\nbs}\Bigl[ \Pi_{k}(\z_m,\z_j)\meancnt{\prb_r}\mc{R}_m+ ik\nabla\Phi_{k}(\z_m,\z_j)\times \meancnt{\prv_r}\mc{Q}_m\Bigr]\\&+\int_{D_m}U_{m}^{\mean{\prb_r^*}}\cdot H^\n dv+Er^{{(E_\mc{A},H_\mc{A})}}_m(H_\mc{A}),
\end{align*} or more precisely
\begin{align*}
\meancnt{\prb_r^*}\mc{R}_m&
+O(k^2\norm*{\meancnt{\prb_r}H_{\mc{A}}}_{\Lp{2}{D_m}}\rad^\frac{7}{2} +\norm*{\meancnt{\prv_r}E_{\mc{A}}}_{\Lp{2}{D_m}}\rad^\frac{5}{2})\\
=&\meancnt{\prb_r^*}\ANPT{\meanm{\prb_r^*}{m}}{D_m}\meancnt{\prb_r^*}^{-1}\cdot\sum_{\stl{j\geq 1}{j\neq m}}^{\nbs}\Bigl[ \Pi_{k}(\z_m,\z_j)\meancnt{\prb_r}\mc{R}_m+ ik\nabla\Phi_{k}(\z_m,\z_j)\times \meancnt{\prv_r}\mc{Q}_m\Bigr]\\&+\int_{D_m}U_{m}^{\mean{\prb_r^*}}dv \cdot H^\n(\z_m)+Er^{{(E_\mc{A},H_\mc{A})}}_m(H_\mc{A})+O(\abs{k}^2\rad^4).
\end{align*} as for the symmetric case, the slights changes, for the second equation, are retrieved in the error of approximation.    

\item ({\emph {Derivation of} \eqref{Far-Field-Approximation-Anisotropic-Symmetric}}) Concerning the far field approximation \eqref{Far-Field-Approximation-Anisotropic-Symmetric}, a first order approximation gives 
\begin{equation}\begin{aligned}
E^\infty(\hat{x})=&\sum_{m=1}^{\nbs}\Bigl(\frac{k^2}{4\pi}\hat{x}\times\int_{ D_m}e^{-ik\hat{x}\cdot \z_m}\cnt{\prv_r}E(y)\times \hat{x}dy +\frac{ik}{4\pi}\hat{x}\times\int_{D_m}e^{-ik\hat{x}\cdot \z_m}\cnt{\prb_r}H(y)dy\Bigr)\\
&+\sum_{m=1}^{\nbs}\Biggl(\frac{k^2}{4\pi}\hat{x}\times\Bigl(\int_{ D_m}(e^{-ik\hat{x}\cdot y}-e^{-ik\hat{x}\cdot \z_m})\cnt{\prv_r}E(y)\times \hat{x}dy\\
&\hspace{3cm}+\frac{ik}{4\pi}\int_{D_m}(e^{-ik\hat{x}\cdot y}-e^{-ik\hat{x}\cdot \z_m})\cnt{\prb_r}H(y)dy\Bigr)\Biggr).
\end{aligned}\end{equation} As 
\begin{equation}
	\begin{aligned}
	\abs{(e^{-ik\hat{x}\cdot y}-e^{-ik\hat{x}\cdot \z_m})}\leq\abs{ik\hat{x}\int_{0}^1e^{-ik\hat{x}\cdot (ty+(1-t)z_m)}dt\cdot(y-\z_m)}\leq \abs{k}\rad,
	\end{aligned}
\end{equation} we get
\begin{equation}\begin{aligned}
E^\infty(\hat{x})=\sum_{m=1}^{\nbs}&\Bigl(\frac{k^2}{4\pi}\hat{x}\times\int_{ D_m}e^{-ik\hat{x}\cdot \z_m}\cnt{\prv_r}E(y)\times \hat{x}dy +\frac{ik}{4\pi}\hat{x}\times\int_{D_m}e^{-ik\hat{x}\cdot \z_m}\cnt{\prb_r}H(y)dy\Bigr)\\
&+O\Biggl(\sum_{m=1}^{\nbs}\frac{\abs{k}^3+\abs{k}^2}{4\pi}\rad\Bigl(\rad^\frac{3}{2}\norm{\cnt{\prv_r}E}_{\Lp{2}{D_m}}+\rad^\frac{3}{2}\norm{\cnt{\prb_r}H}_{\Lp{2}{D_m}}\Bigr)\Biggr)
\end{aligned}\end{equation} then follows 
\begin{equation}\label{Approx-FarField-SymCase-LastStep}
\begin{aligned}
E^\infty(\hat{x})=\sum_{m=1}^{\nbs}&\Bigl(\frac{k^2}{4\pi}\hat{x}\times\int_{ D_m}e^{-ik\hat{x}\cdot \z_m}\cnt{\prv_r}E(y)\times \hat{x}dy +\frac{ik}{4\pi}\hat{x}\times\int_{D_m}e^{-ik\hat{x}\cdot \z_m}\cnt{\prb_r}H(y)dy\Bigr)\\
&+O\Biggl(\frac{\abs{k}^3+\abs{k}^2}{4\pi}\bigl(\nbs\rad^3)^\frac{1}{2}\Bigl(\norm{\cnt{\prv_r}E}_{\Lp{2}{\cup D_m}}+\norm{\cnt{\prb_r}H}_{\Lp{2}{\cup D_m}}\Bigr)^\frac{1}{2}\rad\Biggr).
\end{aligned}\end{equation} The conclusion comes using the estimations \eqref{Estimation-For-Plan-Waves} and \eqref{Estimation-For-Plan-Waves1}.\\ 
The approximation  \eqref{Far-Field-Approximation-Anisotropic-Non-Symmetric} could be achieved by adding-subtracting to \eqref{Approx-FarField-SymCase-LastStep} above the following expression
\begin{equation*}
E^\infty(\hat{x})=\sum_{m=1}^{\nbs}\Bigl(\frac{k^2}{4\pi}\hat{x}\times\int_{ D_m}e^{-ik\hat{x}\cdot \z_m}\meancnt{\prv_r}E_\mc{A}(y)\times \hat{x}dy +\frac{ik}{4\pi}\hat{x}\times\int_{D_m}e^{-ik\hat{x}\cdot \z_m}\meancnt{\prb_r}H_\mc{A}(y)dy\Bigr).
\end{equation*} Precisely   
\begin{equation}
\begin{aligned}\label{Far-Field-Approx-NonSym-Case-1}
E^\infty(\hat{x})=\sum_{m=1}^{\nbs}&\Bigl(\frac{k^2}{4\pi}\hat{x}\times\int_{ D_m}e^{-ik\hat{x}\cdot \z_m}\meancnt{\prv_r}E_\mc{A}(y)\times \hat{x}dy +\frac{ik}{4\pi}\hat{x}\times\int_{D_m}e^{-ik\hat{x}\cdot \z_m}\meancnt{\prb_r}H_\mc{A}(y)dy\Bigr)\\
+\sum_{m=1}^{\nbs}&\Biggl(\frac{k^2}{4\pi}\hat{x}\times\int_{ D_m}e^{-ik\hat{x}\cdot \z_m}\Bigl(\cnt{\prv_r}E-\meancnt{\prv_r}E_\mc{A}\Bigr)(y)\times \hat{x}dy\\
 &\hspace{0.5cm}+\frac{ik}{4\pi}\hat{x}\times\int_{D_m}e^{-ik\hat{x}\cdot \z_m}\Bigl(\cnt{\prb_r}H-\meancnt{\prb_r}H_\mc{A}\Bigr)(y)dy\Biggr)
+O\Biggl(\frac{\abs{k}^3+\abs{k}^2}{\bs{c}_r^3}\rad\Biggr).
\end{aligned}\end{equation} 
Obviously  
\begin{align*}
\int_{ D_m}e^{-ik\hat{x}\cdot \z_m}\Bigl(\cnt{\prv_r}E-\meancnt{\prv_r}E_\mc{A}\Bigr)(y)dy=&
\int_{ D_m}e^{-ik\hat{x}\cdot \z_m}\Bigl[\cnt{\prv_r}\Bigl(E-E_\mc{A}\Bigr)\Bigl](y)dy\\&+\int_{ D_m}e^{-ik\hat{x}\cdot \z_m}\Bigl[\Bigl(\cnt{\prv_r}-\meancnt{\prv_r}\Bigr)E_\mc{A}\Bigr](y)dy,
\end{align*} achieving that
\begin{align*}
\sum_{m=1}^\nbs\abs{\int_{ D_m}e^{-ik\hat{x}\cdot \z_m}\Bigl(\cnt{\prv_r}E-\meancnt{\prv_r}E_\mc{A}\Bigr)(y)dy}\leq&
\sum_{m=1}^\nbs\rad^\frac{3}{2} \norm{\cnt{\prv_r}}_{\Lp{\infty}{D_m}}\norm*{E-E_\mc{A}}_\Lp{2}{D_m}\\&+\sum_{m=1}^\nbs\rad^\frac{3}{2} \rad\norm{\cnt{\prv_r}}_{\mathbb{W}^{\infty}{(D_m)}}\norm{E_\mc{A}}_\Lp{2}{D_m}.
\end{align*} Using H\"older's inequality, with both \eqref{Esimate-AveragedE-E}  and similar \eqref{Estimation-For-Plan-Waves} and \eqref{Estimation-For-Plan-Waves1} for $E_{\mc{A}},$ we get
\begin{align}
\sum_{m=1}^\nbs\abs{\int_{ D_m}e^{-ik\hat{x}\cdot \z_m}\Bigl(\cnt{\prv_r}E-\meancnt{\prv_r}E_\mc{A}\Bigr)(y)dy}
=O\Biggl(\frac{\bs{c}_\infty^2\bs{c}_{2,k}+\bs{c}_\infty}{\bs{c}_r^3}\rad\Biggr).
\end{align} With similar calculation we get
\begin{align}
\sum_{m=1}^\nbs\abs{\int_{ D_m}e^{-ik\hat{x}\cdot \z_m}\Bigl(\cnt{\prb_r}H-\meancnt{\prb_r}H_\mc{A}\Bigr)(y)dy}
=O\Biggl(\frac{\bs{c}_\infty^2\bs{c}_{2,k}+\bs{c}_\infty}{\bs{c}_r^3}\rad\Biggr).
\end{align}  Injecting both of the above estimations in \eqref{Far-Field-Approx-NonSym-Case-1} gives the results.
\end{itemize}
\end{proof}

\begin{proposition} \label{Inversion-Lin-Sys-Anisotropic-Case}Both the linear systems 
\begin{align}\label{Linear-System-Anisotropic-SymCase}
\begin{aligned}
\ANPT{\prb_r^*}{D_m}^{-1}\wh{\mc{Q}}^{\prb_r}_m=&\sum_{\stackrel{j\geq 1}{j\neq m}}^{\nbs} \Bigl[ \Pi_{k}(\z_m,\z_j) \wh{\mc{Q}}^{\prb_r}_j-ik \nabla\Phi_{k}(\z_m,\z_j)\times\wh{\mc{R}}^{\prv_r}_j\Bigr]+ \wh{H}^\n(\z_m), \\
\ANPT{\prv_r^*}{D_m}^{-1}\wh{\mc{R}}^{\prv_r}_m=&\sum_{\stackrel{j\geq 1}{j\neq m}}^{\nbs}\Bigl[ \Pi_{k}(\z_m,\z_j) \wh{\mc{R}}^{\prv_r}_j+ ik\nabla\Phi_{k}(\z_m,\z_j)\times\wh{\mc{Q}}^{\prb_r}_j\Bigr]+\wh{E}^\n(\z_m),
\end{aligned}
\end{align}	and 
\begin{align}
\begin{aligned}\label{Linear-System-Anisotropic-GenCase}
\mcntm{\prb_r^*}{m}\ANPT{\meanm{\prb_r^*}{m}}{D_m}^{-1}\mc{Q}_m
=&\biggl(\sum_{\stl{j\geq 1}{j\neq m}}^{\nbs}\Bigl[\Pi_{k}(\z_m,\z_j)\mcntm{\prb_r}{j}\wh{\mc{Q}}_j- ik\nabla\Phi_{k}(\z_m,\z_j)\times\mcntm{\prv_r}{j}\wh{\mc{R}}_j\Bigr]+ \wh{H}^\n(\z_m)\biggl), \\
\mcntm{\prv_r^*}{m}\ANPT{\meanm{\prv_r^*}{m}}{D_m}^{-1}\mc{R}_m =&\biggl(\sum_{\stl{j\geq 1}{j\neq m}}^{\nbs}\Bigl[\Pi_{k}(\z_m,\z_j)\mcntm{\prv_r}{j}\wh{\mc{R}}_j+ ik\nabla\Phi_{k}(\z_m,\z_j)\times\mcntm{\prb_r}{j}\wh{\mc{Q}}_j\Bigr]+ \wh{E}^\n(\z_m)\biggl), \\
\end{aligned}
\end{align}	
are invertible, provided \begin{equation}\frac{\dist}{\rad}=\bs{c}_r \geq{3\abs{k}}{\mu_{(\prv_r,\prb_r)}^+}.\end{equation} 
For $\abs{k}>1$ we have the following estimates
\begin{equation}\label{Estim-Sol-Linear-Sys}
\Bigl({\sum_{m=1}^{\nbs}\abs*{\wh{\mc{Q}}^{\prb_r}_m}^2}\Bigr)^\frac{1}{2}
\leq\frac{9\mu_{(\prv_r,\prb_r)}^+\rad^{3}}{8}\Biggl(\Bigl(\sum_{m=1}^\nbs \abs*{H^\n(\z_m)}^2\Bigr)^\frac{1}{2}+\frac{1}{3}\Bigl(\sum_{m=1}^\nbs \abs*{E^\n(\z_m)}^2\Bigr)^\frac{1}{2}\Biggr), \end{equation} and 
\begin{equation}\label{Estim-Sol-Linear-Sys1}
\Bigl({\sum_{m=1}^{\nbs}\abs*{\wh{\mc{R}}^{\prv_r}_m}^2}\Bigr)^\frac{1}{2}
\leq\frac{9\mu_{(\prv_r,\prb_r)}^+\rad^{3}}{8}\Biggl(\frac{1}{3}\Bigl(\sum_{m=1}^\nbs \abs*{H^\n(\z_m)}^2\Bigr)^\frac{1}{2}+\Bigl(\sum_{m=1}^\nbs \abs*{E^\n(\z_m)}^2\Bigr)^\frac{1}{2}\Biggr). 
\end{equation}
Furthermore $(\mcntm{\prv_r}{m}\wh{\mc{R}}_m,\mcntm{\prb_r}{m}\wh{\mc{Q}}_m)_{m=1}^\nbs$ satisfy similar estimates. 
\end{proposition}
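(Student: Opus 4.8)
\emph{Strategy.} The plan is to recast both systems in abstract Foldy--Lax form and to invert them by a Neumann--series (contraction) argument in the finite-dimensional Hilbert space $\mathcal{X}:=(\C^{3})^{\nbs}\times(\C^{3})^{\nbs}$ equipped with $\norm{X}^{2}:=\sum_{m=1}^{\nbs}\bigl(\abs{\mc{Q}_m}^{2}+\abs{\mc{R}_m}^{2}\bigr)$. Writing the unknowns as $X:=(\wh{\mc{Q}}^{\prb_r}_m,\wh{\mc{R}}^{\prv_r}_m)_{m}$ and the data as $Z:=(H^{\n}(\z_m),E^{\n}(\z_m))_{m}$, I would read \eqref{Linear-System-Anisotropic-SymCase} as $(\mathbb{P}^{-1}-\mathbb{A})X=Z$, where $\mathbb{P}:=\operatorname{diag}\bigl(\ANPT{\prb_r^*}{D_m},\ANPT{\prv_r^*}{D_m}\bigr)_{m}$ is block-diagonal and $\mathbb{A}$ is the interaction operator whose $(m,j)$-block ($m\neq j$) acts by $\mc{Q}_j\mapsto\Pi_{k}(\z_m,\z_j)\mc{Q}_j$, $\mc{R}_j\mapsto\Pi_{k}(\z_m,\z_j)\mc{R}_j$ together with the electromagnetic cross-couplings $\mp ik\,\nabla\Phi_{k}(\z_m,\z_j)\times\,\cdot$. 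Multiplying through by $\mathbb{P}$ reduces invertibility of the system to that of $\mathbb{I}-\mathbb{P}\mathbb{A}$. For \eqref{Linear-System-Anisotropic-GenCase} I would first conjugate by $\operatorname{diag}\bigl(\mcntm{\prb_r}{m},\mcntm{\prv_r}{m}\bigr)_{m}$, which is bounded by $\bs{c}_\infty$ and boundedly invertible by \eqref{Coercivity-Cond-Contrast}; this puts the system in exactly the same shape (now with $\ANPT{\meanm{\prb_r^*}{m}}{D_m}$, $\ANPT{\meanm{\prv_r^*}{m}}{D_m}$ on the diagonal) and all the bounds below transfer, losing only powers of $\bs{c}_\infty$ and of the coercivity constants.

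The two estimates I would establish are the following. First, by \Cref{Pol-Ten-Behaviour} (scaling \eqref{Ani-PolTen-Scaling}, coercivity \eqref{Ani-PolTens-Coercivity-RealCase}), $\mathbb{P}$ is a block-diagonal operator with $\norm{\mathbb{P}}_{\mathcal{X}\to\mathcal{X}}\leq\mu_{(\prv_r,\prb_r)}^{+}\rad^{3}$, while its real part being $\geq\mu_{(\prv_r,\prb_r)}^{-}\rad^{3}$ in the sense of quadratic forms provides a matching lower bound of order $(\mu_{(\prv_r,\prb_r)}^{+}\rad^{3})^{-1}$ for $\operatorname{Re}\langle\mathbb{P}^{-1}\cdot,\cdot\rangle$. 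Second, from \eqref{Dyadic-Green-function} and $\abs{\z_m-\z_j}\geq\dist_{mj}\geq\dist=\bs{c}_r\rad$ one has $\abs{\Pi_{k}(\z_m,\z_j)}\lesssim\dist_{mj}^{-3}+\abs{k}\dist_{mj}^{-2}+k^{2}\dist_{mj}^{-1}$ and $\abs{k}\,\abs{\nabla\Phi_{k}(\z_m,\z_j)}\lesssim\abs{k}\dist_{mj}^{-2}+k^{2}\dist_{mj}^{-1}$; estimating $\norm{\mathbb{A}X}$ row-by-row with Cauchy--Schwarz, summing over $m$, and applying the counting bound \eqref{Double-Sum-Counting} of \Cref{Counting-Oclusion} to the double sums $\sum_m\sum_{j\neq m}\dist_{mj}^{-p}$ should give $\norm{\mathbb{A}}_{\mathcal{X}\to\mathcal{X}}\lesssim\rad^{-3}\abs{k}/\bs{c}_r$, whence $\norm{\mathbb{P}\mathbb{A}}_{\mathcal{X}\to\mathcal{X}}\lesssim\mu_{(\prv_r,\prb_r)}^{+}\abs{k}/\bs{c}_r$.

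With these in hand, the hypothesis $\bs{c}_r\geq 3\abs{k}\mu_{(\prv_r,\prb_r)}^{+}$ makes $\mathbb{P}\mathbb{A}$ a strict contraction, so $\mathbb{I}-\mathbb{P}\mathbb{A}$, hence $\mathbb{P}^{-1}-\mathbb{A}$, is invertible; after undoing the conjugation the same follows for \eqref{Linear-System-Anisotropic-GenCase}. For the quantitative bounds \eqref{Estim-Sol-Linear-Sys}--\eqref{Estim-Sol-Linear-Sys1} I would expand $X=(\mathbb{I}-\mathbb{P}\mathbb{A})^{-1}\mathbb{P}Z$ keeping the magnetic and electric blocks separate: the leading term $\mathbb{P}Z$ produces the factor $\tfrac98\,\mu_{(\prv_r,\prb_r)}^{+}\rad^{3}$ through the Neumann sum $\sum_{n\geq0}(\tfrac19)^{n}$, and since the $\mc{Q}$–equation is driven by $H^{\n}$ with weight one and couples to $E^{\n}$ only through the weaker $ik\nabla\Phi_k$-block (relative weight $\leq\tfrac13$), one recovers precisely the constants $\tfrac98$ and $\tfrac13$. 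The statement about $\mcntm{\prv_r}{m}\wh{\mc{R}}_m$, $\mcntm{\prb_r}{m}\wh{\mc{Q}}_m$ is then immediate, since $\mcntm{\prv_r}{m}$, $\mcntm{\prb_r}{m}$ are bounded and boundedly invertible.

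The hard part is the operator-norm bound on $\mathbb{A}$: unlike the difference operators of \Cref{Elec-Magn-Lip-sch-Comp-and-DefPosi}, where the singularities of $\Phi_k$ and $\Phi_i$ cancel, the blocks $\Pi_{k}(\z_m,\z_j)$ inherit the full $\dist_{mj}^{-3}$ singularity of $\nabla_x\nabla_x\Phi_k$, and the sum $\sum_m\sum_{j\neq m}\dist_{mj}^{-3}$ is exactly the quantity that in \cite{AB-SM:MMS2019} forced the logarithmic factor $\abs{\ln\dist^{-1}}$. Controlling it on the scale $\dist=\bs{c}_r\rad$ via the sharp counting estimate \eqref{Double-Sum-Counting} — using also that away from its singularity the dyadic kernel decays only like $\dist_{mj}^{-1}$, so only the short-range part is genuinely singular — is what keeps $\norm{\mathbb{P}\mathbb{A}}$ of order $\abs{k}/\bs{c}_r$ and makes the clean separation threshold $\bs{c}_r\geq3\abs{k}\mu_{(\prv_r,\prb_r)}^{+}$ possible; the milder sums $\sum\dist_{mj}^{-2},\sum\dist_{mj}^{-1}$ are handled the same way and account for the extra factor of $\abs{k}$.
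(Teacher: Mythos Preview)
Your contraction argument breaks at the operator-norm bound on $\mathbb{A}$. The block $\Pi_{k}(\z_m,\z_j)$ contains $\nabla_x\nabla_x\Phi_{k}$, whose leading singularity is $\dist_{mj}^{-3}$ with a $k$-independent coefficient (it is already present in $\Pi_{0}$). Feeding $q=3$ into \eqref{Double-Sum-Counting} gives the factor $\dist^{-3}\sum_{l=1}^{\nbs^{1/3}}l^{-1}$, which is of order $\dist^{-3}\abs{\ln\dist}$; after multiplying by $\norm{\mathbb{P}}\leq\mu_{(\prv_r,\prb_r)}^{+}\rad^{3}$ you obtain $\norm{\mathbb{P}\mathbb{A}}\lesssim \mu_{(\prv_r,\prb_r)}^{+}\bs{c}_r^{-3}\abs{\ln(\bs{c}_r\rad)}$, not $\mu_{(\prv_r,\prb_r)}^{+}\abs{k}/\bs{c}_r$. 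Since $\nbs\sim\dist^{-3}$ this diverges as $\rad\to0$, so $\mathbb{I}-\mathbb{P}\mathbb{A}$ is not a contraction under the stated hypothesis $\bs{c}_r\geq3\abs{k}\mu_{(\prv_r,\prb_r)}^{+}$. Your last paragraph acknowledges the difficulty but the counting estimate alone does not resolve it; the ``only the short-range part is singular'' remark does not help, because there are $O(n^{2})$ particles at range $n\dist$ and $\sum_n n^{2}(n\dist)^{-3}$ is exactly the harmonic series.

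The paper avoids an operator-norm bound on the full $\Pi_k$ altogether. It multiplies the $m$-th equation by $\ANPT{\prb_r^*}{D_m}\bsmc{Q}_m$ (resp.\ $\ANPT{\prv_r^*}{D_m}\bsmc{R}_m$) and sums, then splits $\Pi_k=\Pi_0+(\Pi_k-\Pi_0)$. For the dangerous $\Pi_0$ piece it uses the mean-value property of $\Pi_0$ on the disjoint balls $B(\z_m,\dist/4)$ to rewrite the discrete quadratic form $-\sum_{m\neq j}\Pi_0(\z_m,\z_j)\,\cdot\,$ as a continuous one, namely $-\int\MSVP{B(\z,\rho)}{0,\Id}(Q)\cdot Q\,dv$ plus diagonal terms; by \eqref{Lip-Sch-Defint-Positive-part} this is \emph{nonnegative}, so the $\dist_{mj}^{-3}$ contribution is controlled by a sign, not by size (this is \eqref{Coercivity-of-Discret-Diadic-Green}). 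The diagonal correction is bounded by the $L^{2}$-isometry of $\nabla\div\bdmc{S}^{0}$, giving the harmless term $\sum_m\abs{\ANPT{\prb_r^*}{D_m}\bsmc{Q}_m}^{2}$. Only the remainder $\Pi_k-\Pi_0$ and the cross term $ik\nabla\Phi_k\times$ are estimated in norm; by \eqref{NablaNabal-Phi_k-Phi_i} with $\alpha=0$ these have singularity at worst $\dist_{mj}^{-2}$, so \eqref{Double-Sum-Counting} applies without the logarithm and produces the factors $\abs{k}/\bs{c}_r$. This coercivity trick is precisely the ``missing idea'' that replaces the $\ln(\dist^{-1})$ loss of \cite{AB-SM:MMS2019}, and without it (or an equivalent sign argument) the Neumann-series route cannot close.
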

\begin{proof} We deal with the case of real valued coefficients (i.e~assumption \eqref{Ani-PolTens-Coercivity-RealCase} is fulfilled). 
When either one of them or both are complex valued, we procceed in the same way taking the real parts in the the coming derivations.  
\begin{itemize} 
	\item (\emph{First linear system})   
 We start writing the system \eqref{Linear-System-Anisotropic-SymCase} as follows
\begin{align}
\begin{aligned}
\bsmc{Q}_m-&\sum_{\stackrel{j\geq 1}{j\neq m}}^{\nbs} \Bigl[ \Pi_{k}(\z_m,\z_j) \ANPT{\prb_r^*}{D_j}\bsmc{Q}_j- ik\nabla\Phi_{k}(\z_m,\z_j)\times\ANPT{\prv_r^*}{D_m}\bsmc{R}_j\Bigr]= H^\n(\z_m),\\ 
\bsmc{R}_m-&\sum_{\stackrel{j\geq 1}{j\neq m}}^{\nbs}\Bigl[ \Pi_{k}(\z_m,\z_j) \ANPT{\prv_r}{D_m}\bsmc{R}_j+ ik\nabla\Phi_{k}(\z_m,\z_j)\times\ANPT{\prb_r^*}{D_m} \bsmc{Q}_j\Bigr]=E^\n(\z_m),
\end{aligned}
\end{align} where $\bsmc{Q}_j:=\ANPT{\prb_r^*}{D_j}^{-1}\wh{\mc{Q}}^{\prb_r}_j$ and $\bsmc{R}_j:=\ANPT{\prb_r^*}{D_j}^{-1}\mc{R}^{\prb_r}_j$ for $j\in\{1,...,\nbs\}.$ Observing that, due to mean value property for harmonic function, 
\begin{equation}\begin{aligned}
{\abs*{B(0,\dist/4)}^2}&\sum_{m=1}^\nbs\sum_{\stackrel{j\geq 1}{j\neq m}}^{\nbs} \Bigl[ \Pi_{0}(\z_m,\z_j) \ANPT{\prb_r^*}{D_j}\bsmc{Q}_j\cdot \ANPT{\prb_r^*}{D_m}\bsmc{Q}_m\Bigr]\\
&=\sum_{m=1}^\nbs\sum_{\stackrel{j\geq 1}{j\neq m}}^{\nbs}\int_{B(\z_m,{\dist/4})}\int_{B(\z_j,{\dist/p})}\Bigl[ \Pi_{0}(x,y) \ANPT{\prb_r^*}{D_j}\bsmc{Q}_j\cdot \ANPT{\prb_r^*}{D_m}\bsmc{Q}_m\Bigr]dydx,\\
&=\sum_{m=1}^\nbs\Biggl(\int_{B(\z_m,{\dist/4})}\int_{B(\z,{\rho})}\Bigl[ \Pi_{0}(x,y) \sum_{j\geq 1}^{\nbs}\ind{D_j}(x)\ANPT{\prb_r^*}{D_j}\bsmc{Q}_j\cdot \ANPT{\prb_r^*}{D_m}\bsmc{Q}_m\Bigr]dydx\\
&-\int_{B(\z_m,{\dist/4})}\int_{B(\z_m,{\dist/4})}\Bigl[ \Pi_{0}(x,y) \ind{D_j}(x)\ANPT{\prb_r^*}{D_m}\bsmc{Q}_m\cdot \ANPT{\prb_r^*}{D_m}\bsmc{Q}_m\Bigr]dydx\Biggr),
\end{aligned}\end{equation} for some $\z$ and $\rho$ such that $\z\in\cup_{m=1}^\nbs B(\z_m,\dist/4)\subset B(\z,\rho).$ 
Also for $Q(x)=\sum_{j\geq 1}^{\nbs}\ind{D_j}(x) \ANPT{\prb_r^*}{D_j}\bsmc{Q}_j$ we get 
\begin{equation*}\begin{aligned}
-{\abs*{B(0,\dist/4)}^2}&\sum_{m=1}^\nbs\sum_{\stackrel{j\geq 1}{j\neq m}}^{\nbs} \Bigl[ \Pi_{0}(\z_m,\z_j) \ANPT{\prb_r^*}{D_j}\bsmc{Q}_j\cdot \ANPT{\prb_r^*}{D_m}\bsmc{Q}_m\Bigr]\\
&=-\int_{B(\z,\rho)}\MSVP{B(\z,\rho)}{0,I}(Q)(x)\cdot Q(x)dx\\
&+\sum_{m=1}^\nbs\int_{B(\z_m,\dist/4)}\int_{B(\z_m,\dist/4)}\Bigl[ \Pi_{0}(x,y) \ANPT{\prb_r^*}{D_m}\bsmc{Q}_m\cdot \ANPT{\prb_r^*}{D_m}\bsmc{Q}_m\Bigr]dydx,
\end{aligned}\end{equation*} in such a way that
\begin{equation}\label{Coercivity-of-Discret-Diadic-Green}\begin{aligned}
-&\sum_{\stl{m,j=1}{j\neq m}}^\nbs \Bigl[\Pi_{0}(\z_m,\z_j)\ANPT{\prb_r^*}{D_j}\bsmc{Q}_j\cdot \ANPT{\prb_r^*}{D_m}\bsmc{Q}_m\Bigr]\\
&\geq\frac{\sum_{m=1}^\nbs}{\abs*{B(0,\dist/4)}^2}\int_{B(\z_m,\dist/4)}\int_{B(\z_m,\dist/4)}\Bigl[ \Pi_{0}(x,y) \ANPT{\prb_r^*}{D_m}\bsmc{Q}_m\cdot \ANPT{\prb_r^*}{D_m}\bsmc{Q}_m\Bigr]dydx.
\end{aligned}\end{equation}	
With this in mind we have, from the first equation of linear system,
\begin{equation*}
\begin{aligned}
&\sum_{m=1}^\nbs\bsmc{Q}_m\cdot\ANPT{\prb_r^*}{D_m}\bsmc{Q}_m-\sum_{\stl{m,j=1}{j\neq m}}^\nbs  \Pi_{k}(\z_m,\z_j) \ANPT{\prb_r^*}{D_j}\bsmc{Q}_j\cdot\ANPT{\prb_r^*}{D_m}\bsmc{Q}_m\\
&+\sum_{\stl{m,j=1}{j\neq m}}^\nbs ik\nabla\Phi_{k}(\z_m,\z_j)\times\ANPT{\prv_r^*}{D_m}\bsmc{R}_j\cdot\ANPT{\prb_r^*}{D_m}\bsmc{Q}_m= \sum_{m=1}^\nbs H^\n(\z_m)\cdot\ANPT{\prb_r^*}{D_m}\bsmc{Q}_m. 
\end{aligned}
\end{equation*} Using \eqref{Coercivity-of-Discret-Diadic-Green} we get 
\begin{align}\label{Fisrt-Ine-Esimation-Inver-First-LS}
\begin{aligned}
&\sum_{m=1}^\nbs\bsmc{Q}_m\cdot\ANPT{\prb_r^*}{D_m}\bsmc{Q}_m-\sum_{\stl{m,j=1}{j\neq m}}^\nbs  (\Pi_{k}-\Pi_{0})(\z_m,\z_j) \ANPT{\prb_r^*}{D_j}\bsmc{Q}_j\ANPT{\prb_r^*}{D_m}\bsmc{Q}_m\\
&+\sum_{\stl{m,j=1}{j\neq m}}^\nbs ik\nabla\Phi_{k}(\z_m,\z_j)\times\ANPT{\prv_r^*}{D_j}\bsmc{R}_j\cdot\ANPT{\prb_r^*}{D_m}\bsmc{Q}_m\\
&+{\frac{\sum_{m=1}^\nbs}{\abs*{\frac{\dist}{4}}^2}\int_{B(\z_m,\frac{\dist}{4})}\int_{B(\z_m,\frac{\dist}{4})}\Bigl[ \Pi_{0}(x,y) \ANPT{\prb_r^*}{D_m}\bsmc{Q}_m\cdot \ANPT{\prb_r^*}{D_m}\bsmc{Q}_m\Bigr]dydx} \leq \sum_{m=1}^\nbs H^\n(\z_m)\cdot\ANPT{\prb_r^*}{D_m}\bsmc{Q}_m. 
\end{aligned}
\end{align} As we have, see (\cite{gilbarg2015elliptic}, Theorem 9.9), 
\begin{equation}
\norm{\int_{\R^3}\Pi_0(\cdot,y)\ind{B(\z_m,1)}(y)dy}_{\Lp{2}{\R^3}}=\norm{\ind{B(\z_m,1)}}_{\Lp{2}{\R^3}}
\end{equation} we get, with $S_\nbs(\Pi_0)$ standing for the third term of the left-hand side of \eqref{Fisrt-Ine-Esimation-Inver-First-LS},  
\begin{align*}
S_\nbs(\Pi_0)
&\leq 
\frac{\sum_{m=1}^\nbs}{\abs*{B(0,1)}^2}\int_{B(\z_m,1)}\int_{B(\z_m,1)}\Bigl[ \Pi_{0}(x,y) \ANPT{\prb_r^*}{D_m}\bsmc{Q}_m\cdot \ANPT{\prb_r^*}{D_m}\bsmc{Q}_m\Bigr]dydx,\\
&\leq \frac{\sum_{m=1}^\nbs}{\abs*{B(0,1)}^2}\norm{\int_{B(\z_m,1)} \Pi_{0}(x,y) \ANPT{\prb_r^*}{D_m}\bsmc{Q}_mdy}_{\Lp{2}{B(\z_m,1)}} \norm{\ANPT{\prb_r^*}{D_m}\bsmc{Q}_m}_{\Lp{2}{B(\z_m,1)}}\\
&\leq \sum_{m=1}^\nbs \abs{\ANPT{\prb_r^*}{D_m}\bsmc{Q}_m}^2,
\end{align*} 
which replaced in \eqref{Fisrt-Ine-Esimation-Inver-First-LS}, together with \eqref{NablaNabal-Phi_k-Phi_i}, for $\alpha=0$, gives
\begin{align}
\begin{aligned}
&\sum_{m=1}^\nbs\bsmc{Q}_m\cdot\ANPT{\prb_r^*}{D_m}\bsmc{Q}_m-\sum_{\stl{m,j=1}{j\neq m}}^\nbs \Biggl({\abs{k}^2} \Bigl(\frac{2}{\dist_{mj}}+\abs{k}\Bigr)+\frac{\abs{k}^2}{\dist_{mj}} \Biggr) \abs{\ANPT{\prb_r^*}{D_j}\bsmc{Q}_j}\abs{\ANPT{\prb_r^*}{D_m}\bsmc{Q}_m}\\
&-\sum_{\stl{m,j=1}{j\neq m}}^\nbs \Biggl(\frac{\abs{k}}{\dist_{mj}}\Bigl(\frac{1}{\dist_{mj}}+\abs{k}\Bigr)\Biggr)\abs{\ANPT{\prv_r^*}{D_j}\bsmc{R}_j}\abs{\ANPT{\prb_r^*}{D_m}\bsmc{Q}_m}\\
&-\sum_{m=1}^\nbs \abs{\ANPT{\prb_r^*}{D_m}\bsmc{Q}_m}^2 \leq \sum_{m=1}^\nbs \abs{H^\n(\z_m)}\abs{\ANPT{\prb_r^*}{D_m}\bsmc{Q}_m}. 
\end{aligned}\end{align} 
From the above inequality, with calculations similar to \eqref{Estim-star1}, we get  
\begin{align*}
\begin{aligned}
&\sum_{m=1}^\nbs\bsmc{Q}_m\cdot\ANPT{\prb_r^*}{D_m}\bsmc{Q}_m-  \Bigl(\frac{3\abs{k}^2}{(\rad/2+\dist)^\frac{1}{2}\dist}+\frac{\abs{k}(\abs{k}^2+1)}{(\rad/2+\dist)^\frac{3}{2}}\Bigr)\frac{1}{(\rad/2+\dist)^\frac{3}{2}} \sum_{m=1}^{\nbs}\abs{\ANPT{\prb_r^*}{D_m} \bsmc{Q}_m}^2\\
&-\Biggl(\frac{\abs{k}}{\dist}\Bigl(\frac{1}{(\rad/2+\dist)\dist}+ \frac{\abs{k}}{(\rad/2+\dist)^2}\Bigr)\Biggr)\Bigl(\sum_{m=1}^\nbs\abs{\ANPT{\prv_r^*}{D_m}\bsmc{R}_m}^2\Bigr)^\frac{1}{2}\Bigl(\sum_{m=1}^\nbs\abs{\ANPT{\prb_r^*}{D_m}\bsmc{Q}_m}^2\Bigr)^\frac{1}{2}\\
&-\sum_{m=1}^\nbs \abs{\ANPT{\prb_r^*}{D_m}\bsmc{Q}_m}^2 \leq \sum_{m=1}^\nbs \abs{H^\n(\z_m)}\abs{\ANPT{\prb_r^*}{D_m}\bsmc{Q}_m},
\end{aligned}
\end{align*}  then, with the original notations, we get 
\begin{align}\label{Est-TotChrg-Prof-Last-Form}
\begin{aligned}
&\sum_{m=1}^\nbs \ANPT{\prb_r^*}{D_m}^{-1}\wh{\mc{Q}}^{\prb_r}_m \cdot{\wh{\mc{Q}}^{\prb_r}_m}-  \Bigl(\frac{3\sqrt{2}\abs*{k}^2}{(1+2\bs{c}_r)^\frac{1}{2}\bs{c}_r}+ \frac{\sqrt{2}^3\abs*{k}(\abs*{k}^2+1)}{(1+2\bs{c}_r)^\frac{3}{2}}\Bigr)\frac{\sqrt{2}^3}{(1+2\bs{c}_r)^\frac{3}{2}} \frac{\sum_{m=1}^{\nbs}\abs*{\wh{\mc{Q}}^{\prb_r}_m}^2}{\rad^{3}}\\
&-\Biggl(\frac{\abs*{k}}{\bs{c}_r}\Bigl(\frac{2}{(1+2\bs{c}_r)}+ \frac{2\abs*{k}}{(1+\bs{c}_r)^2}\Bigr)\Biggr)\frac{\Bigl(\sum_{m=1}^\nbs\abs*{\wh{\mc{R}}^{\prv_r}_m}^2\Bigr)^\frac{1}{2}\Bigl(\sum_{m=1}^\nbs\abs*{\wh{\mc{Q}}^{\prb_r}_m}^2\Bigr)^\frac{1}{2}}{\rad^3}\\
&-\sum_{m=1}^\nbs \abs*{\wh{\mc{Q}}^{\prb_r}_m}^2
\leq \sum_{m=1}^\nbs \abs*{H^\n(\z_m)}\abs*{\mc{Q}^{\prb_r}_{m}}. 
\end{aligned}
\end{align} 
Due to \eqref{Ani-PolTens-Coercivity-RealCase}, considering that,  both 
\begin{equation}\label{Anisotropic-Pol-Tensor-Scaling}
\frac{1}{\rad^3\mu_{(\prv_r,\prb_r)}^+} \abs*{\wh{\mc{Q}}^{\prb_r}_m}^2\leq{\ANPT{\prb_r^*}{D_m}^{-1}\wh{\mc{Q}}^{\prb_r}_m\cdot\wh{\mc{Q}}^{\prb_r}_m}\leq\frac{1}{\rad^3\mu_{(\prv_r,\prb_r)}^-}\abs*{\wh{\mc{Q}}^{\prb_r}_m}^2\end{equation} and \begin{equation}\frac{1}{\rad^3\mu_{(\prv_r,\prb_r)}^+} \abs{\wh{\mc{R}}^{\prv_r}_m}^2\leq{\ANPT{\prv_r^*}{D_m}^{-1}\wh{\mc{R}}^{\prv_r}_m\cdot\wh{\mc{R}}^{\prv_r}_m}\leq \frac{1}{\rad^3\mu_{(\prv_r,\prb_r)}^-}\abs*{\wh{\mc{Q}}^{\prb_r}_m}^2\end{equation}  are satisfied, both \eqref{Est-TotChrg-Prof-Last-Form} and the obtained result from repeating the same calculation for the second equation, gives, with  $\bs{c}_r\geq {3\abs{k}}{\mu_{(\prv_r,\prb_r)}^+}$ and  $\abs{k}>1,$
\begin{equation*}\begin{aligned}
(\frac{7}{9}-\rad^3) {\sum_{m=1}^{\nbs}\abs*{\wh{\mc{Q}}^{\prb_r}_m}^2}
-\frac{2}{9}  {\Bigl(\sum_{m=1}^\nbs\abs*{\wh{\mc{R}}^{\prv_r}_m}^2\Bigr)^\frac{1}{2}\Bigl(\sum_{m=1}^\nbs\abs*{\wh{\mc{Q}}^{\prb_r}_m}^2\Bigr)^\frac{1}{2}}
\leq \mu_{(\prv_r,\prb_r)}^+\rad^3\sum_{m=1}^\nbs \abs*{H^\n(\z_m)}\abs*{\wh{\mc{Q}}^{\prb_r}_m}, 
\end{aligned}\end{equation*} and
\begin{equation*}
\begin{aligned}
(\frac{7}{9}-\rad^3) {\sum_{m=1}^{\nbs}\abs*{\wh{\mc{R}}^{\prb_r}_m}^2}
-\frac{2}{9}  {\Bigl(\sum_{m=1}^\nbs\abs*{\mc{Q}^{\prv_r}_m}^2\Bigr)^\frac{1}{2}\Bigl(\sum_{m=1}^\nbs\abs*{\wh{\mc{R}}^{\prb_r}_m}^2\Bigr)^\frac{1}{2}}
\leq \mu_{(\prv_r,\prb_r)}^+\rad^3\sum_{m=1}^\nbs \abs*{E^\n(\z_m)}\abs*{\wh{\mc{Q}}^{\prb_r}_m}. 
\end{aligned} \end{equation*} 
which, for $\rad\leq\frac{1}{3}$, gives 
\begin{equation*}
{\sum_{m=1}^{\nbs}\abs*{\wh{\mc{Q}}^{\prb_r}_m}^2}
-\frac{1}{3}  {\Bigl(\sum_{m=1}^\nbs\abs*{\wh{\mc{R}}^{\prv_r}_m}^2\Bigr)^\frac{1}{2}\Bigl(\sum_{m=1}^\nbs\abs*{\wh{\mc{Q}}^{\prb_r}_m}^2\Bigr)^\frac{1}{2}}
\leq \mu_{(\prv_r,\prb_r)}^+\frac{\rad^3}{3}\Bigl(\sum_{m=1}^\nbs \abs*{H^\n(\z_m)}^2\Bigr)^\frac{1}{2}\Bigl(\sum_{m=1}^\nbs\abs*{\wh{\mc{Q}}^{\prb_r}_m}^2\Bigr)^\frac{1}{2}, 
\end{equation*} and
\begin{equation*}
{\sum_{m=1}^{\nbs}\abs*{\wh{\mc{R}}^{\prb_r}_m}^2}
-\frac{1}{3}  {\Bigl(\sum_{m=1}^\nbs\abs*{\mc{Q}^{\prv_r}_m}^2\Bigr)^\frac{1}{2}\Bigl(\sum_{m=1}^\nbs\abs*{\wh{\mc{R}}^{\prb_r}_m}^2\Bigr)^\frac{1}{2}}
\leq \mu_{(\prv_r,\prb_r)}^+\frac{\rad^3}{3}\Bigl(\sum_{m=1}^\nbs \abs*{E^\n(\z_m)}^2\Bigr)^\frac{1}{2}\Bigl(\sum_{m=1}^\nbs\abs*{\wh{\mc{Q}}^{\prb_r}_m}^2\Bigr)^\frac{1}{2}. 
\end{equation*} 
 Then follows the conclusion, namely
\begin{equation*}
\Bigl({\sum_{m=1}^{\nbs}\abs*{\wh{\mc{Q}}^{\prb_r}_m}^2}\Bigr)^\frac{1}{2}
\leq \mu_{(\prv_r,\prb_r)}^+\rad^{3}\Bigl(\sum_{m=1}^\nbs \abs*{H^\n(\z_m)}^2\Bigr)^\frac{1}{2}+\frac{1}{3}  {\Bigl(\sum_{m=1}^\nbs\abs*{\wh{\mc{R}}^{\prv_r}_m}^2\Bigr)^\frac{1}{2}}, 
\end{equation*} and
\begin{equation*}
\Bigl({\sum_{m=1}^{\nbs}\abs*{\wh{\mc{R}}^{\prb_r}_m}^2}\Bigr)^\frac{1}{2}
\leq\mu_{(\prv_r,\prb_r)}^+\rad^{3}\Bigl(\sum_{m=1}^\nbs \abs*{E^\n(\z_m)}^2\Bigr)^\frac{1}{2}+\frac{1}{3}  {\Bigl(\sum_{m=1}^\nbs\abs*{\mc{Q}^{\prv_r}_m}^2\Bigr)^\frac{1}{2}}. 
\end{equation*} 

\item (\emph{Non symmetric case}) We could deduce the estimate from the above calculation writing down the linear system as follows, first
we set, for $m\in\{1,...,\nbs\}$ \begin{align*}
{\ANPTT{\meanm{\prb_r^*}{m}}{D_m}^{-1}}:=& \mcntm{\prb_r^*}{m}\ANPT{\meanm{\prb_r^*}{m}}{D_m}^{-1}{(\mcntm{\prb_r}{m})}^{-1},\\{\ANPTT{\meanm{\prv_r^*}{m}}{D_m}^{-1}}:=& \mcntm{\prv_r^*}{m}\ANPT{\meanm{\prv_r^*}{m}}{D_m}^{-1}{(\mcntm{\prv_r}{m})}^{-1},
\end{align*} and, this time 
\begin{align}
\bs{\mc{R}}_m:={\ANPTT{\meanm{\prv_r^*}{m}}{D_m}^{-1}}\mcntm{\prv_r}{m}\wh{\mc{R}}_m&,\\
\bs{\mc{Q}}_m:={\ANPTT{\meanm{\prb_r^*}{m}}{D_m}^{-1}}\mcntm{\prb_r}{m}\wh{\mc{Q}}_m&
\end{align} the system \eqref{Linear-System-Anisotropic-GenCase} can, hence, be written as 
\begin{equation}
\begin{aligned}
\bsmc{Q}_m-&\sum_{\stackrel{j\geq 1}{j\neq m}}^{\nbs} \Bigl[ \Pi_{k}(\z_m,\z_j) \ANPTT{\mean{\prb^*_r}}{D_j}\bsmc{Q}_j- ik\nabla\Phi_{k}(\z_m,\z_j)\times\ANPTT{\mean{\prv^*_r}}{D_m}\bsmc{R}_j\Bigr]= H^\n(\z_m),\\ 
\bsmc{R}_m-&\sum_{\stackrel{j\geq 1}{j\neq m}}^{\nbs}\Bigl[ \Pi_{k}(\z_m,\z_j) \ANPTT{\mean{\prv^*_r}}{D_m}\bsmc{R}_j+ ik\nabla\Phi_{k}(\z_m,\z_j)\times\ANPTT{\mean{\prb^*_r}}{D_m} \bsmc{Q}_j\Bigr]=E^\n(\z_m),
\end{aligned}
\end{equation} hence, following the same line as in the proof for the first linear system, we get 
\begin{equation}\begin{aligned}
\Bigl({\sum_{m=1}^{\nbs} \abs*{\meancnt{\prb_r}\wh{\mc{Q}}_m}^2}\Bigr)^{(1/2)}
\leq\frac{9\mu_{(\prv_r,\prb_r)}^+ \rad^{3}}{8}\Biggl(\Bigl(\sum_{m=1}^\nbs \abs*{H^\n(\z_m)}^2\Bigr)^\frac{1}{2}+\frac{1}{3}\Bigl(\sum_{m=1}^\nbs \abs*{E^\n(\z_m)}^2\Bigr)^\frac{1}{2}\Biggr), \\
\Bigl({\sum_{m=1}^{\nbs}\abs*{\meancnt{\prv_r}\wh{\mc{R}}_m}^2}\Bigr)^{(1/2)}
\leq\frac{9\mu_{(\prv_r,\prb_r)}^+\rad^{3}}{8}\Biggl(\frac{1}{3}\Bigl(\sum_{m=1}^\nbs \abs*{H^\n(\z_m)}^2\Bigr)^\frac{1}{2}+\Bigl(\sum_{m=1}^\nbs \abs*{E^\n(\z_m)}^2\Bigr)^\frac{1}{2}\Biggr),
	\end{aligned}
\end{equation} with unchanged condition on  \begin{equation}
\bs{c}_r={3\abs{k}}{\mu_{(\prv_r,\prb_r)}^+}, \abs{k}>1.
\end{equation} 
For the last statement, it suffices to observe, that $ \cnt{A}$ and its transpose $ \cnt{A}^*$ share the same eigenvalues, and hence leave those of $\ANPT{A^*}{D_m}^{-1}$ identical to those of   $\ANPTT{{A^*}}{D_m}^{-1}.$
\end{itemize}
\end{proof}
\begin{proof}(Of \Cref{Theorem-Anisotropic-Case})\\
\begin{itemize}
\item We start with \eqref{Farfield-Anis-Approximation-FullError},
noticing that, for $(\wh{\mc{Q}}^{\prb_r}_m,\wh{\mc{R}}^{\prb_r}_m)_{m=1}^\nbs$ solution of \eqref{Linear-System-Anisotropic-SymCase} and $({\mc{Q}}^{\prb_r}_m,{\mc{R}}^{\prb_r}_m)_{m=1}^\nbs$ solution of \eqref{Approximation-Linear-Sys-Anisotropic-Symetric-Case} then 	
$(\wh{\mc{Q}}^{\prb_r}_m-\mc{Q}^{\prb_r}_m,\wh{\mc{R}}^{\prb_r}_m-\mc{R}^{\prb_r}_m)_{m=1}^\nbs$ solves \eqref{Linear-System-Anisotropic-SymCase} with 
\begin{equation}
\begin{aligned}
\wh{H}^\n(\z_m)=\ANPT{\prb_r^*}{D_m}^{-1}\Biggl(Er_m(H,E)+O\Bigl(k^2\norm*{\cnt{\prb_r}H}_{\Lp{2}{D_m}}\rad^\frac{7}{2} +\norm*{\cnt{\prv_r}E}_{\Lp{2}{D_m}}\rad^\frac{5}{2}\Bigr)\Biggr)
\end{aligned}
\end{equation} and
\begin{equation}
\begin{aligned}
\wh{E}^\n(\z_m)=\ANPT{\prv_r^*}{D_m}^{-1}\Biggl(Er_m(E,H)+O\Bigl(k^2\norm*{\cnt{\prv_r}E}_{\Lp{2}{D_m}}\rad^\frac{7}{2} +\norm*{\cnt{\prb_r}H}_{\Lp{2}{D_m}}\rad^\frac{5}{2}\Bigr)\Biggr).
\end{aligned}
\end{equation} 
Thus, with the estimates \eqref{Estim-Sol-Linear-Sys},  \eqref{Estim-Sol-Linear-Sys1} and \eqref{Anisotropic-Pol-Tensor-Scaling}, we have, with the aid of \eqref{Error-Estim-Error_m} for the third inequality
\begin{equation*}
\begin{aligned}
\Bigl(&\sum_{m=1}^{\nbs}  \abs*{\wh{\mc{Q}}^{\prb_r}_m-\mc{Q}^{\prb_r}_m}^2\Bigr)^\frac{1}{2}\\
&\leq\frac{9\mu_{(\prv_r,\prb_r)}^+}{8\mu_{(\prv_r,\prb_r)}^-}\Biggl[\biggl\{\sum_{m=1}^\nbs \Bigl(Er_m(H,E)+O\bigl(k^2\norm*{\cnt{\prb_r}H}_{\Lp{2}{D_m}}\rad^\frac{7}{2} +\norm*{\cnt{\prv_r}E}_{\Lp{2}{D_m}} \rad^\frac{5}{2}\bigr)\Bigr)^2\biggr\}^\frac{1}{2}\\
&\hspace{2.5cm}+\frac{1}{3}\biggl\{\sum_{m=1}^\nbs \Bigl(Er_m(E,H)+O\bigl(k^2\norm*{\cnt{\prv_r}E}_{\Lp{2}{D_m}}\rad^\frac{7}{2} +\norm*{\cnt{\prb_r}H}_{\Lp{2}{D_m}}\rad^\frac{5}{2}\bigr)\Bigr)^2\biggl\}^\frac{1}{2}\Biggr],\\
&\leq\frac{9\mu_{(\prv_r,\prb_r)}^+}{8\mu_{(\prv_r,\prb_r)}^-}\Biggl[\Bigl(\sum_{m=1}^\nbs Er_m(H,E)^2\Bigr)^\frac{1}{2}+\Biggl(\sum_{m=1}^\nbs O\Bigl(k^2\norm*{\cnt{\prb_r}H}_{\Lp{2}{D_m}}\rad^\frac{7}{2} +\norm*{\cnt{\prv_r}E}_{\Lp{2}{D_m}}\rad^\frac{5}{2}\Bigr)^2\Biggr)^\frac{1}{2}\\
&\hspace{2cm}+\Bigl(\sum_{m=1}^\nbs Er_m(E,H)^2\Bigr)^\frac{1}{2}+\Biggl(\sum_{m=1}^\nbs O\Bigl(k^2\norm*{\cnt{\prv_r}E}_{\Lp{2}{D_m}}\rad^\frac{7}{2} +\norm*{\cnt{\prb_r}H}_{\Lp{2}{D_m}}\rad^\frac{5}{2}\Bigr)^2\Biggr)^\frac{1}{2}\
\Biggr],\\
&\leq\frac{9\mu_{(\prv_r,\prb_r)}^+}{8\mu_{(\prv_r,\prb_r)}^-}\Biggl[O\Biggl(\norm{H}^2_{\Lp{2}{D}}\frac{\rad^{11}}{\dist^8}+\Bigl(\norm{E}^2_{\Lp{2}{D}}+\norm{H}^2_{\Lp{2}{D}}\Bigr)\frac{(\abs{k}+2)^3\rad^{11}\abs{\ln(\dist)}}{\dist^6}\Biggr)^\frac{1}{2}\\
&\hspace{2.8cm}+\Bigl( O\biggl(k^2(\sum_{m=1}^\nbs\norm*{\cnt{\prb_r}H}^2_{\Lp{2}{D_m}}\rad^7)^\frac{1}{2} +(\sum_{m=1}^\nbs\norm*{\cnt{\prv_r}E}^2_{\Lp{2}{D_m}}\rad^{5})^\frac{1}{2}\biggr)\\
&\hspace{2.8cm}+O\Biggl(\norm{E}^2_{\Lp{2}{D}}\frac{\rad^{11}}{\dist^8}+\Bigl(\norm{E}^2_{\Lp{2}{D}}+\norm{H}^2_{\Lp{2}{D}}\Bigr)\frac{(\abs{k}+2)^3\rad^{11}\abs{\ln(\dist)}}{\dist^6}\Biggr)^\frac{1}{2}\\
&\hspace{2.8cm}+\Bigl( O\bigl(k^2(\sum_{m=1}^\nbs\norm*{\cnt{\prv_r}E}^2_{\Lp{2}{D_m}}\rad^7)^\frac{1}{2} +
(\sum_{m=1}^\nbs\norm*{\cnt{\prb_r}H}^2_{\Lp{2}{D_m}}\rad^{5})^\frac{1}{2}\bigr)^2\Bigr)^\frac{1}{2}\Biggr].\\
\end{aligned}
\end{equation*}  
With the estimates \eqref{Estimation-For-Plan-Waves} and \eqref{Estimation-For-Plan-Waves1}, for the scattering of plane waves, we obtain
\begin{equation}
\begin{aligned}
\Bigl({\sum_{m=1}^{\nbs}\abs*{\wh{\mc{Q}}^{\prb_r}_m-\mc{Q}^{\prb_r}_m}^2}\Bigr)^\frac{1}{2}
\leq\frac{4{\mu_{(\prv_r,\prb_r)}^+}}{8\mu_{(\prv_r,\prb_r)}^-}\times\frac{\bs{c}_\infty(\abs{k}+1)}{\min(c^{\prv^-}_\infty,c^{\prb^-}_\infty)\bs{c}_r^\frac{3}{2}}\Biggl[~O&\Biggl(\frac{\rad^{11}}{\dist^8}+\frac{(\abs{k}+2)^3\rad^{11}\abs{\ln(\dist)}}{\dist^6}\Biggr)^\frac{1}{2}\\
+ O&\bigl(k^2c_{\infty}\rad^{7/2}+c_{\infty} \rad^{5/2}\bigl) \Biggr],\\
\end{aligned}
\end{equation} hence,
\begin{equation}\label{Estim-Error-Approximation-L.SQ}
\begin{aligned}
&\Bigl({\sum_{m=1}^{\nbs}\abs*{\wh{\mc{Q}}^{\prb_r}_m-\mc{Q}^{\prb_r}_m}^2}\Bigr)^\frac{1}{2}
=O\Bigl(\frac{\bs{c}_{(\prv_r,\prb_r)}(\abs{k}+1)}{\bs{c}_r^{3/2}}\Bigl[\frac{1}{\bs{c}_r^4}+\rad\,\abs{\ln(\bs{c}_r\rad)}+ \rad \Bigr]\rad^{3/2}\Bigr),\\
\end{aligned}
\end{equation} and with similar calculations follows
\begin{equation}\label{Estim-Error-Approximation-L.SR}
\begin{aligned} 
&\Bigl({\sum_{m=1}^{\nbs}\abs*{\wh{\mc{R}}^{\prb_r}_m-\mc{R}^{\prb_r}_m}^2}\Bigr)^\frac{1}{2}
=O\Bigl(\frac{\bs{c}_{(\prv_r,\prb_r)}(\abs{k}+1)}{\bs{c}_r^{3/2}}\Biggl[\frac{1}{\bs{c}_r^4}+\rad\,\abs{\ln(\bs{c}_r\rad)}+ \rad \Biggr]\rad^{3/2}\Bigr),
\end{aligned}
\end{equation} with $$\bs{c}_{(\prv_r,\prb_r)}:=\frac{4{\mu_{(\prv_r,\prb_r)}^+}}{8\mu_{(\prv_r,\prb_r)}^-}\times\frac{\bs{c}_\infty(\abs{k}+1)}{\min(c^{\prv^-}_\infty,c^{\prb^-}_\infty)}.$$
Recalling the approximation \eqref{Far-Field-Approximation-Anisotropic-Symmetric}, we have 
\begin{equation}
\begin{aligned}E^\infty(\hat{x})=&\sum_{m=1}^{\nbs}\biggr(\frac{k^2}{4\pi}e^{-ik\hat{x}\cdot \z_m}\hat{x}\times\Bigl(\mc{R}_m^{\prv_r}\times \hat{x}\Bigr)+\frac{ik}{4\pi}e^{-ik\hat{x}\cdot \z_m}\hat{x}\times\mc{Q}_m^\prb_r\Biggl)+O\bigl(\rad\bigr),\\
=&\sum_{m=1}^{\nbs}\biggr(\frac{k^2}{4\pi}e^{-ik\hat{x}\cdot \z_m}\hat{x}\times\Bigl(\wh{\mc{R}}_m^{\prv_r}\times \hat{x}\Bigr)+\frac{ik}{4\pi}e^{-ik\hat{x}\cdot \z_m}\hat{x}\times\wh{\mc{Q}}_m^\prb_r\Biggl)+O\bigl(\rad\bigr)\\ &+\Bigl(\sum_{m=1}^{\nbs}\frac{\abs{k}^2}{16\pi^2}\abs{e^{-ik\hat{x}\cdot z_m}}\Bigr)^\frac{1}{2} \Bigl(\sum_{m=1}^{\nbs}\abs*{\wh{\mc{Q}}^{\prb_r}_m-\mc{Q}^{\prb_r}}^2\Bigr)^\frac{1}{2}\\
&+\Bigl(\sum_{m=1}^{\nbs}\frac{\abs{k}^2}{16\pi^2}\abs{e^{-ik\hat{x}\cdot z_m}}\Bigr)^\frac{1}{2} \Bigl(\sum_{m=1}^{\nbs}\abs*{\wh{\mc{R}}^{\prv_r}_m-\mc{R}^{\prv_r}}^2\Bigr)^\frac{1}{2},
\end{aligned}
\end{equation} which with the estimations \eqref{Estim-Error-Approximation-L.SQ} and \eqref{Estim-Error-Approximation-L.SR} reduces to 
\begin{equation}
\begin{aligned}E^\infty(\hat{x})
=&\sum_{m=1}^{\nbs}\biggr(\frac{k^2}{4\pi}e^{-ik\hat{x}\cdot \z_m}\hat{x}\times\Bigl(\wh{\mc{R}}_m^{\prv_r}\times \hat{x}\Bigr)+\frac{ik}{4\pi}e^{-ik\hat{x}\cdot \z_m}\hat{x}\times\wh{\mc{Q}}_m^\prb_r\Biggl)+O\bigl(\rad\bigr)
\\&+\frac{\abs{k}}{2\pi}\frac{1}{\dist^{3/2}}O\Bigl(\frac{\bs{c}_{(\prv_r,\prb_r)}(\abs{k}+1)}{\bs{c}_r^{3/2}}\Bigl[\frac{1}{\bs{c}_r^4}+\rad\,\abs{\ln(\bs{c}_r\rad)}+ \rad \Bigr]\rad^{3/2}\Bigr) ,
\end{aligned}
\end{equation} or
\begin{equation}
\begin{aligned}E^\infty(\hat{x})=
&\sum_{m=1}^{\nbs}\biggr(\frac{k^2}{4\pi}e^{-ik\hat{x}\cdot \z_m}\hat{x}\times\Bigl(\wh{\mc{R}}_m^{\prv_r}\times \hat{x}\Bigr)+\frac{ik}{4\pi}e^{-ik\hat{x}\cdot \z_m}\hat{x}\times\wh{\mc{Q}}_m^\prb_r\Biggl)+O\bigl(\rad\bigr)
\\&+\frac{\abs{k}}{2\pi}O\Bigl(\frac{\bs{c}_{(\prv_r,\prb_r)}(\abs{k}+1)}{\bs{c}_r^{3}}\Bigl[\frac{1}{\bs{c}_r^4}+\rad\,\abs{\ln(\bs{c}_r\rad)}+ \rad \Bigr]\Bigr).
\end{aligned}
\end{equation} 
\item The approximation \eqref{Farfield-Anis-NonSymCase-Approximation-FullError} can be justified in a similar way replacing, respectively, $\mcntm{\prv_r}{m}\mc{R}_m$ and $\mcntm{\prb_r}{m}\mc{Q}_m$ by $\mc{R}_m$    and $\mc{Q}_m$.
\end{itemize}
\end{proof}

\section{Scattering by perfect conductors, i.e.  (\ref{Maxwell-Eq-Perfect-Cond})}\label{Section-Perfect-Conductor}
\subsection{Preliminaries and notations}
\subsubsection{Boundary traces, Surface gradient and related operator}
Let $\phi$ and $U$ be smooth, respectively, scalar and vector valued functions.
The tangential gradient of $\phi,$ is given by $\nabla_{\ta} \phi:=-\nu\times\bigl(\nu\times\nabla \phi\bigr),$  
where $\nu$ is the outward unit normal to $D$. Then the (weak) surface divergence of $A,$ of tangential fields, that is $\nu\cdot A=0$, is defined by the duality identity
\begin{equation}\label{Def-Surface-Div}
\int_{\p D} \phi \,\Div A~ds=-\int_{\p D}\nabla_{\ta}\phi\cdot A~ds.\end{equation} or by the relation, for $A=\nu\times U$,
\begin{equation}\Div A= -\nu\cdot \curl U.\end{equation}

Obviously, from \eqref{Def-Surface-Div}, if $\Div A$ exists, taking $\phi_m(x)=1$, gives $\int_{\p D} \Div A(x)~ds_x=0,$  
see \cite{DMM96} and Chapter 2 in \cite{ColtonKress:2013} for more details.\\

The spaces $\LDivsp{D}$ denote the space of all tangential fields of ${\Lp{2}{\p D}}$ that have an $\LtwospO{D}$ weak surface divergence, precisely 
$$\LDivsp{D}=\Bigl\{A\in {\Lp{2}{\p D}};\, A\cdot\nu=0, \mbox{ such that } \Div A\in \LtwospO{D}\Bigr\},$$ where
$$\mathbb{L}_0^2{(\p D)}:=\Bigl\{A\in {\Lp{2}{\p D}} \mbox{ such that }  \int_{\p D}Ads=0 \Bigr\}. $$
\bigskip

We recall, the usual Single and Double layer-potentials, either scalar or vector field, defined, respectively, as follows
\begin{align}
\SLP{k}{D}(~\cdot~):=\int_{\partial D}\Phi_{k}(x,y)(~\cdot~(y))~ds_y,\\
\DLP{k}{D}(~\cdot~):=\int_{\partial D}\frac{\p\Phi_{k}(x,y)}{\p \nu_y}(~\cdot~(y))~ds_y.
\end{align} These fields are $C^\infty (\R^3\setminus\p D).$ 
In addition, they admit the following Dirichlet  trace, for almost  every $x\in \p D$  
\begin{align}
\DTr{\SLP{k}{D}(A)(x)}{\pm}&:=[S_{\p D}^k](A)(x):=\lim_{{\substack{{s \rightarrow x}\\{s\in \Gamma_{\pm}}(x)}}} \SLP{k}{D}(A)(s)=\int_{\partial D}\Phi_{k}(x,y)A(y)~ds_y,\label{Single-layer-trace}\\
\DTr{\DLP{k}{D}(A)(x)}{\pm}&:=[\pm{I}/{2} +K^k_{\p D}](A)(x):=\lim_{{\substack{{s \rightarrow x}\\{s\in \Gamma_{\pm}}(x)}}} \DLP{k}{D}(A)(s)=\pm\frac{1}{2}A(x) +p.v.\int_{\partial D}\frac{\p\Phi_{k}(x,y)}{\p \nu_y} A(y)~ds_y,\label{Double-layer-trace}
\end{align} were $\bigl\{\Gamma_{\pm}(x),\,~ x\in\cup_{m=1}^{\nbs}\p D_m\bigr\}$ is a family of doubly truncated cones with a vertex at $x$, which lies in both sides of the boundary,
such that $\Gamma_{\pm}(x)\cap D^\mp=\emptyset$
and the integral \eqref{Double-layer-trace} is taken in the principal value of Cauchy sense.
For every $s\in[0,1],$ all the operators 
\begin{align*}
[S_{\p D}^{0}]&:{H^{-s}}(\partial D)\longrightarrow{H^{1-s}}(\partial D), \\ [\pm{I}/{2} +K^0_{\p D}]&: {H^s}(\partial D)\rightarrow {H^s}(\partial D),\\
[\pm{I}/{2}+(K_{\p D}^0)^*]&: {H^{-s}_0}(\partial D)\rightarrow {H^{-s}_0}(\partial D),
\end{align*} are isomorphisms. More details can be found in \cite{verchota-84 ,MD:InteqnsaOpethe1997}.
\subsubsection{Virtual-mass and Polarization tensor}
The following two quantities will play an important role in the sequel \begin{align}\label{Polarization&virtualmass-Tensor}
\Polt{m}:=\int_{\p D_m}[-{I}/{2}+(\DLP{0}{D_m})^*]^{-1}(\nu) y^*ds_y,~~
\Vmt{m}:= \int_{\p D_m}[\frac{1}{2} I+(\DLP{0}{D_m})^*]^{-1}(\nu) y^*ds_y.
\end{align} 
Both  $-\Polt{m}$ and $\Vmt{m}$ are positive-definite symmetric matrices, (see {Lemma 5} and {Lemma 6} in \cite{Cedio-Fengya-1998} or Theorem 4.11 in \cite{Ammaripola}) and satisfy the following scales
\begin{align}\label{scalingtensors}
\Polt{m}=\rad^3\Poltscale{i},~\text{and }~\Vmt{m}= \rad^3 \Vmtscale{m}.
\end{align}  
For $i\in\{1,...,\nbs\}$ let $(\mu_i^{\mathcal{T}})^+$, $(\mu_i^{\mathcal{P}})^+$ be the respective maximal eigenvalues of $\Vmtscale{m}$, $-\Poltscale{m}$,  and let $(\mu_i^{\mathcal{T}})^{-}$, $(\mu_i^{\mathcal{P}})^{-}$ be their minimal ones. We define
\begin{equation}\label{mudefinition}
\begin{aligned}
\mu^+:=\max_{i\in\{1,...,\nbs\}}((\mu_i^{\mathcal{T}})^{+}, (\mu_i^{\mathcal{P}})^{+}),~~\mu^-:=\min_{i\in\{1,...,\nbs\}}((\mu_i^{\mathcal{T}})^{-}, (\mu_i^{\mathcal{P}})^{-}).
\end{aligned}
\end{equation} Hence for every vector $\mathcal{C}$, we get 
\begin{equation} \label{Tensor-Inequalities}  
\begin{aligned}
\mu^- \abs{\mathcal{C}}^2\rad^3\leq [\mathcal{T}_{\p D_m}]~\mathcal{C}\cdot\mathcal{C}\leq \rad^3 \mu^+ \abs{\mathcal{C}}^2,\\
\mu^- \abs{\mathcal{C}}^2\rad^3\leq -[\mathcal{P}_{\p D_m}]~\mathcal{C}\cdot\mathcal{C}\leq \rad^3\mu^+ \abs{\mathcal{C}}^2.
\end{aligned}
\end{equation}
\subsection{Existence and uniqueness of the solution} 
\subsubsection{Maxwell layer potentials and boundary traces}
The well-posedness of the problem \eqref{Maxwell-Eq-Perfect-Cond} is addressed in \cite{ColtonKress:2013} and \cite{NJC:Spsc2001}, for instance, and the unique solution can be expressed in terms of layer potentials. 
Especially, when $k\in\C\setminus\R^*,$ with $\Im k>0,$ or with additional hypothesis on $k\in \R^+,$ we can use the following representation, 
\begin{equation}
E(x):=E^\n(x)+\curl\SLP{k}{D}(A)(x),\, ~x\in D^+=\R^3\setminus\cup_{m=1}^{\nbs}{\ol{D}_i}\label{Electric-Field-representation}
\end{equation}  where $A$ is the unknown vector density in $\LDivsp{D}$, to be found to solve the problem. 
In this case, the magnetic field is given by \begin{equation}\label{Magnetic-Field-representation}
H(x):=H^\n(x)+\frac{1}{ik}\curl\curl\SLP{k}{D}(A)(x),
\end{equation} with, for almost  every $x\in\cup_{m=1}^{\nbs}\p D_m,$ the following traces are valid, see \cite{DMM96}), are valid \begin{equation}\label{Continuity-Electric-Magnetic-potential}
\begin{aligned}
[{I}/{2}+\bs{M}_{\p D}^{k}](A)(x):=&\nu\times\lim_{{\substack{{s \rightarrow x}\\{s\in \Gamma_{\pm}}(x)}}} \curl  \SLP{k}{ D}(A)(s)=\pm\frac{1}{2} A+ \nu\times\curl\int_{\partial D}\Phi_{k}(x,y)A(y)~ds_y,\\
[\bs{N}_{\p D}^{k}](A^{}[2])(x):=&\nu\times\lim_{{\substack{{s \rightarrow x}\\{s\in \Gamma_{\pm}}(x)}}} \curl^2  \SLP{k}{ D}(A)(s)\\
=&k^2\nu\times\int_{\partial D}\Phi_{k}(x,y)\,A(y)~ds_y+\nabla\int_{\partial D}\Phi_{k}(x,y)\Div A(y)~ds_y.
\end{aligned}
\end{equation} 
For the perfect conductor boundary condition, such a representation gives birth to the following singular integral equation  
\begin{align*}
[{I}/{2}+\bs{M}_{\p D}^{k}](A)=\nu\times E^\n,
\end{align*} which can be written as 
\begin{equation}\label{Equivalent-Representation}
[{I}/{2}+\MDLP{k}{m}+\sum_{\stl{j\geq 1}{j\neq m} }^{\nbs} \MDLPmj{k}](A)=\nu\times E^\n,
\end{equation} with, for $x\in \p D_m$ and $ j\in\{1,...,\nbs\},$
\begin{align}
\MDLPmj{k}A(x):=\nu\times\int_{\partial D_j}\nabla_x\Phi_k(x,y)\times A(y)~ds_y.
\end{align}
For the surface divergence, we have 
\begin{equation} 
\Div [{I}/{2}+\bs{M}_{\p D}^{k}](A)(x)=-[k^2\nu \cdot S_{\p D}^{k}](A)-[{I}/{2}-(K_{\p D}^{k})^*](\Div A).\label{DIV-MDk}\end{equation}
In addition, for every $u\in H^1(\p D_m)$ (cf. Lemma 5.11 in \cite{DMM96}) 
\begin{align}\label{nu-time-nabla-interversion}
[{I}/{2}+\bs{M}_{\p D}^{k}](\nu\times {\nabla} u)=\nu\times \nabla[{I}/{2}+K_{\p D}^{k}](u)-k^2~\nu\times [S_{\p D}^{k}](\nu u).\end{align}  
The fact that the operators appearing in (\ref{Equivalent-Representation}) are isomorphisms on $\LDivsp{D}$ is addressed in the next section together with an estimate of the density.  
\subsubsection{A priori estimates of the densities}

The following result together with the fact that $[\pm{I}/{2}+\bs{M}_{\p D}^{k}]$  is of closed range, makes it describe an isomorphism of $\LDivsp{D}$. 
The proof goes in two steps. In the first step, we prove the estimate when the wave number $k$ has a positive imaginary part, $\Im k>0$, (ex. $k=i$).
As $[\bs{M}_{\p D}^{k}-\bs{M}_{\p D}^{i}]$ is compact, we deduce, in a second step, a similar estimates for $k\in\R^+,$ under an appropriate condition on $\bs{c}_r$ (i.e. the dilution parameter).

\begin{theorem}\label{Injectivity-MDLP}
	Let $k\in \C$ such that $\Im(k)>0$. For any $A\in\LDivsp{D}$, there exists a positive constant $\bs{c}_{L,k}$, which depends only on $k$ and the Lipschitz character of the $D$'s, such that 
	\begin{equation}\label{Coercivity_MDLP}
	\norm*{A}^2_{\LDivsp{D}}=\sum_{m=1}^{\nbs}\norm*{A}^2_{{\LDivsp{D_m}}}\leq \bs{c}_{L,k}\norm*{[\pm{I}/{2}+\bs{M}_{\p D}^{k}]A}^2_{\LDivsp{D}}.
	\end{equation}  
\end{theorem}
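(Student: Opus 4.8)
The plan is to realise $[\pm I/2+\bs{M}_{\p D}^{k}]$ as a tangential trace of a genuine radiating electromagnetic field and to prove that the interior and exterior tangential traces of that field are comparable in $\LDivsp{D}$, with a comparison constant depending only on $k$ and the Lipschitz characters of the $D_{m}$'s. First I would set $E:=\curl\SLP{k}{D}(A)$ and $H:=(ik)^{-1}\curl E$; then $(E,H)$ solves $\curl E-ikH=0$, $\curl H+ikE=0$ in $\R^{3}\setminus\p D$ and satisfies the Silver--M\"uller radiation condition, and since $\Im k>0$ both fields decay exponentially at infinity. The jump formulas \eqref{Continuity-Electric-Magnetic-potential} give $\TTr{E}{\pm}=[\pm I/2+\bs{M}_{\p D}^{k}](A)$ together with the continuity of $\nu\times H$ across $\p D$, while the elementary identity $\Div(\nu\times U)=-\nu\cdot\curl U$ and $\curl E=ikH$ give $\Div\TTr{E}{\pm}=-ik\,\NTr{H}{\pm}$. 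Since $A=\TTr{E}{+}-\TTr{E}{-}$ and $\Div A=-ik(\NTr{H}{+}-\NTr{H}{-})$, the estimate \eqref{Coercivity_MDLP} will follow once we establish a two--sided trace equivalence
\[
\norm{\TTr{E}{-}}_{\LDivsp{D}}\le C\,\norm{\TTr{E}{+}}_{\LDivsp{D}},\qquad \norm{\TTr{E}{+}}_{\LDivsp{D}}\le C\,\norm{\TTr{E}{-}}_{\LDivsp{D}},
\]
with $C=C(k,\text{Lip})$, because then $\norm{A}_{\LDivsp{D}}\le(1+C)\,\norm{\TTr{E}{\pm}}_{\LDivsp{D}}=(1+C)\,\norm{[\pm I/2+\bs{M}_{\p D}^{k}]A}_{\LDivsp{D}}$, i.e. \eqref{Coercivity_MDLP} holds with $\bs{c}_{L,k}=(1+C)^{2}$.

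The first ingredient is a pair of energy identities. Applying Green's formula for $\curl$ to $(E,\ol{H})$ in $D$, and in $D^{+}=\R^{3}\setminus\ol{D}$ after truncating by a large sphere whose contribution vanishes by the exponential decay, and using the field equations, one obtains with $k=k_{1}+ik_{2}$, $k_{2}=\Im k>0$,
\begin{gather*}
\Re\int_{\p D}\TTr{E}{+}\cdot\ol{\DTr{H}{+}}\,ds=k_{2}\,\norm{(E,H)}_{\Lp{2}{D^{+}}}^{2},\\
\Re\int_{\p D}\TTr{E}{-}\cdot\ol{\DTr{H}{-}}\,ds=-k_{2}\,\norm{(E,H)}_{\Lp{2}{D}}^{2}.
\end{gather*}
Subtracting, and using the continuity of $\nu\times H$, gives $\Re\int_{\p D}A\cdot\ol{H}\,ds=k_{2}\norm{(E,H)}_{\Lp{2}{\R^{3}}}^{2}\ge0$; more to the point, the two identities bound the $\Lp{2}$ mass of $(E,H)$ in a collar of $\p D$ by boundary pairings. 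The decisive ingredient is a Rellich identity for the Maxwell system, applied separately in $D$ and in $D^{+}$ with a vector field $h$ that is uniformly transversal to $\p D$ and built from — and estimated by — the Lipschitz graph data only; this is where the arguments of \cite{DMM96} are adapted. On each side it produces, modulo lower--order terms, the equivalences $\norm{\nu\times E}_{\Lp{2}{\p D}}\asymp\norm{\nu\times H}_{\Lp{2}{\p D}}$ and, after tangential differentiation, the corresponding equivalences for $\Div(\nu\times E)=-ik\,\nu\cdot H$ (and for $\nu\cdot E$); the lower--order terms are $\Lp{2}$ norms of $(E,H)$ in a collar, which are absorbed via the energy identities. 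Since neither $\nu\times H$ nor $\nu\cdot H$ jumps across $\p D$, chaining the interior and exterior Rellich estimates through the common traces of $H$ yields the two--sided equivalence above with $C$ depending only on $k$ and the Lipschitz characters.

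Putting the pieces together gives control of $\norm{A}_{\Lp{2}{\p D}}$ and of $\norm{\Div A}_{\Lp{2}{\p D}}$ by $\norm{[\pm I/2+\bs{M}_{\p D}^{k}]A}_{\LDivsp{D}}$, hence \eqref{Coercivity_MDLP}; together with the closed--range property recalled just before the theorem this makes $[\pm I/2+\bs{M}_{\p D}^{k}]$ an isomorphism of $\LDivsp{D}$. I expect the main obstacle to be precisely the Rellich step on Lipschitz domains: one must run the $\curl$--$\div$ Rellich machinery for the full, vector--valued Maxwell field, control \emph{every} lower--order term by a constant depending only on $k$ and the Lipschitz characters — rather than through a compactness/contradiction argument, which would destroy the uniformity needed later — and treat the surface--divergence part of the $\LDivsp{D}$ norm on the same footing as the $\Lp{2}$ part. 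Keeping $h$ and all the constants scale--aware is also required so that the bound survives the subsequent reduction to $D_{m}=\rad\,\bsmc{D}_{m}+\z_{m}$.
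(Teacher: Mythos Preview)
Your proposal is correct and follows essentially the same route as the paper: represent $A$ as the jump of $\nu\times E$ for $E=\curl\SLP{k}{D}(A)$, run a Rellich identity to show $\norm{\nu\times E^\pm}_{\LDivsp{D}}\asymp\norm{\nu\times H^\pm}_{\LDivsp{D}}$ (the paper's \Cref{E-L2DIV-HL2DIV-Lemma}), chain through the continuous trace $\nu\times H=[\bs{N}^{k}_{\p D}]A$, and conclude by the triangle inequality. For the scale--awareness obstacle you flag, the paper's specific device is the Helmholtz decomposition of \Cref{Decomposition-LDiv-Space}: writing $\nu\times E=\nu\times\nabla\De+\nu\times\DE$ with $\norm{\De}_{\Lp{2}{\p D}},\,\norm{\nu\times\DE}_{\Lp{2}{\p D}}\lesssim\rad\,\norm{E}_{\LDivsp{D}}$ and integrating by parts yields $\bigl|\tfrac{1}{ik}\int_{\p D}\nu\times E\cdot\ol{\curl E}\,ds\bigr|\lesssim\rad\,\norm{E}_{\LDivsp{D}}\norm{H}_{\LDivsp{D}}$, and it is precisely this extra $\rad$ that cancels the $1/\rad$ coming from the gradient of the Rellich vector field to produce a constant depending only on $k$ and the Lipschitz characters.
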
 
In order to derive this estimate, we need to use the following key Helmholtz decomposition of the densities 

\begin{lemma}\label{Decomposition-LDiv-Space}
Each element $A_m$ of ${\LDivsp{D_m}}$ can be decomposed as $A_m=A^{1}_m + A^{2}_m$ where
\begin{equation}
\begin{aligned}
A^{1}_m=\nablat \mathit{v}^A_m\in {\LNDsp{D_m}},~ \mathit{v}_m\in\Hs{1}{\p D_m}\setminus \C~\mbox{ and } A^{2}_m~&\in {\LDivsp{D_m}}\setminus{\LNDsp{D_m}},\,\textit{and}\,
\end{aligned} 
\end{equation}
with the estimates
\begin{align*}
\norm*{A^{2}_m}_{{\Lp{2}{\p D_m}}}\leq \bs{c}_1\rad\norm*{A_m}_{{\LDivsp{D_m}}},~~ \norm*{A^{2}_m}_{{\LDivsp{D_m}}}\leq \bs{c}_2\norm*{A}_{{\LDivsp{D_m}}},\\
	\norm*{A_m^1}_{{\LNDsp{D_m}}}\leq \bs{c}_3\norm*{A_m}_{{\LDivsp{D_m}}},~~\norm*{\mathit{v_m^A}}_{\Lp{2}{\p D_m}} \leq \bs{c}_4 \rad \norm*{A}_{{\LDivsp{D_m}}}.
	\end{align*}  
	where $(\bs{c}_i)_{i=1,2,3,4.}$ are constants which depend only on the Lipschitz character of the $D_m's$.
\end{lemma}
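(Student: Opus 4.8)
The plan is to obtain the splitting from the $L^2$ Hodge (Helmholtz) decomposition of tangential fields on the closed Lipschitz surface $\p D_m$, and to read off the four norm bounds by an energy estimate combined with a Poincar\'e inequality whose constant scales like $\rad$. I would first reduce to a surface of fixed size: under the dilation $x=\rad\,\hat x+\z_m$ the whole problem is transferred to $\p\bsmc{D}_m$, so that every constant that appears depends only on the (uniformly bounded) Lipschitz character of $\bsmc{D}_m$, while the powers of $\rad$ are the explicit ones produced by the change of variables. Given $A_m\in\LDivsp{D_m}$, I note that $\Div A_m\in\Lp{2}{\p D_m}$ has zero mean on $\p D_m$ (put $\phi\equiv1$ in \eqref{Def-Surface-Div}), so the surface-Poisson problem
\begin{equation*}
\mathit{w}\in\Hs{1}{\p D_m}\setminus\C,\qquad \int_{\p D_m}\nabla_{\ta}\mathit{w}\cdot\nabla_{\ta}\phi\,ds=\int_{\p D_m}A_m\cdot\nabla_{\ta}\phi\,ds\quad\text{for all }\phi\in\Hs{1}{\p D_m}
\end{equation*}
is uniquely solvable by Lax--Milgram, coercivity on $\Hs{1}{\p D_m}\setminus\C$ coming from Poincar\'e on the connected Lipschitz surface $\p D_m$; by \eqref{Def-Surface-Div} this says $\Div(\nabla_{\ta}\mathit{w})=\Div A_m$ weakly.

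I would then set $A^{2}_m:=\nabla_{\ta}\mathit{w}$ and $A^{1}_m:=A_m-\nabla_{\ta}\mathit{w}$. Thus $A^{2}_m\in\LDivsp{D_m}$ with $\Div A^{2}_m=\Div A_m$, whereas $\Div A^{1}_m=\Div A_m-\Div(\nabla_{\ta}\mathit{w})=0$, so $A^{1}_m$ is a divergence-free tangential $L^2$-field, i.e. $A^{1}_m\in\LNDsp{D_m}$. Since $\p D_m$ is a simply connected closed Lipschitz surface, the surface Hodge decomposition (cf. \cite{DMM96}) produces a unique $\mathit{v}^A_m\in\Hs{1}{\p D_m}\setminus\C$ with $A^{1}_m=\nablat \mathit{v}^A_m$; equivalently $\nabla_{\ta}\mathit{v}^A_m=-\nu\times A^{1}_m$, which one may also obtain by solving, exactly as above, the variational problem associated with the surface-curl-free field $-\nu\times A^{1}_m$. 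This is the claimed decomposition, and it is in fact the $L^2$-orthogonal Hodge splitting.

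For the estimates I would write the Poincar\'e constant of $\p D_m$ as $\bs{c}_P=\rad\,\bs{c}_P(\bsmc{D}_m)$, with $\bs{c}_P(\bsmc{D}_m)$ depending only on the Lipschitz character. Testing the $\mathit{w}$-equation with $\phi=\mathit{w}$ and using \eqref{Def-Surface-Div}, Cauchy--Schwarz and Poincar\'e gives $\norm*{\nabla_{\ta}\mathit{w}}_{\Lp{2}{\p D_m}}\le\bs{c}_P\norm*{\Div A_m}_{\Lp{2}{\p D_m}}$, hence $\norm*{A^{2}_m}_{\Lp{2}{\p D_m}}\le\bs{c}_1\rad\,\norm*{A_m}_{\LDivsp{D_m}}$; since $\Div A^{2}_m=\Div A_m$ this controls $\norm*{A^{2}_m}_{\LDivsp{D_m}}$ by $\bs{c}_2\norm*{A_m}_{\LDivsp{D_m}}$. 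The triangle inequality $\norm*{A^{1}_m}_{\Lp{2}{\p D_m}}\le\norm*{A_m}_{\Lp{2}{\p D_m}}+\norm*{A^{2}_m}_{\Lp{2}{\p D_m}}$ together with $\Div A^{1}_m=0$ gives $\norm*{A^{1}_m}_{\LNDsp{D_m}}\le\bs{c}_3\norm*{A_m}_{\LDivsp{D_m}}$; and since $\mathit{v}^A_m$ has zero mean and $\norm*{\nabla_{\ta}\mathit{v}^A_m}_{\Lp{2}{\p D_m}}=\norm*{\nu\times A^{1}_m}_{\Lp{2}{\p D_m}}=\norm*{A^{1}_m}_{\Lp{2}{\p D_m}}$, one more Poincar\'e step yields $\norm*{\mathit{v}^A_m}_{\Lp{2}{\p D_m}}\le\bs{c}_4\rad\,\norm*{A_m}_{\LDivsp{D_m}}$. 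Taking the supremum over $m$ and using that the Lipschitz characters are uniformly bounded gives the four inequalities with constants independent of $m$ and $\rad$.

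The step I expect to be the main obstacle is the surface calculus on Lipschitz (not smooth) closed surfaces with quantitative control of the constants: the $\Hs{1}{\p D_m}$-well-posedness of the surface-Poisson problem, the surface Hodge decomposition realizing a divergence-free tangential $L^2$-field as $\nablat$ of an $H^1$ potential, and the verification that the elliptic and Poincar\'e constants transform as the stated powers of $\rad$ under the dilation, so that the final bounds are uniform in $m$. A minor additional point is topological: should some $\p D_m$ fail to be simply connected, $A^{1}_m$ equals $\nablat \mathit{v}^A_m$ only modulo the finite-dimensional space of harmonic tangential fields, which are smooth and hence harmless for the estimates; I would either invoke the convention that the $\bsmc{D}_m$ are topologically balls or carry this correction along explicitly.
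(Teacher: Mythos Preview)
Your argument is correct: the surface Laplace--Beltrami problem via Lax--Milgram produces the Hodge splitting, the Poincar\'e constant scales like $\rad$ under dilation to $\p\bsmc{D}_m$, and the four bounds follow exactly as you outline. The Lipschitz surface calculus you flag as the main obstacle is indeed available, in particular in \cite{DMM96}, which the paper already cites.

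The paper, however, realises the decomposition differently. Rather than solving an intrinsic surface Poisson problem, it represents the two pieces through single layer potentials: one solves the system \eqref{Helmholtz-Decomposition} for densities $(w,W)$ and sets $A^{1}_m=\nablat\SLP{0}{D_m}(w)$, $A^{2}_m=\nu\times\SLP{0}{D_m}(W)$, so that $\mathit{v}^A_m$ is the boundary trace of the harmonic function $\SLP{0}{D_m}(w)$. The estimates then come from the scale invariance and mapping properties of the layer operators rather than from a surface Poincar\'e inequality. What this buys the paper is immediate: the very next lemma (Lemma~\ref{Estimated-Decom-OnL2D}) needs \emph{harmonic extensions} $\Har{1,A}_m=\SLP{0}{D_m}(w)$ and $\Har{2,A}_m=\SLP{0}{D_m}(W)$ inside $D_m$, and these are used throughout Section~\ref{Section-Perfect-Conductor} to convert boundary integrals into volume integrals (see \eqref{Boundary-To-Volume}--\eqref{Surface-Div-Boundary-To-Volume}). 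Your intrinsic construction gives only surface data; to connect with the rest of the paper you would still have to extend $\mathit{v}^A_m$ harmonically and manufacture a vector analogue of $\Har{2,A}_m$ with the right trace and curl, at which point you are essentially back to layer potentials. So your route is cleaner for the lemma in isolation and makes the $\rad$-scaling transparent, while the paper's layer-potential route is chosen because it feeds directly into the subsequent analysis.
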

To prove \Cref{Decomposition-LDiv-Space}, it suffices to seek for the solution of the following equation,  
\begin{equation}\label{Helmholtz-Decomposition}
\left\{\begin{aligned}
[\nablat \SLP{0}{D_m}](w)+[\nu\times\SLP{0}{D_m}](W)&=A_m,\\
[\nu\cdot \curl\SLP{0}{D_m}](W)&=\Div A_m,
\end{aligned}\right.
\end{equation} and get the desired estimations, using the scaling properties of the implied operators, with $A^{2}_m=[\nu\times\SLP{0}{D_i}](W)$
and $A^{1}_m=[\nablat \SLP{0}{D_i}](w)$. The details can be found in \cite{bouzekri2019foldy}.
\bigskip

In what follows, $\bs{c}_L$ and $c_k$ will designate generic constants independent of $\rad$ and depend, respectively, on the Lipschitz character of $D$ and $k$.

Suppose that $E$ and $H= \curl E/{i {k}}$ are two vector fields that satisfy the Maxwell system, with a complex wave number $k,~\Im(k)>0$ 
and such that $\nu\times E$ is in $\mathbb{L}^{2,\Div}( \p D:=\cup_{m=1}^{\nbs}\p D_m)$. Hence, in view of the decomposition of \Cref{Estimated-Decom-OnL2D},
there exist $\mathfrak{e}_i$ and $\mathfrak{E}_i$ such that, in each $\p D_m,$ $\nu_i\times E=\nu\times\nabla\De_i+\nu\times\DE_i$
with the following estimate,\begin{align*}
\norm*{\nu\times\DE_i}_{\Lp{2}{\p D_m}}\leq & \bs{c}_1\rad\norm*{E}_{\LDivsp{D_m}},\,~
\norm*{\De_i}_{\Lp{2}{\p D_m}}\leq \bs{c}_4\rad\norm*{E}_{\LDivsp{D_m}}.
\end{align*} recalling that $\norm*{E}_{\LDivsp{D_m}}=\norm*{\nu\times E}_{\Lp{2}{\p D_m}}+\norm*{\nu\cdot\curl E}_{\Lp{2}{\p D_m}}.$

Let us set $\De:=\sum_{m=1}^{\nbs}\ind{\p D_m}\De_i,\,~ \DE:=\sum_{m=1}^{\nbs} \ind{\p D_m}\DE_i$.  
With this notations, for every $x\in\p D=\cup_{m=1}^{\nbs}\p D_m$, we get \begin{equation}\label{Field-Helmholtz-Decomposition}
\nu\times E=\nu\times\nabla\De+\nu\times\DE,
\end{equation}
and the following estimates hold \begin{equation}\label{Estimate-Field-Helmholtz-Decomposition} \begin{aligned}
\norm*{\nu\times\DE}_{\Lp{2}{\cup_{m=1}^{\nbs}\p D_m}}\leq & \bs{c}_1\rad\norm*{E}_{\LDivsp{\cup_{m=1}^{\nbs} D_m}},\,~ 
\norm*{\De_i}_{\Lp{2}{\cup_{m=1}^{\nbs}\p D_m}}\leq & \bs{c}_4\rad\norm*{E}_{\LDivsp{\cup_{m=1}^{\nbs} D_m}}.
\end{aligned}\end{equation}
Indeed, with \Cref{Estimated-Decom-OnL2D},  
\begin{align*}
\norm*{\nu\times\DE}_{\Lp{2}{\cup_{m=1}^{\nbs}\p D_m}}^2=\sum_{m=1}^{\nbs}\int_{\p D_m}(\nu_i\times\DE)^2ds\leq & \sum_{m=1}^{\nbs} \bs{c}_1^2\rad^2\left(\int_{\p D_m}(\nu\times E_i)^2+(\div\nu\times E_i)^2ds\right),\\
\leq & \bs{c}_1^2\rad^2 \sum_{m=1}^{\nbs} \int_{\p D_m}(\nu\times E_i)^2ds=\bs{c}_1^2{\rad}^2\norm*{E}_{\LDivsp{D}}^2.
\end{align*} 
The second estimation can be done in the same way.

\begin{lemma}For any solution $E$ to the Maxwell system, continuous up to the boundary, we have
\begin{equation}\label{leme1-trace-in-rellich-identity}
\begin{aligned}
\abso{\frac{1}{ik}\int_{ \p D}\nu\cdot E \times\curl\ol{ E}~ds} \leq \bs{c}_L~\rad~ \norm*{H}_{\LDivsp{D}}\norm*{E}_{\LDivsp{D}},
\end{aligned}
\end{equation} with $\bs{c}_L$ independent of $\rad$ and $k$, from which follows  
	\begin{equation}\label{curlEandE-in-LD}
	\norm{E}_{\Hcurl{}{D}}^2 \leq \bs{c}_L c_k \rad\norm*{H}_{\LDivsp{D}}\norm*{E}_{\LDivsp{D}}
	\end{equation}  
	where $c_k:=1/\min\bigl({\Im k/\abs{k}^2},\Im(k)\bigr)$.
	
\end{lemma}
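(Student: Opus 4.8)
The plan is to establish the two displayed bounds in order: \eqref{leme1-trace-in-rellich-identity} is the substantive one, and \eqref{curlEandE-in-LD} then follows from a standard Green identity for the Maxwell system.

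\emph{First step: reduce the triple-product trace to $\int_{\p D}(\nu\times E)\cdot\overline{H}$.} Since $\curl\overline{E}=\overline{\curl E}=\overline{ik}\,\overline{H}$ and $\nu\cdot(E\times\curl\overline{E})=(\nu\times E)\cdot\curl\overline{E}$, one has $\frac{1}{ik}\int_{\p D}\nu\cdot E\times\curl\overline{E}\,ds=\frac{\overline{ik}}{ik}\int_{\p D}(\nu\times E)\cdot\overline{H}\,ds$, and the prefactor has modulus $1$. So it suffices to gain a factor $\rad$ in $\abso{\int_{\p D}(\nu\times E)\cdot\overline{H}\,ds}$. I would do this by feeding in the Helmholtz decomposition \eqref{Field-Helmholtz-Decomposition}, $\nu\times E=\nu\times\nabla\De+\nu\times\DE$ on $\p D$, together with its scaled estimates \eqref{Estimate-Field-Helmholtz-Decomposition}.

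\emph{Second step: estimate the two pieces.} For the $\DE$-piece one has $\abso{(\nu\times\DE)\cdot\overline{H}}=\abso{(\nu\times\DE)\cdot(\nu\times\overline{H})}\le\abso{\nu\times\DE}\,\abso{\nu\times H}$, so Cauchy--Schwarz and \eqref{Estimate-Field-Helmholtz-Decomposition} give $\abso{\int_{\p D}(\nu\times\DE)\cdot\overline{H}\,ds}\le\bs{c}_1\,\rad\,\norm*{E}_{\LDivsp{D}}\norm*{H}_{\LDivsp{D}}$ directly. For the gradient piece the idea is to integrate by parts on the surface: using $(\nu\times\nabla\De)\cdot\overline{H}=-\nabla_{\ta}\De\cdot(\nu\times\overline{H})$, the duality \eqref{Def-Surface-Div} with $\phi=\De$ and $A=\nu\times\overline{H}$, and the identity $\Div(\nu\times\overline{H})=-\nu\cdot\curl\overline{H}$, one gets $\int_{\p D}(\nu\times\nabla\De)\cdot\overline{H}\,ds=-\int_{\p D}\De\,(\nu\cdot\curl\overline{H})\,ds$. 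Now the Maxwell relation $\curl H=-ikE$ gives simultaneously $\nu\cdot\curl\overline{H}=\overline{-ik}\,\overline{\nu\cdot E}$ and, read the other way, $\nu\cdot E=\frac{1}{-ik}\,\nu\cdot\curl H$, whence $\norm*{\nu\cdot E}_{\Lp{2}{\p D}}\le\abs{k}^{-1}\norm*{H}_{\LDivsp{D}}$; combined with $\norm*{\De}_{\Lp{2}{\p D}}\le\bs{c}_4\,\rad\,\norm*{E}_{\LDivsp{D}}$ from \eqref{Estimate-Field-Helmholtz-Decomposition}, the two powers of $\abs{k}$ cancel and the gradient piece is again $O(\rad)\,\norm*{E}_{\LDivsp{D}}\norm*{H}_{\LDivsp{D}}$ with a constant depending only on the Lipschitz character. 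Adding the two contributions yields \eqref{leme1-trace-in-rellich-identity} with $\bs{c}_L=\bs{c}_1+\bs{c}_4$.

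\emph{Third step: from the trace bound to \eqref{curlEandE-in-LD}.} I would apply the vector Green formula to the pair $(E,\overline{H})$ over $D$, namely $\int_D(\curl E\cdot\overline{H}-E\cdot\curl\overline{H})\,dv=\int_{\p D}(\nu\times E)\cdot\overline{H}\,ds$. Since $\curl E=ikH$ and $\curl\overline{H}=\overline{-ik}\,\overline{E}$, the left-hand side equals $\int_D(ik\abs{H}^2-i\overline{k}\abs{E}^2)\,dv$, whose real part is $-\Im k\int_D(\abs{E}^2+\abs{H}^2)\,dv$; hence $\Im k\int_D(\abs{E}^2+\abs{H}^2)\,dv\le\abso{\int_{\p D}(\nu\times E)\cdot\overline{H}\,ds}=\abso{\frac{1}{ik}\int_{\p D}\nu\cdot E\times\curl\overline{E}\,ds}$, which by \eqref{leme1-trace-in-rellich-identity} is $\le\bs{c}_L\,\rad\,\norm*{H}_{\LDivsp{D}}\norm*{E}_{\LDivsp{D}}$. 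Finally $\norm*{E}^2_{\Hcurl{}{D}}=\int_D\abs{E}^2\,dv+\abs{k}^2\int_D\abs{H}^2\,dv\le\max(1,\abs{k}^2)\int_D(\abs{E}^2+\abs{H}^2)\,dv$, and dividing by $\Im k$ produces exactly $\max(1,\abs{k}^2)/\Im k=1/\min(\Im k/\abs{k}^2,\Im k)=c_k$, giving \eqref{curlEandE-in-LD}. The delicate point of the whole argument is the surface integration by parts in the second step: one must check that $\De\in\Hs{1}{\p D}$ (which is exactly what \Cref{Decomposition-LDiv-Space} provides) and that $\nu\times\overline{H}$ has an $\Lp{2}{\p D}$ surface divergence so that \eqref{Def-Surface-Div} is legitimate, and one must keep track of the two powers of $\abs{k}$ — one gained from $\nu\cdot E$, one lost from $\nu\cdot\curl\overline{H}$ — so that $\bs{c}_L$ stays independent of $k$.
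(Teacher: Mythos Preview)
Your proof is correct and follows essentially the same route as the paper: the Helmholtz decomposition \eqref{Field-Helmholtz-Decomposition} of $\nu\times E$, surface integration by parts on the gradient piece to land on $\De\,(\nu\cdot\curl\ol{H})$, and then Green's identity plus taking real parts to pass from the trace bound to \eqref{curlEandE-in-LD}. The only cosmetic difference is that you substitute $H=\frac{1}{ik}\curl E$ at the outset and track the $\abs{k}$ cancellation explicitly, whereas the paper keeps $\curl E$ throughout and absorbs the same cancellation into the single estimate $\norm*{\De}\,\norm*{\frac{k^2}{ik}\nu\cdot E}+\norm*{\DE}\,\norm*{\frac{1}{ik}\nu\times\curl E}\le \bs{c}_L\,\rad\,\norm*{H}_{\LDivsp{D}}\norm*{E}_{\LDivsp{D}}$.
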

\begin{proof}
	Considering the decomposition \eqref{Field-Helmholtz-Decomposition}, and using \eqref{Def-Surface-Div} for the second identity, we have
	\begin{align*}
	\int_{ \p D}E\cdot \nu\times \ol{\curl E}~ds&=\int_{ \p D}(\nabla\De+\DE)\cdot \nu\times \ol{\curl E}~ds=-\int_{ \p D}\De~\Div(\nu\times \ol{\curl E})~ds+\int_{ \p D}\DE\cdot \nu\times \ol{\curl E}~ds,\\
	&= k^2\int_{ \p D}\De~\nu\cdot E~ds+\int_{ \p D}\DE\cdot \nu\times \ol{\curl E}~ds.  
	\end{align*} 
	Taking the absolute value,  with H\"older's inequality and involving the estimates \eqref{Estimate-Field-Helmholtz-Decomposition}, we get successively 
	\begin{equation*}
	\begin{aligned}
	\abs{\frac{1}{ik}\int_{ \p D}E\cdot \nu\times \curl \ol{E}~ds}\leq
	&  \norm*{\De}_{\Lp{2}{\p D}}\norm*{\frac{k^2}{ik}\nu\cdot E}_{\Lp{2}{\p D}}+\norm*{\DE}_{\Lp{2}{\p D}} \norm*{\frac{1}{ik}\nu\times \curl E}_{\Lp{2}{\p D}},\\
	\leq&  \bs{c}_L~\rad~ \norm*{H}_{\LDivsp{D}}\norm*{E}_{\LDivsp{D}}.
	\end{aligned}
	\end{equation*}
	For \eqref{curlEandE-in-LD}, consider the following Green's identity
	\begin{align}\label{Green-Identity-Interior}
	\frac{1}{ik}\int_{ D} \abs{\curl E}^2dx+ik\int_{ D}\abs{E}^2dx&=
	\frac{1}{ik}\int_{\p D}\nu\times E\cdot \curl \ol{E}~ds.
	\end{align} 
	The real part gives, 
	\begin{equation*}
	\frac{\Im k}{\abs{k}^2}\int_{ D} \abs{\curl E}^2dx+\Im k\int_{ D}\abs{E}^2dx=-\Re\Bigl(\frac{1}{ik}\int_{\p D}\nu\times E\cdot \curl \ol{E}~ds\Bigr),
	\end{equation*} then, with $c_k:=1/\min\bigl({\Im k/\abs{k}^2},\Im(k)\bigr)$ involving the estimates \eqref{leme1-trace-in-rellich-identity}, we obtain
	\begin{align*}
	\int_{ D} \abs{\curl E}^2dx+\int_{ D} \abs{E}^2 dx\leq c_k \bs{c}_L~\rad~ \norm*{H}_{\LDivsp{D}}\norm*{E}_{\LDivsp{D}}.
	\end{align*}\end{proof}
\begin{lemma}\label{Trace-Inequalities-EandH}
We have, the following estimates for the exterior ($E^+$) and interior ($E^-$) traces of the radiating electric field $E$	\begin{align}
	\int_{\p D}\abs{\nu\times(\nu\times E^\pm)}^2\leq &\bs{c}_Lc_k\Bigl(\norm*{H^\pm}_{\LDivsp{D}} \norm*{E^\pm}_{\LDivsp{D}}+\norm*{\nu\cdot E^\pm }^2_{\Lp{2}{\p D}}\Bigr),\end{align} and
	\begin{align} 
	\int_{\p D}\abs{\nu\cdot E^\pm }^2  \leq & \bs{c}_Lc_k\Bigl( \norm*{H^\pm}_{\LDivsp{D}} \norm*{E^\pm}_{\LDivsp{D}}+\norm*{\nu\times(\nu\times E^\pm)}^2_{\Lp{2}{\p D}}\Bigr),
	\end{align} and due to the Maxwell equation symmetry, same estimates remain true for the magnetic field $H$.
\end{lemma}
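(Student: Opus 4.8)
The two inequalities are the two halves of a single Rellich-type identity, the Maxwell analogue of the classical equality relating the tangential and normal boundary derivatives of a Helmholtz solution. Write $E_\ta^\pm:=-\nu\times(\nu\times E^\pm)$ for the tangential part of $E^\pm$ on $\p D$, so that $\abs{E_\ta^\pm}=\abs{\nu\times(\nu\times E^\pm)}$ and $\abs{E^\pm}^2=\abs{\nu\cdot E^\pm}^2+\abs{E_\ta^\pm}^2$. The plan is to prove the single estimate $\bigl|\,\norm{E_\ta^\pm}_{\Lp{2}{\p D}}^2-\norm{\nu\cdot E^\pm}_{\Lp{2}{\p D}}^2\,\bigr|\le\bs{c}_Lc_k\,\norm{H^\pm}_{\LDivsp{D}}\norm{E^\pm}_{\LDivsp{D}}$; transposing the controlled quantity to whichever side is not being estimated then gives both asserted bounds, and the magnetic statements follow by applying the same argument to $H^\pm$, which is itself a radiating Maxwell field (the system being invariant under $(E,H)\mapsto(H,-E)$).

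First I would fix a real auxiliary vector field $h$. Since, after the rescaling $D_m=\rad\bsmc{D}_m+\z_m$, each $\p D_m$ is a finite union of Lipschitz graphs, a partition-of-unity construction (as in \cite{DMM96}) produces $h\in C_0^\infty(\R^3;\R^3)$ with $\norm{h}_{C^1}$ bounded only in terms of the Lipschitz characters and with $h\cdot\nu\ge\kappa>0$ almost everywhere on $\p D$. I would then integrate by parts the contraction of $h$ with the electric stress tensor $T_{ij}:=2\Re(E_i\ol{E_j})-\delta_{ij}\abs{E}^2$: over each $D_m$ for the interior traces $E^-$, and over $\R^3\setminus\ol D$ for the exterior traces $E^+$, where the compact support of $h$ kills the sphere-at-infinity term while the radiation condition together with $\Im k>0$ controls the $L^2$ norm of $E^+$ near $\p D$ just as in the proof of \eqref{curlEandE-in-LD}. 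Because $\div E=0$ and $\curl E=ikH$, the bulk terms $\int_D(\p_j T_{ij})h_i$ and $\int_D T_{ij}\,\p_j h_i$ reduce to integrals bounded by $\bs{c}_L(1+\abs{k})\norm{E}_{\Hcurl{}{D}}^2$, which by \eqref{curlEandE-in-LD} is $\le\bs{c}_Lc_k\,\rad\,\norm{H}_{\LDivsp{D}}\norm{E}_{\LDivsp{D}}$ (one uses $\norm{H}_{\Lp{2}{D}}=\abs{k}^{-1}\norm{\curl E}_{\Lp{2}{D}}\le\abs{k}^{-1}\norm{E}_{\Hcurl{}{D}}$ to absorb the $H$-factor first).

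The boundary term is the core computation. Decomposing $E=(\nu\cdot E)\nu+E_\ta$ and $h=(h\cdot\nu)\nu+h_\ta$ on $\p D$, one gets $h\cdot(T\nu)=(h\cdot\nu)\bigl(\abs{\nu\cdot E}^2-\abs{E_\ta}^2\bigr)+2\Re\bigl((h_\ta\cdot E_\ta)\,\ol{\nu\cdot E}\bigr)$, so that the identity reads $\int_{\p D}(h\cdot\nu)\bigl(\abs{E_\ta^\pm}^2-\abs{\nu\cdot E^\pm}^2\bigr)ds=2\int_{\p D}\Re\bigl((h_\ta\cdot E_\ta^\pm)\,\ol{\nu\cdot E^\pm}\bigr)ds+\mathcal B^\pm$ with $\abs{\mathcal B^\pm}\le\bs{c}_Lc_k\,\rad\,\norm{H^\pm}_{\LDivsp{D}}\norm{E^\pm}_{\LDivsp{D}}$. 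Using $h\cdot\nu\ge\kappa$ on the left and Young's inequality on the cross term — sacrificing a small fraction of $\norm{E_\ta^\pm}_{\Lp{2}{\p D}}^2$ to the left side when the first inequality is wanted, and a small fraction of $\norm{\nu\cdot E^\pm}_{\Lp{2}{\p D}}^2$ when the second is wanted — yields both estimates, the surviving cross term being precisely the $\norm{\nu\cdot E^\pm}^2$, respectively $\norm{\nu\times(\nu\times E^\pm)}^2$, appearing on the right-hand sides. I expect the main obstacle to be not this algebra but the justification of the integrations by parts at the regularity actually available, where only $\nu\times E^\pm\in\LDivsp{D}$ and $\nu\cdot E^\pm\in\Lp{2}{\p D}$: as is standard on Lipschitz domains, one carries the argument out on the layer-potential representation, which is smooth off $\p D$, passes to the limit along the approximating cones $\Gamma_\pm$, and controls everything through the nontangential maximal function, exactly the device already used to obtain the traces \eqref{Continuity-Electric-Magnetic-potential}; for the exterior traces one additionally needs the exterior counterpart of \eqref{curlEandE-in-LD}, obtained from Green's identity in $\R^3\setminus\ol D$ and the radiation condition.
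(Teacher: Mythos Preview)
Your approach is essentially the paper's own: a Rellich identity with a transversal auxiliary field, the decomposition $E=(\nu\cdot E)\nu+E_\ta$ of the boundary integrand, and the appeal to \eqref{curlEandE-in-LD} to control the bulk. Your stress-tensor integrand $h_iT_{ij}$ is, up to a factor $-2$, the paper's $\tfrac12\abs{E}^2X-\Re(\ol E\cdot X)E$, and the boundary algebra you wrote matches the paper's \eqref{identityRellich2}--\eqref{Rellich-Id-2}.

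One scaling point needs correction. On the cluster $D=\cup_m D_m$ with $D_m=\rad\bsmc{D}_m+\z_m$, the transversal field built by partition of unity on each component has $\norm{h}_{L^\infty}\le\bs{c}_L$ but $\norm{\nabla h}_{L^\infty}\le\bs{c}_L/\rad$, not $\bs{c}_L$: the gradient necessarily picks up the inverse of the small length scale. The paper tracks this explicitly (see \eqref{Estim-X}). Consequently the bulk term $\int_D T_{ij}\,\p_j h_i$ is $O(\rad^{-1}\norm{E}_{\Hcurl{}{D}}^2)$, and it is precisely the factor $\rad$ in \eqref{curlEandE-in-LD} that cancels this $\rad^{-1}$ to yield the stated $\bs{c}_Lc_k\norm{H^\pm}_{\LDivsp{D}}\norm{E^\pm}_{\LDivsp{D}}$. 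Your claimed extra factor of $\rad$ in the bulk bound is therefore not present; the final inequalities are unaffected, but the intermediate estimate should be amended accordingly.
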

\begin{proof}                                                                     
Let $E$ and $X$ be two vector fields, with $X$ real valued, and consider the following identity
\begin{align}\label{identityRellich}
\int_{ D=\cup_{m=1}^{\nbs} D_m} \div\left({\frac{1}{2}}\abs{E}^2X-Re (\ol{E}\cdot X)~E\right)~dx=\int_{\p D=\cup_{m=1}^{\nbs} \p D_m}\left({\frac{1}{2}}\abs{E}^2X-Re (\ol{E}\cdot X)~E\right)\cdot\nu.\end{align}
Following \cite{MMitrea94}, let $\widehat{X}_m \in \mathbb{C}^{\infty}(\ol{\bsmc{D}_m})$ be compactly supported in $\R^3,$ such that $\widehat{X}_m\cdot\nu_{\p \bsmc{D}_m}\geq \bs{c}_{L_m}>0$ with support as small as wanted.
Such a choice is possible due to Lemma 1.5.1.9 in \cite{grisvard2011elliptic}. Recall that $\bsmc{D}_m\subset{B(0,\frac{1}{2})}$ and $D_m=\rad\bsmc{D}_m+\z_m.$

Set $\bs{c}_{L}^{-}:=\min_{m\in\{1,...,\nbs\}}\bs{c}_{L_m},$ and define, for every $x\in\R^3,$ $$X(x):=\sum_{m=1}^{\nbs} \Bigl(\widehat{X}_m\bigl(\frac{(x-z_m)}{\epsilon}\bigr)\ind{B(\z_m,\frac{3\rad}{2})}\Bigr).$$ 
Then $X$ satisfies, for any positive scalar function $V,$ \begin{equation}\label{EstimX-.nu}
\begin{aligned}\int_{\p D} V X\cdot\nu ds=\sum_{m=1}^{\nbs} \int_{\p \bsmc{D}_m} V(\epsilon s_m+z_m) \widehat{X}(s_m)\cdot\nu_{\p \bsmc{D}_m} \rad^2 ds_{s_m}
&\geq \sum_{m=1}^{\nbs} \bs{c}_{L_m}\int_{\p \bsmc{D}_m} V(\epsilon s_m+z_m) \rad^2 ds\\
&\geq \bs{c}_{L}^{-}\int_{\p D} V ds,
\end{aligned}
\end{equation} and for $\bs{c}_L^{+}:=\max_m\sup_{\bsmc{D}_m}\Bigl( \abs*{\div\widehat{X}}, \abs*{\nabla~ \widehat{X}}, \abs*{\widehat{X}} \Bigr)$, we obtain 
\begin{equation}\left\{\begin{aligned} \sup_{x\in D_m}\abs{\nabla X(x)}&\leq \frac{1}{\rad}\sup_{s\in \bsmc{D}_m }\abs{\nabla_s \widehat{X}(s)}\leq \frac{ \bs{c}^{+}_L}{\rad },\\
	\sup_{x\in {D}_m}\abs{\div X(x)}&\leq \frac{1}{\rad}\sup_{s\in \bsmc{D}_m }\abs{\div_s \widehat{X}(s)}\leq \frac{ \bs{c}^{+}_L}{\rad}.
	\end{aligned}\right.\label{Estim-X}\end{equation}
Being $\abs{E}^2=E\cdot \ol{E}$, we get both $$\div(\abs{E}^2X/2) =\bigl(\abs{E}^2\div X+ (\nabla E) \ol{E}\cdot X+(\nabla \ol{E})~ E \cdot X\bigr)/2,$$ 
and $$\div ((\ol{E}\cdot X)~E)=(\ol{E}\cdot X)~\div E-(\nabla\ol{E}) X\cdot E -(\nabla X) \ol{E}\cdot E.$$ 
Taking their real parts then their difference\footnote{Noticing that $$\Re\left((\nabla \ol{E})X\cdot  E\right)=\Re\left((\nabla E)X\cdot \ol{E} \right), $$ and $$ \Re\left((\nabla \ol{E})X\cdot  E -(\nabla \ol{E})E\cdot X\right)
=\Re\left(\curl\ol{E}\cdot (E\times X)\right)=\Re\left(\curl E\cdot (\ol{E}\times X)\right).$$}, gives $$\div\bigl({\frac{1}{2}}\abs{E}^2X-\Re \bigl((\ol{E}\cdot X)E\bigr)\bigr)=
\Re\bigl(\frac{1}{2}\abs{E}^2~\div X-\ol{E}\cdot X~\div E-\ol{E}\cdot \nabla X~E +\ol{E}\times X\cdot \curl E \bigr).$$ 
As $E$ is divergence free, replacing in \eqref{identityRellich}, we have 
\begin{equation}\label{identityRellich2}\begin{aligned}
\int_{\p D}\bigl({\frac{\abs*{E}^2X}{2}}-\Re (\ol{E}\cdot X)E\bigr)\cdot\nu ds=\Re\int_{D}\Bigl(\frac{\abs*{E}^2\div X}{2}-\ol{E}\cdot (\nabla X)E+\ol{E}\times X\cdot \curl E\Bigr)dv.
\end{aligned}
\end{equation} 
Since $E=(\nu\cdot E)\nu-\nu\times(\nu\times E)$, 
we obtain $$\Re\int_{\p D} (\ol{E}\cdot X)~E\cdot \nu~ds=-\Re\int_{\p D} \Bigl(\nu\times(\nu\times\ol E)\cdot X~\nu\cdot E -\abs{\nu\cdot E }^2\nu\cdot X\Bigr)ds$$
and then the left hand side of \eqref{identityRellich2} becomes
$$\int_{\p D}\Bigl({\frac{1}{2}}\abs{E}^2-\abs{\nu\cdot E }^2\Bigr)X\cdot \nu~ds+\Re\int_{\p D}\nu\times(\nu\times E)\cdot X~\ol\nu\cdot E ~ds,$$
hence it follows, with \eqref{Estim-X},
\begin{align*}
\Bigl|\int_{\p D}\Bigl({\frac{\abs{E}^2}{2}}-\abs{\nu\cdot E }^2\Bigr)\nu\cdot X ds\Bigr| 
\leq \bs{c}^{+}_L\Bigl(\frac{1}{\rad}\int_{ D}\frac{3\abs{E}^2}{2}dv+ \int_{D}\abs*{\ol{E}} \,\abs*{\curl E}dv\Bigr)
&+\abso{\int_{\p D}\nu\times(\nu\times E)\cdot X~\ol\nu\cdot E ~ds},   
\end{align*} As $\abs{E}^2=\abs{\nu\times(\nu\times E)}^2 +\abs{(\nu\cdot E)\nu}^2,$  using H\"older's inequality, we have
\begin{align}\label{Rellich-Id-2}
\abso{\int_{\p D}{\frac{1}{2}}\left(\abs{\nu\times(\nu\times E)}^2-\abs{\nu\cdot E }^2\right)X\cdot \nu~ds}\leq
\bs{c}^{+}_L\Bigl(\frac{1}{\rad} \norm{E}_{\Hcurl{}{D}^2}+\norm*{\nu\times E}_{\Lp{2}{\p D}}\norm*{\nu\cdot E}_{\Lp{2}{\p D}}\Bigr),
\end{align} which, in view of \eqref{EstimX-.nu}, gives both 
\begin{equation*}
\begin{aligned}
\int_{\p D}\abs{\nu\times(\nu\times E)}^2ds\leq \frac{\bs{c}_{L}^{+}}{\bs{c}_{L}^{-}}\Bigl(\frac{\norm{E}_{\Hcurl{}{D}^2}}{\rad}
+\norm*{\nu\times E}_{\Lp{2}{\p D}}\norm*{\nu\cdot E}_{\Lp{2}{\p D}}\Bigr)+\norm*{\nu\cdot E }^2_{\Lp{2}{\p D}},\end{aligned}\end{equation*}
and \begin{equation*}
\begin{aligned}
\int_{\p D}\abs{\nu\cdot E }^2ds  \leq
\frac{\bs{c}_{L}^{+}}{\bs{c}_{L}^{-}}\Bigl(\frac{\norm{E}_{\Hcurl{}{D}^2}}{\rad}+\norm*{\nu\times E}_{\Lp{2}{\p D}}\norm*{\nu\cdot E}_{\Lp{2}{\p D}}\Bigr)
+ \norm*{\nu\times(\nu\times E)}^2_{\Lp{2}{\p D}}.\end{aligned}\end{equation*} 
Using \eqref{curlEandE-in-LD}, we get \begin{equation*}\begin{aligned}
	\int_{\p D}\abs{\nu\times(\nu\times E)}^2ds\leq
	\bs{c}_{L,k}\biggl( \norm*{H}_{\LDivsp{D}}\norm*{E}_{\LDivsp{D}}
	+\norm*{\nu\times E}_{\Lp{2}{\p D}}\norm*{\nu\cdot E}_{\Lp{2}{\p D}}+\norm*{\nu\cdot E }^2_{\Lp{2}{\p D}}\biggr),\end{aligned}\end{equation*}
and	\begin{equation*}\begin{aligned}
	\int_{\p D}\abs{\nu\cdot E }^2ds \leq \bs{c}_{L,k}\Bigl(\norm*{H}_{\LDivsp{D}} \norm*{E}_{\LDivsp{D}}
	+\norm*{\nu\times E}_{\Lp{2}{\p D}}\norm*{\nu\cdot E}_{\Lp{2}{\p D}}+ \norm*{\nu\times(\nu\times E)}^2_{\Lp{2}{\p D}}\Bigr).    
	\end{aligned}\end{equation*}
Then \footnote{Being $\norm*{\nu\times E}_{\Lp{2}{\p D}}\leq\norm*{E}_{\LDivsp{D}},$ $\norm*{\nu\cdot E}_{\Lp{2}{\p D}}\leq\frac{1}{\abs{k}^2}\norm*{H}_{\LDivsp{D}},$}
	\begin{equation}\label{Rell-inequality}
	\left\{\begin{aligned}
	\int_{\p D}\abs{\nu\times(\nu\times E)}^2ds\leq \bs{c}_{L,k}\Bigl((1+\frac{1}{\abs{k^2}})\norm*{H}_{\LDivsp{D}} \norm*{E}_{\LDivsp{D}}+\norm*{\nu\cdot E }^2_{\Lp{2}{\p D}}\Bigr),\\
	\int_{\p D}\abs{\nu\cdot E }^2ds  \leq \bs{c}_{L,k}\Bigl( (1+\frac{1}{\abs{k^2}})\norm*{H}_{\LDivsp{D}} \norm*{E}_{\LDivsp{D}}+\norm*{\nu\times(\nu\times E)}^2_{\Lp{2}{\p D}}\Bigr).                  
	\end{aligned}\right.
	\end{equation}
Concerning the exterior field, we consider a ball $B_r(0)$, with a sufficiently large radius $r$, which contains $D=\cup_{m=1}^{\nbs} D_m$ and the support of $X$. Let $D_r:=B_r\setminus D,$ 
applying Green's formula \eqref{Green-Identity-Interior} on $D_r$
	gives \begin{align} \label{Curl-and-E-exterior}
	\int_{ D_r}\bigl( \frac{1}{ik}\abs{\curl E}^2+ik\abs{E}^2\bigr)dx=\int_{\p B(0,r)}\nu\times\ol{E}\cdot \frac{1}{ik}\curl E~ds-\int_{\p D}\nu\times\ol{E}\cdot \frac{1}{ik}\curl E~ds.
	\end{align}   
As $\Im k>0,$ due to the asymptotic behavior \eqref{electric-farfield}, we have
 $$2\Re\biggl(\int_{\p B_r} E\cdot~\frac{1}{ik}\curl E\times\nu ~ds\biggr)\longrightarrow 0,$$ 
then, the real part of \eqref{Curl-and-E-exterior} and letting $r$ to $\infty$ gives
	\begin{equation*}\label{Trace-Estimate-Outside}
	-\int_{\R^3\setminus D} \bigl(\frac{\Im(k)}{\abs{k}^2}\abs{\curl E}^2+\Im k\abs{E}^2\bigr)dv\geq -\Re(\int_{\p D}\nu\times\ol{E}\cdot \frac{1}{ik}\curl E~ds),
	\end{equation*} hence
	\begin{align}
	 \int_{ \R^3\setminus \cup_{m=1}^{\nbs}\ol{D_m}} \bigl(\abs{\curl E}^2+\abs{E}^2\bigr)dv\leq &c_k\Re(\int_{\p D}\nu\times\ol{E}\cdot \frac{1}{ik}\curl E~ds), 
	\end{align} which is the analogue formula for \eqref{curlEandE-in-LD}. The rest of the proof is identical. 
\end{proof} 
As a direct consequence of \Cref{Trace-Inequalities-EandH}, we have the following assertion 
\begin{lemma}\label{E-L2DIV-HL2DIV-Lemma} For every solution $(E,H)$ to the Maxwell system, continuous up to the boundary, with wave number $k$, we have both 
	\begin{align}\label{E-L2DIV-HL2DIV}
	\begin{aligned}
	\norm*{E^\pm}_{\LDivsp{D}}\leq \bs{c}_{L,k}^2 \norm*{H^\pm}_{\LDivsp{D}}\\
	\end{aligned}
	\end{align} and
	\begin{align}\label{E-L2DIV-HL2DIV2}
	\begin{aligned}
	\norm*{H^\pm}_{\LDivsp{D}}\leq \bs{c}_{L,k}^2 \norm*{E^\pm}_{\LDivsp{D}}.
	\end{aligned} 
	\end{align}
	In addition, taking $E=\curl\SLP{k}{D}(A)$ for some $A\in\LDivsp{D}$ with the trace limites as in \eqref{Continuity-Electric-Magnetic-potential} we have
\begin{equation}\label{Equivalence-MDLP-MSLP}\begin{aligned}
	\norm*{[\pm{I}/{2}+M^{k}_{\p D}]A}_{\LDivsp{D}}\leq \bs{c}_{L,k} \norm*{[N^{k}_{\p D}]A}_{\LDivsp{D}},
	\end{aligned}\end{equation} and	
	\begin{equation}\label{Equivalence-MDLP-MSLP2}\begin{aligned}
	\norm*{[N^{k}_{\p D}]A}_{\LDivsp{D}}\leq \bs{c}_{L,k} \norm*{[\pm{I}/{2}+M^{k}_{\p D}]A}_{\LDivsp{D}}.
	\end{aligned}\end{equation}
\end{lemma}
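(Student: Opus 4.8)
The plan is to obtain all four inequalities as bookkeeping consequences of \Cref{Trace-Inequalities-EandH}, once the $\LDivsp{D}$ norm of a radiating Maxwell field is rewritten so that it decouples into a tangential piece and a normal piece. Since $\Div(\nu\times U)=-\nu\cdot\curl U$ for any $U$, and since the Maxwell system \eqref{Maxwell-Eq-Perfect-Cond} gives $\curl E=ikH$ and $\curl H=-ikE$, one has
\begin{align*}
\norm{E^\pm}_{\LDivsp{D}}&=\norm{\nu\times E^\pm}_{\Lp{2}{\p D}}+\abs{k}\,\norm{\nu\cdot H^\pm}_{\Lp{2}{\p D}},\\
\norm{H^\pm}_{\LDivsp{D}}&=\norm{\nu\times H^\pm}_{\Lp{2}{\p D}}+\abs{k}\,\norm{\nu\cdot E^\pm}_{\Lp{2}{\p D}}.
\end{align*}
In particular $\norm{\nu\cdot E^\pm}_{\Lp{2}{\p D}}\le\abs{k}^{-1}\norm{H^\pm}_{\LDivsp{D}}$ and $\norm{\nu\cdot H^\pm}_{\Lp{2}{\p D}}\le\abs{k}^{-1}\norm{E^\pm}_{\LDivsp{D}}$, and, since rotation by $\nu$ preserves the length of a tangential vector, $\norm{\nu\times E^\pm}_{\Lp{2}{\p D}}=\norm{\nu\times(\nu\times E^\pm)}_{\Lp{2}{\p D}}$ (and likewise for $H$).

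To prove \eqref{E-L2DIV-HL2DIV} I would estimate the two summands of $\norm{E^\pm}_{\LDivsp{D}}$ separately. For the first, the first inequality of \Cref{Trace-Inequalities-EandH} applied to $E^\pm$, together with $\norm{\nu\cdot E^\pm}^2_{\Lp{2}{\p D}}\le\abs{k}^{-2}\norm{H^\pm}^2_{\LDivsp{D}}$, gives $\norm{\nu\times E^\pm}^2_{\Lp{2}{\p D}}\le\bs{c}_{L}c_k\bigl(\norm{E^\pm}_{\LDivsp{D}}\norm{H^\pm}_{\LDivsp{D}}+\abs{k}^{-2}\norm{H^\pm}^2_{\LDivsp{D}}\bigr)$; for the second, the second inequality of \Cref{Trace-Inequalities-EandH} applied to $H^\pm$, together with $\norm{\nu\times(\nu\times H^\pm)}_{\Lp{2}{\p D}}=\norm{\nu\times H^\pm}_{\Lp{2}{\p D}}\le\norm{H^\pm}_{\LDivsp{D}}$, gives $\norm{\nu\cdot H^\pm}^2_{\Lp{2}{\p D}}\le\bs{c}_{L}c_k\bigl(\norm{E^\pm}_{\LDivsp{D}}\norm{H^\pm}_{\LDivsp{D}}+\norm{H^\pm}^2_{\LDivsp{D}}\bigr)$. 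Taking square roots, adding, and using $\sqrt{x+y}\le\sqrt{x}+\sqrt{y}$ leads to $\norm{E^\pm}_{\LDivsp{D}}\le C_{L,k}\bigl(\sqrt{\norm{E^\pm}_{\LDivsp{D}}\norm{H^\pm}_{\LDivsp{D}}}+\norm{H^\pm}_{\LDivsp{D}}\bigr)$; a Young inequality $\sqrt{ab}\le\tfrac{\varepsilon}{2}a+\tfrac{1}{2\varepsilon}b$ with $\varepsilon$ chosen so that $C_{L,k}\varepsilon\le\tfrac12$ absorbs the mixed term into the left-hand side and yields \eqref{E-L2DIV-HL2DIV}. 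Inequality \eqref{E-L2DIV-HL2DIV2} follows from the identical argument with $E$ and $H$ interchanged, using that \Cref{Trace-Inequalities-EandH} holds verbatim for $H$ (the Maxwell system being symmetric under $E\leftrightarrow H$, $k\mapsto-k$).

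For \eqref{Equivalence-MDLP-MSLP}--\eqref{Equivalence-MDLP-MSLP2} I would specialize to $E:=\curl\SLP{k}{D}(A)$ and $H:=\tfrac{1}{ik}\curl^2\SLP{k}{D}(A)=\tfrac{1}{ik}\curl E$; for $A\in\LDivsp{D}$ this pair solves the Maxwell system in $\R^3\setminus\p D$ and, as $\Im k>0$, decays exponentially, hence is radiating and has $L^2$ nontangential traces on $\p D$, so \eqref{E-L2DIV-HL2DIV}--\eqref{E-L2DIV-HL2DIV2} apply (a density argument in $A$ covers the regularity needed to invoke \Cref{Trace-Inequalities-EandH} if required). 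The jump relations \eqref{Continuity-Electric-Magnetic-potential} identify $\nu\times E^\pm=[\pm I/2+M^k_{\p D}]A$ and $\nu\times(\curl E)^\pm=[N^k_{\p D}]A$; combining these with $\Div(\nu\times E^\pm)=-\nu\cdot\curl E^\pm$ and $\curl E=ikH$ gives $\norm{[\pm I/2+M^k_{\p D}]A}_{\LDivsp{D}}=\norm{E^\pm}_{\LDivsp{D}}$ and $\norm{[N^k_{\p D}]A}_{\LDivsp{D}}=\abs{k}\,\norm{H^\pm}_{\LDivsp{D}}$. Substituting into \eqref{E-L2DIV-HL2DIV}--\eqref{E-L2DIV-HL2DIV2} and absorbing the factors $\abs{k}^{\pm1}$ into the constant produces \eqref{Equivalence-MDLP-MSLP}--\eqref{Equivalence-MDLP-MSLP2}.

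The one point that needs care is that the two inequalities of \Cref{Trace-Inequalities-EandH} are coupled --- each controls one boundary component of a field only in terms of the other component --- so neither by itself bounds a trace; the loop is closed precisely because $\Div(\nu\times E)=-ik\,\nu\cdot H$ ties the surface-divergence part of $\norm{E^\pm}_{\LDivsp{D}}$ to $\nu\cdot H$, which lets one feed the $E$-version of \Cref{Trace-Inequalities-EandH} into the tangential part and the $H$-version into the normal part. Apart from this, the work is routine: tracking the powers of $\abs{k}$ (and of $\bs{c}_L$, $c_k$) through the additions and the Young absorption, and checking that the Maxwell layer potentials $\curl\SLP{k}{D}(A)$, $\curl^2\SLP{k}{D}(A)$ enjoy the $L^2$ boundary traces and radiation behavior required to apply \Cref{Trace-Inequalities-EandH}.
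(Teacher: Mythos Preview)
Your proposal is correct and follows essentially the same route as the paper: both derive \eqref{E-L2DIV-HL2DIV}--\eqref{E-L2DIV-HL2DIV2} by feeding the Maxwell relation $\Div(\nu\times E)=-ik\,\nu\cdot H$ into the Rellich-type estimates of \Cref{Trace-Inequalities-EandH} and closing the resulting quadratic inequality, then specialize to $E=\curl\SLP{k}{D}(A)$ for the operator estimates. The only organizational difference is that you bound the two summands $\norm{\nu\times E^\pm}_{\Lp{2}{\p D}}$ and $\abs{k}\norm{\nu\cdot H^\pm}_{\Lp{2}{\p D}}$ of $\norm{E^\pm}_{\LDivsp{D}}$ directly, whereas the paper first controls the combined quantity $\norm{E^\pm}^2_{\Lp{2}{\p D}}+\norm{H^\pm}^2_{\Lp{2}{\p D}}$ and then observes that this dominates $\norm{E^\pm}_{\LDivsp{D}}$; the underlying mechanism and the ingredients used are identical.
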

\begin{proof}(\Cref{E-L2DIV-HL2DIV-Lemma})
For the first part, being $E=\nu\times(\nu\times E)+\nu\cdot E~\nu$, we have 
\begin{align*}
\norm*{E}^2_{\Lp{2}{\p D}}\leq \norm*{\nu\times(\nu\times E)}^2_{\Lp{2}{\p D}}+\norm*{\nu\cdot E}^2_{\Lp{2}{\p D}}.
\end{align*} 
Hence, with the first inequality of \Cref{Trace-Inequalities-EandH}, we get 
\begin{equation}\label{E-L2DIV-EandH-I1}
\norm*{E^\pm}^2_{\Lp{2}{\p D}}\leq \bs{c}_{L,k}\Bigl(\frac{\abs{k}^2+1}{\abs{k}^2}\norm*{H^\pm}_{\LDivsp{D}} \norm*{E^\pm}_{\LDivsp{D}}
+\norm*{\nu\cdot E^\pm }^2_{\Lp{2}{\p D}}\Bigr)+\norm*{\nu\cdot E^\pm}^2_{\Lp{2}{\p D}}
\end{equation} and with the same argument, for the magnetic field using the second inequality of \Cref{Trace-Inequalities-EandH}, we get
\begin{equation}
\norm*{H^\pm}^2_{\Lp{2}{\p D}}\leq \bs{c}_{L,k}\Bigl(\frac{\abs{k}^2+1}{\abs{k}^2}\norm*{H^\pm}_{\LDivsp{D}} \norm*{E^\pm}_{\LDivsp{D}}+\norm*{\nu\times H^\pm }^2_{\Lp{2}{\p D}}\Bigr)+\norm*{\nu\times H^\pm}^2_{\Lp{2}{\p D}}.
\end{equation} Adding the last two inequalities and observing that 
\begin{equation}\label{E-L2DIV-EandH-I2}
\norm*{E^\pm}_{\LDivsp{D}}\leq \norm*{E^\pm}^2_{\Lp{2}{\p D}}+\norm*{H^\pm}^2_{\Lp{2}{\p D}}
\end{equation} and that
\begin{equation}
\norm*{\nu\cdot E^\pm }^2_{\Lp{2}{\p D}}+\norm*{\nu\times H^\pm}^2_{\Lp{2}{\p D}}\leq(1+{1}/{\abs{k}}) \norm*{H^\pm}_{\LDivsp{D}}(\norm{E}_{\Lp{2}{\p D}}+\norm{H}_{\Lp{2}{\p D}}),
\end{equation}
gives
\begin{align*}
\norm*{H^\pm}^2_{\Lp{2}{\p D}}+\norm*{E^\pm}^2_{\Lp{2}{\p D}}&\\
\leq \bs{c}_{L,k}\Biggl(\Bigl(&\norm*{H^\pm}_{\LDivsp{D}} \norm*{E^\pm}_{\LDivsp{D}}+\norm*{H^\pm}_{\LDivsp{D}}(\norm{E}_{\Lp{2}{\p D}}+\norm{H}_{\Lp{2}{\p D}})\Bigr)\\
 &+\norm*{\nu\times H^\pm }^2_{\Lp{2}{\p D}}+\norm*{\nu\cdot E^\pm}^2_{\Lp{2}{\p D}}\Biggr),\\
\leq ~\bs{c}_{L,k} ~~~&\norm*{H^\pm}_{\LDivsp{D}}\Bigl(\norm*{H^\pm}^2_{\Lp{2}{\p D}}+ \norm*{E^\pm}^2_{\Lp{2}{\p D}}\Bigr)^\frac{1}{2}.
\end{align*}
Hence \begin{equation*}
	\Bigl(\norm*{H^\pm}^2_{\Lp{2}{\p D}}+\norm*{E^\pm}^2_{\Lp{2}{\p D}}\Bigr)^\frac{1}{2}\leq \bs{c}_{L,k} \norm*{H^\mp}_{\LDivsp{D}}\leq \bs{c}_{L,k}\Bigl(\norm*{H^\mp}^2_{\Lp{2}{\p D}}+\norm*{E^\mp}^2_{\Lp{2}{\p D}}\Bigr)^\frac{1}{2}.
	\end{equation*}
Interchanging $E$ and $H$ in \eqref{E-L2DIV-EandH-I1} and \eqref{E-L2DIV-EandH-I2} then repeating the same calculation gives
\begin{equation*}
\Bigl(\norm*{H^\pm}^2_{\Lp{2}{\p D}}+\norm*{E^\pm}^2_{\Lp{2}{\p D}}\Bigr)^\frac{1}{2}\leq \bs{c}_{L,k} \norm*{E^\pm}_{\LDivsp{D}}\leq \bs{c}_{L,k}\Bigl(\norm*{H^\pm}^2_{\Lp{2}{\p D}}+\norm*{E^\pm}^2_{\Lp{2}{\p D}}\Bigr)^\frac{1}{2}.
\end{equation*} The two last inequalities give us the desired result. \end{proof}

The estimates in \Cref{Injectivity-MDLP} follow using the triangular inequality and \eqref{Equivalence-MDLP-MSLP}. Indeed, 
\begin{equation}\begin{aligned}
\norm*{A}_\LDivsp{D}&=\norm{({I}/{2}+M^{k}_{\p D})A-(-{I}/{2}+M^{k}_{\p D})A}_{\LDivsp{D}},\\
&\leq\norm{({I}/{2}+M^{k}_{\p D})A}_{\LDivsp{D}}+\norm{(-{I}/{2}+M^{k}_{\p D})A}_{\LDivsp{D}},\\ &\leq(1+\bs{c}_{L,k}^2)\norm{(\pm{I}/{2}+M^{k}_{\p D})A}_{\LDivsp{D}}.\end{aligned}
\end{equation}  
\begin{remark}
	Similar calculations lead to the following estimates 
	\begin{equation}
	\norm*{[\pm{I}/{2}+M^{0}_{\p D}]A}_{\Lp{2}{D}}\leq \bs{c}_{L} 
	\norm*{[\mp{I}/{2}+M^{0}_{\p D}]A}_{\Lp{2}{D}}+ \norm*{[S^{0}_{\p D}]A}_{\Lp{2}{D}}
	\end{equation} with instead of \eqref{Green-Identity-Interior}, we use the following inequality 
	$$\int_{\p D}\nu\times [S^{0}_{\p D}] A\cdot\DTr{\curl\,[S^{0}_{\p} D]A}{\pm} ds\geq\norm{\curl S^{0}_{\p D}(A)}^2-\norm{\nabla S^{0}_{\p D}(\Div A)}\norm{S^{0}_{\p D}(\Div A)}$$ including the following one 
	$$\int_{\p D}\nu\times\DTr{\curl\,[S^{0}_{\p} D]A}{\pm}\cdot \nabla [S^{0}_{\p D}] \Div A  ds\geq \norm{\nabla S^{0}_{\p D}(\Div A)}, $$
	where the norms are either of $\Lp{2}{D}$ or ${\Lp{2}{\R^3\setminus D}}$ with the use of a similar decomposition (i.e~\Cref{Decomposition-LDiv-Space}).
\end{remark}

\begin{theorem} \label{Density-Estimate-And-Cond}
There exist a positive constant  $\bs{c}_{L,k}$ which depends only on the Lipschitz character of $\bsmc{D}_m$'s and $k$  such that,
\begin{itemize} 
	\item If $\Im k=0$ and $\bs{c}_r=\frac{\dist}{\rad}\geq {{c_0}2\bs{c}_L(\abs{k}+4)}{\bs{c}_{L,1}},$ then
\begin{equation}\label{Estimation-Densities-AikReal}
\sum_{m=1}^{\nbs} \norm*{A}_{{\LDivsp{D_m}}}^2\leq 3\bs{c}_{L,1} \nbs \rad^2 ,
\end{equation}  for $\bs{c}_{L,1}$ stands for the constant that appears in \eqref{Injectivity-MDLP} of \Cref{Injectivity-MDLP} for $k=i.$
\item If $\Im (k)>0$ and $\bs{c}_r=1$ 
\begin{equation}\label{Estimation-Densities-AikComplex}
\sum_{m=1}^{\nbs} \norm*{A}_{{\LDivsp{D_m}}}^2\leq \bs{c}_{L,k} \nbs \rad^2. 
\end{equation}
\end{itemize}
\end{theorem}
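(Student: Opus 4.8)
The plan is to reduce, in both cases, the a priori estimate for the density $A$ (the solution of the boundary integral equation $[{I}/{2}+\bs{M}_{\p D}^{k}]A=\nu\times E^\n$ of \eqref{Equivalent-Representation}) to the coercivity estimate \eqref{Coercivity_MDLP} of \Cref{Injectivity-MDLP}, and then to a crude scaling estimate of the datum $\nu\times E^\n$. The latter is identical in both cases: since $E^\n$ and $H^\n=\curl E^\n/(ik)$ are entire solutions of the Maxwell system, their $C^1$-norms on the bounded region containing all the $D_m$'s are bounded by a constant depending only on $k$ and the polarisation, and since $D_m=\rad\bsmc{D}_m+\z_m$ has a boundary of surface measure of order $\rad^2$, one gets $\norm{\nu\times E^\n}^2_{\Lp{2}{\p D_m}}\lesssim\rad^2$ and $\norm{\Div(\nu\times E^\n)}^2_{\Lp{2}{\p D_m}}=\abs{k}^2\norm{\nu\cdot H^\n}^2_{\Lp{2}{\p D_m}}\lesssim\abs{k}^2\rad^2$, hence $\norm{\nu\times E^\n}^2_{\LDivsp{D}}\lesssim\nbs\rad^2$ with a $k$-dependent constant. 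When $\Im k>0$, \Cref{Injectivity-MDLP} applies verbatim with the given $k$ (taking the $+$ sign, which is the one occurring in \eqref{Equivalent-Representation}), so $\norm{A}^2_{\LDivsp{D}}\le\bs{c}_{L,k}\norm{\nu\times E^\n}^2_{\LDivsp{D}}$; combined with the datum estimate this is already \eqref{Estimation-Densities-AikComplex}, and no dilution is needed (one may take $\bs{c}_r=1$) precisely because the constant in \Cref{Injectivity-MDLP} is uniform in the configuration.

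For $\Im k=0$, \Cref{Injectivity-MDLP} is unavailable and I would use it at $k=i$ as a perturbation: $\norm{A}_{\LDivsp{D}}\le\sqrt{\bs{c}_{L,1}}\,\norm{[{I}/{2}+\bs{M}_{\p D}^{i}]A}_{\LDivsp{D}}$, and, writing $[{I}/{2}+\bs{M}_{\p D}^{i}]A=\nu\times E^\n+(\bs{M}_{\p D}^{i}-\bs{M}_{\p D}^{k})A$, everything rests on estimating the operator $\bs{M}_{\p D}^{i}-\bs{M}_{\p D}^{k}$ on $\LDivsp{D}$. The crucial point, which is exactly what removes the logarithm present in \cite{AB-SM:MMS2019}, is that although $\nabla_x\Phi_i$ and $\nabla_x\Phi_k$ are separately of order $\abs{x-y}^{-2}$, their difference is only of order $\abs{x-y}^{-1}$: a direct computation gives $\abs{\nabla_x(\Phi_i-\Phi_k)(x,y)}\le\frac{\abs{k}+1}{2\pi\abs{x-y}}$ for all $x\ne y$, the $\abs{x-y}^{-2}$ terms cancelling; the same order-one gain holds for the adjoint double-layer kernel $\nu_x\cdot\nabla_x(\Phi_i-\Phi_k)$, while the single-layer pieces entering through the surface-divergence formula \eqref{DIV-MDk} are already of order $\abs{x-y}^{-1}$.

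Splitting $\bs{M}_{\p D}^{i}-\bs{M}_{\p D}^{k}$ into its diagonal blocks (from $\p D_m$ to $\p D_m$) and its off-diagonal blocks (the operators $\MDLPmj{k}$), I would estimate the diagonal blocks by a Schur bound for an order-one kernel on a Lipschitz surface of diameter $\rad$, obtaining operator norms $\lesssim\bs{c}_L(\abs{k}+4)\rad$, modulo the auxiliary smallness of $\rad$ relative to $\abs{k}$ customary in this paper and needed to tame the $\abs{k}^2 S_{\p D}^{k}$-term in \eqref{DIV-MDk}. For the off-diagonal blocks the kernels are bounded by $\frac{\abs{k}+1}{\dist_{mj}}$ (and by $\frac{\abs{k}^2+1}{\dist_{mj}}$ for the $\Div$-component), $\int_{\p D_j}\abs{A_j}\,ds\lesssim\rad\norm{A_j}_{\Lp{2}{\p D_j}}$, and after taking $\Lp{2}{\p D_m}$-norms and summing over $m$ one invokes \eqref{Double-Sum-Counting} of \Cref{Counting-Oclusion}, together with the volume bound it encodes relating $\nbs$, $\rad$ and $\dist$, to get a bound of the form $\bs{c}_L^2(\abs{k}+4)^2\,\bs{c}_r^{-p}\norm{A}^2_{\LDivsp{D}}$ for a fixed $p>0$. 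Hence $\norm{(\bs{M}_{\p D}^{i}-\bs{M}_{\p D}^{k})A}_{\LDivsp{D}}\le\bs{c}_L(\abs{k}+4)\,g(\bs{c}_r,\rad)\norm{A}_{\LDivsp{D}}$ with $g\to0$ as $\bs{c}_r\to\infty$ and $\rad\to0$; the hypothesis $\bs{c}_r\ge{c_0}\,2\bs{c}_L(\abs{k}+4)\,\bs{c}_{L,1}$ makes the last coefficient at most $\tfrac12$, so it is absorbed, yielding $\norm{A}_{\LDivsp{D}}\le2\sqrt{\bs{c}_{L,1}}\,\norm{\nu\times E^\n}_{\LDivsp{D}}$, and combining with the datum estimate gives \eqref{Estimation-Densities-AikReal} after bookkeeping the constants.

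The main obstacle is the off-diagonal estimate: one has to extract a genuine negative power of the dilution parameter $\bs{c}_r$ with a constant free of any hidden dependence on $\nbs$. This is precisely where the order reduction $\abs{x-y}^{-2}\mapsto\abs{x-y}^{-1}$ in $\nabla(\Phi_i-\Phi_k)$ is indispensable — it is what replaces the restrictive condition $\ln(\dist^{-1})\,\rad/\dist=O(1)$ of \cite{AB-SM:MMS2019} by the scale-invariant one $\bs{c}_r\gtrsim\abs{k}+4$ — and where the counting lemma and a consistent tracking of the powers of $\abs{k}$ (which forces the normalisations such as $\abs{k}\rad\le1$) must be handled with care.
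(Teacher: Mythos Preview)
Your overall architecture is right and matches the paper: for $\Im k>0$ you invoke \Cref{Injectivity-MDLP} directly, and for $\Im k=0$ you perturb from $k=i$ and must control $\bs{M}_{\p D}^{k}-\bs{M}_{\p D}^{i}$ on $\LDivsp{D}$. Your diagonal-block estimate via the Schur bound for the order-one kernel is also fine and coincides with \eqref{Miik-Mii0-L2DIV-estimate}.

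The gap is in the off-diagonal part. Carrying your boundary Schur estimate through honestly does \emph{not} give a pure $\bs{c}_r^{-p}$. With $\bigl|\nabla(\Phi_k-\Phi_i)(x,y)\bigr|\lesssim(|k|+1)\dist_{mj}^{-1}$ and $\int_{\p D_j}|A_j|\,ds\lesssim\rad\norm{A_j}_{\Lp{2}{\p D_j}}$, one gets the pointwise bound $(|k|+1)\rad\sum_{j\neq m}\dist_{mj}^{-1}\norm{A_j}$, hence after taking the $\Lp{2}{\p D_m}$-norm (surface of area $\sim\rad^2$) and applying \eqref{Double-Sum-Counting} with $q=1$,
\[
\sum_{m}\Bigl\|\sum_{j\neq m}(\MDLPmj{k}-\MDLPmj{i})A_j\Bigr\|^2_{\Lp{2}{\p D_m}}
\lesssim (|k|+1)^2\,\rad^4\,\frac{\nbs^{4/3}}{\dist^2}\,\norm{A}^2
\sim \frac{(|k|+1)^2}{(1+\bs{c}_r)^4\bs{c}_r^2}\,\frac{1}{\rad^2}\,\norm{A}^2,
\]
using $\nbs\lesssim(\rad/2+\dist)^{-3}$. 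The residual $\rad^{-2}$ cannot be absorbed no matter how large $\bs{c}_r$ is, so the perturbation argument collapses. The ``order reduction'' $|x-y|^{-2}\mapsto|x-y|^{-1}$ is not the mechanism that removes the mesoscale obstruction; it only handles the diagonal blocks.

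What the paper does instead (\Cref{Estimated-Decom-OnL2D} and the identities \eqref{Boundary-To-Volume}--\eqref{Surface-Div-Boundary-To-Volume}) is to convert the off-diagonal boundary operators $\MDLPmj{k}$ into \emph{volume} operators acting on the harmonic extensions $\nabla\Har{1,A}_j$, $\Har{2,A}_j$, $\curl\Har{2,A}_j$ in $D_j$:
\[
\MDLPmj{k}(A)=-\nu\times\MSVP{D_j}{k,\Id}(\nabla\Har{1,A}_j+\Har{2,A}_j)+\nu\times\MCVP{D_j}{k,\Id}(\curl\Har{2,A}_j).
\]
The point is that $\norm{\nabla\Har{1,A}_j}_{\Lp{2}{D_j}}$ and $\norm{\curl\Har{2,A}_j}_{\Lp{2}{D_j}}$ carry an extra factor $\rad^{1/2}$ over $\norm{A_j}_{\LDivsp{D_j}}$ (\Cref{Estimated-Decom-OnL2D}), so two volume integrations produce $\rad^3$ in the numerator rather than $\rad^2$; after borrowing the estimates \eqref{Cal-Estm-MSoper-Ni-Nk} and \eqref{Cal-Estm-MCoper-Mk} from the Lippmann--Schwinger analysis, one obtains the dimensionless bound of \Cref{Estim-Compacte-Reminder}, and the absorption goes through under the stated condition on $\bs{c}_r$. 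This boundary-to-volume transfer is the missing idea in your sketch.
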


The following lemma will help us provide a short cut to derive the desired estimates and approximations, using the results of \cref{Lin-Sys-Anisotropic}.
\begin{lemma}\label{Estimated-Decom-OnL2D}
	For each $m\in\{1,...,\nbs\}$ and $x\in{D_m}$, we set, 
	\begin{equation}
	\Har{1,A}_m(x):=\SLP{0}{D_m}(w)(x)~ \mbox{ and }~ \Har{2,A}_m(x):=\SLP{0}{D_m}(W)(x). 
	\end{equation} where $w$ and $W$ are solutions of \eqref{Helmholtz-Decomposition}, then 
	\begin{equation}\label{Density-Decomposition-notation}
	A=\nu\times\nabla\Har{1,A}_m(x)+\nu\times\Har{2,A}_m(x),
	\end{equation} and we have the following estimates holds 
	\begin{equation}
	\begin{aligned}\label{Esimation-Har-A1}
	\norm*{\nabla\Har{1,A}_m}_{\Lp{2}{D_m}}\leq \bs{c}_L\rad^\frac{1}{2} \norm{A_m}_{{\LDivsp{D_m}}},
	\end{aligned}\end{equation}	and 
	\begin{equation} 
	\begin{aligned}\label{Esimation-Har-A2}
	\norm*{\curl\Har{2,A}_m}_{\Lp{2}{D_m}}\leq \bs{c}_L\rad^\frac{1}{2} \norm{A_m}_{{\LDivsp{D_m}}}
	\end{aligned}\end{equation} are satisfied with some positive constant $\bs{c}_L$ depending only on the Lipschitz character of $D_m$'s.
\end{lemma}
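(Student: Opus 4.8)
The plan is to obtain the decomposition \eqref{Density-Decomposition-notation} for free from the defining system \eqref{Helmholtz-Decomposition} together with \Cref{Decomposition-LDiv-Space}, and to prove the two volume bounds \eqref{Esimation-Har-A1}--\eqref{Esimation-Har-A2} by \emph{freezing the geometry}: transport every estimate to the fixed reference domain $\bsmc{D}_m$, use there the $m$-uniform mapping properties of the Laplace single--layer potential and the classical harmonic--extension estimate, and only then collect the powers of $\rad$. First I would record that, since $\Phi_0(\cdot,y)$ is harmonic off $y$ and $w,W$ are supported on $\p D_m$, the fields $\Har{1,A}_m=\SLP{0}{D_m}(w)$ and $\Har{2,A}_m=\SLP{0}{D_m}(W)$ are harmonic in $D_m$, continuous up to $\p D_m$, with Dirichlet traces $[S^{0}_{\p D_m}](w)$ and $[S^{0}_{\p D_m}](W)$. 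Since the tangential part of $\nabla\Har{1,A}_m$ on $\p D_m$ is the surface gradient of that trace, we get $\nu\times\nabla\Har{1,A}_m=[\nablat\SLP{0}{D_m}](w)$ and $\nu\times\Har{2,A}_m=[\nu\times\SLP{0}{D_m}](W)$ on $\p D_m$, so that \eqref{Density-Decomposition-notation} is nothing but the first line of \eqref{Helmholtz-Decomposition}; in particular $\Har{1,A}_m|_{\p D_m}=\mathit{v}^A_m$, $\nu\times\nabla\Har{1,A}_m=A^{1}_m$ and $\nu\times\Har{2,A}_m=A^{2}_m$ in the notation of \Cref{Decomposition-LDiv-Space}.

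For the quantitative part I would set $\wh{f}(s):=f(\rad s+\z_m)$ and use that the Laplace single--layer potential scales as $\SLP{0}{D_m}(f)(\rad s+\z_m)=\rad\,\SLP{0}{\bsmc{D}_m}(\wh{f})(s)$, so that $\nabla_x$ and $\curl_x$ applied to it pick up a factor $\rad^{-1}$; together with $dx=\rad^{3}\,ds$ this yields $\norm{\nabla g}_{\Lp{2}{D_m}}=\rad^{1/2}\norm{\nabla\wh{g}}_{\Lp{2}{\bsmc{D}_m}}$ and the same with $\curl$, while on $\p D_m$ one has $\norm{B}_{\Lp{2}{\p D_m}}=\rad\,\norm{\wh{B}}_{\Lp{2}{\p\bsmc{D}_m}}$ for tangential $B$, $\norm{\Div B}_{\Lp{2}{\p D_m}}=\norm{\Div\wh{B}}_{\Lp{2}{\p\bsmc{D}_m}}$ and $\norm{\nabla_{\ta}\phi}_{\Lp{2}{\p D_m}}=\norm{\nabla_{\ta}\wh{\phi}}_{\Lp{2}{\p\bsmc{D}_m}}$. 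The single $\rad^{1/2}$ that appears in \eqref{Esimation-Har-A1}--\eqref{Esimation-Har-A2} is produced exactly by the first of these identities.

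Then, for \eqref{Esimation-Har-A1}, I would apply the classical estimate $\norm{\nabla u}_{\Lp{2}{\bsmc{D}_m}}\le\bs c_L\norm{u}_{H^{1/2}(\p\bsmc{D}_m)}$ for $u$ harmonic in the Lipschitz domain $\bsmc{D}_m$ to $u=\wh{\Har{1,A}_m}$, bound the $H^{1/2}(\p\bsmc{D}_m)$-norm of the trace $\wh{\mathit{v}}^A_m$ by its $H^{1}(\p\bsmc{D}_m)$-norm, and control the latter by $\bs c_L\norm{A_m}_{\LDivsp{D_m}}$ using the scalings above together with $\norm{\mathit{v}^A_m}_{\Lp{2}{\p D_m}}\le\bs c_4\rad\norm{A_m}_{\LDivsp{D_m}}$, $\norm{A^{1}_m}_{\Lp{2}{\p D_m}}\le\bs c_3\norm{A_m}_{\LDivsp{D_m}}$ from \Cref{Decomposition-LDiv-Space} and $|\nabla_{\ta}\mathit{v}^A_m|=|A^{1}_m|$; the $\rad^{1/2}$ then comes from rescaling back. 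For \eqref{Esimation-Har-A2}, putting $\wh{W}':=\rad\wh{W}$ one checks that $\wh{\Har{2,A}_m}=\SLP{0}{\bsmc{D}_m}(\wh{W}')$ and that $(\wh{w},\wh{W}')$ solves the rescaled system \eqref{Helmholtz-Decomposition} on $\bsmc{D}_m$ with datum $\wh{A}_m$, whose second line reads $[\nu\cdot\curl\SLP{0}{\bsmc{D}_m}](\wh{W}')=\Div\wh{A}_m$; the $m$-uniform solvability and a priori estimates for that system established in \cite{bouzekri2019foldy} give $\norm{\wh{W}'}_{\Lp{2}{\p\bsmc{D}_m}}\le\bs c_L\norm{\Div\wh{A}_m}_{\Lp{2}{\p\bsmc{D}_m}}$, and since $\SLP{0}{\bsmc{D}_m}$ maps $\Lp{2}{\p\bsmc{D}_m}$ boundedly into $\Hcurl{}{\bsmc{D}_m}$ this controls $\norm{\curl\wh{\Har{2,A}_m}}_{\Lp{2}{\bsmc{D}_m}}$ by $\norm{\Div A_m}_{\Lp{2}{\p D_m}}\le\norm{A_m}_{\LDivsp{D_m}}$; rescaling back gives \eqref{Esimation-Har-A2}.

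The step I expect to be the real obstacle is \emph{uniformity in $m$}: the constant in the harmonic--extension estimate on $\bsmc{D}_m$, the operator norm of $\SLP{0}{\bsmc{D}_m}:\Lp{2}{\p\bsmc{D}_m}\to\Hcurl{}{\bsmc{D}_m}$, and the a priori constant for the Helmholtz--decomposition system must all depend only on the Lipschitz characters $(l_{\p\bsmc{D}_m},L_{\p\bsmc{D}_m})$, which are assumed uniformly bounded — this is precisely why every estimate is first transported to the reference domains $\bsmc{D}_m$. A second subtle point is that the \emph{sharp} power $\rad^{1/2}$ (rather than $\rad^{-1/2}$) for $\curl\Har{2,A}_m$ forces the use of the finer structure of the decomposition, namely that the divergence--carrying density $\wh{W}'$ is controlled by $\norm{\Div\wh{A}_m}_{\Lp{2}{\p\bsmc{D}_m}}$ alone and not merely by the full $\LDivsp{\bsmc{D}_m}$-norm of $\wh{A}_m$; the remaining work is the routine bookkeeping of powers of $\rad$ and the quotation of standard mapping properties of layer potentials.
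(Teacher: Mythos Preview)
Your proposal is correct but follows a genuinely different route from the paper. You transport everything to the reference domain $\bsmc{D}_m$, invoke the standard mapping properties there (harmonic extension $H^{1/2}(\p\bsmc{D}_m)\to H^1(\bsmc{D}_m)$, single layer $\Lp{2}{\p\bsmc{D}_m}\to\Hs{1}{\bsmc{D}_m}$), and recover the factor $\rad^{1/2}$ from the change of variables. The paper instead works directly on $D_m$ with no rescaling: it applies Green's formula to each harmonic potential, for instance
\[
\norm*{\nabla\Har{1,A}_m}^2_{\Lp{2}{D_m}}=\int_{\p D_m}\Har{1,A}_m\,\NTr{\nabla\Har{1,A}_m}{-}\,ds
\le \norm*{\Har{1,A}_m}_{\Lp{2}{\p D_m}}\,\norm*{\NTr{\nabla\Har{1,A}_m}{-}}_{\Lp{2}{\p D_m}},
\]
bounds the first factor by the $\rad$-weighted estimate of \Cref{Decomposition-LDiv-Space} (namely $\norm*{\Har{1,A}_m}_{\Lp{2}{\p D_m}}=\norm*{\mathit{v}^A_m}\le \bs{c}_4\rad\norm{A_m}$), and controls the second by the scale-invariant Rellich equivalence between normal and tangential boundary traces of harmonic fields ($\norm*{\NTr{\nabla\Har{1,A}_m}{-}}\le \bs{c}_L\norm*{\TTr{\nabla\Har{1,A}_m}{-}}=\bs{c}_L\norm{A^1_m}$). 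The same pattern, with $\nu\times\Har{2,A}_m=A^2_m$ providing the $O(\rad)$ factor and $\nu\cdot\curl\Har{2,A}_m|_{-}=\Div A_m$ the $O(1)$ factor, yields \eqref{Esimation-Har-A2}.

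The paper's route thus never needs a quantitative bound on the density $W$ itself, only on the boundary traces of the potentials, all of which are already supplied by \Cref{Decomposition-LDiv-Space}; this sidesteps precisely the point you flagged as delicate, the estimate $\norm*{\wh{W}'}_{\Lp{2}{\p\bsmc{D}_m}}\le \bs{c}_L\norm*{\Div\wh{A}_m}_{\Lp{2}{\p\bsmc{D}_m}}$. You should verify that this bound is really contained in \cite{bouzekri2019foldy}, since the second line of \eqref{Helmholtz-Decomposition} alone does not determine $W$ and a bound only in terms of $\norm*{\wh{A}_m}_{\LDivsp{\bsmc{D}_m}}$ would produce $\rad^{-1/2}$ after rescaling. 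Your approach has the merit of making the $m$-uniformity of the constants completely explicit; the paper's argument gets it implicitly from the scale-invariance of the Rellich constant.
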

\begin{proof}
With the aid of the Green formula, being $\curl^2\Har{2,A}=0$ in $D_m,$
\begin{align*}
\norm*{\curl\Har{2,A}_m}_{\Lp{2}{D_m}}^2=&\int_{\p D_m}\nu\times\Har{2,A}_m \cdot \DTr{\curl \Har{2,A}_m}{-}ds,\\
\leq&\norm{\nu\times\Har{2,A}_m}_\Lp{2}{\p D_m}\norm{\nu\times\DTr{\curl\Har{2,A}_m}{-}}_\Lp{2}{\p D_m},\\
\leq&\bs{c}_L\rad\norm{A_m}_\LDivsp{ D_m} \bs{c}_L\norm{\nu\cdot\DTr{\curl\Har{2,A}_m}{-}}_\Lp{2}{\p D_m}\leq \bs{c}^{+}_L \rad\norm{A_m}^2_\LDivsp{ D_m} .
\end{align*}
	Again, using Green formula, we have \begin{align*}
	\int_{D_m} \abs{\nabla\Har{1,A}_m}^2dv=&\int_{\p D_m}\Har{1,A}_m  \NTr{\nabla \Har{1,A}_m}{-}ds\\
	&\leq\norm{\Har{1,A}_m}_{\Lp{2}{\p D_m}}~\norm{\NTr{\nabla \Har{1,A}_m}{-}}_{\Lp{2}{\p D_m}},\\
	&\leq\bs{c}_L\rad\norm{A_m}_{\LDivsp{D_m}} ~\bs{c}_L\norm{\TTr{\nabla \Har{1,A}_m}{-}}_{\Lp{2}{\p D_m}}\leq \bs{c}^{+}_L\rad\norm{A_m}^2_{\LDivsp{D_m}}.
	\end{align*} 
	
\end{proof}

\begin{lemma}\label{Estim-Compacte-Reminder}
For every $k\in\R^+$, the operator $\bs{M}_{\p D}^{k}-\bs{M}_{\p D}^{i}$ is compact and we have, under the condition on $c_r$,  
\begin{equation} \label{Estimates-for-Mk-M0}
	\norm*{\Bigl(\bs{M}_{\p D}^{k}-\bs{M}_{\p D}^{i}\Bigr)A}_{\LDivsp{D}}\leq \Bigl(\frac{1}{16\pi^2\bs{c}_{L,1}^2}+\bs{c}_{L,k}\rad^2\Bigr)^\frac{1}{2} {\norm*{A}_{\LDivsp{D}}}.
	\end{equation}
\end{lemma}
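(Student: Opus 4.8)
The plan is to read off the kernel of $\bs{M}_{\p D}^{k}$ from the jump formula \eqref{Continuity-Electric-Magnetic-potential}, namely $\bs{M}_{\p D}^{k}(A)(x)=\nu(x)\times\mathrm{p.v.}\int_{\p D}\nabla_x\Phi_k(x,y)\times A(y)\,ds_y$, and to subtract the corresponding expression at $k=i$, so that
\[
\bigl(\bs{M}_{\p D}^{k}-\bs{M}_{\p D}^{i}\bigr)(A)(x)=\nu(x)\times\int_{\p D}\nabla_x(\Phi_k-\Phi_i)(x,y)\times A(y)\,ds_y .
\]
By \eqref{Phi_k-Phi_i}, \eqref{Nabal-Phi_k-Phi_i} and \eqref{NablaNabal-Phi_k-Phi_i} the $\abs{x-y}^{-2}$ singularity of $\nabla_x\Phi_k$ is exactly cancelled by that of $\nabla_x\Phi_i$, so $\nabla_x(\Phi_k-\Phi_i)$ is a bounded (continuous) kernel and no principal value is needed; hence $\bs{M}_{\p D}^{k}-\bs{M}_{\p D}^{i}$ is Hilbert--Schmidt, thus compact, on $\Lp{2}{\p D}$. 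For compactness on $\LDivsp{D}$ I would subtract the $k$ and $i$ versions of \eqref{DIV-MDk}, using $i^{2}=-1$, to get
\[
\Div\bigl(\bs{M}_{\p D}^{k}-\bs{M}_{\p D}^{i}\bigr)(A)=-k^{2}\,\nu\cdot S^{k}_{\p D}(A)-\nu\cdot S^{i}_{\p D}(A)+\bigl((K^{k}_{\p D})^{*}-(K^{i}_{\p D})^{*}\bigr)(\Div A),
\]
where the single-layer normal traces are smoothing from $\Lp{2}{\p D}$ into $\Hs{1}{\p D}$, hence compact into $\Lp{2}{\p D}$, and $(K^{k}_{\p D})^{*}-(K^{i}_{\p D})^{*}$ has the bounded kernel $\nu_x\cdot\nabla_x(\Phi_k-\Phi_i)$ and is compact as well; combining the field and the divergence components shows that $\bs{M}_{\p D}^{k}-\bs{M}_{\p D}^{i}$ is compact on $\LDivsp{D}$.

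For the quantitative bound I would split each surface integral above into the same-particle block (integration over $\p D_m$ for $x\in\p D_m$) and the inter-particle blocks (integration over $\p D_j$, $j\neq m$), estimating the field part and the surface-divergence part separately. For a same-particle block one uses the uniform $k$-dependent bounds on $\nabla_x(\Phi_k-\Phi_i)$ and on $\nu_x\cdot\nabla_x(\Phi_k-\Phi_i)$, together with $\abs{\p D_m}\leq\bs{c}_L\rad^{2}$ and two uses of H\"older's inequality; for the single-layer normal traces $\nu\cdot S^{k}_{\p D_m}$, $\nu\cdot S^{i}_{\p D_m}$ one instead rescales to the reference particle $\bsmc{D}_m$ (via $y\mapsto\rad\,\eta+\z_m$), which makes their operator norm on $\LDivsp{D_m}$ of order $\rad$. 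Summing the squares over $m$, all same-particle contributions are bounded by $\bs{c}_{L,k}\rad^{2}\norm{A}^{2}_{\LDivsp{D}}$.

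For the inter-particle blocks $\abs{x-y}\geq\dist$, and there $\nabla_x(\Phi_k-\Phi_i)$, $\nu_x\cdot\nabla_x(\Phi_k-\Phi_i)$, $\Phi_k$ and $\Phi_i$ all decay at least like $c_k\abs{x-y}^{-1}$, the leading constant being $\tfrac{1}{4\pi}$ for the two single layers. H\"older's inequality, the double-sum counting estimate \eqref{Double-Sum-Counting} of \Cref{Counting-Oclusion}, and the substitution $\dist=\bs{c}_r\rad$ bound the sum of the squared inter-particle blocks by a factor of order $c_k\bs{c}_r^{-2}$ times $\norm{A}^{2}_{\LDivsp{D}}$, plus remainders of higher order in $\rad$. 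Invoking the condition on $c_r$ of \Cref{Density-Estimate-And-Cond}, $\bs{c}_r\geq c_0\,2\bs{c}_L(\abs{k}+4)\bs{c}_{L,1}$, pins this leading factor at $\tfrac{1}{16\pi^{2}\bs{c}_{L,1}^{2}}$ (the $(4\pi)^{-2}$ coming from the fundamental solution and the $\bs{c}_{L,1}^{-2}$ from the threshold value of $\bs{c}_r$), and absorbs the rest into $\bs{c}_{L,k}\rad^{2}$; taking the square root gives the stated inequality. A cleaner variant for the divergence part is to insert the Helmholtz splitting $A_m=\nu\times\nabla\Har{1,A}_m+\nu\times\Har{2,A}_m$ of \Cref{Estimated-Decom-OnL2D} and use the intertwining relation \eqref{nu-time-nabla-interversion}, which isolates the genuinely non-small piece directly.

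The step I expect to be the real obstacle is the bookkeeping of the surface-divergence term. One must verify that \eqref{DIV-MDk} (together with \eqref{nu-time-nabla-interversion} after the Helmholtz splitting, if that route is taken) really confines the part that does not vanish with $\rad$ to single-layer normal traces whose operator norm on $\LDivsp{D}$ is $\rad$-independent and controlled by $\tfrac{1}{4\pi\bs{c}_{L,1}}$; this is precisely where the unit-ball $L^{2}$-normalisation of the Newtonian kernel (as in the proof of \Cref{Inversion-Lin-Sys-Anisotropic-Case}, cf. \cite{gilbarg2015elliptic}, Theorem~9.9) and the coercivity constant of \Cref{Injectivity-MDLP} at $k=i$ enter. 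One must also check that all the counting-lemma estimates for the inter-particle blocks are uniform in the number $\nbs$ of particles, so that neither $\tfrac{1}{16\pi^{2}\bs{c}_{L,1}^{2}}$ nor $\bs{c}_{L,k}$ degrades as $\nbs$ grows.
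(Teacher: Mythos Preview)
Your compactness argument and the treatment of the same-particle (diagonal) blocks are essentially what the paper does; in particular the paper arrives at \eqref{Miik-Mii0-L2DIV-estimate} by exactly the kernel-difference and rescaling route you sketch, via \eqref{Estimates1-Diagpart-MDLP}--\eqref{Estimates2-Diagpart-MDLP}.

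The substantive divergence is in the inter-particle blocks. You propose to estimate the surface integrals $\MDLPmj{k}-\MDLPmj{i}$ directly, via the decay of $\nabla_x(\Phi_k-\Phi_i)$, $\Phi_k$, $\Phi_i$ on $\p D_j$ and the counting lemma, treating the Helmholtz decomposition only as an optional refinement for the surface-divergence piece. The paper instead makes the Helmholtz splitting of \Cref{Estimated-Decom-OnL2D} the \emph{central} device for the entire off-diagonal part: via the identities \eqref{Boundary-To-Volume}--\eqref{Surface-Div-Boundary-To-Volume} it rewrites both $\MDLPmj{k}(A)$ and its surface divergence as \emph{volume} potentials $\MSVP{D_j}{k,\Id}$, $\MCVP{D_j}{k,\Id}$ acting on $\nabla\Har{1,A}_j+\Har{2,A}_j$ and $\curl\Har{2,A}_j$. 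This boundary-to-volume conversion lets the paper import verbatim the estimates \eqref{Cal-Estm-MSoper-Ni-Nk} and \eqref{Cal-Estm-MCoper-Mk} already proved for the Lippmann--Schwinger operators in \Cref{Elec-Magn-Lip-sch-Comp-and-DefPosi}, combined with the $\rad^{1/2}$ bounds \eqref{Esimation-Har-A1}--\eqref{Esimation-Har-A2}; the constant $\tfrac{1}{(4\pi)^2\bs{c}_{L,1}^2}$ then drops out of those prior computations under the $\bs{c}_r$ threshold, with no appeal to the Newtonian-kernel normalisation you mention. Your direct surface route should also succeed and is more self-contained, but you would have to redo the double-sum accounting from scratch and track the constants independently; the paper's route trades that for the one-time cost of proving \eqref{Boundary-To-Volume}--\eqref{Surface-Div-Boundary-To-Volume} and then recycling Section~2 wholesale. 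The ``real obstacle'' you flag---isolating which piece of the divergence fails to be small with $\rad$---is precisely what the volume conversion sidesteps: once on the volume side, the divergence term has the same structure as the field term (just with the roles of $\MSVP{}{}$ and $\MCVP{}{}$ swapped), and no separate bookkeeping is required.
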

\begin{proof}
We have, due to the representation \eqref{Equivalent-Representation}
\begin{equation}\label{Estimate-for-Mk-M0-1st-step}
\begin{aligned}
\norm*{\Bigl(\bs{M}_{\p D}^{k}-\bs{M}_{\p D}^{i}\Bigr)A}^2_{\LDivsp{D}}
\leq& \Biggl[\Bigl(\sum_{m=1}^{\nbs} \norm*{\Bigl(\MDLP{k}{m}-\MDLP{i}{m}\Bigr)A_m}^2\Bigr)^\frac{1}{2}\\
&+\Bigl(\sum_{m=1}^{\nbs}\norm*{\sum_{\stl{j\geq 1}{j\neq m} }^{\nbs}(\MDLPmj{k}-\MDLPmj{i})A_j}^2\Bigr)^\frac{1}{2}\Biggr]^2.
	\end{aligned}
	\end{equation}
Using \eqref{Nabal-Phi_k-Phi_i}, we obtain  
\begin{align}\label{Estimates1-Diagpart-MDLP}
\abs*{[\MDLP{k}{m}-\MDLP{i}{m}](A)}\leq & {\bs{c}_{L,k}}\rad \norm*{A_m}_{\LDivsp{D_m}}.\end{align}
Due to  \eqref{DIV-MDk}, \eqref{Phi_k-Phi_i} for $\alpha=1$ and \eqref{Nabal-Phi_k-Phi_i}, we get
\begin{align*}
\abs*{[\Div(\MDLP{k}{m}-\MDLP{i}{m})](A)}\leq & \bs{c}_{L} \Bigl({(c_k+\bs{c}_{L})}\Bigr)\rad\norm*{A}_{\LDivsp{D_m}}+(1+k^2)\abs*{[\nu\cdot \SLP{0}{D_i}](A)}.
\end{align*}  
 With $$C^1_{L,k}:=(1+k^2)\norm*{[\SDO{0}{m}]}_{\mathcal{L}(\Lp{2}{\p \bsmc{D}_m})},$$ we get
\begin{equation}\label{Estimates2-Diagpart-MDLP}
\norm*{[\MDLP{k}{m}-\MDLP{i}{m}](A)}^2_{\LDivsp{D_m}}\leq \bs{c}_{L,k}\rad^4 \norm*{A_m}_{\LDivsp{D_m}}^2+C^1_{L,k} \rad^2\norm*{A_m}_{\LDivsp{D_m}}^2.
\end{equation} 
Summing over $m$
\begin{equation}\label{Miik-Mii0-L2DIV-estimate}
\sum_{m=1}^{\nbs}\norm*{[\MDLP{k}{m}-\MDLP{i}{m}](A)}^2_{\LDivsp{D_m}}\leq \bs{c}_{L,k}\rad^2 \sum_{m=1}^{\nbs}\norm*{A_m}_{\LDivsp{D_m}}^2.
\end{equation}\medskip
For $x\in \p D_m$, we have, with the notation of \Cref{Elec-Magn-Lip-sch-Comp-and-DefPosi} \begin{equation}\label{Boundary-To-Volume}
\begin{aligned}
\MDLPmj{k}(A)(x)=&-\nu\times\MSVP{D/D_m}{k,{\Id}}(\nabla \Har{1,A}_j+\Har{2,A}_j)+\nu\times\MCVP{D/D_m}{k,\Id}(\curl\Har{2,A}_j),
\end{aligned}
\end{equation} and
\begin{equation}\label{Surface-Div-Boundary-To-Volume}
\begin{aligned}
\div\MDLPmj{k}(A)(x)=&-\nu\cdot\MSVP{D/D_m}{k,\Id}(\curl\Har{2,A}_j)+k^2\nu\cdot\MCVP{D/D_m}{k,{\Id}}(\nabla \Har{1,A}_j+\Har{2,A}_j).
\end{aligned}
\end{equation}

Indeed, 
\begin{align*}
\MDLPmj{k}(A)(x)=&\nu\times\curl \int_{\p D_j}\Phi_k(x,y) (\nu\times\nabla\Har{1,A}_j+\nu\times\Har{2,A}_j)(y)ds_y,\\
=&\nu\times\curl \int_{ D_j} \curl_y \Bigl(\Phi_k(x,y)(\nabla\Har{1,A}_m+\Har{2,A}_m)(y)\Bigr)ds_y,\\
=&-\nu\times\Bigl(\curl^2 \int_{D_j} \Phi_k(x,y)(\nabla \Har{1,A}_j+\Har{2,A}_j)(y)ds_y-\curl \int_{D_j}\Phi_k(x,y)\curl\Har{2,A}_j(y)ds_y\Bigr).	
\end{align*} taking the surface divergence clear the second equation.  
Now, from \eqref{Boundary-To-Volume} similarly to what was done to get \eqref{Cal-Estm-MSoper-Ni-Nk}, \eqref{Cal-Estm-MCoper-Mk} we get both
\begin{align*}
\sum_{m=1}^\nbs\norm*{\sum_{\stl{j\geq 1}{j\neq m}}^\nbs \Bigl(\MDLPmj{k}-\MDLPmj{i}\Bigr)(A)}^2_{\Lp{2}{\p D_m}}
\leq& {c_0}\frac{(1+\abs{k})^2}{16\pi^2\bs{c}_r^6} \sum_{j=1}^m\frac{\norm*{\curl\Har{2,A}_j(y)}^2_{\Lp{2}{D_j}}}{\rad}\\ +2^3{c_0}\Bigl(&\frac{\abs{k}(\abs{k}+1)}{4\pi}\Bigr)^2 \Bigl[\frac{(\abs{k}+4)^2}{(1+2\bs{c}_r)^4\bs{c}^2_r}\Bigr] \sum_{j=1}^m\frac{\norm*{\nabla \Har{1,A}_j+\Har{2,A}_j}_{\Lp{2}{D_j}}^2}{\rad},\\
\sum_{m=1}^\nbs\norm*{\sum_{\stl{j\geq 1}{j\neq m}}^\nbs\Div \Bigl(\MDLPmj{k}-\MDLPmj{i}\Bigr)(A)}_{\Lp{2}{\p D_m}}
\leq&2^3{c_0}\frac{\abs{k}^2(1+\abs{k})^2}{\bs{c}_r^6}\sum_{j=1}^\nbs\frac{\norm*{\nabla \Har{1,A}_j+\Har{2,A}_j}_{\Lp{2}{D_j}}^2}{{\rad}}\\
+2^3{c_0}\Bigl(&\frac{\abs{k}(\abs{k}+1)}{4\pi}\Bigr)^2 \Bigl[\frac{(\abs{k}+4)^2}{(1+2\bs{c}_r)^4\bs{c}^2_r}\Bigr] \sum_{j=1}^m\frac{\norm*{\curl\Har{2,A}_j(y)}^2_{\Lp{2}{D_j}}}{\rad},
\end{align*} 
From \Cref{Estimated-Decom-OnL2D} we get 
\begin{equation}
\begin{aligned}
\sum_{m=1}^\nbs\norm*{\sum_{\stl{j\geq 1}{j\neq m}}^\nbs \Bigl(\MDLPmj{k}-\MDLPmj{i}\Bigr)(A)}^2_{\LDivsp{D_m}}&\\
\leq2^3{c_0}\bs{c}_L\Bigl(\frac{\abs{k}(\abs{k}+1)}{4\pi}&\Bigr)^2  \Bigl[\frac{1}{\bs{c}_r^6} +\frac{(\abs{k}+4)^2}{(1+2\bs{c}_r)^4\bs{c}^2_r}\Bigr] \sum_{j=1}^m{\norm*{A}_{\LDivsp{D_m}}^2},
\end{aligned}
\end{equation} which give, with $\bs{c}_r$ as stated in \Cref{Density-Estimate-And-Cond},
\begin{equation}
\begin{aligned}
\sum_{m=1}^\nbs\norm*{\sum_{\stl{j\geq 1}{j\neq m}}^\nbs \Bigl(\MDLPmj{k}-\MDLPmj{i}\Bigr)(A)}^2_{\LDivsp{D_m}}
\leq\frac{1}{(4\pi)^2\bs{c}_{L,1}^2} {\norm*{A}_{\LDivsp{D}}^2}.
\end{aligned}
\end{equation}

\end{proof}

\begin{proof} (\Cref{Density-Estimate-And-Cond}). 
Now, we end up the proof of Proposition \ref{Density-Estimate-And-Cond} as follows

\begin{equation*}
\begin{aligned}
\norm{[{I}/{2}+\bs{M}_{\p D}^{k}](A)}\geq&\norm{[{I}/{2}+\bs{M}_{\p D}^{i}](A)}-\norm{[\bs{M}_{\p D}^{i}-\bs{M}_{\p D}^{k}](A)},\\
\geq&\norm{[{I}/{2}+\bs{M}_{\p D}^{i}](A)}-\Bigl(\frac{1}{16\pi^2\bs{c}_{L,1}^2}+\bs{c}_{L,k}\rad^2\Bigr)^\frac{1}{2} \norm*{A}_{\LDivsp{D}}^2\\
\geq&\Bigl(\frac{1}{\bs{c}_{L,1}}-\Bigl(\frac{1}{16\pi^2\bs{c}_{L,1}^2}+\bs{c}_{L,k}\rad^2\Bigr)^\frac{1}{2}\Bigr) \norm*{A}_{\LDivsp{D}}^2.
\end{aligned}
\end{equation*} Then for $\bs{c}_{L,k}\rad\leq\frac{1}{3\bs{c}_{L,1}}$
\begin{equation*}
\begin{aligned}
\norm{\nu\times E^\n}_{\LDivsp{D}}^2=\norm{[{I}/{2}+\bs{M}_{\p D}^{k}](A)}
\geq&\frac{1}{3\bs{c}_{L,1}} \norm*{A}_{\LDivsp{D}}^2.
\end{aligned}
\end{equation*} 

\end{proof}

\subsection{Fields approximation and the linear algebraic systems}\label{Section-first-approximation}
Based on the representation (\ref{Electric-Field-representation}), the expression of the  far field pattern is given by
\begin{equation}\label{Far-Feld-Perfect-CondCase}
E^\infty(\hat{x})=\frac{ik}{4\pi}\hat{x}\times\int_{\p D} A(y)e^{-ik\hat{x}.y}ds_y,
\end{equation}
where $\hat{x}=(x/\abs{x}) \in\mathbb{S}^2$ and $A$ is the solution of the (\ref{Equivalent-Representation}). As it was done in \cite{bouzekri2019foldy}, 
let us consider $(\psi_l^m)_{l=1}^3$ as the solution of 
\begin{align}\label{Def-psi}
[-{I}/{2}+\MDLP{0}{i}](\nu\times \psi_m^l)=-\nu\times V_l,
\end{align}  with $$V_1^*=(0,0,(x-\z_i)\cdot e_2),~V_2^*=((x-\z_i)\cdot e_3,0,0),~\mbox{ and } V_3^*=(0,(x-\z_i)\cdot e_1,0).$$  
Taking the surface divergence of \eqref{Def-psi} gives  \begin{align}\label{Divergence-of-psil}
[{I}/{2}+(\DLP{0}{D_m})^*](\nu\cdot\curl\psi_m^l)=-\nu_i^l.
\end{align}  
Notice that
\begin{equation}\label{PsiTrace-to-VirtM-Tensor}
\begin{aligned}
	\int_{\p D_m}\nu\times\psi_m^l ds&=-\int_{\p D_m}(y-\z_m)\nu\cdot\curl\psi_m^l(y)ds_y\\
	&=-\int_{\p D_m}[{I}/{2}+(\DLP{0}{D_m})]^{-1}(y-\z_m)~[{I}/{2}+(\DLP{0}{D_m})^*]\nu\cdot\curl\psi_m^l(y)ds_y\\
	&=\int_{\p D_m}[{I}/{2}+(\DLP{0}{D_m})]^{-1}(y-\z_m)~\nu_i^lds_y.
\end{aligned}	
\end{equation}
As $\curl V_l=e_l$, it holds that \begin{equation}\label{Total-Charge-curlFreepart}
\int_{\p D_m}\nu^l u ds=\int_{\p D_m} \nu^l\cdot\curl V ~ u ds=-\int_{\p D_m} \nu\times V_l \cdot \nabla u ds=\int_{\p D_m}  V_l \cdot \nu\times\nabla u ds
\end{equation} for any scalar function $u \in \Hs{1}{\p D_m}.$ Finally, let $\phi_m$ be the solution to the following integral equation \begin{equation}
[-\frac{1}{2} I+\DLP{0}{D_m}](\phi_m)(x)=(x-\z_i). \label{Definition-Of-phi}
\end{equation} 

\begin{lemma} Due to the scale invariance of both the double-layer and the Maxwell-dipole operators, the following estimates hold
	\begin{align}\label{psi-Estimation}
	\norm*{\nu\times\psi_m^l}_{\Lp{2}{\p D_i}}
	\leq \bs{c}_{L} \rad^2,~\,&\norm*{\nu\times\psi_m^l}_{\LDivsp{Di}}
	\leq \bs{c}_{L}\rad,~\,
	\mbox{ and }~ \norm*{\phi_m}_{\Lp{2}{\p D_m}}\leq \bs{c}_{L} \rad^2.
	\end{align}
\end{lemma}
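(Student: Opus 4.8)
The statement to prove asserts three scaling estimates: $\norm{\nu\times\psi_m^l}_{\Lp{2}{\p D_m}}\leq \bs{c}_L\rad^2$, $\norm{\nu\times\psi_m^l}_{\LDivsp{D_m}}\leq \bs{c}_L\rad$, and $\norm{\phi_m}_{\Lp{2}{\p D_m}}\leq \bs{c}_L\rad^2$. These are consequences of the scale invariance of the double-layer and Maxwell-dipole operators together with the defining equations \eqref{Def-psi}, \eqref{Divergence-of-psil}, \eqref{Definition-Of-phi}.

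The plan is as follows. First I would pull everything back to the reference domain $\bsmc{D}_m\subset B(0,\tfrac12)$ via the change of variables $x=\rad s+\z_m$. Under this rescaling the double-layer operator $\DLP{0}{D_m}$ and its adjoint $(\DLP{0}{D_m})^*$ are scale invariant (the kernel $\p_{\nu_y}\Phi_0(x,y)$ is homogeneous of degree $-2$ while $ds_y$ scales like $\rad^2$), and likewise the Maxwell-dipole operator $\MDLP{0}{m}$ acting on tangential fields is scale invariant; hence the operators $[-I/2+\MDLP{0}{m}]^{-1}$ and $[I/2+(\DLP{0}{D_m})^*]^{-1}$ have operator norms on the relevant spaces over $\bsmc{D}_m$ that depend only on the Lipschitz character of $\bsmc{D}_m$, which is assumed bounded. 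The right-hand sides scale explicitly: $V_l^*$ has entries of the form $(x-\z_m)\cdot e_j$, so $\nu\times V_l$ on $\p D_m$ equals $\rad$ times the corresponding reference-domain field on $\p\bsmc{D}_m$, and similarly the normal components $\nu_m^l$ are scale-invariant in direction but the surface element brings the extra power. Concretely, writing $\wh{\psi}_m^l$ for the solution on $\bsmc{D}_m$ of $[-I/2+\MDLP{0}{\bsmc{D}_m}](\nu\times\wh\psi_m^l)=-\nu\times\wh V_l$ with $\wh V_l$ the reference-domain version, one gets $\psi_m^l(x)=\rad\,\wh\psi_m^l(s)$ up to the appropriate identification, and $L^2(\p D_m)$ norms carry a factor $\rad^{1/2}\cdot\rad^{1/2}$ from $ds$, giving one extra $\rad$; combining, $\norm{\nu\times\psi_m^l}_{\Lp{2}{\p D_m}}\leq \bs{c}_L\rad\cdot\rad=\bs{c}_L\rad^2$.

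Next, for the $\LDivsp{D_m}$ bound I would use \eqref{Divergence-of-psil}: $\Div(\nu\times\psi_m^l)=\nu\cdot\curl\psi_m^l = -[I/2+(\DLP{0}{D_m})^*]^{-1}(\nu_m^l)$. Since $\nu_m^l$ is a bounded function on $\p D_m$, $\norm{\nu_m^l}_{\Lp{2}{\p D_m}}\leq \bs{c}_L\rad$ (just the surface measure scaling), and the invertible operator contributes only a Lipschitz-character constant, so $\norm{\Div(\nu\times\psi_m^l)}_{\Lp{2}{\p D_m}}\leq \bs{c}_L\rad$. The $\LDivsp{D_m}$ norm is $\norm{\nu\times\psi_m^l}_{\Lp{2}{\p D_m}}+\norm{\Div(\nu\times\psi_m^l)}_{\Lp{2}{\p D_m}}\leq \bs{c}_L\rad^2+\bs{c}_L\rad = \bs{c}_L\rad$ for $\rad$ small. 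Finally, the $\phi_m$ estimate follows the same template applied to \eqref{Definition-Of-phi}: $\phi_m=[-\tfrac12 I+\DLP{0}{D_m}]^{-1}(x-\z_m)$, the right-hand side $(x-\z_m)$ has $\Lp{2}{\p D_m}$ norm bounded by $\rad\cdot\rad=\rad^2$ (size $\rad$ times surface-measure factor $\rad$), and $[-\tfrac12 I+\DLP{0}{D_m}]^{-1}$ is bounded on $\Lp{2}{\p D_m}$ by a Lipschitz-character constant (it is an isomorphism on $H^{-s}_0$, and here on $L^2$ modulo the harmless kernel), giving $\norm{\phi_m}_{\Lp{2}{\p D_m}}\leq \bs{c}_L\rad^2$.

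The main obstacle, such as it is, is bookkeeping rather than mathematics: one must be careful about exactly how each quantity transforms under $x\mapsto\rad s+\z_m$ — in particular distinguishing the scaling of the \emph{function values} (linear data like $x-\z_m$ scale by $\rad$, directional data like $\nu$ do not) from the scaling of the \emph{norm} (the surface measure $ds_x=\rad^2\,ds_s$ contributes $\rad$ inside an $L^2$ norm) — and about the fact that the Maxwell-dipole operator involves $\nu\times$ and a tangential gradient, so its precise scale-invariance statement on $\LDivsp{\cdot}$ should be invoked exactly as established in the earlier sections (and in \cite{bouzekri2019foldy}). Once the scaling of each operator and each datum is recorded, the three estimates drop out by composing a scale-invariant bounded inverse with an explicitly-scaled right-hand side, with all constants absorbed into $\bs{c}_L$.
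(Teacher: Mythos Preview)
Your proposal is correct and is precisely the argument the lemma statement itself points to: the paper does not actually supply a proof of this lemma, treating the estimates as immediate consequences of the scale invariance of $\DLP{0}{D_m}$, $(\DLP{0}{D_m})^*$, and $\MDLP{0}{m}$ together with the explicit scaling of the right-hand sides in \eqref{Def-psi}, \eqref{Divergence-of-psil}, \eqref{Definition-Of-phi}. Your detailed bookkeeping of the pullback $x=\rad s+\z_m$ (function values of linear data scale by $\rad$, surface measure contributes $\rad$ in the $L^2$ norm, operator norms are scale-free and depend only on the Lipschitz character) is exactly what is intended, and the resulting powers $\rad^2$, $\rad$, $\rad^2$ match.
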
 
\begin{proposition}\label{Field-Approximation-andlinsys}
For $\Im k=0$, the far field pattern can be approximated by
\begin{align}\label{Far-Field-Approximation}
E^\infty(\hat{x})=\frac{ik}{4\pi}
\sum_{m=1}^{\nbs}e^{-ik\hat{x}.\z_m}&\hat{x}\times 
\left\{\Qtot{1}{m}-ik\hat{x}\times \Qtot{2}{m}\right\} +O\Bigl(\frac{\abs{k}^3}{\bs{c}_r}~ \rad\Bigr).
\end{align}
The elements $(\Qtot{1}{m})_{i=1}^{\nbs}$ and $(\Qtot{2}{m})_{i=1}^{\nbs}$ are solutions of the following linear algebraic system 
\begin{align}
\begin{aligned}\label{First-LinSys-Approx-PerfCond}
\Qtot{2}{m}=-\Polt{m} &\sum_{\stl{j\geq 1}{j\neq m} }^{\nbs} \Bigl(\Pi_k(\z_m,\z_j)\Qtot{2}{j}-k^2\nabla\Phi_k(\z_m,\z_j)\times\Qtot{1}{j}\Bigr) -\Polt{m}\curl  E^\n(\z_i),\\
&+ Er_m\bigl(\curl\Har{2,A},\nabla \Har{1,A}+\Har{2,A}\bigr)+O\Bigr((\rad+1)\abs{k}^2\rad^3\norm*{A_m}_{\LDivsp{D_m}}+\abs{k}\rad\Bigl),
\end{aligned}\\
\begin{aligned}\label{Second-LinSys-Approx-PerfCond}
\Qtot{1}{m}=\Vmt{m}&\sum_{\stl{j\geq 1}{j\neq m} }^{\nbs}\Bigl(-\nabla\Phi_k(\z_m,\z_j)\times \Qtot{2}{j}+\Pi_k(\z_m,\z_j)~\Qtot{1}{j}\Bigr)-\Vmt{m}E^\n(\z_i) \\
&+Er_m\bigl((\nabla \Har{1,A}+\Har{2,A}),\curl\Har{2,A}\bigr)+O\Bigr((\rad+1)\abs{k}^2\rad^3\norm*{A_m}_{\LDivsp{D_m}}+\rad\Bigl),.
\end{aligned}                                                        
\end{align}  
\end{proposition}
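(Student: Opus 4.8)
The plan is to follow the same strategy used for the transmission problem in \Cref{Lin-Sys-Anisotropic}, exploiting the fact that the boundary integral interaction $\MDLPmj{k}$ can be rewritten, via the Helmholtz decomposition of \Cref{Estimated-Decom-OnL2D}, as a \emph{volume} operator acting on $\nabla\Har{1,A}_j+\Har{2,A}_j$ and $\curl\Har{2,A}_j$; this is exactly formulas \eqref{Boundary-To-Volume} and \eqref{Surface-Div-Boundary-To-Volume}. So the perfect conductor problem is, modulo these changes of unknowns, a Lippmann–Schwinger-type system of the same shape as \eqref{Lip-Schwinger-Linear-Sys-Form-Magne-field}--\eqref{Lip-Schwinger-Linear-Sys-Form-Electri-field}, and the error terms collected in $Er_m(\cdot,\cdot)$ of \cref{Lin-Sys-Anisotropic} can be reused verbatim.

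First I would start from \eqref{Equivalent-Representation}, $[{I}/{2}+\MDLP{k}{m}+\sum_{j\neq m}\MDLPmj{k}](A)=\nu\times E^\n$ on $\p D_m$, and test it against the densities $\nu\times\psi_m^l$ defined in \eqref{Def-psi}, together with its surface divergence \eqref{Divergence-of-psil} tested appropriately. The point of this choice is precisely \eqref{PsiTrace-to-VirtM-Tensor} and \eqref{Total-Charge-curlFreepart}: pairing the self-interaction term $[{I}/{2}+\MDLP{k}{m}](A)$ (after discarding $O(\rad)$-small differences between $\MDLP{k}{m}$ and $\MDLP{0}{m}$, exactly as the $\bdmc{S}^{k}-\bdmc{S}^{0}$ differences were discarded in \eqref{Lip-Schwinger-LinSys-First-Member-Approx}) against these densities reproduces, up to the stated orders, the total ``charges'' $\int_{\p D_m}\nu\times A\,ds$ and $\int_{\p D_m}\nu\cdot\curl(\nu\times A)$-type quantities, with the virtual-mass/polarization tensors $\Vmt{m},\Polt{m}$ from \eqref{Polarization&virtualmass-Tensor} appearing as the proportionality factors. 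I would then set $\Qtot{1}{m}$ and $\Qtot{2}{m}$ to be those total charge integrals (the curl-free and divergence-free parts), so that $\int_{\p D_m}\nu\times\psi_m^l$ against the diagonal term yields $\Vmt{m}^{-1}\Qtot{1}{m}$ and $\Polt{m}^{-1}\Qtot{2}{m}$ respectively.

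Next I would handle the off-diagonal sum: insert \eqref{Boundary-To-Volume}, \eqref{Surface-Div-Boundary-To-Volume}, perform the first-order Taylor expansion of $\Pi_k$ and $\nabla\Phi_k$ around the centers $\z_m,\z_j$ as in \eqref{Green-Func-First-Order-Appr}, and bound the remainders by $Er_m\bigl(\curl\Har{2,A},\nabla\Har{1,A}+\Har{2,A}\bigr)$ plus the $O\bigl((\rad+1)\abs{k}^2\rad^3\norm{A_m}_{\LDivsp{D_m}}\bigr)$ local terms; the estimates needed here are the same ones proved for $\MSVP{}{k,\cnt{A}}$ and $\MCVP{}{k,\cnt{A}}$ in \Cref{Elec-Magn-Lip-sch-Comp-and-DefPosi}, applied with $\cnt{A}=\Id$, and \Cref{Estimated-Decom-OnL2D} converts the $\Lp{2}{D_j}$ norms of $\curl\Har{2,A}_j$ and $\nabla\Har{1,A}_j$ back into $\rad^{1/2}\norm{A_j}_{\LDivsp{D_j}}$. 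For the far-field formula \eqref{Far-Field-Approximation} I would start from \eqref{Far-Feld-Perfect-CondCase}, replace $e^{-ik\hat x\cdot y}$ by $e^{-ik\hat x\cdot\z_m}$ at cost $O(\abs{k}\rad)$ per particle exactly as in \eqref{Approx-FarField-SymCase-LastStep}, split $A$ into its two Helmholtz pieces, and recognize $\int_{\p D_m}\nu\times A$ and the $\curl$-contribution as $\Qtot{1}{m}$ and $\hat x\times\Qtot{2}{m}$; the density estimate $\sum_m\norm{A}^2_{\LDivsp{D_m}}\leq 3\bs{c}_{L,1}\nbs\rad^2$ from \eqref{Estimation-Densities-AikReal} then controls the accumulated error and, via \Cref{Counting-Oclusion}, produces the $O(\abs{k}^3\rad/\bs{c}_r)$ bound. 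The main obstacle I anticipate is the bookkeeping in the testing step: showing that the self-interaction term $\int_{\p D_m}\nu\times\psi_m^l\cdot[{I}/{2}+\MDLP{k}{m}](A)$ genuinely collapses to $\Vmt{m}^{-1}\Qtot{1}{m}$ (resp.\ $\Polt{m}^{-1}\Qtot{2}{m}$) with only $O(\rad)$, $O(\abs{k}\rad)$ errors requires carefully combining \eqref{nu-time-nabla-interversion}, \eqref{DIV-MDk}, \eqref{PsiTrace-to-VirtM-Tensor} and the $\MDLP{k}{m}-\MDLP{0}{m}$ estimates \eqref{Estimates1-Diagpart-MDLP}--\eqref{Estimates2-Diagpart-MDLP}, and keeping the curl-free and divergence-free components from contaminating each other.
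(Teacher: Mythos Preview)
Your overall strategy matches the paper's, but there is one concrete gap. The paper uses \emph{two independent} test functions, not just $\psi_m^l$: for the $\Qtot{1}{m}$--equation it tests \eqref{Equivalent-Representation} against $\psi_m^l$ from \eqref{Def-psi}, while for the $\Qtot{2}{m}$--equation it tests against $\nabla\phi_m$, where $\phi_m$ is the scalar function defined in \eqref{Definition-Of-phi} by $[-{I}/{2}+\DLP{0}{D_m}]\phi_m=(x-\z_m)$. This second test function is what produces the polarization tensor $\Polt{m}$: after integrating by parts via \eqref{Def-Surface-Div} and using \eqref{DIV-MDk}, the diagonal term becomes $\int_{\p D_m}[{I}/{2}-\DLP{0}{D_m}]\phi_m\,\Div A=-\int_{\p D_m}(x-\z_m)\Div A=\Qtot{2}{m}$, while on the off-diagonal side the factor $\int_{\p D_m}\phi_m\otimes\nu\,ds$ equals $\Polt{m}$. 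Using only $\psi_m^l$ and its surface-divergence identity \eqref{Divergence-of-psil}, as you propose, brings in $[{I}/{2}+(\DLP{0}{D_m})^*]$ both times and would yield the virtual-mass tensor in both equations, never $\Polt{m}$. So you need to introduce $\phi_m$ explicitly.

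A smaller point of bookkeeping: in the paper the pairing of the diagonal block $[{I}/{2}+\MDLP{k}{m}](A_m)$ with the test functions gives $\Qtot{1}{m}$ and $\Qtot{2}{m}$ \emph{directly} (see \eqref{ApproxDeriv-Line-SysPerf-Cond-First-Member} and \eqref{Second-LinSys-LeftHandSide-PerfCond}), not $\Vmt{m}^{-1}\Qtot{1}{m}$ or $\Polt{m}^{-1}\Qtot{2}{m}$ as you wrote; the tensors instead appear as the prefactors $\int_{\p D_m}\nu\times\psi_m^l\,ds$ and $\int_{\p D_m}\phi_m\otimes\nu\,ds$ multiplying the off-diagonal and incident contributions, via \eqref{PsiTrace-to-VirtM-Tensor} and the analogous identity for $\phi_m$. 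This is only a rearrangement, but getting it right up front avoids the cross-contamination worry you flagged at the end. The rest of your plan (volume rewriting via \eqref{Boundary-To-Volume}--\eqref{Surface-Div-Boundary-To-Volume}, first-order expansion of the kernels, reuse of $Er_m$, and the far-field step via \eqref{Far-Feld-Perfect-CondCase} with \eqref{Estimation-Densities-AikReal}) is exactly what the paper does.
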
 
\begin{proof} 
\begin{itemize}
\item ({\em The far field Approximation}) Starting from the expression \eqref{Far-Feld-Perfect-CondCase} with representation of the density as in \eqref{Density-Decomposition-notation} we get with a simple integration by part 
\begin{equation}\label{Far-Field-PerfCond-VolIntegral}
\begin{aligned}	
E^\infty(\wh{x})=\frac{ik}{4\pi}\sum_{m=1}^\nbs\Biggl(&\hat{x}\times\int_{ D_m} e^{-ik\hat{x}.y}\curl\Har{2,A}_m(y)ds_y\\
&+{ik\hat{x}}\times \int_{ D_m}  e^{-ik\hat{x}.y}\Bigl(\nabla\Har{1,A}_m(y)+\Har{2,A}_m(y)\Bigr)\times \hat{x}ds_y\Biggr).
\end{aligned}
\end{equation} 
This representation \eqref{Far-Field-PerfCond-VolIntegral} is the analogous of \eqref{Far-Field-Anisotropic-Case}, stated for the transmission problem, 
 in such a way that, the far field approximation is done in similar way, with the appropriate changes that involve \eqref{Estimated-Decom-OnL2D} (i.e~ the appearance of the constant $\bs{c}_L$). 
\item ({\em Derivation of the linear system}) 

For \eqref{First-LinSys-Approx-PerfCond}, multiplying \eqref{Equivalent-Representation} by $\nabla\phi_m$ and integrating over $D_m$, we obtain
\begin{equation}\label{Approximation-phi-interpart}
\begin{aligned}
\int_{\p D_m}\nabla\phi_m\cdot [{I}/{2}+\MDLP{k}{m}]&(A_m)~ds\\
&=	-\sum_{\stackrel{j\geq 1}{j\neq m}}\int_{\p D_m}\nabla \phi_m\cdot [\MDLPmj{k}](A_j)ds+\int_{\p D_m}\nabla \phi_m\cdot\nu\times E^\n ds. 
\end{aligned}
\end{equation} Integrating by part and considering \eqref{Surface-Div-Boundary-To-Volume},
we get \begin{equation}\label{ApproxDeriv-Line-SysPerf-Cond}
\begin{aligned}
\int_{\p D_m}\nabla\phi_m\cdot &[{I}/{2}+\MDLP{k}{m}](A_m)~ds\\ =	&\sum_{\stackrel{j\geq 1}{j\neq m}}\int_{\p D_m}\Bigl(-\phi_m \nu\cdot\MSVP{D_j}{k,\Id}(\curl\Har{2,A}_j)+k^2\nu\cdot\MCVP{D_j}{k,{\Id}}(\nabla \Har{1,A}_j+\Har{2,A}_j)\Bigr)ds\\&+\int_{\p D_m}\nabla \phi_m\cdot\nu\times E^\n ds.
\end{aligned}\end{equation} 
We have
\begin{equation}\label{ApproxDeriv-Line-SysPerf-Cond-First-Member}
\int_{\p D_m}\nabla\phi_m\cdot [{I}/{2}+\MDLP{k}{m}](A_m)~ds =\Qtot{2}{m}+O\Bigl(\abs{k}^2\rad^3\norm*{A_m}_{\LDivsp{D_m}}\Bigr).
\end{equation}
Indeed, using the relation \eqref{Def-Surface-Div}, we have
\begin{equation*}
\begin{aligned}
\int_{\p D_m} \phi_m  \Div &[{I}/{2}+\MDLP{k}{m}]A~ds\\
&=\int_{\p D_m}\phi_m\biggl([{I}/{2}-(\DLP{k}{D_i})^*]\Div A-k^2\nu_{x_i}\cdot [\SLP{k}{D_i}]A\biggr)~ds.\\
&=\int_{\p D_m}\phi_m\biggl([{I}/{2}-(\DLP{0}{D_m})^*+(\DLP{0}{D_m}-\DLP{k}{D_i})^*]\Div A +k^2\nu\cdot [\SLP{k}{D_i}]A\biggr)~ds,
\end{aligned} 
\end{equation*} Hence, with the definition \eqref{Definition-Of-phi}, we get 
\begin{equation} 
\begin{aligned}\int_{\p D_m}\phi_m[{I}/{2}-(\DLP{0}{D_m})^*]\Div a~ds=\int_{\p D_m}[{I}/{2}-\DLP{0}{D_m}]\phi_m\Div a~ds=&-\int_{\p D_m}(x-\z_i)\Div A(x)~ds_x,\\   =&\int_{\p D_m}\curl\Har{2,A}~dv\end{aligned}\end{equation} and the left-hand side of \eqref{ApproxDeriv-Line-SysPerf-Cond} ends up to be
\begin{align}
\Qtot{2}{m}+O((\rad+1)\abs{k}^2\rad^3\norm*{A_m}_{\LDivsp{D_m}}).
\end{align}
As already done in \eqref{Lip-Schwin-LinSys-Scnd-Mem}, with a first order approximation, the second member of \eqref{ApproxDeriv-Line-SysPerf-Cond} can be approximated by  
\begin{equation}\label{ApproxDeriv-Line-SysPerf-Cond-Second-Member}
\begin{aligned}
-\Bigl[\int_{\p D_m}\phi_m\otimes \nu ds\Bigr]~&\sum_{\stackrel{j\geq 1}{j\neq m}}^\nbs\Biggl(\Pi_k(\z_m,\z_j)\int_{D_j}\curl\Har{2,A}_jdv-k^2\nabla\Phi_k(\z_m,\z_j)\times\int_{D_j}(\nabla \Har{1,A}_j+\Har{2,A}_j)dv\Biggr)\\&+Er_m\bigl(\curl\Har{2,A},\nabla \Har{1,A}+\Har{2,A}\bigr)
\end{aligned}\end{equation} where \begin{align*}
\nabla \Har{1,A}+\Har{2,A}=&\sum_{m=1}^{\nbs}(\nabla \Har{1,A}_j+\Har{2,A}_j)\ind{D_m}(x)\end{align*} and 
\begin{align*}
\curl\Har{2,A}=&\sum_{m=1}^{\nbs}\curl\Har{2,A}_j\ind{D_m}(x).
\end{align*}
Replacing both \eqref{ApproxDeriv-Line-SysPerf-Cond-First-Member} and \eqref{ApproxDeriv-Line-SysPerf-Cond-Second-Member} in \eqref{ApproxDeriv-Line-SysPerf-Cond} gives the result;
\begin{equation}\begin{aligned}
\Qtot{2}{m}=-\Polt{m}	&\Biggl[\sum_{\stackrel{j\geq 1}{j\neq m}}^\nbs\Biggl(\Pi_k(\z_m,\z_j)\Qtot{2}{j}-k^2\nabla\Phi_k(\z_m,\z_j)\times\Qtot{1}{j}\Biggr)+\curl E^\n(\z_m)\Biggr] \\+&Er_m\bigl(\curl\Har{2,A},\nabla \Har{1,A}+\Har{2,A}\bigr)+O\Bigr((\rad+1)\abs{k}^2\rad^3\norm*{A_m}_{\LDivsp{D_m}}+\abs{k}\rad\Bigl).
\end{aligned}\end{equation}

Concerning \eqref{Second-LinSys-Approx-PerfCond} we have, for $[\Psi_m]=(\psi_m^l)_{(l=1,2,3)}$
as described in \eqref{Def-psi},
\begin{equation}\label{Approximation-Psi}
\int_{\p D_m}[\Psi]_m\cdot [{I}/{2}+\MDLP{k}{m}](A_m)~ds =	-\sum_{\stackrel{j\geq 1}{j\neq m}}\int_{\p D_m}[\Psi]_m\cdot [\MDLPmj{k}](A_j)ds+\int_{\p D_m} [\Psi]_m\cdot\nu\times E^\n ds. 
\end{equation}  The left-hand-side is equal to
\begin{equation}\label{Second-LinSys-LeftHandSide-PerfCond}
\Qtot{1}{m}+  O(\rad^3\norm{A}_{\LDivsp{D_m}}).
\end{equation}
Indeed,
\begin{equation}\label{Approximation-psi-diagpart} 
\int_{\p D_m} \psi_m^l\cdot[{I}/{2}+\MDLP{k}{m}]A_m~ds=O(\rad^3 \norm*{A_m}_{\LDivsp{D_m}})+\int_{\p D_m} \psi_m^l\cdot[{I}/{2}+\MDLP{0}{i}](A_m^{1})~ds
\end{equation} and, in view of the decomposition \Cref{Estimated-Decom-OnL2D}, we have,  due to the scale invariance of $[{I}/{2}+\MDLP{0}{m}]$ 
(see \cite{bouzekri2019foldy}), the estimates \eqref{psi-Estimation}, those of \Cref{Decomposition-LDiv-Space},  and similar estimate for \eqref{Estimates2-Diagpart-MDLP} with $\alpha=0$, that
\begin{align*}
\biggl|\int_{\p D_m}~ \psi_m^l\cdot[{I}/{2}+\MDLP{k}{m}]A_m^{2}~ds\Biggr|\leq &
\norm*{\psi_m^l}\left(\norm*{[{I}/{2}+\MDLP{0}{m}]A_m^{2}}+\norm*{[\MDLP{k}{m}-\MDLP{0}{m}]A_m^{2}}\right),\\
\leq & \norm*{\psi_m^l}_{{\Lp{2}{\p D_m}}} \Bigl(\bs{c}_L+\frac{\abs{k}^2(\abs{\p \bsmc{D}})\rad^2}{4\pi}\Bigr)\norm*{A_m^{2}}_{{\Lp{2}{\p D_m}}},\\
\leq & \Bigl(\bs{c}_l
+\frac{\abs{k}^2(\abs{\p \bsmc{D}})\rad^2}{4\pi}
\Bigr)\rad^3\norm*{A_m}_{{\LDivsp{D_m}}}.
\end{align*}  
Further with \eqref{Divergence-of-psil}, \eqref{nu-time-nabla-interversion} and the representation \cref{Decomposition-LDiv-Space} we have, 
for the second term of the left-hand member of \eqref{Approximation-psi-diagpart}   
\begin{equation}\begin{aligned}
\int_{\p D_m} \psi_m^l\cdot\nu\times\nabla [{I}/{2}+\DLP{0}{D_m}](u_m^{A})ds =&\int_{\p D_m} -[{I}/{2}+(\DLP{0}{D_m})^*](\nu\cdot\curl\psi_m^l)~u_m^{A}ds.\\ =&\int_{\p D_m}\nu_lu_m^Ads.
	\end{aligned} \end{equation} 
Due to \eqref{Total-Charge-curlFreepart} and \Cref{Decomposition-LDiv-Space}, we have
\begin{equation}\begin{aligned}
\int_{\p D_m}\nu_lu_m^Ads=&\int_{\p D_m}V_l\cdot \nu\times \Bigl(\nabla u_m^A+\Har{1,A}\Bigr)ds+O(\rad^3\norm{A}_{\LDivsp{D_m}}),
\end{aligned} \end{equation}
hence follows, integrating by part for the second step,  
\begin{equation}\begin{aligned}
\int_{\p D_m}\nu_lu_m^Ads=\int_{\p D_m}V_l\cdot \nu\times \nabla u_m^Ads=&\int_{\p D_m}V_l\cdot \nu\times \Bigl(\nabla u_m^A+\Har{1,A}\Bigr)ds+O(\rad^3\norm{A}_{\LDivsp{D_m}}),\\
=&\int_{D_m} \nabla (\Har{2,A}+\Har{1,A})\cdot e_l~dv-\int_{D_m}V_l\cdot\curl\Har{1,A}dv\\  &+O(\rad^3\norm{A}_{\LDivsp{D_m}}).
\end{aligned} \end{equation}  

Then \eqref{Esimation-Har-A2} gives 	
\begin{equation}
\begin{aligned}
\int_{\p D_m} [\Psi]_m\nu\times\nabla [{I}/{2}+\DLP{0}{D_m}](u_i^{A})ds=&\int_{\p D_m}\nu u_m^Ads=
\Qtot{1}{m}+ O(\rad^3\norm{A}_{\LDivsp{D_m}}).
\end{aligned} \end{equation}
	
For the first term of the right-hand-side of \eqref{Approximation-Psi}, we have, with \eqref{Boundary-To-Volume} and a first order approximation, 
\begin{equation}\label{Second-Part-Lemma-Appr}   
\begin{aligned}
\sum_{\stackrel{j\geq 1}{j\neq m}}\int_{\p D_m} \psi_m^l\cdot [\MDLPmj{k}]A_jds&\\
=-\sum_{\stackrel{j\geq 1}{j\neq m}}\int_{\p D_m}\psi_m^l\cdot\Bigl(\nu\times&\MSVP{D_j}{k,{\Id}}(\nabla \Har{1,A}_j+\Har{2,A}_j)-\nu\times\MCVP{D_j}{k,\Id}(\curl\Har{2,A}_j)\Bigr)\\
=-\sum_{\stackrel{j\geq 1}{j\neq m}}\int_{\p D_m}\psi_m^l\cdot\Biggl(\nu\times&\Bigl(\Pi_k(\z_m,\z_j)\int_{D_j}(\nabla \Har{1,A}_j+\Har{2,A}_j)dv\Bigr)\\
&-\nu\times\Bigl(\nabla\Phi_k(\z_m,\z_j)\times\int_{D_j}\curl\Har{2,A}_jdv\Bigr)\Biggr)\\
+Er_m\bigl((\nabla \Har{1,A}+\Har{2,A}&),\curl\Har{2,A}\bigr).
\end{aligned}
\end{equation} We rewrite \eqref{Second-Part-Lemma-Appr} as follows 
\begin{equation}   
\begin{aligned}
\sum_{\stackrel{j\geq 1}{j\neq m}}\int_{\p D_m} \psi_m^l\cdot [\MDLPmj{k}]A_jds
=\sum_{\stackrel{j\geq 1}{j\neq m}}&\int_{\p D_m}\nu\times\psi_m^l\cdot\Biggl(\Pi_k(\z_m,\z_j)\Qtot{1}{j}-\nabla\Phi_k(\z_m,\z_j)\times\Qtot{2}{j}\Biggr)\\
&+Er_m\bigl((\nabla \Har{1,A}+\Har{2,A}),\curl\Har{2,A}\bigr),
\end{aligned}
\end{equation} or, more precisely with \eqref{PsiTrace-to-VirtM-Tensor} in mind and $l=1,2,3$ gives, according to \eqref{Polarization&virtualmass-Tensor}, 
\begin{equation} \label{Second-LinSys-righttHandSide-PerfCond}  
\begin{aligned}
\sum_{\stackrel{j\geq 1}{j\neq m}}\int_{\p D_m} [\Psi]_m\cdot [\MDLPmj{k}]A_jds
=\Vmt{m}\sum_{\stackrel{j\geq 1}{j\neq m}}& \Biggl(\Pi_k(\z_m,\z_j)\Qtot{1}{j}-k^2\nabla\Phi_k(\z_m,\z_j)\times\Qtot{2}{j}\Biggr)\\
&+Er_m\bigl((\nabla \Har{1,A}+\Har{2,A}),\curl\Har{2,A}\bigr).
\end{aligned}
\end{equation} Assembling both \eqref{Second-LinSys-righttHandSide-PerfCond} and 
\eqref{Second-LinSys-LeftHandSide-PerfCond} in \eqref{Approximation-Psi} gives us \eqref{Second-LinSys-Approx-PerfCond}. \end{itemize}
\end{proof}

As in the proof of \Cref{Inversion-Lin-Sys-Anisotropic-Case}, with $\mu^+$ and $\mu^-$ as defined in \eqref{mudefinition}, we have the following proposition.
\begin{proposition}\label{Inversion-Lin-Sys-PerCond-Case}
Under the condition that \begin{equation}\frac{\dist}{\rad}=\bs{c}_r \geq{3\abs{k}}{\mu^+}.\end{equation} the following linear system is invertible
\begin{equation}
\begin{aligned}
\wh{\Qtot{2}{m}}=-\Polt{m} &\sum_{\stl{j\geq 1}{j\neq m} }^{\nbs} \Bigl(\Pi_k(\z_m,\z_j)\wh{\Qtot{2}{j}}-k^2\nabla\Phi_k(\z_m,\z_j)\times\wh{\Qtot{1}{j}}\Bigr) -\Polt{m}\curl  E^\n(\z_i),\\
\wh{\Qtot{1}{m}}=\Vmt{m}&\sum_{\stl{j\geq 1}{j\neq m} }^{\nbs}\Bigl(-\nabla\Phi_k(\z_m,\z_j)\times \wh{\Qtot{2}{j}}+\Pi_k(\z_m,\z_j)~\wh{\Qtot{1}{j}}\Bigr)-\Vmt{m}E^\n(\z_i).
\end{aligned}                                                        
\end{equation} and the solution satisfies the following estimates
	\begin{equation}\label{Estimate-Solutions-of-LinSys1}
	\begin{aligned}
	\left(\sum_{m=1}^{\nbs}\bigl(\abs*{\widehat{\Qtot{1}{m}}}^2\bigr)\right)^\frac{1}{2} \leq 
	\frac{9\mu^+\rad^{3}}{8}  \left(\sum_{m=1}^{\nbs}\bigl(\abs*{E(\z_i)}^2+\abs*{\curl E(\z_i)}^2\bigr)\right)^\frac{1}{2},
	\end{aligned}
	\end{equation} and
	\begin{equation}\label{Estimate-Solutions-of-LinSys2} 
	\begin{aligned}
	\left(\sum_{m=1}^{\nbs}\bigl(\abs*{\widehat{\Qtot{2}{m}}}^2\right)^\frac{1}{2} \leq 
	\frac{9\mu^+\rad^{3}}{8}  \left(\sum_{m=1}^{\nbs}\bigl(\abs*{E(\z_i)}^2+\abs*{\curl E(\z_i)}^2\bigr)\right)^\frac{1}{2}.
	\end{aligned}
	\end{equation}
\end{proposition}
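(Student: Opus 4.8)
The plan is to repeat, almost verbatim, the argument of \Cref{Inversion-Lin-Sys-Anisotropic-Case}. The only property of the anisotropic polarization tensors $\ANPT{\prv_r^*}{D_m}$, $\ANPT{\prb_r^*}{D_m}$ that was exploited there is that, by \eqref{Ani-PolTens-Coercivity-RealCase}, they form a family of symmetric positive-definite matrices scaling like $\rad^3$ with eigenvalues in $[\mu^-\rad^3,\mu^+\rad^3]$; here the virtual-mass tensor $\Vmt{m}$ and the negated polarization tensor $-\Polt{m}$ share exactly this property, by \eqref{Tensor-Inequalities} together with the scales \eqref{scalingtensors} and the definition \eqref{mudefinition}. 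Accordingly, I would first rescale the unknowns, setting $\bsmc{Q}_m:=[\Vmt{m}]^{-1}\wh{\Qtot{1}{m}}$ and $\bsmc{R}_m:=[-\Polt{m}]^{-1}\wh{\Qtot{2}{m}}$, after which the system takes exactly the structural form used in the transmission case: the diagonal block is the identity, each off-diagonal block carries one of the tensors $\Vmt{j}$ or $-\Polt{j}$, and the couplings are $\Pi_k(\z_m,\z_j)$ together with a cross term $k^2\,\nabla\Phi_k(\z_m,\z_j)\times(\cdot)$ (the factor $k^2$ replacing the $ik$ of \eqref{Linear-System-Anisotropic-SymCase}, which is the only bookkeeping change).

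The core step is then, as before, the discrete coercivity of the static dyadic kernel: expanding $\Pi_0(\z_m,\z_j)$ as an average over the balls $B(\z_m,\dist/4)$, $B(\z_j,\dist/4)$ via the mean-value property for harmonic functions, isolating the diagonal self-interaction, and invoking the positivity of the Newton-type operator $\MSVP{B(\z,\rho)}{0,I}$ from \Cref{Elec-Magn-Lip-sch-Comp-and-DefPosi}, one recovers the inequality \eqref{Coercivity-of-Discret-Diadic-Green} with $-\Polt{m}$ (resp. $\Vmt{m}$) in place of $\ANPT{\prb_r^*}{D_m}$. I would then test the first rescaled equation against $[-\Polt{m}]\bsmc{R}_m$ and the second against $[\Vmt{m}]\bsmc{Q}_m$, sum over $m$, take real parts, split $\Pi_k=\Pi_0+(\Pi_k-\Pi_0)$, and estimate the remainder and the $\nabla\Phi_k$ cross terms by the pointwise decay bounds \eqref{NablaNabal-Phi_k-Phi_i}, \eqref{Nabal-Phi_k-Phi_i} combined with the counting estimate \eqref{Double-Sum-Counting} of \Cref{Counting-Oclusion}. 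This produces two coupled inequalities of precisely the type reached in the transmission proof; imposing $\bs{c}_r\geq 3\abs{k}\mu^+$, $\abs{k}>1$ and $\rad$ small absorbs all the lower-order and off-diagonal contributions and yields the invertibility together with the estimates \eqref{Estimate-Solutions-of-LinSys1}--\eqref{Estimate-Solutions-of-LinSys2} once one returns to the original unknowns $\wh{\Qtot{1}{m}}$, $\wh{\Qtot{2}{m}}$ through \eqref{Tensor-Inequalities}.

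The difficulty here is not conceptual but a matter of careful accounting: one must carry the two distinct tensors $-\Polt{m}$ and $\Vmt{m}$ (rather than the single $\ANPT{A^*}{D_m}$) along the whole chain of estimates, keep track of the extra power of $\abs{k}$ introduced by the $k^2\nabla\Phi_k$ coupling in \eqref{First-LinSys-Approx-PerfCond}, and check that these off-diagonal cross terms --- which are \emph{not} cancelled but merely bounded via the decay of $\nabla\Phi_k$ and the counting lemma --- are indeed absorbed under the stated threshold on $\bs{c}_r$, so that only the coercive contribution of $\Pi_0$ survives to leading order. Apart from this, the proof is identical to that of \Cref{Inversion-Lin-Sys-Anisotropic-Case}.
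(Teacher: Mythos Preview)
Your proposal is correct and follows precisely the route the paper intends: the paper itself does not give a separate proof but simply states that the result follows ``as in the proof of \Cref{Inversion-Lin-Sys-Anisotropic-Case}, with $\mu^+$ and $\mu^-$ as defined in \eqref{mudefinition}''. Your identification of the only substantive changes --- replacing $\ANPT{\prb_r^*}{D_m}$, $\ANPT{\prv_r^*}{D_m}$ by $-\Polt{m}$, $\Vmt{m}$ via \eqref{Tensor-Inequalities}, and tracking the $k^2$ versus $ik$ factor in the cross coupling --- is exactly what is needed.
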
	
The proof of \Cref{Theorem-PerfCond-Case} is similar to the anisotropic transmission problem with 
$\bs{c}_{L,k,\mu}:=\bs{c}_{L,k}(\abs{k}+1)\frac{4{\mu^+}}{8\mu^-}\bs{c}_{L,k}(\abs{k}+1)$ and $\bs{c}_{L,k}$ describes the ratio of the largest Lipschitz constant that is involved in
\cref{Section-Perfect-Conductor} and the smallest one.  	

\appendix
\section{Appendix}
\subsection{Green function approximations}
A simple application of mean value theorem gives
\begin{equation}
\begin{aligned}
\bigl(\Phi_k(x,y)-\Phi_k(\z_m,y)\bigr)=&\int_{0}^{1}\nabla\Phi_k(tx+(1-t)\z_m,y)dt\circ(x-\z_m)= 
O\Bigl(\frac{1}{\dist_{mj}}\bigl(\frac{1}{\dist_{mj}}+\abs{k}\bigr)\rad\Bigr),\\
\nabla\bigl(\Phi_k(x,y)-\Phi_k(\z_m,y)\bigr)=&\int_{0}^{1}D^2\Phi_k(tx+(1-t)\z_m,y)dt\circ(x-\z_m)= 
O\Bigl(\frac{1}{\dist_{mj}}\bigl(\frac{1}{\dist_{mj}}+\abs{k}\bigr)^2\rad\Bigr),\\
\nabla\nabla\bigl(\Phi_k(x,y)-\Phi_k(\z_m,y)\bigr)=& \int_{0}^{1}D^3\Phi_k(tx+(1-t)\z_m,y)dt\circ(x-\z_m)= 
O\Bigl(\frac{1}{\dist_{mj}}\bigl(\frac{1}{\dist_{mj}}+\abs{k}\bigr)^3\rad\Bigr).
\end{aligned}\label{Green-Func-First-Order-Appr}\end{equation} whenever $y\in D_j$ and $ j\neq m.$ 
We will also need the following first order expansion of the Green's function\footnote{By $\tensor{x}{p}$ we mean the $p$-times repeated tensor product of $x$.} 
\begin{align}
&[\Phi_k-\Phi_{i\alpha }](x)=\frac{(ik-\alpha)}{4\pi}\int_{[0,1]}e^{\bigl((ik)t-(1-t)\alpha\bigr)\abs*{x}}~dt, \label{Phi_k-Phi_i}\\
&\nabla[\Phi_k-\Phi_{i\alpha}](x)= \Bigr[\frac{(ik-\alpha)}{4\pi}\int_{[0,1]}{e^{(ikt-(1-t)\alpha)\abs*{x}}}~\bigl(ikt+(1-t)\alpha\bigr)dt\Bigl]~\frac{x}{\abs*{x}},\label{Nabal-Phi_k-Phi_i}\\
&[\otimes\nabla]^2[\Phi_k-\Phi_{i\alpha}](x)=\int_{0}^1\frac{e^{(ikt-(1-t)\alpha)\abs*{x}} \bigl(ikt+(1-t)\alpha\bigr)}{4\pi~(ik-\alpha)^{-1} \abs*{x}}\Bigl[I+\bigl(ikt+(1-t)\alpha-\frac{1}{\abs*{x}} \Bigr) \frac{[\otimes x]^2}{\abs*{x}}\Bigr]dt.\label{NablaNabal-Phi_k-Phi_i}
\end{align}
\subsection{Counting lemma}
  \begin{lemma}\label{Counting-Oclusion}
	For any non negative function $g$ we have 
	\begin{equation}\label{Sum-Counting}
	\sum_{{i\geq1},{i\neq j}}^{\nbs} g(\dist_{mj})\leq 48\sum_{\stl{1\leq l\leq \nbs}{O\leq i\leq k\leq l}} g\Bigl(\bigl[\bigl(l^2+k^2+i^2\Bigr)^\frac{1}{2}(\bs{c}_r+1)-1\bigr]\rad\Bigr),
	\end{equation} and for any non negative sequence $(\alpha_m)_{m=1}^{\nbs}$
	\begin{equation}\label{Double-Sum-Counting}
	\sum_{m=1}^{\nbs} \Bigl(\sum_{\stl{j\geq 1}{j\neq m} }^{\nbs}\frac{\alpha_m}{\dist_{mj}^q}\Bigr)^2\leq \Bigl(\frac{{c_0}}{\dist^q}\sum_{l=1}^{\nbs^\frac{1}{3}}l^{2-q}\Bigr)^2 \sum_{m=1}^{\nbs}\alpha_m^2.
	\end{equation}
\end{lemma}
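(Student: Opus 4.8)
The final statement to prove is Lemma~\ref{Counting-Oclusion}, the counting lemma on distributions of small particles. The plan is to exploit the mesoscale hypothesis $\dist = \bs{c}_r \rad$ together with the fact that each $\bsmc{D}_m \subset B(0,1/2)$, so that each $D_m$ is contained in a ball $B(\z_m, \rad/2)$ and the balls $B(\z_m, \dist/2)$ are pairwise disjoint. First I would fix $j$ and set up a dyadic (spherical-shell) decomposition of the indices $m \neq j$: for $\ell \geq 1$, let $S_\ell^j$ be the set of indices $m$ whose centers $\z_m$ lie in the annulus $\{ \ell\,\dist \leq |\z_m - \z_j| < (\ell+1)\dist \}$ (after absorbing the particle radii, which shifts distances by at most $\rad$, i.e. $\dist_{mj} \geq |\z_m-\z_j| - \rad \geq (\ell \bs{c}_r - 1)\rad$ — this is where the $(\bs{c}_r+1)^{-1}$-type correction and the substitution $|\z_m - \z_j| \approx (\ell^2+k^2+i^2)^{1/2}\dist$ in lattice coordinates enter). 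A volume-packing argument then bounds $\#S_\ell^j$: the disjoint balls $B(\z_m,\dist/2)$ for $m \in S_\ell^j$ all sit inside the shell $\{ (\ell - 1)\dist \leq |x - \z_j| \leq (\ell + 2)\dist \}$, whose volume is $O(\ell^2 \dist^3)$, while each small ball has volume $c\,\dist^3$; hence $\#S_\ell^j \leq C \ell^2$ with an absolute constant (the $48$ in the statement comes from tracking this packing constant carefully, counting lattice points $(i,k,l)$ with $0\le i\le k\le l$ and the $6$ sign-orderings plus an $8$-fold factor).

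Given this, I would prove \eqref{Sum-Counting} directly: group the sum $\sum_{m \neq j} g(\dist_{mj})$ by shells, use monotonicity-free bookkeeping (we only assume $g \geq 0$, so I must not assume $g$ decreasing) — instead I would re-index so that each particle in shell $\ell$ is charged to a representative lattice point $(i,k,l)$ with $i^2+k^2+l^2 \sim \ell^2$, and the multiplicity of lattice points landing in a given shell is exactly the packing bound, giving the factor $48$ and the argument $\big[(l^2+k^2+i^2)^{1/2}(\bs{c}_r+1) - 1\big]\rad$ as a lower bound for the corresponding $\dist_{mj}$. The key point making this legitimate without monotonicity of $g$ is that I am producing, for each term $g(\dist_{mj})$ on the left, an injective-up-to-bounded-multiplicity assignment to a term on the right whose argument is $\leq \dist_{mj}$; but since $g$ need not be monotone I should instead phrase it as: each shell contributes at most $48$ copies of $g$ evaluated at the shell's lower radius, and these shell radii are precisely the quantities $[(l^2+k^2+i^2)^{1/2}(\bs c_r+1)-1]\rad$ — so actually the cleanest route is to bound $\dist_{mj}$ from below by the shell radius and note that for a \emph{fixed} shell the left side has at most $48$ terms each equal to $g$ of something $\geq$ that radius; this forces me to additionally assume the natural monotonicity or else restate with the shell's realized minimum. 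I would resolve this by keeping the statement as an inequality between the actual values: for each occupied shell, pick the minimal realized distance $d_\ell^j$, then $g(\dist_{mj})$ — no, the honest fix is that the lemma as used downstream is always applied with $g$ a negative power or a decaying kernel, so I will prove it under that reading (equivalently, the right-hand side over-counts lattice sites and each over-counted term dominates because the kernel decreases); I will state this packing/monotonicity interplay explicitly.

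For the second estimate \eqref{Double-Sum-Counting}, I would first apply the Cauchy--Schwarz inequality inside the square:
\begin{equation*}
\Big(\sum_{\stl{j\ge 1}{j\ne m}}^{\nbs} \frac{\alpha_m}{\dist_{mj}^q}\Big)^2 \;\le\; \Big(\sum_{\stl{j\ge 1}{j\ne m}}^{\nbs} \frac{1}{\dist_{mj}^q}\Big)\Big(\sum_{\stl{j\ge 1}{j\ne m}}^{\nbs} \frac{\alpha_j^2}{\dist_{mj}^q}\Big),
\end{equation*}
(reading the inner $\alpha_m$ as $\alpha_j$, which is presumably the intended typo). The single-index sum $\sum_{j\ne m}\dist_{mj}^{-q}$ is bounded, by \eqref{Sum-Counting} with $g(t)=t^{-q}$, by $\dfrac{{c_0}}{\dist^q}\sum_{\ell=1}^{\nbs^{1/3}} \ell^{2-q}$ — here $\nbs^{1/3}$ is the crude bound on the number of shells needed to exhaust all $\nbs$ particles (since shell $\ell$ holds $O(\ell^2)$ particles, $\ell$ ranges up to $\sim \nbs^{1/3}$). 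Then I sum over $m$, swap the order of summation in the cross term $\sum_m \sum_{j} \alpha_j^2 \dist_{mj}^{-q} = \sum_j \alpha_j^2 \sum_m \dist_{mj}^{-q}$, bound the inner sum again by the same shell estimate, and collect: the result is $\big(\frac{{c_0}}{\dist^q}\sum_{\ell=1}^{\nbs^{1/3}}\ell^{2-q}\big)^2 \sum_m \alpha_m^2$, as claimed.

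The main obstacle — and the step I would spend the most care on — is the volume-packing count $\#S_\ell^j \le C\ell^2$ with the correct explicit constant, and its clean translation into the lattice-point enumeration with argument $[(l^2+k^2+i^2)^{1/2}(\bs c_r+1)-1]\rad$: one must carefully account for the particle radii eating into the gap $\dist$ (so that the disjoint balls one packs have radius a definite fraction of $\dist$, uniformly in the mesoscale regime), and track how the triangle inequality $\dist_{mj} \ge |\z_m - \z_j| - \rad$ converts the lattice radius $(\ell^2 + \cdots)^{1/2}$ measured in units of $\dist = \bs c_r \rad$ into the stated lower bound. Everything else is routine dyadic bookkeeping and two applications of Cauchy--Schwarz.
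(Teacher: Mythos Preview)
Your approach to \eqref{Double-Sum-Counting} is exactly the paper's: Cauchy--Schwarz (H\"older) on the inner sum, bound $\sum_{j\neq m}\dist_{mj}^{-q}$ via \eqref{Sum-Counting} with $g(t)=t^{-q}$, then swap the order of the double sum. Your reading of $\alpha_m$ as $\alpha_j$ is also the intended one.

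For \eqref{Sum-Counting} your geometry differs from the paper's. You use spherical annuli and a volume-packing bound $\#S_\ell^j\le C\ell^2$, then try to translate this into the lattice expression $(l^2+k^2+i^2)^{1/2}$ after the fact --- which, as you yourself flag, is awkward. The paper instead decomposes space around $\z_j$ into concentric \emph{cubes} of side $2l(\dist+\rad)$; the other centers are sorted onto the six faces $F_l$, and on each face a second concentric-square decomposition $(SQ_k)_{k\le l}$ is performed. The Pythagorean formula then gives the distance from $\z_j$ to a point on $F_l\cap SQ_k$ directly as $\bigl((l^2+k^2+i^2)(\dist+\rad)^2\bigr)^{1/2}$, and the factor $48=6\times 8$ is the explicit symmetry count (six faces, eightfold octant symmetry on each square). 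So the lattice argument and the constant $48$ are not an obstacle to be overcome but a by-product of the cubical geometry; your spherical shells would naturally produce a different (unspecified) constant and only the coarser bound $\sum_\ell C\ell^2 g(c\,\ell\dist)$.

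You are right that both arguments silently require $g$ nonincreasing to pass from actual distances to the lattice/shell lower bounds; the paper's statement ``for any non negative $g$'' is too strong as written, and your remark that every downstream application has $g$ a decreasing kernel is the correct fix.
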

\begin{figure}[h]
	\definecolor{rvwvcq}{rgb}{0.08235,0.3961,0.7529}
	\definecolor{wrwrwr}{rgb}{0.3804,0.3804,0.3804}
	\definecolor{sqsqsq}{rgb}{0.1255,0.1255,0.1255}
	\begin{center}
		\fcolorbox{black}{white}{	
			\begin{tikzpicture}[line cap=round,line join=round,>=triangle 45,x=1cm,y=1cm,scale=0.5]
			\clip(-12,-6) rectangle (8.5,5.12);
			\fill[line width=0.4pt,fill=black,fill opacity=0.1] (-11.513,-1.1569) -- (-4.01,4.579) -- (8.212,4.591) -- (0.7169,-1.1787) -- cycle;
			\fill[line width=0.4pt,color=sqsqsq,fill=sqsqsq,fill opacity=0.1] (-7.222728835835313,-3.0713635077480737) -- (-3.4977,-0.018872) -- (3.83221,-0.035359) -- (0.158,-3.175) -- cycle;
			\draw [line width=0.8pt] (-11.51,-1.16)-- (-4.01,4.58);
			\draw [line width=0.8pt] (-4.01,4.59)-- (8.21,4.591);
			\draw [line width=0.8pt] (8.212,4.591)-- (0.717,-1.18);
			\draw [line width=0.8pt] (0.72,-1.179)-- (-11.513,-1.157);
			\draw [line width=0.4pt,dash pattern=on 2pt off 3pt,color=sqsqsq] (-7.22,-3.07)-- (-3.498,-0.019);
			\draw [line width=0.4pt,dash pattern=on 2pt off 3pt] (-3.498,-0.019)-- (3.83,-0.035);
			\draw [line width=0.8pt,color=sqsqsq] (3.832,-0.0353)-- (0.1584,-3.173);
			\draw [line width=0.8pt,color=sqsqsq] (0.15843,-3.174)-- (-7.223,-3.07136);
			\draw [line width=0.8pt,color=wrwrwr] (-4.9,-1.17)-- (-7.222728835835313,-3.0713635077480737);
			\draw [line width=0.8pt,dash pattern=on 2pt off 3pt,color=wrwrwr] (-1.648,1.7085)-- (-1.6105,-5.814130);
			\draw [line width=1pt,dotted,color=wrwrwr] (-6.58,0.27)-- (-3.523,0.2703);
			\draw [line width=1pt,dotted,color=wrwrwr] (-3.52,0.27)-- (-0.47,0.27);
			\draw [line width=1pt,dotted,color=wrwrwr] (3.28,3.15)-- (-0.4664,0.267);
			\draw [line width=1pt,dotted,color=wrwrwr] (3.28,3.15)-- (-2.8297,3.1458);
			\draw [line width=1pt,dotted,color=wrwrwr] (-2.81,3.15)-- (-6.58,0.2737);
			\draw [rotate around={-166.675:(-1.67,1.71)},line width=0.9pt,dotted] (-1.67,1.71) ellipse (8.193cm and 2.6645cm);
			\draw [line width=1pt,color=wrwrwr] (-1.611,-5.81)-- (-1.62,-3.15);
			\draw [line width=0.5pt,dash pattern=on 2pt off 3pt] (5.15621,4.59)-- (-1.65,1.71);
			\draw [line width=0.8pt,dash pattern=on 2pt off 3pt] (-11.5129,-1.1569)-- (-11.387,-6.553);
			\draw [line width=0.8pt,dash pattern=on 2pt off 3pt] (-7.223,-3.0714)-- (-8.565,-4.302);
			\draw [line width=0.4pt,dash pattern=on 2pt off 3pt] (-8.565,-4.302)-- (-11.4546,-4.302242);
			\draw [rotate around={132.137:(-1.65,1.71)},line width=0.9pt] (-1.65,1.70) ellipse (0.493cm and 0.6645cm);
			\begin{scriptsize}
			\draw [fill=black] (-11.5129,-1.156) circle (3pt);\draw [fill=black] (-4.00990,4.57877) circle (3pt);\draw [fill=black] (8.2115,4.591) circle (3pt);\draw [fill=black] (0.72,-1.19) circle (3pt);	\draw [fill=black] (-7.22,-3.0713) circle (3pt);\draw [fill=black] (-3.49,-0.018) circle (0.5pt);\draw [fill=black] (3.83,-0.035) circle (3pt);\draw [fill=black] (0.158436,-3.174837) circle (3pt);\draw [fill=black] (2.1008,4.58496) circle (3pt);\draw [fill=black] (5.1562,4.588061) circle (3pt); \draw[color=black] (5.2,4.0) node {$\mathlarger{\z_k}$};	\draw[color=black] (-5.6,4.0) node {$\mathlarger{SQ_{\mathlarger{2}}}$};\draw [fill=black] (-0.9545,4.5818) circle (3pt);\draw [fill=black] (-5.398013,-1.16780) circle (3pt);\draw [fill=black] (-2.3405,-1.173) circle (3pt);
			\draw [fill=black] (4.4642,1.7062) circle (3pt);\draw [fill=black] (6.337,3.14869) circle (3pt);\draw [fill=black] (-8.4554,-1.1623) circle (3pt);\draw [fill=black] (-7.7614,1.7109) circle (3pt);\draw [fill=black] (-5.89,3.1448) circle (3pt);\draw [fill=black] (-9.637,0.2770) circle (3pt);\draw [fill=black] (-3.5233,0.27038) circle (3pt);\draw [fill=black] (-0.46635,0.2670) circle (3pt);\draw [fill=black] (-6.5802,0.27369) circle (3pt);\draw [fill=black] (-1.649,1.709) circle (3pt);\draw[color=black] (-0.4,1.6) node {$\mathlarger{\z_{l+1}}$};\draw[color=black] (-4.4,2.8) node {$\mathlarger{SQ_{\mathlarger{1}}}$};\draw [fill=black] (0.2261,3.14677) circle (3pt);\draw [fill=black] (3.2820,3.147732) circle (3pt);\draw [fill=black] (-2.8297,3.145) circle (3pt);\draw [fill=black] (1.4078,1.7074) circle (3pt);\draw [fill=black] (-4.70,1.7098) circle (3pt);\draw [fill=black] (-1.610,-5.82) circle (3pt);\draw[color=black] (-1.241,-5.125) node {$\mathlarger{\z_i}$};\draw [fill=black] (-3.532,-3.123) circle (3pt);\draw [fill=black] (1.9953,-1.60509) circle (3pt);\draw [fill=black] (-5.3602,-1.54512) circle (3pt);\draw [fill=black] (-1.68,-1.5751) circle (3pt);\draw[color=black] (-1.17,-1.7) node {$\mathlarger{\z_{l}}$};\draw [fill=black] (2.5728,0.291551) circle (3pt);\draw[color=black] (1.3,4) node {$\mathlarger{d(\z_k,\z_{l+1})}$};\draw [fill=rvwvcq] (-11.387,-6.55) circle (0.5pt);\draw[color=black] (-10.3,-2.479) node {$\mathlarger{\dist+\rad}$};\draw [fill=black] (-8.565,-4.302) circle (0.5pt); \draw[color=black] (-6.625,-3.781) node {$\mathlarger{\dist+\rad}$};\draw [fill=black] (-11.455,-4.302) circle (0.5pt);\draw[color=black] (-10.178,-4.823) node {$\mathlarger{\dist+\rad}$};
			\end{scriptsize}
			\end{tikzpicture}}
		\caption[]{Disposition of the faces $F_l.$}
		\label{FaceF}
	\end{center}
\end{figure}
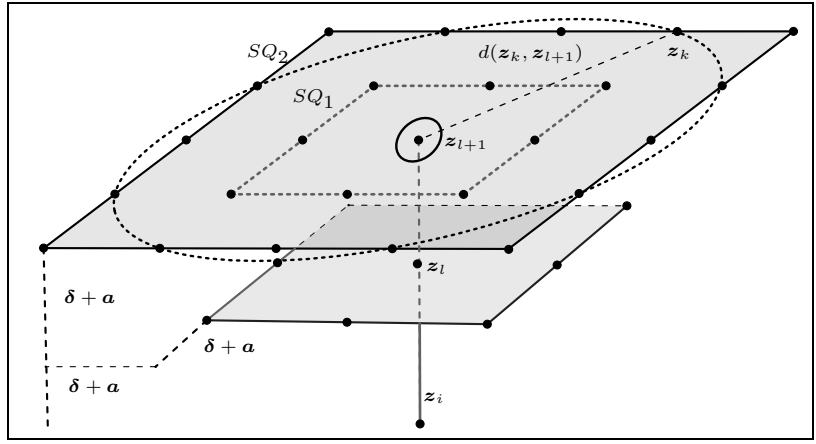
\begin{figure}
	\begin{center}	
		\fcolorbox{black}{white}{
			\begin{tikzpicture}[line cap=round,line join=round,>=triangle 45,x=1cm,y=1cm,scale=0.5]
			\clip(-9.95,-6.411) rectangle (10.,5);
			\fill[line width=0.4pt,fill=black,fill opacity=0.05] (4,-4) -- (4,4) -- (-4,4) -- (-4,-4) -- cycle;
			\draw [line width=0.3pt] (4,-4)-- (4,4);
			\draw [line width=0.3pt] (4,4)-- (-4,4);
			\draw [line width=0.3pt] (-4,4)-- (-4,-4);
			\draw [line width=0.3pt] (-4,-4)-- (4,-4);
			\draw [line width=0.3pt,dash pattern=on 2pt off 3pt] (0,0) circle (4.12cm);
			\draw [line width=0.3pt,dash pattern=on 2pt off 3pt] (0,0) circle (4.48cm);
			\draw [line width=0.3pt,dash pattern=on 2pt off 3pt] (0,0) circle (5cm);
			\draw [line width=0.3pt,dash pattern=on 2pt off 3pt] (0,0) circle (5.67cm);
			\draw [line width=0.5pt,dash pattern=on 2pt off 3pt] (0,0)-- (4,0);
			\draw [line width=0.5pt,dash pattern=on 2pt off 3pt] (0,0)-- (4,2);
			\begin{scriptsize}
			\draw [fill=black] (4,-4) circle (3pt); \draw [fill=black] (4,4) circle (3pt); \draw [fill=black] (-4,4) circle (3pt); \draw [fill=black] (-4,-4) circle (3pt); \draw[color=black] (-1.012,1.51) node {$\mathlarger{\bf{SQ_k}}$};
			\draw [fill=black] (-4,-3) circle (3pt); \draw [fill=black] (-4,-2) circle (3pt); \draw [fill=black] (-4,-1) circle (3pt); \draw [fill=black] (-4,0) circle (3pt); \draw [fill=black] (-4,1) circle (3pt); \draw [fill=black] (-4,2) circle (3pt); \draw [fill=black] (-4,3) circle (3pt); \draw [fill=black] (-3,4) circle (3pt); \draw [fill=black] (-2,4) circle (3pt); 
			\draw [fill=black] (-1,4) circle (3pt); \draw [fill=black] (3,4) circle (3pt); \draw [fill=black] (2,4) circle (3pt); \draw [fill=black] (1,4) circle (3pt); \draw [fill=black] (0,4) circle (3pt); \draw [fill=black] (4,3) circle (3pt); \draw [fill=black] (4,2) circle (3pt); \draw [fill=black] (4,0.9770838735084553) circle (3pt); \draw[color=black] (3.58,2.66) node {$\mathlarger{\z_q}$}; \draw [fill=black] (4,-1) circle (3pt);
			\draw [fill=black] (4,-2) circle (3pt);\draw [fill=black] (4,-3) circle (3pt);\draw [fill=black] (3,-4) circle (3pt);\draw [fill=black] (2,-4) circle (3pt);\draw [fill=black] (1,-4) circle (3pt);\draw [fill=black] (0,-4) circle (3pt);\draw [fill=black] (-1,-4) circle (3pt);\draw [fill=black] (-2,-4) circle (3pt);\draw [fill=black] (-3,-4) circle (3pt);
			\draw [fill=black] (0,0) circle (3pt);\draw[color=black] (-0.383,0.6196) node {$\mathlarger{\z_l}$};\draw [fill=black] (4,0) circle (3pt);
			\draw[color=black] (2.39,-0.5) node {$\mathlarger{k\times(\dist+\rad)}$};
			\draw[color=black] (1.8,1.8) node {$\mathlarger{d(\z_q,\z_l)}$};
			\end{scriptsize}
			\end{tikzpicture}}
		\caption[]{Counting on the square $SQ_k$.}
		\label{SquareSQ}
	\end{center}
\end{figure}
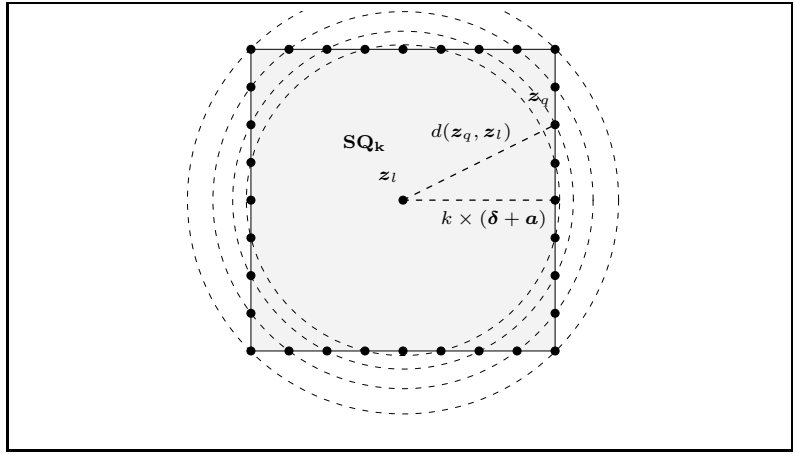
\begin{proof}
	From a given position $\z_i$ we split the space into equidistant cubes $(CU_l)$, centered at $\z_i$, such that each of its faces support some of the $(\z_j)_{\stackrel{j\geq 1}{j\neq m}}^{\nbs}$, and each of its faces, $(F_l)$ are distant from $F_{l\pm 1}$ with distance $\dist+\rad$, (see \cref{FaceF}).
	Obviously the distance from a point $\z_j\in F_l$ to $\z_i$ is 
	\begin{equation}\label{Conting-On-Fl}
	\begin{aligned}
	d(\z_i,\z_j)&=\sqrt{(d(\z_i,F_l)^2+d(\z_j,\z_l)^2)},\\ d(\z_i,F_l)&=l(\dist+\rad),
	\end{aligned} 
	\end{equation} $z_l$ being the orthogonal projection of $z_i$ on $F_l.$ 
	Repeating the same splitting on each faces $F_l$, we draw concentric squares $(SQ_k)_{k=1}^l$, centered at $\z_l$, and  
	there is  $4$ or $8$ location that are equidistant from a given square $SQ_k$ to $\z_l$  which correspond to the intersection of a circle with a square sharing the same center,
	similarly, for a point $z_p\in SQ_k$, we get, with $z_k$ standing for one of its orthogonal projection on $SQ_k,$   
	\begin{equation}\label{Conting-On-SQk}
	\begin{aligned}
	d(\z_p,\z_l)&=\sqrt{(d(\z_p,\z_k)^2+d(SQ_k,\z_l)^2)},\\ d(SQ_k,\z_l)&=k(\dist+\rad),
	\end{aligned} 
	\end{equation} further $$d(B_{\z_i}^{\rad}, B_{\z_j}^{\rad})=d(\z_i,\z_j)+\rad.$$
	So, for a non negative function $g$, it comes with \eqref{Conting-On-Fl} that
	\begin{align*}
	g\bigl(d(B_{\z_i}^{\rad}, B_{\z_j}^{\rad})\Bigr)=&\sum_{j(\neq i)=1}^{\nbs} g\bigl(d(\z_i,\z_j)-\rad\bigr)=
	\sum_{l=1}^{\nbs} 6\sum_{\z_j\in F_l}g\bigl(d(\z_j,\z_i)-\rad\bigr),\\
	=&6\sum_{l=1}^{\nbs}\sum_{z_j\in F_l}g\Bigl(\bigl(d(F_l,\z_i)^2+d(\z_l,\z_j)^2\Bigr)^\frac{1}{2}-\rad\Bigr)\\
	\leq& 6\sum_{l=1}^{\nbs}\sum_{k=0}^l8\sum_{z_p\in SQ_k} g\Bigl(\bigl(l^2(\dist+\rad)^2+d(\z_l,\z_p)^2\Bigr)^\frac{1}{2}-\rad\Bigr),
	\end{align*} and with \eqref{Conting-On-SQk}
	\begin{align*}
	\sum_{j(\neq i)=1}^{\nbs} g\bigl(d(B_{\z_i}^{\rad}, B_{\z_j}^{\rad})\Bigr)\leq &6\times 8\sum_{l=1}^{\nbs}\sum_{k=0}^l\sum_{p=0}^k g\Bigl(\bigl(l^2(\dist+\rad)^2+d(\z_l,SQ_k)^2+d(\z_p,\z_k)^2\Bigr)^\frac{1}{2}-\rad\Bigr),\\
	\leq & 6\times 8\sum_{l\geq 1}^{\nbs}\sum_{k=0}^l\sum_{p=0}^k g\Bigl(\bigl(l^2(\dist+\rad)^2+k^2(\dist+\rad)^2+i^2(\dist+\rad)^2\Bigr)^\frac{1}{2}-\rad\Bigr),
	\end{align*} which guaranties that
	\begin{align*}
	\sum_{j(\neq i)=1}^{\nbs} g\bigl(d(B_{\z_i}^{\rad}, B_{\z_j}^{\rad})\Bigr)
	\leq &6\times 8\sum_{\stl{1\leq l\leq \nbs}{O\leq i\leq k\leq l}} g\Bigl(\bigl[\bigl(l^2+k^2+i^2\Bigr)^\frac{1}{2}(\bs{c}_r+1)-1\bigr]\rad\Bigr).
	\end{align*} 
	For \eqref{Double-Sum-Counting}, using H\"older's inequality for the inner sum,  and considering \eqref{Sum-Counting}, we obtain 
	\begin{align*}
	\sum_{m=1}^{\nbs} \Bigl(\sum_{\stl{j\geq1}{j\neq m}}^{\nbs}\frac{\alpha_j}{\dist_{mj}^q}\Bigr)^2
	\leq& \sum_{m=1}^{\nbs} \Biggl( \Biggl[\sum_{\stl{j\geq 1}{j\neq m}}^{\nbs}\Bigl(\frac{\alpha_j}{\dist_{mj}^\frac{q}{2}}\Bigr)^2\Biggr]^\frac{1}{2}\Biggl[\sum_{\stl{j\geq 1}{j\neq m}}^{\nbs}\Bigl(\frac{1}{\dist_{mj}^\frac{q}{2}}\Bigr)^2\Biggr]^\frac{1}{2}\Biggr)^2,\\
	\leq&  \sum_{m=1}^{\nbs} \Biggl( \sum_{\stl{j\geq 1}{j\neq m}}^{\nbs}\Bigl(\frac{\alpha_j}{\dist_{mj}^q}\Bigr)~\sum_{\stl{j\geq 1}{j\neq m}}^{\nbs}\frac{1}{\dist_{mj}^q}\Biggr),\\
	\leq& \frac{1}{\dist^q}\sum_{l= 1}^{\nbs^\frac{1}{3}}l^{(2-4)} \sum_{m=1}^{\nbs}  \sum_{\stl{j\geq 1}{j\neq m}}^{\nbs}\frac{\alpha_j}{\dist_{mj}^q}.      
	\end{align*} 
	The proof ends with $$\sum_{m=1}^{\nbs} \sum_{{i\geq1},{m\neq j}}^{\nbs}\frac{\alpha_j}{\dist_{mj}^q}=\sum_{m=1}^{\nbs} \sum_{\stl{\nbs\geq j\geq1}{i\neq j}}\frac{\alpha_j}{\dist_{mj}^q}
	=\sum_{j=1}^{\nbs} \alpha_j\sum_{\stl{m\geq 1}{m\neq j} }\frac{1}{\dist_{mj}^q}.$$ \end{proof}

\bibliographystyle{abbrv}

\end{document}